\documentclass[a4paper,leqno,12pt]{amsart}
\usepackage[hmargin=1in,textheight=23cm,centering]{geometry}

\usepackage{amsmath}
\usepackage{amssymb}
\usepackage{mathtools}
\usepackage{graphicx}
\usepackage{enumitem}
\usepackage{tikz-cd}
\usepackage[hidelinks]{hyperref}
\hypersetup{
  pdftitle={Cartan--Ambrose--Hicks theorems and dual geodesic symmetries on statistical manifolds},
  pdfauthor={Naoto Satoh}
}

\theoremstyle{plain}
\newtheorem{theorem}{\indent\normalfont\scshape Theorem}[section]
\newtheorem{lemma}[theorem]{\indent\normalfont\scshape Lemma}
\newtheorem{corollary}[theorem]{\indent\normalfont\scshape Corollary}
\newtheorem{proposition}[theorem]{\indent\normalfont\scshape Proposition}

\theoremstyle{definition}
\newtheorem{definition}[theorem]{\indent\normalfont\scshape Definition}
\newtheorem{setting}[theorem]{\indent\normalfont\scshape Setting}
\newtheorem{remark}[theorem]{\indent\normalfont\scshape Remark}

\numberwithin{equation}{section}

\newcommand{\id}{\operatorname{id}}
\newcommand{\Iso}{\operatorname{Iso}}
\let\originaloverline\overline
\newcommand{\overlinescale}{0.85}
\newcommand{\shortoverlineaux}[2]{%
  \begingroup
  \setbox0=\hbox{$#1#2$}%
  \ooalign{%
    \hfil\scalebox{\overlinescale}[1]{$#1\originaloverline{\phantom{#2}}$}\hfil\cr
    \box0\cr
  }%
  \endgroup
}
\renewcommand{\overline}[1]{%
  \mathord{\mathpalette{\shortoverlineaux}{#1}}%
}

\allowdisplaybreaks

\begin{document}

\title[CAH theorems and dual geodesic symmetries]{Cartan--Ambrose--Hicks theorems\\ and dual geodesic symmetries\\ on statistical manifolds}

\author[N.~Satoh]{Naoto Satoh}
\address{Mathematics Laboratory \endgraf
Sapporo Campus, Hokkaido University of Education \endgraf
1-5, Ainosato 5-3, Kita-ku, Sapporo 002-8502 \endgraf
Japan}
\email{satoh.naoto@s.hokkyodai.ac.jp}

\dedicatory{Dedicated to Professor Jun-ichi Inoguchi on his sixtieth birthday}

\subjclass[2020]{Primary 53B12; Secondary 53B05, 53C35, 53C30.}
\keywords{Statistical manifold, Cartan--Ambrose--Hicks theorem, dual geodesic symmetry, statistical symmetric space, Hessian manifold.}
\date{October 27, 2026}

\begin{abstract}
We establish Cartan–Ambrose–Hicks (CAH) theorems for statistical manifolds and characterize statistical symmetric spaces by dual geodesic symmetries.
These theorems determine statistical structures up to local statisticmorphism by comparing the curvature of an alpha-connection and the skewness tensor using radial geodesics and parallel displacement of that connection.
The Levi--Civita formulation characterizes local statisticmorphisms by comparing the radial Jacobi operators of Riemannian curvature and the skewness tensor using Levi--Civita parallel displacement.
The same characterization holds with statistical curvature in place of Riemannian curvature.
For Hessian manifolds, comparison of the skewness tensor alone is necessary and sufficient.

For connected statistical manifolds, we prove the equivalence between the homogeneous definition of statistical symmetric spaces and the existence of global dual geodesic symmetries at every point.
The CAH theorems give the necessary and sufficient conditions for maps constructed from exponential maps of dual connections to be local dual geodesic symmetries, preserving the metric and interchanging the dual connections.
Such local symmetries exist at every point if and only if the Riemannian curvature and skewness tensors are Levi–Civita parallel.
For Hessian manifolds, this condition is equivalent to the equality of the Hessian and dual Hessian curvature tensors.
\end{abstract}

\maketitle

%%%%%%%%%%%%%%%%%%%%%%%%%%%%
%%%%%%%%%%%%%%%%%%%%%%%%%%%%
%%%%%%%%%%%%%%%%%%%%%%%%%%%%
\section*{Introduction}
%%%%%%%%%%%%%%%%%%%%%%%%%%%%
%%%%%%%%%%%%%%%%%%%%%%%%%%%%
%%%%%%%%%%%%%%%%%%%%%%%%%%%%

The Cartan--Ambrose--Hicks (CAH) theorems determine Riemannian metrics and affine connections by tensor comparison under parallel displacement.
They give necessary and sufficient conditions for a prescribed tangent-space isometry to extend to a Riemannian isometry and, in the affine case, for a linear isomorphism to extend to a connection-preserving diffeomorphism.
Cartan's local theorem compares Riemannian curvature along radial Levi--Civita geodesics \cite{Cartan1928}.
Ambrose extended the result to complete simply connected Riemannian manifolds by using broken geodesics \cite{Ambrose}; Hicks extended it to affine connections, comparing curvature and torsion with respect to the connection \cite{Hicks1959}.

A statistical manifold carries a Riemannian metric $g$ and a torsion-free connection $\nabla$ for which the cubic form $C=\nabla g$ is totally symmetric.
The pair $(g,\nabla)$ is called a statistical structure, and $\nabla$ is its statistical connection.
A local statisticmorphism is a local diffeomorphism preserving both the metric and the statistical connection.
For statistical manifolds, the question is which tensor comparisons characterize local statisticmorphisms, and which connections can be used for the geodesics and parallel displacements.
We use the skewness tensor $K$ and the alpha-connections
\[
C(X,Y,Z)=g(K(X,Y),Z),
\qquad
\nabla^{(\alpha)}=\nabla^{g}-\frac{\alpha}{2}K,
\]
where $\nabla^{g}$ is the Levi--Civita connection of $g$, $\nabla^{(1)}=\nabla$, and $\nabla^{(-1)}=\nabla^{*}$ is the dual connection of $\nabla$ with respect to $g$.
We denote the curvature tensors of $\nabla^{g}$ and $\nabla^{(\alpha)}$ by $R^{g}$ and $R^{(\alpha)}$, respectively, and write $R=R^{(1)}$ and $R^{*}=R^{(-1)}$.

The local CAH theorem in \cite[Theorem~3.2]{Satoh2019} characterizes local statisticmorphisms by comparing the full Riemannian curvature tensor $R^{g}$ and $K$ along Levi--Civita geodesics using Levi--Civita parallel displacement.
The same work gives a sufficient criterion using $R$, $K$, and $\nabla K$ \cite[Corollary~3.4]{Satoh2019}.
This raises two questions: whether geodesics and parallel displacement relative to the statistical connection give such a characterization, and whether statistical curvature $R^{\mathsf{S}}=(R+R^{*})/2$ can replace Riemannian curvature.
The present paper answers both questions and characterizes statistical symmetric spaces by dual geodesic symmetries.

Given statistical manifolds $(M,g,\nabla)$ and $(\overline{M},\overline{g},\overline{\nabla})$ of the same dimension and an initial linear isometry $\Phi:T_{p}M\longrightarrow T_{\overline{p}}\overline{M}$, fix $\alpha,\beta\ne0$.
On sufficiently small normal neighborhoods, define
\[
\varphi^{(\alpha,\beta)}
=\overline{\operatorname{Exp}}_{\overline{p}}^{(\beta)}
\circ\Phi\circ
\bigl(\operatorname{Exp}_{p}^{(\alpha)}\bigr)^{-1}.
\]
Following Hicks \cite{Hicks1966}, we call this map a spray map.
Parallel displacement with respect to $\nabla^{(\alpha)}$ and $\overline{\nabla}^{(\beta)}$ determines linear isomorphisms from $\Phi$ along corresponding radial geodesics.
Theorem~\ref{thm:alpha-beta-CAH} states that $\varphi^{(\alpha,\beta)}$ preserves the metrics and satisfies $(\varphi^{(\alpha,\beta)})_{*}\nabla^{(\alpha)}=\overline{\nabla}^{(\beta)}$ if and only if, along all corresponding radial geodesics, these linear isomorphisms intertwine the Jacobi operators of $R^{(\alpha)}$ and $\overline{R}^{(\beta)}$ in the radial directions and identify the scaled skewness tensors $\alpha K$ and $\beta\overline{K}$.

Although these parallel displacements need not preserve the metrics, $\nabla^{(\alpha)}g=\alpha C$ and $\overline{\nabla}^{(\beta)}\overline{g}=\beta \overline{C}$ give linear ordinary differential equations for the Gram matrices in the corresponding parallel frames.
The comparison of $\alpha K$ and $\beta\overline{K}$, together with the initial isometry $\Phi$, shows that the two Gram matrices satisfy the same linear initial-value problem.
Therefore, uniqueness implies that the Gram matrices coincide and that the linear isomorphisms obtained by parallel displacement are isometries.
The condition on the Jacobi operators identifies these linear isomorphisms with the differential of $\varphi^{(\alpha,\beta)}$.
For $\alpha\ne0$, the cases $\beta=\alpha$ and $\beta=-\alpha$ give local statisticmorphisms to the target structure and its dual, respectively.

The Levi--Civita formulation in Theorem~\ref{thm:LC-CAH} replaces comparison of the full Riemannian curvature tensor by comparison of the radial Jacobi operators of $R^{g}$, while retaining comparison of $K$.
Theorem~\ref{thm:LC-CAH-curv} gives a necessary and sufficient criterion using the radial Jacobi operators of the statistical curvature $R^{\mathsf{S}}=(R+R^{*})/2$ and $K$.
The comparisons use Levi--Civita parallel displacement along all corresponding radial Levi--Civita geodesics.
Equivalently, along each such geodesic $\gamma$, the criterion in Theorem~\ref{thm:LC-CAH-curv} compares $K$ and $R(u,\dot{\gamma})w$ for arbitrary tangent vectors $u,w$, without requiring a comparison of $\nabla K$.
In dimensions at least two, these criteria also admit a formulation using statistical sectional curvature on radial two-planes.

For Hessian manifolds, $R=R^{*}=0$.
When $\alpha^{2}=\beta^{2}\ne0$, Corollary~\ref{cor:Hess-alpha-CAH} reduces the CAH criterion to radial comparison of $\alpha K$ and $\beta\overline{K}$.
In the Levi--Civita formulation, Corollary~\ref{cor:LC-CAH-conj-Hess} shows that radial comparison of $K$ alone is necessary and sufficient.
Thus the comparison of Hessian curvature tensors in \cite[Theorem~4.1]{Satoh2019} can be omitted.

Combining the case $\beta=\alpha\ne0$ with the Levi--Civita formulation gives necessary and sufficient conditions for an initial tangent-space isometry to extend to a local statisticmorphism.
For each fixed $\alpha\in\mathbb{R}$, these conditions compare the radial Jacobi operators of $R^{(\alpha)}$ and the skewness tensor $K$ along radial $\nabla^{(\alpha)}$-geodesics using the parallel displacement of that connection.
Thus, these comparisons determine the statistical structure up to local statisticmorphism.

For Riemannian manifolds satisfying $\nabla^{g}R^{g}=0$, Levi--Civita parallel displacement preserves Riemannian curvature.
Hence the curvature condition in the Riemannian CAH theorem need only be checked at the base points.
In particular, for the same manifold and base point $p$, the initial isometry $-\id_{T_{p}M}$ preserves $R^{g}_{p}$.
Therefore, the CAH theorem shows that the corresponding spray map, which is the Riemannian geodesic symmetry, is a local isometry.
Cartan \cite{Cartan1926} established the theory of Riemannian symmetric spaces.
He proved that Riemannian curvature is Levi--Civita parallel if and only if the Riemannian geodesic symmetry $s_{p}^{(0)}(\operatorname{Exp}_{p}^{(0)}(v)) =\operatorname{Exp}_{p}^{(0)}(-v)$ is a local isometry at every point.

For statistical manifolds, the corresponding conditions concern both $R^{g}$ and $K$.
When both are Levi--Civita parallel, \cite[Corollary~3.5]{Satoh2019} reduces comparison to base-point data.
The map $-\id_{T_{p}M}$ preserves $R^{g}_{p}$ and reverses $K_{p}$.
The dual statistical structure has skewness tensor $-K$.
Therefore, comparison with the dual structure constructs a local dual geodesic symmetry.

A local dual geodesic symmetry at $p$ is a local isometry $s_{p}$ with $s_{p}(p)=p$, $(ds_{p})_{p}=-\id_{T_{p}M}$, and $(s_{p})_{*}\nabla=\nabla^{*}$.
For $\alpha\ne0$, Corollary~\ref{cor:alpha-dual-CAH} characterizes when the spray map
\[
s_{p}^{(\alpha)}
=\operatorname{Exp}_{p}^{(-\alpha)}
\circ(-\id_{T_{p}M})\circ
\bigl(\operatorname{Exp}_{p}^{(\alpha)}\bigr)^{-1}
\]
is such a symmetry for $(g,\nabla^{(\alpha)})$.
Then it agrees locally with the Riemannian geodesic symmetry.
Thus the statistical CAH theorem constructs the symmetry from the exponential maps of the dual connections.

Nielsen and Okamura \cite{NielsenOkamura} use dual geodesic symmetries to study geodesic midpoints associated with dual connections.
On Riemannian symmetric spaces with Levi--Civita parallel cubic forms $\nabla^{g}C=0$, they prove that geodesic symmetries interchange the dual connections.
Moreover, Kobayashi and Okuda \cite{KobayashiOkuda} treat a $G$-invariant statistical structure on $G/H$ and prove that the cubic form is Levi--Civita parallel when $(\mathfrak{g},\mathfrak{h})$ is a symmetric Lie algebra pair.

In this article, we extend Cartan's characterization of Riemannian local symmetry to statistical manifolds.
Theorem~\ref{thm:local-stat-sym} proves that local dual geodesic symmetries exist at every point if and only if
\begin{equation}
\nabla^{g}R^{g}=0,
\qquad
\nabla^{g}K=0.
\label{eq:intro-parallel-R-K}
\end{equation}
The condition $\nabla^{g}K=0$ is equivalent to $\nabla^{g}C=0$.
These conditions characterize locally statistical symmetric spaces.
Each dual geodesic symmetry exchanges $\nabla^{(\alpha)}$ and $\nabla^{(-\alpha)}$ for every $\alpha\in\mathbb{R}$.

Inoguchi and Ohno define statistical symmetric spaces by a homogeneous presentation $M\simeq G/H$, with $G$ acting effectively by statistical automorphisms and an $\operatorname{Ad}(H)$-invariant reductive decomposition $\mathfrak{g}=\mathfrak{h}\oplus\mathfrak{m}$ satisfying $[\mathfrak{m},\mathfrak{m}]\subset\mathfrak{h}$.
They ask for an interpretation of this definition in terms of curvature and geodesics \cite[Definition~10.3 and Problem~10.5]{InoguchiOhno}.
In Theorem~\ref{thm:homo-stat-sym}, we prove that, on a connected statistical manifold, this definition is equivalent to the existence of global dual geodesic symmetries at every point.
The Riemannian transvection group gives the effective symmetric homogeneous presentation and preserves the statistical structure.
This equivalence requires no simple connectivity.

On a simply connected statistical manifold complete with respect to $g$ and satisfying the conditions \eqref{eq:intro-parallel-R-K}, Theorem~\ref{thm:global-stat-sym} extends the local dual geodesic symmetries obtained in Theorem~\ref{thm:local-stat-sym} uniquely to global dual geodesic symmetries.

For the symmetry problem on Hessian manifolds, the relation $R^{g}=-\frac{1}{4}[K,K]$ shows that $\nabla^{g}K=0$ also implies $\nabla^{g}R^{g}=0$.
Let $Q=\frac{1}{2}\nabla K$ and $Q^{*}=\frac{1}{2}\nabla^{*}K^{*}$ denote the Hessian curvature tensor and its dual \cite{Shima1995}.
The identity $Q-Q^{*}=\nabla^{g}K$ and Corollary~\ref{cor:Hess-stat-sym} show that a Hessian manifold is a locally statistical symmetric space if and only if $Q=Q^{*}$.
Conjugate symmetry is automatic here, whereas equality of the Hessian curvature tensors is an additional condition.

On the cone of symmetric positive-definite matrices, the spray map $s_{P}^{(\alpha)}$ agrees with the global Riemannian geodesic symmetry at $P$, $\Sigma\mapsto P\Sigma^{-1}P$, for every $\alpha\in\mathbb{R}$.
On the Euclidean motion group of the plane $\mathrm{E}(2)$, the statistical structure from \cite{InoguchiOhno} has a symmetric homogeneous presentation by $\mathrm{SO}(2)\times\mathbb{R}^{2}$.
The simply transitive left action of $\mathrm{E}(2)$ preserves the structure but does not satisfy the bracket condition.
Thus a transitive group of statistical automorphisms need not give a symmetric homogeneous presentation.
This example also admits global dual geodesic symmetries without simple connectivity.

With its Fisher metric and dually flat connections, the family of normal distributions is statistically homogeneous and its underlying Riemannian structure is symmetric.
However, $\nabla^{g}K\ne0$ at every point.
Therefore, for every $\alpha\ne0$, the structure $(g,\nabla^{(\alpha)})$ admits no local dual geodesic symmetry at any point.
Its Riemannian geodesic symmetry preserves the metric but does not exchange the dual connections.
The spray map for $\alpha=1$ sends $\nabla$ to $\nabla^{*}$ locally, but it neither sends $\nabla^{*}$ to $\nabla$ nor preserves the metric.

Section~\ref{sec:stat-curv} introduces statistical structures, alpha-connections, and curvature tensors on statistical manifolds.
Section~\ref{sec:stat-morph} gives equivalent conditions for statisticmorphisms and characterizes infinitesimal statistical automorphisms by Lie derivatives.
Section~\ref{sec:alpha-CAH} proves the local statistical CAH theorem using radial geodesics and parallel displacement of alpha-connections.
Section~\ref{sec:LC-CAH} gives the Levi--Civita formulation, including the criterion using statistical curvature and the Hessian specialization.
Section~\ref{sec:local-stat-sym} characterizes locally statistical symmetric spaces by the conditions $\nabla^{g}R^{g}=0$ and $\nabla^{g}K=0$.
Section~\ref{sec:transv-stat-sym} develops statistical transvections and treats the global extension of local dual geodesic symmetries.
It proves the equivalence between the existence of global dual geodesic symmetries at every point and the homogeneous definition of statistical symmetric spaces, and gives the criterion $Q=Q^{*}$ for local statistical symmetry of Hessian manifolds.
Section~\ref{sec:examples} treats the cone of symmetric positive-definite matrices, the Euclidean motion group $\mathrm{E}(2)$, and the family of normal distributions.

%%%%%%%%%%%%%%%%%%%%%%%%%%%%
%%%%%%%%%%%%%%%%%%%%%%%%%%%%
%%%%%%%%%%%%%%%%%%%%%%%%%%%%
\section{Statistical structures and curvature tensors}
\label{sec:stat-curv}
%%%%%%%%%%%%%%%%%%%%%%%%%%%%
%%%%%%%%%%%%%%%%%%%%%%%%%%%%
%%%%%%%%%%%%%%%%%%%%%%%%%%%%

Throughout this paper, $M$ denotes a smooth manifold, and all objects under consideration are assumed to be smooth.
For a vector bundle $E\to M$, we denote by $\Gamma(E)$ the space of smooth sections of $E$.
For nonnegative integers $p$ and $q$, put $T^{p}_{q}(M):=(TM)^{\otimes p}\otimes(T^{*}M)^{\otimes q}$.
We denote by $\Gamma(T^{p}_{q}(M))$ the space of tensor fields of type $(p,q)$ on $M$, where $p$ and $q$ are the contravariant and covariant degrees, respectively.
We write $\Lambda^{k}(E)$ and $S^{k}(E)$ for the $k$th exterior and symmetric powers of $E$, respectively, and use the same notation for vector spaces.
All metrics are Riemannian.
Unless otherwise stated, all affine connections are assumed to be torsion free, and $X,Y,Z$ denote smooth vector fields on $M$.
We write $\mathfrak{X}(M):=\Gamma(TM)$.

%%%%%%%%%%%%%%%%%%%%%%%%%%%%
%%%%%%%%%%%%%%%%%%%%%%%%%%%%
\subsection{Statistical manifolds and alpha-connections}
%%%%%%%%%%%%%%%%%%%%%%%%%%%%
%%%%%%%%%%%%%%%%%%%%%%%%%%%%

\begin{definition}
Let $g$ be a Riemannian metric and $\nabla$ a torsion-free affine connection on $M$.
The triple $(M,g,\nabla)$ is called a \emph{statistical manifold} if the covariant $3$-tensor $C:=\nabla g$ is totally symmetric. 
In this case, the pair $(g,\nabla)$ is called a \emph{statistical structure} on $M$, and $\nabla$ the \emph{statistical connection}.
The tensor $C$ is called the \emph{cubic form} of $(g,\nabla)$.
\end{definition}

The \emph{dual connection} $\nabla^{*}$ of $\nabla$ with respect to $g$ is characterized by
\[
Xg(Y,Z)
=g(\nabla_{X}Y,Z)+g(Y,\nabla_{X}^{*}Z).
\]
If $(g,\nabla)$ is a statistical structure, the pair $(g,\nabla^{*})$ is also a statistical structure.

Let $\nabla^{g}$ be the Levi--Civita connection of $g$.
Since $\nabla^{g}g=0$, the pair $(g,\nabla^{g})$ is a statistical structure with vanishing cubic form.

We define the \emph{skewness tensor} $K$ by
\begin{equation}
g(K(X,Y),Z)=C(X,Y,Z).
\label{eq:K}
\end{equation}
For $X\in\mathfrak{X}(M)$, the \emph{skewness operator} $K_{X}\in\Gamma\bigl(\operatorname{End}(TM)\bigr)$ is the endomorphism field defined by
\[
K_{X}(Y):=K(X,Y)
\qquad
(Y\in\mathfrak{X}(M)).
\]
For $q\in M$ and $x\in T_{q}M$, we use the same notation $K_{x}\in\operatorname{End}(T_{q}M)$ for the endomorphism defined by
\[
K_{x}(y):=K_{q}(x,y)
\qquad
(y\in T_{q}M).
\]
Since $\nabla^{g}g=0$, differentiating \eqref{eq:K} gives
\[
g\bigl((\nabla^{g}_{W}K)(X,Y),Z\bigr)
=
(\nabla^{g}_{W}C)(X,Y,Z)
\]
for all $X,Y,Z,W\in\mathfrak{X}(M)$.
Thus, by nondegeneracy of $g$,
\begin{equation}
\nabla^{g}K=0
\quad\Longleftrightarrow\quad
\nabla^{g}C=0.
\label{eq:parallel-K-C}
\end{equation}
The statistical structure $(g,\nabla)$ is said to have a \emph{Levi--Civita parallel skewness tensor}, or equivalently a \emph{Levi--Civita parallel cubic form}, when these equivalent conditions hold.

For a fixed Riemannian metric $g$, a statistical structure can be described equivalently by its connection $\nabla$, its skewness tensor $K$, or its cubic form $C$.

\begin{lemma}
\label{lem:stat-data}
Let $g$ be a Riemannian metric on $M$.
The following three classes of pairs are in one-to-one correspondence\textup{:}
\begin{enumerate}[label={\rm(\roman*)}]
\item
A statistical structure $(g,\nabla)$.

\item
A pair $(g,K)$, where $K\in\Gamma(T^{1}_{2}(M))$ satisfies
\[
K(X,Y)=K(Y,X),\qquad
g\bigl(K_{X}(Y),Z\bigr)=g\bigl(Y,K_{X}(Z)\bigr).
\]

\item
A pair $(g,C)$, where $C\in\Gamma(S^{3}(T^{*}M))$.
\end{enumerate}
The correspondences are given by $C=\nabla g$ and \eqref{eq:K}.
The corresponding connection and its dual satisfy
\begin{equation}
\nabla=\nabla^{g}-\frac{1}{2}K,
\qquad
\nabla^{*}=\nabla^{g}+\frac{1}{2}K.
\label{eq:nabla-K}
\end{equation}
\end{lemma}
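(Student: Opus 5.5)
The plan is to use the difference tensor of $\nabla$ and $\nabla^{g}$ as the intermediary, and to treat the correspondence (ii)$\leftrightarrow$(iii) as pure linear algebra. Given a statistical structure $(g,\nabla)$, set $A:=\nabla-\nabla^{g}\in\Gamma(T^{1}_{2}(M))$. Since both connections are torsion free, $A(X,Y)=A(Y,X)$. Because $\nabla^{g}g=0$, we have
\[
C(X,Y,Z)=(\nabla_{X}g)(Y,Z)=-g(A(X,Y),Z)-g(Y,A(X,Z)).
\]
Next I will use the total symmetry of $C$, which includes symmetry in $(X,Z)$. Write $a(X,Y,Z):=g(A(X,Y),Z)$; then $a$ is symmetric in its first two slots. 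The symmetry $C(X,Y,Z)=C(Z,Y,X)$ gives $a(X,Z,Y)=a(Z,X,Y)$, hence $a(Y,X,Z)=a(Y,Z,X)$ after renaming. So $a$ is symmetric in slots $(1,2)$ and in $(2,3)$, and therefore it is totally symmetric. Consequently $C=-2a$, that is, $A=-\tfrac12 K$ with $K$ defined by \eqref{eq:K}. This is the formula $\nabla=\nabla^{g}-\tfrac12K$ in \eqref{eq:nabla-K}.

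The same computation shows that $K$ satisfies the two conditions in (ii). Symmetry $K(X,Y)=K(Y,X)$ comes from torsion-freeness, and self-adjointness of $K_{X}$ is the symmetry of $C$ in its last two slots.

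For the reverse direction, start from $(g,K)$ as in (ii) and define $\nabla:=\nabla^{g}-\tfrac12K$. This connection is torsion free because $K$ is symmetric. The display above, with $A=-\tfrac12K$, gives $\nabla g=C$ with $C(X,Y,Z)=g(K(X,Y),Z)$. The conditions in (ii) make this $C$ symmetric in slots $(1,2)$ and in $(2,3)$, hence totally symmetric. So $(g,\nabla)$ is a statistical structure, and the two constructions are mutually inverse. Injectivity of $\nabla\mapsto K$ follows since $\nabla$ is recovered from $K$.

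Between (ii) and (iii), I would simply use nondegeneracy of $g$. Formula \eqref{eq:K} is a bijection between $T^{1}_{2}$ tensors and covariant $3$-tensors, and under it the two conditions in (ii) correspond exactly to total symmetry of $C$, as already checked.

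For the dual connection, substitute into the defining identity. Using $Xg(Y,Z)=g(\nabla^{g}_{X}Y,Z)+g(Y,\nabla^{g}_{X}Z)$ and $\nabla=\nabla^{g}-\tfrac12K$, we need $g(Y,\nabla^{*}_{X}Z)=g(Y,\nabla^{g}_{X}Z)+\tfrac12 g(K_{X}Y,Z)$. By self-adjointness of $K_{X}$, this is equivalent to $\nabla^{*}=\nabla^{g}+\tfrac12K$.

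I expect no genuine obstacle here. The only step needing care is the slot-permutation argument showing that symmetry in two overlapping transpositions forces total symmetry of $a$. It is also where the sign and the factor $\tfrac12$ are fixed, so I will track indices carefully there.
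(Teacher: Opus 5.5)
Your argument is correct and follows the paper's proof: nondegeneracy of $g$ for (ii)$\leftrightarrow$(iii), identifying $\nabla-\nabla^{g}=-\tfrac12K$, a direct check that $\nabla^{g}-\tfrac12K$ is statistical, and the dual formula from the defining identity. The one difference is that the paper gets $2g(\nabla^{g}_{X}Y-\nabla_{X}Y,Z)=C(X,Y,Z)$ from the Koszul formula, whereas you get it by your slot-permutation argument on $a$, which is the same computation.

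There is one index slip, in exactly the step you flagged. When you expand $C(X,Y,Z)=C(Z,Y,X)$, the terms $a(X,Z,Y)$ and $a(Z,X,Y)$ cancel by the $(1,2)$-symmetry you already have. So the identity you extract is $a(X,Y,Z)=a(Z,Y,X)$, not $a(X,Z,Y)=a(Z,X,Y)$. Applying the $(1,2)$-symmetry to each side of this identity, rather than renaming variables, then gives $a(Y,X,Z)=a(Y,Z,X)$ as you claim.
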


\begin{proof}
By nondegeneracy of $g$, \eqref{eq:K} gives a bijection between \textup{(ii)} and \textup{(iii)}: total symmetry of $C$ is equivalent to symmetry of $K$ and $g$-self-adjointness of each $K_{X}$.

Next, let $(g,\nabla)$ be a statistical structure, and define $C=\nabla g$ and $K$ by \eqref{eq:K}.
Then $(g,K)$ satisfies \textup{(ii)}.
The Koszul formula and the total symmetry of $C$ give
\[
2g\bigl(\nabla^{g}_{X}Y-\nabla_{X}Y,Z\bigr)
=C(X,Y,Z)
=g(K(X,Y),Z).
\]
Therefore, $\nabla=\nabla^{g}-\frac{1}{2}K$.

Conversely, let $(g,K)$ satisfy \textup{(ii)} and set $\nabla:=\nabla^{g}-\frac{1}{2}K$.
Symmetry of $K$ makes $\nabla$ torsion free.
Using $\nabla^{g}g=0$ and the $g$-self-adjointness of $K_{X}$, we obtain
\[
(\nabla_{X}g)(Y,Z)
=\frac{1}{2}\bigl\{
g(K(X,Y),Z)+g(Y,K(X,Z))
\bigr\}
=C(X,Y,Z).
\]
Thus $(g,\nabla)$ is a statistical structure with the prescribed $K$.
These constructions are inverse to one another, proving the bijection between \textup{(i)} and \textup{(ii)}.

Finally, the defining identity of the dual connection gives $\nabla^{*}=\nabla+K=\nabla^{g}+\frac{1}{2}K$.
\end{proof}

The tensor $\nabla-\nabla^{g}$ is also called the \emph{difference tensor}.
Thus the skewness tensor $K$ is $-2$ times the difference tensor.
Let $K^{*}$ denote the skewness tensor of the dual statistical structure $(g,\nabla^{*})$.
Since $\nabla^{*}g=-\nabla g$, one has
\[
K^{*}=-K.
\]
For $\alpha\in\mathbb{R}$, the associated \emph{$\alpha$-connection} is
\begin{equation}
\nabla^{(\alpha)}
:=\frac{1+\alpha}{2}\nabla
+\frac{1-\alpha}{2}\nabla^{*}
=\nabla^{g}-\frac{\alpha}{2}K.
\label{eq:alpha-family}
\end{equation}
Thus
\begin{equation}
\nabla^{(0)}=\nabla^{g},\qquad
\nabla^{(1)}=\nabla,\qquad
\nabla^{(-1)}=\nabla^{*}.
\label{eq:alpha-special}
\end{equation}
We use $\nabla^{g}$ when emphasizing the Levi--Civita connection of $g$ and $\nabla^{(0)}$ when emphasizing the member of the $\alpha$-family.

Moreover, we have
\begin{equation}
\nabla^{(\alpha)}g=\alpha C.
\label{eq:alpha-C}
\end{equation}
Indeed, substituting \eqref{eq:alpha-family} and using the $g$-self-adjointness of $K_{X}$ yields
\[
(\nabla^{(\alpha)}_{X}g)(Y,Z)
=\frac{\alpha}{2}
\{g(K(X,Y),Z)+g(Y,K(X,Z))\}
=\alpha C(X,Y,Z).
\]
Equivalently, $\nabla^{(\alpha)}$ is the statistical connection determined by the pair $(g,\alpha C)$.

\begin{remark}
\label{rem:rescale-stat-data}
Let $\lambda_{1}>0$ and $\lambda_{2}\in\mathbb{R}$ be constants, and set $(\overline{g},\overline{C}):=(\lambda_{1}g,\lambda_{2}C)$ on $M$.
Denote by $\overline{\nabla}$ and $\overline{K}$ the statistical connection and the skewness tensor determined by these data.
The defining relation \eqref{eq:K} gives $\overline{K}=(\lambda_{2}/\lambda_{1})K$.
Indeed,
\[
\overline{g}\left(
\frac{\lambda_{2}}{\lambda_{1}}K(X,Y),Z
\right)
=\lambda_{2}C(X,Y,Z)
=\overline{C}(X,Y,Z).
\]
Since a positive constant rescaling of a Riemannian metric preserves its Levi--Civita connection, one has $\nabla^{\overline{g}}=\nabla^{g}$.
Consequently,
\[
\overline{\nabla}
=\nabla^{\overline{g}}-\frac{1}{2}\overline{K}
=\nabla^{g}-\frac{1}{2}\frac{\lambda_{2}}{\lambda_{1}}K.
\]
In the three descriptions of a statistical structure in Lemma~\ref{lem:stat-data}, the same rescaling takes the following forms:
\[
\begin{aligned}
(g,\nabla)
&\longmapsto
\left(\lambda_{1}g,
\nabla^{g}-\frac{1}{2}\frac{\lambda_{2}}{\lambda_{1}}K\right),\\
(g,K)
&\longmapsto
\left(\lambda_{1}g,\frac{\lambda_{2}}{\lambda_{1}}K\right),\\
(g,C)
&\longmapsto
(\lambda_{1}g,\lambda_{2}C).
\end{aligned}
\]
For $\lambda_{2}=0$, the rescaled data determine the statistical structure $(\lambda_{1}g,\nabla^{g})$.
A common positive rescaling $(g,C)\mapsto(\lambda g,\lambda C)$ with $\lambda>0$ sends $(g,\nabla)$ to $(\lambda g,\nabla)$ while it preserves $K$.
Related observations concerning constant rescalings of statistical data appear in \cite{Inoguchi2026}.
\end{remark}

%%%%%%%%%%%%%%%%%%%%%%%%%%%%
%%%%%%%%%%%%%%%%%%%%%%%%%%%%
\subsection{Curvature of alpha-connections}
%%%%%%%%%%%%%%%%%%%%%%%%%%%%
%%%%%%%%%%%%%%%%%%%%%%%%%%%%

Let $\mathcal{R} \in \Gamma\bigl( \Lambda^{2}(T^{*}M)\otimes\operatorname{End}(TM) \bigr)$ be an $\operatorname{End}(TM)$-valued $2$-form.
For each $Y\in\mathfrak{X}(M)$, define the associated endomorphism field $\mathcal{R}_{Y}\in\Gamma\bigl(\operatorname{End}(TM)\bigr)$ by
\[
\mathcal{R}_{Y}(X):=\mathcal{R}(X,Y)Y
\qquad
(X\in\mathfrak{X}(M)).
\]
For each $q\in M$, the endomorphism $\bigl(\mathcal{R}_{Y}\bigr)_{q}$ depends only on $Y_{q}$.
Accordingly, for $y\in T_{q}M$, we write $\mathcal{R}_{y}:=\bigl(\mathcal{R}_{Y}\bigr)_{q}$, where $Y_{q}=y$.
We call $\mathcal{R}_{y}$ the \emph{Jacobi operator} associated with $\mathcal{R}$ in the direction $y$.

We call $\mathcal{R}$ \emph{curvature-like} with respect to $g$ if it satisfies the first Bianchi identity
\[
\mathcal{R}(X,Y)Z
+\mathcal{R}(Y,Z)X
+\mathcal{R}(Z,X)Y
=0
\]
and if $\mathcal{R}(X,Y)\in\Gamma\bigl(\operatorname{End}(TM)\bigr)$ is $g$-skew-adjoint:
\[
g\bigl(\mathcal{R}(X,Y)Z,W\bigr)
=
-g\bigl(Z,\mathcal{R}(X,Y)W\bigr)
\]
for all $X,Y,Z,W\in\mathfrak{X}(M)$.
Together with the skew-symmetry in the first two arguments, these conditions imply
\[
g\bigl(\mathcal{R}_{Y}(X),Z\bigr)
=
g\bigl(X,\mathcal{R}_{Y}(Z)\bigr),
\]
so $\mathcal{R}_{Y}$ is $g$-self-adjoint.

If $\mathcal{R}$ is curvature-like, its sectional curvature on a two-plane $\Pi\subset T_{q}M$ with basis $\{x,y\}$ is defined by
\[
\operatorname{Sec}_{\mathcal{R}}(\Pi)
:=
\frac{
g_{q}\bigl(\mathcal{R}_{y}(x),x\bigr)
}{
g_{q}(x,x)g_{q}(y,y)-g_{q}(x,y)^{2}
}.
\]
It is independent of the choice of $\{x,y\}$.
In particular, if $\{x,y\}$ is an orthonormal basis of $\Pi$, then one has $\operatorname{Sec}_{\mathcal{R}}(\Pi) = g_{q}\bigl(\mathcal{R}_{y}(x),x\bigr)$.

For a statistical manifold $(M,g,\nabla)$, the curvature tensor $R^{\nabla}$ of $\nabla$ is defined by
\[
R^{\nabla}(X,Y)Z
=
\nabla_{X}\nabla_{Y}Z
-\nabla_{Y}\nabla_{X}Z
-\nabla_{[X,Y]}Z.
\]
For $Y\in\mathfrak{X}(M)$, write $R^{\nabla}_{Y}(X):=R^{\nabla}(X,Y)Y$ for its Jacobi operator.
We write
\[
R^{(\alpha)}:=R^{\nabla^{(\alpha)}},
\qquad
R:=R^{\nabla}=R^{(1)},
\qquad
R^{*}:=R^{(-1)},
\qquad
R^{g}:=R^{(0)}.
\]
Since each $\nabla^{(\alpha)}$ is torsion free, $R^{(\alpha)}$ satisfies the first Bianchi identity.
The Riemannian curvature $R^{g}$ is curvature-like, but $R^{(\alpha)}(X,Y)$ need not be $g$-skew-adjoint when $\alpha\ne0$.

Define
\[
[K,K](X,Y)Z
:=[K_{X},K_{Y}]Z
=
K_{X}K_{Y}Z-K_{Y}K_{X}Z.
\]
Thus $[K,K]$ is an $\operatorname{End}(TM)$-valued $2$-form.
Regarding $K$ as the $\operatorname{End}(TM)$-valued one-form $X\mapsto K_{X}$, its covariant exterior derivative with respect to $\nabla$ is defined by
\[
\bigl(d^{\nabla}K\bigr)(X,Y)
:=
\nabla_{X}K_{Y}-\nabla_{Y}K_{X}-K_{[X,Y]}.
\]
Since $\nabla$ is torsion free, evaluating this identity on $Z$ gives
\begin{equation}
\bigl(d^{\nabla}K\bigr)(X,Y)Z
=
(\nabla_{X}K)(Y,Z)
-
(\nabla_{Y}K)(X,Z).
\label{eq:dK}
\end{equation}
Thus $d^{\nabla}K$ is the alternation of the covariant derivative tensor $\nabla K$ in its first two covariant arguments and is an $\operatorname{End}(TM)$-valued $2$-form:
\[
d^{\nabla}K
\in
\Gamma\bigl(
\Lambda^{2}(T^{*}M)\otimes\operatorname{End}(TM)
\bigr).
\]

\begin{proposition}
\label{prop:alpha-curv}
For every $\alpha\in\mathbb{R}$,
\begin{align}
&R^{(\alpha)}
=R^{g}-\frac{\alpha}{2}d^{\nabla^{g}}K
+\frac{\alpha^{2}}{4}[K,K],
\label{eq:Ralpha-Rg}\\
&d^{\nabla^{(\alpha)}}K
=d^{\nabla^{g}}K-\alpha[K,K],
\label{eq:dK-alpha}\\
&R^{g}
=R^{(\alpha)}
+\frac{\alpha}{2}d^{\nabla^{(\alpha)}}K
+\frac{\alpha^{2}}{4}[K,K].
\label{eq:Rg-Ralpha}
\end{align}
In particular,
\begin{equation}
R^{g}
=R+\frac{1}{2}d^{\nabla}K+\frac{1}{4}[K,K].
\label{eq:Rg-R}
\end{equation}
\end{proposition}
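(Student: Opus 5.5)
We will use the general formula for the curvature of a connection perturbed by a symmetric $(1,2)$-tensor. Let $D$ be any torsion-free connection and $A$ an $\operatorname{End}(TM)$-valued one-form $X\mapsto A_{X}$, and set $D'=D+A$. Expanding $D'_{X}D'_{Y}Z-D'_{Y}D'_{X}Z-D'_{[X,Y]}Z$ gives
\[
R^{D'}(X,Y)=R^{D}(X,Y)+(d^{D}A)(X,Y)+[A_{X},A_{Y}],
\]
where $(d^{D}A)(X,Y)=D_{X}A_{Y}-D_{Y}A_{X}-A_{[X,Y]}$ as an operator identity. This is the only real computation. One expands $D'_{X}(D_{Y}Z+A_{Y}Z)$, collects the terms $D_{X}(A_{Y}Z)-A_{Y}D_{X}Z=(D_{X}A_{Y})Z$, and uses the commutator structure. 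We will carry it out once and then specialize.

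For \eqref{eq:Ralpha-Rg}, take $D=\nabla^{g}$ and $A=-\frac{\alpha}{2}K$, which is legitimate by \eqref{eq:alpha-family}. Linearity of $d^{D}$ in $A$ gives the term $-\frac{\alpha}{2}d^{\nabla^{g}}K$, and bilinearity of the bracket gives $\frac{\alpha^{2}}{4}[K,K]$.

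For \eqref{eq:dK-alpha}, we compute directly from the operator definition with $\nabla^{(\alpha)}_{X}=\nabla^{g}_{X}-\frac{\alpha}{2}K_{X}$. Acting on endomorphism fields, we get $\nabla^{(\alpha)}_{X}K_{Y}=\nabla^{g}_{X}K_{Y}-\frac{\alpha}{2}[K_{X},K_{Y}]$, since the covariant derivative of an endomorphism picks up the commutator with the difference tensor. Alternating in $X,Y$ then produces $-\frac{\alpha}{2}[K_{X},K_{Y}]+\frac{\alpha}{2}[K_{Y},K_{X}]=-\alpha[K,K](X,Y)$, while the terms $K_{[X,Y]}$ are unchanged. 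The main point to check here is this sign bookkeeping: $d^{\nabla}K$ in \eqref{eq:dK} is the covariant exterior derivative of an $\operatorname{End}$-valued form, so it involves a commutator, not merely $K_{X}$ applied on one side. We will verify it against \eqref{eq:dK}. There, $(\nabla_{X}K)(Y,Z)=\nabla_{X}(K(Y,Z))-K(\nabla_{X}Y,Z)-K(Y,\nabla_{X}Z)$, and the difference terms contribute $-\frac{\alpha}{2}\bigl(K_{X}K_{Y}Z-K_{K_{X}Y}Z-K_{Y}K_{X}Z\bigr)$. Alternating, the middle terms cancel by symmetry of $K$, which leaves $-\alpha[K_{X},K_{Y}]Z$.

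Finally, \eqref{eq:Rg-Ralpha} follows by substituting \eqref{eq:dK-alpha} into \eqref{eq:Ralpha-Rg}:
\[
R^{(\alpha)}=R^{g}-\frac{\alpha}{2}\bigl(d^{\nabla^{(\alpha)}}K+\alpha[K,K]\bigr)+\frac{\alpha^{2}}{4}[K,K]=R^{g}-\frac{\alpha}{2}d^{\nabla^{(\alpha)}}K-\frac{\alpha^{2}}{4}[K,K],
\]
and rearranging gives the stated identity. Setting $\alpha=1$ and using \eqref{eq:alpha-special} yields \eqref{eq:Rg-R}.
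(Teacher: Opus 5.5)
Your proof is correct and follows the same route as the paper. Your perturbation formula $R^{D+A}=R^{D}+d^{D}A+[A,A]$ is exactly the substitution of $\nabla^{(\alpha)}=\nabla^{g}-\frac{\alpha}{2}K$ into the curvature definition; you then alternate the $\nabla^{(\alpha)}$-derivative of $K$ to get \eqref{eq:dK-alpha} and eliminate $d^{\nabla^{g}}K$, and your sign bookkeeping checks out, including the cancellation of the $K_{K_{X}Y}$ terms by symmetry of $K$.
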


\begin{proof}
Substitution of \eqref{eq:alpha-family} into the curvature definition yields \eqref{eq:Ralpha-Rg}.
Alternating the covariant derivative of $K$ with respect to $\nabla^{(\alpha)}$ yields \eqref{eq:dK-alpha}.
Eliminating $d^{\nabla^{g}}K$ from these two identities yields \eqref{eq:Rg-Ralpha}, whose specialization to $\alpha=1$ is \eqref{eq:Rg-R}.
\end{proof}

The curvature tensors of dual connections satisfy
\begin{equation}
g(R^{(\alpha)}(X,Y)Z,W)
=
-g(Z,R^{(-\alpha)}(X,Y)W).
\label{eq:dual-curv}
\end{equation}
For $X,Y\in\mathfrak{X}(M)$, let $R(X,Y)^{\dagger}$ denote the $g$-adjoint of $R(X,Y)$, defined by
\[
g(R(X,Y)Z,W)
=
g\bigl(Z,R(X,Y)^{\dagger}W\bigr).
\]
Equation~\eqref{eq:dual-curv} with $\alpha=1$ gives $R(X,Y)^{\dagger}=-R^{*}(X,Y)$.

Following Lauritzen \cite{Lauritzen}, a statistical manifold is called \emph{conjugate symmetric} if $R=R^{*}$.
Equations~\eqref{eq:dK} and \eqref{eq:Ralpha-Rg} give
\begin{equation}
\nabla^{g}K=0
\;\Longrightarrow\;
d^{\nabla^{g}}K=0
\;\Longleftrightarrow\;
R=R^{*}.
\label{eq:conj-sym-dK}
\end{equation}

\begin{definition}
\label{def:stat-curv}
Define
\begin{equation}
R^{\mathsf{S}}
:=\frac{1}{2}(R+R^{*}),
\qquad
R^{\mathsf{A}}
:=\frac{1}{2}(R-R^{*}).
\label{eq:RS-RA}
\end{equation}
We call $R^{\mathsf{S}}$ the \emph{statistical curvature tensor} and $R^{\mathsf{A}}$ the \emph{conjugate-asymmetry tensor}.
\end{definition}

\begin{proposition}
\label{prop:curv-split}
The following assertions hold.
\begin{enumerate}[label=\textup{(\arabic*)}]
\item
The curvature tensors $R$ and $R^{*}$ decompose as
\[
R=R^{\mathsf{S}}+R^{\mathsf{A}},
\qquad
R^{*}=R^{\mathsf{S}}-R^{\mathsf{A}}.
\]

\item
For every $X,Y\in\mathfrak{X}(M)$,
\[
\bigl(R^{\mathsf{S}}(X,Y)\bigr)^{\dagger}
=
-R^{\mathsf{S}}(X,Y),
\qquad
\bigl(R^{\mathsf{A}}(X,Y)\bigr)^{\dagger}
=
R^{\mathsf{A}}(X,Y).
\]
Thus $R^{\mathsf{S}}$ is the $g$-skew-adjoint part of $R$, whereas $R^{\mathsf{A}}$ is its $g$-self-adjoint part.

\item
The tensors $R^{\mathsf{S}}$ and $R^{\mathsf{A}}$ are expressed in terms of $R^{g}$ and $K$ as
\begin{equation}
R^{\mathsf{S}}
=
R^{g}+\frac{1}{4}[K,K],
\qquad
R^{\mathsf{A}}
=
-\frac{1}{2}d^{\nabla^{g}}K.
\label{eq:RS-RA-K}
\end{equation}

\item
The tensor $R^{\mathsf{S}}$ is curvature-like.
Moreover, the statistical manifold is conjugate symmetric if and only if $R^{\mathsf{A}}=0$.
\end{enumerate}
\end{proposition}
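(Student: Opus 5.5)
Part (1) follows at once from Definition~\ref{def:stat-curv}: adding and subtracting $R^{\mathsf{S}}=\frac12(R+R^{*})$ and $R^{\mathsf{A}}=\frac12(R-R^{*})$ gives $R=R^{\mathsf{S}}+R^{\mathsf{A}}$ and $R^{*}=R^{\mathsf{S}}-R^{\mathsf{A}}$.

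For (2), I will use \eqref{eq:dual-curv} with $\alpha=\pm1$. It gives $R(X,Y)^{\dagger}=-R^{*}(X,Y)$ and, symmetrically, $R^{*}(X,Y)^{\dagger}=-R(X,Y)$. Taking adjoints is linear, so $(R^{\mathsf{S}})^{\dagger}=\frac12(-R^{*}-R)=-R^{\mathsf{S}}$ and $(R^{\mathsf{A}})^{\dagger}=\frac12(-R^{*}+R)=R^{\mathsf{A}}$. Since $R=R^{\mathsf{S}}+R^{\mathsf{A}}$, this identifies $R^{\mathsf{S}}$ and $R^{\mathsf{A}}$ as the $g$-skew-adjoint and $g$-self-adjoint parts of $R$.

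For (3), I substitute $\alpha=1$ and $\alpha=-1$ into \eqref{eq:Ralpha-Rg}:
\[
R=R^{g}-\tfrac12 d^{\nabla^{g}}K+\tfrac14[K,K],
\qquad
R^{*}=R^{g}+\tfrac12 d^{\nabla^{g}}K+\tfrac14[K,K].
\]
Half the sum and half the difference give \eqref{eq:RS-RA-K} directly.

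Part (4) is the only step with real content: I must show that $R^{\mathsf{S}}$ is curvature-like.
\begin{itemize}
\item Skew-symmetry of $R^{\mathsf{S}}(X,Y)$ in $X,Y$ holds because both $R$ and $R^{*}$ have it.
\item The first Bianchi identity holds for $R$ and $R^{*}$, since both connections are torsion free. It is linear, so it passes to the average $R^{\mathsf{S}}$.
\item $g$-skew-adjointness of $R^{\mathsf{S}}(X,Y)$ is exactly part (2).
\end{itemize}
As a cross-check via (3): $R^{g}$ is curvature-like. For $[K,K]$, one has $[K_X,K_Y]^{\dagger}=[K_Y,K_X]$ because each $K_X$ is self-adjoint, so $[K,K](X,Y)$ is skew-adjoint. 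Its Bianchi cyclic sum $K_XK_YZ-K_YK_XZ+\cdots$ vanishes by the symmetry $K(X,Y)=K(Y,X)$; the terms cancel pairwise. Either route works; I will present the first, since it is shorter.

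For the last claim of (4), conjugate symmetry $R=R^{*}$ is equivalent to $R-R^{*}=0$, which is equivalent to $R^{\mathsf{A}}=0$ by definition. I do not expect a genuine obstacle anywhere; the only care needed is remembering that Bianchi for $R^{\pm}$ uses torsion-freeness, which is part of the standing hypotheses.
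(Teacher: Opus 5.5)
Your proposal is correct and follows the paper's proof step for step: (1) from the definitions, (2) from \eqref{eq:dual-curv} with $\alpha=\pm1$, (3) from \eqref{eq:Ralpha-Rg} at $\alpha=\pm1$, and (4) from linearity of the first Bianchi identity together with (2). Your optional cross-check via $[K,K]$ is also valid; the paper runs that implication in the opposite direction, deducing from \eqref{eq:RS-RA-K} that $[K,K]$ is curvature-like.
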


\begin{proof}
Assertion \textup{(1)} follows directly from Definition~\ref{def:stat-curv}.
For \textup{(2)}, equation~\eqref{eq:dual-curv} with $\alpha=1$ gives
\[
R(X,Y)^{\dagger}=-R^{*}(X,Y),
\]
which yields the two adjoint relations.
Assertion \textup{(3)} follows from \eqref{eq:Ralpha-Rg} with $\alpha=1$ and $\alpha=-1$.
For \textup{(4)}, the curvature tensors $R$ and $R^{*}$ satisfy the first Bianchi identity, and assertion \textup{(2)} shows that $R^{\mathsf{S}}(X,Y)$ is $g$-skew-adjoint.
Hence $R^{\mathsf{S}}$ is curvature-like.
The equality $R^{\mathsf{A}}=0$ is equivalent to $R=R^{*}$.
\end{proof}

Equation~\eqref{eq:RS-RA-K} also gives $[K,K]=4(R^{\mathsf{S}}-R^{g})$.
Since $R^{\mathsf{S}}$ and $R^{g}$ are curvature-like, $[K,K]$ is curvature-like as well.
The sectional curvature $\operatorname{Sec}_{R^{\mathsf{S}}}$ determined by $R^{\mathsf{S}}$ is the \emph{statistical sectional curvature} considered in \cite{FuruhataHasegawa}.

%%%%%%%%%%%%%%%%%%%%%%%%%%%%
%%%%%%%%%%%%%%%%%%%%%%%%%%%%
\subsection{Hessian and dual Hessian curvature}
%%%%%%%%%%%%%%%%%%%%%%%%%%%%
%%%%%%%%%%%%%%%%%%%%%%%%%%%%

A statistical manifold $(M,g,\nabla)$ is \emph{Hessian} when $R=0$.
By \eqref{eq:dual-curv}, its dual statistical manifold $(M,g,\nabla^{*})$ is also Hessian.

Following Shima \cite{Shima1995}, we use the following definition and apply it also to the dual Hessian structure.

\begin{definition}
\label{def:Hess-curv}
Let $(M,g,\nabla)$ be a Hessian manifold.
Its \emph{Hessian curvature tensor} is
\begin{equation}
Q(X,Y)Z:=\frac{1}{2}(\nabla_{X}K)(Y,Z).
\label{eq:Q}
\end{equation}
We call the Hessian curvature tensor of the dual Hessian manifold $(M,g,\nabla^{*})$ the \emph{dual Hessian curvature tensor} and denote it by
\[
Q^{*}(X,Y)Z
:=\frac{1}{2}(\nabla^{*}_{X}K^{*})(Y,Z).
\]
\end{definition}

\begin{proposition}
\label{prop:Hess-curv-dual}
The following assertions hold.
\begin{enumerate}[label=\textup{(\arabic*)}]
\item
The dual Hessian curvature tensor satisfies
\[
Q^{*}=-\frac{1}{2}\nabla^{*}K.
\]

\item
The Hessian and dual Hessian curvature tensors satisfy
\begin{equation}
Q-Q^{*}=\nabla^{g}K.
\label{eq:Q-dual-diff}
\end{equation}

\item
The following equivalence holds\textup{:}
\[
Q=Q^{*}
\quad\Longleftrightarrow\quad
\nabla^{g}K=0.
\]
\end{enumerate}
\end{proposition}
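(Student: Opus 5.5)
The plan is to rewrite everything in terms of $\nabla^{g}$ and $K$, using \eqref{eq:nabla-K} and $K^{*}=-K$, and then read off the three assertions in order.

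For (1), substitute $K^{*}=-K$ into Definition~\ref{def:Hess-curv}. Since covariant differentiation is linear in the tensor being differentiated, $\nabla^{*}K^{*}=-\nabla^{*}K$. This gives $Q^{*}=-\frac12\nabla^{*}K$ immediately.

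For (2), I compute $\nabla_{X}K$ and $\nabla^{*}_{X}K$ as derivations on the $(1,2)$-tensor $K$. With $\nabla=\nabla^{g}-\frac12K$ and $\nabla^{*}=\nabla^{g}+\frac12K$, the difference tensor $D=\mp\frac12K$ acts by
\[
(\nabla^{g}_{X}K)(Y,Z)+D_{X}K(Y,Z)-K(D_{X}Y,Z)-K(Y,D_{X}Z).
\]
Writing $A(X,Y,Z):=K_{X}K_{Y}Z-K_{K_{X}Y}Z-K_{Y}K_{X}Z$, this yields
\[
(\nabla_{X}K)(Y,Z)=(\nabla^{g}_{X}K)(Y,Z)-\tfrac12A(X,Y,Z),
\]
\[
(\nabla^{*}_{X}K)(Y,Z)=(\nabla^{g}_{X}K)(Y,Z)+\tfrac12A(X,Y,Z).
\]
Then, by (1),
\[
Q-Q^{*}=\tfrac12\bigl(\nabla K+\nabla^{*}K\bigr)=\nabla^{g}K,
\]
because the $A$-terms cancel. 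This is \eqref{eq:Q-dual-diff}. The identity is purely algebraic and does not use flatness, so the Hessian hypothesis enters only in that $Q$ and $Q^{*}$ are defined for Hessian manifolds.

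Assertion (3) is then immediate from \eqref{eq:Q-dual-diff}.

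The only point needing care is the sign bookkeeping in the derivation formula for the action of the difference tensor on the $(1,2)$-tensor $K$. Here it suffices to observe that the two correction terms are exact negatives of each other, since the difference tensors are $\mp\frac12K$ and the correction is linear in the difference tensor. The explicit form of $A$ is therefore never needed.
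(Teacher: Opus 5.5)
Your proposal is correct and follows essentially the same route as the paper. The paper gets (1) from $K^{*}=-K$ and (2) from the affine-linear dependence of $\nabla K$ on the connection together with $\nabla^{g}=\frac{1}{2}(\nabla+\nabla^{*})$; your cancellation of the $\mp\frac{1}{2}A$ correction terms is an explicit version of that fact, and (3) then follows immediately in both.
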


\begin{proof}
Assertion \textup{(1)} follows from $K^{*}=-K$.
For \textup{(2)}, covariant differentiation of a fixed tensor depends affine-linearly on the connection.
Since $\nabla^{g}=(\nabla+\nabla^{*})/2$, it follows that
\[
\frac{1}{2}\bigl(\nabla K+\nabla^{*}K\bigr)
=\nabla^{g}K.
\]
Together with the definitions of $Q$ and $Q^{*}$, this proves \eqref{eq:Q-dual-diff}.
Assertion \textup{(3)} follows immediately from \textup{(2)}.
\end{proof}

In the Hessian case, \eqref{eq:RS-RA-K} gives $R^{g}=-\frac{1}{4}[K,K]$.
Thus the Riemannian curvature is determined algebraically by $K$.

%%%%%%%%%%%%%%%%%%%%%%%%%%%%
%%%%%%%%%%%%%%%%%%%%%%%%%%%%
%%%%%%%%%%%%%%%%%%%%%%%%%%%%
\section{Statisticmorphisms}
\label{sec:stat-morph}
%%%%%%%%%%%%%%%%%%%%%%%%%%%%
%%%%%%%%%%%%%%%%%%%%%%%%%%%%
%%%%%%%%%%%%%%%%%%%%%%%%%%%%

This section gives equivalent conditions for statisticmorphisms and characterizes infinitesimal statistical automorphisms by Lie derivatives.

%%%%%%%%%%%%%%%%%%%%%%%%%%%%
%%%%%%%%%%%%%%%%%%%%%%%%%%%%
\subsection{Statisticmorphisms and statistical automorphisms}
%%%%%%%%%%%%%%%%%%%%%%%%%%%%
%%%%%%%%%%%%%%%%%%%%%%%%%%%%

Let $(M,g,\nabla)$ and $(\overline{M},\overline{g},\overline{\nabla})$ be statistical manifolds of the same dimension.
Their cubic forms, skewness tensors, and $\alpha$-families are denoted by
\[
C,\quad K,\quad \nabla^{(\alpha)}
\qquad\hbox{and}\qquad
\overline{C},\quad \overline{K},\quad
\overline{\nabla}^{(\alpha)}.
\]
Their curvature tensors are denoted by
\[
R^{(\alpha)}:=R^{\nabla^{(\alpha)}},
\qquad
\overline{R}^{(\alpha)}:=R^{\overline{\nabla}^{(\alpha)}}.
\]
We write $\nabla^{\overline{g}}$ for the Levi--Civita connection of $\overline{g}$, and $R^{\overline{g}}$ for its curvature tensor.
For a diffeomorphism $\varphi:M\to\overline{M}$, the pushforward $\varphi_{*}:\Gamma(T^{p}_{q}(M)) \longrightarrow\Gamma(T^{p}_{q}(\overline{M}))$ is given by
\[
(\varphi_{*}F)(\varphi(x))
=\left[
\bigl((d\varphi)_{x}\bigr)^{\otimes p}
\otimes
\bigl(((d\varphi^{-1})_{\varphi(x)})^{*}\bigr)^{\otimes q}
\right]F(x).
\]
The pushforward of an affine connection $\nabla$ on $M$ is defined by
\[
(\varphi_{*}\nabla)_{\varphi_{*}X}(\varphi_{*}Y)
:=\varphi_{*}(\nabla_{X}Y).
\]
The pullback $\varphi^{*}:\Gamma(T^{p}_{q}(\overline{M})) \longrightarrow\Gamma(T^{p}_{q}(M))$ is defined by $\varphi^{*}\overline{F}=(\varphi^{-1})_{*}\overline{F}$.
For an affine connection $\overline{\nabla}$ on $\overline{M}$, put $\varphi^{*}\overline{\nabla} =(\varphi^{-1})_{*}\overline{\nabla}$.

\begin{definition}
A diffeomorphism $\varphi$ between statistical manifolds is called a \emph{statisticmorphism} (or a \emph{statistical diffeomorphism}) if
\[
\varphi^{*}\overline{g}=g,
\qquad
\varphi_{*}\nabla=\overline{\nabla}.
\]
For $\alpha,\beta\in\mathbb{R}$ satisfying $\alpha\ne0$ and $\beta\ne0$, a diffeomorphism $\varphi$ is called an \emph{$(\alpha,\beta)$-statisticmorphism} if
\begin{equation}
\varphi^{*}\overline{g}=g,
\qquad
\varphi_{*}\nabla^{(\alpha)}
=
\overline{\nabla}^{(\beta)}.
\label{eq:alpha-beta-morph}
\end{equation}
\end{definition}

The statisticmorphisms from $(M,g,\nabla)$ to itself form its \emph{statistical automorphism group}
\[
\operatorname{Aut}(M,g,\nabla)
=\Iso(M,g)\cap\operatorname{Aff}(M,\nabla),
\]
where $\Iso(M,g)$ is the isometry group and $\operatorname{Aff}(M,\nabla)$ is the group of diffeomorphisms of $M$ preserving $\nabla$.
A statisticmorphism between open subsets of $(M,g,\nabla)$ is called a \emph{local statistical automorphism}.

\begin{proposition}
\label{prop:stat-morph-equiv}
Let $\alpha,\beta\in\mathbb{R}$ satisfy $\alpha\ne0$ and $\beta\ne0$, and let $\varphi$ be a diffeomorphism satisfying $\varphi^{*}\overline{g}=g$.
The following conditions are equivalent.
\begin{enumerate}[label={\rm(\roman*)}]
\item
$\varphi$ is an $(\alpha,\beta)$-statisticmorphism.

\item
$\alpha\varphi_{*}\bigl(K(X,Y)\bigr) = \beta\overline{K}(\varphi_{*}X,\varphi_{*}Y)$ for all vector fields $X,Y$.

\item
$\alpha C=\beta\varphi^{*}\overline{C}$.

\item
$\varphi$ is a $(-\alpha,-\beta)$-statisticmorphism.

\item
For some $\eta\ne0$, $\varphi_{*}\nabla^{(\eta)} =\overline{\nabla}^{(\eta\beta/\alpha)}$.

\item
For every $\eta\in\mathbb{R}$,
\begin{equation}
\varphi_{*}\nabla^{(\eta)}
=
\overline{\nabla}^{(\eta\beta/\alpha)}.
\label{eq:family-scaling}
\end{equation}
\end{enumerate}
\end{proposition}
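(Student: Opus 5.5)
The whole proof reduces to one observation. Because $\varphi$ is an isometry, $\varphi_{*}\nabla^{g}=\nabla^{\overline{g}}$: the pushforward of a Levi--Civita connection under an isometry is torsion free and metric for $\overline{g}=\varphi_{*}g$, so it is the Levi--Civita connection by uniqueness. Combining this with \eqref{eq:alpha-family}, for every $\eta\in\mathbb{R}$ and every $\mu\in\mathbb{R}$ we get
\[
\varphi_{*}\nabla^{(\eta)}-\overline{\nabla}^{(\mu)}
=-\frac{\eta}{2}\,\varphi_{*}K+\frac{\mu}{2}\,\overline{K},
\]
where $(\varphi_{*}K)(\varphi_{*}X,\varphi_{*}Y)=\varphi_{*}(K(X,Y))$. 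Hence
\[
\varphi_{*}\nabla^{(\eta)}=\overline{\nabla}^{(\mu)}
\quad\Longleftrightarrow\quad
\eta\,\varphi_{*}K=\mu\,\overline{K}.
\]
Every condition in the proposition will be converted into an identity of this form between tensors.

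The equivalences then follow in turn.
\begin{itemize}
\item[(i)$\Leftrightarrow$(ii)] Take $(\eta,\mu)=(\alpha,\beta)$. Condition (ii) is exactly $\alpha\varphi_{*}K=\beta\overline{K}$ evaluated on $\varphi_{*}X,\varphi_{*}Y$. Since $\varphi$ is a diffeomorphism, vector fields of the form $\varphi_{*}X$ exhaust $\mathfrak{X}(\overline{M})$.
\item[(ii)$\Leftrightarrow$(iii)] Pair both sides of (ii) with $\varphi_{*}Z$ using $\overline{g}$. The identity $\varphi^{*}\overline{g}=g$ turns the left side into $\alpha C(X,Y,Z)$ and the right side into $\beta(\varphi^{*}\overline{C})(X,Y,Z)$. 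Nondegeneracy of $\overline{g}$ gives the converse.
\item[(ii)$\Leftrightarrow$(iv)] Condition (ii) is unchanged under $(\alpha,\beta)\mapsto(-\alpha,-\beta)$, so (iv) is the case $(\eta,\mu)=(-\alpha,-\beta)$ of the key identity.
\item[(ii)$\Leftrightarrow$(vi)] For $\mu=\eta\beta/\alpha$, the condition $\eta\varphi_{*}K=(\eta\beta/\alpha)\overline{K}$ equals $\eta/\alpha$ times $\alpha\varphi_{*}K=\beta\overline{K}$. So (ii) implies (vi) for every $\eta$; the case $\eta=0$ is just the Levi--Civita fact above.
\item[(vi)$\Rightarrow$(v)] This is trivial: take, for example, $\eta=\alpha$.
\item[(v)$\Rightarrow$(ii)] The key identity gives $\eta\varphi_{*}K=(\eta\beta/\alpha)\overline{K}$, and dividing by $\eta\ne0$ gives $\alpha\varphi_{*}K=\beta\overline{K}$.
\end{itemize}

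No step is a genuine obstacle. The only point needing care is the naturality claim that $\varphi_{*}$ commutes with the linear combination in \eqref{eq:alpha-family}, that is, $\varphi_{*}(\nabla^{g}-\tfrac{\eta}{2}K)=\varphi_{*}\nabla^{g}-\tfrac{\eta}{2}\varphi_{*}K$. This holds because a difference of connections is a tensor and pushforward is compatible with both. Together with the Levi--Civita uniqueness step, everything else is linear algebra.
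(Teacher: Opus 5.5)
Your proposal is correct and follows the paper's proof essentially step for step. Both arguments rest on the identity $\varphi_{*}\nabla^{(\eta)}=\nabla^{\overline{g}}-\frac{\eta}{2}\varphi_{*}K$, which comes from isometry invariance of the Levi--Civita connection, and both derive each equivalence by comparing the coefficients of $\varphi_{*}K$ and $\overline{K}$ exactly as you do; your remarks on naturality of the pushforward and on surjectivity of $X\mapsto\varphi_{*}X$ simply make explicit what the paper leaves implicit.
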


\begin{proof}
Since $\varphi$ is an isometry, it preserves the Levi--Civita connections.
Equation~\eqref{eq:alpha-family} gives
\begin{equation}
\varphi_{*}\nabla^{(\eta)}
=
\nabla^{\overline{g}}-\frac{\eta}{2}\varphi_{*}K
\qquad(\eta\in\mathbb{R}).
\label{eq:eta-K}
\end{equation}
Taking $\eta=\alpha$ and comparing this expression with $\overline{\nabla}^{(\beta)} =\nabla^{\overline{g}}-(\beta/2)\overline{K}$ proves the equivalence of \textup{(i)} and \textup{(ii)}.
Taking the $\overline{g}$-inner product of the relation in \textup{(ii)} with $\varphi_{*}Z$ and using $\varphi^{*}\overline{g}=g$ proves the equivalence of \textup{(ii)} and \textup{(iii)}.
Simultaneous replacement of $(\alpha,\beta)$ by $(-\alpha,-\beta)$ leaves \textup{(ii)} unchanged, so \textup{(iv)} is equivalent to \textup{(i)}.

Under \textup{(ii)}, substitution of $\varphi_{*}K=(\beta/\alpha)\overline{K}$ into \eqref{eq:eta-K} proves \textup{(vi)}.
The implication \textup{(vi)} $\Rightarrow$ \textup{(v)} is trivial.
Conversely, \textup{(v)} and the same formula give $\eta\varphi_{*}K=(\eta\beta/\alpha)\overline{K}$.
Since $\eta\ne0$, this implies \textup{(ii)}.
\end{proof}

For $\alpha=\beta=1$, this proposition characterizes ordinary statisticmorphisms.

For automorphisms of a fixed statistical manifold, the definition and the preceding equivalences give
\[
\begin{aligned}
\operatorname{Aut}(M,g,\nabla)
&=\Iso(M,g)\cap\operatorname{Aff}(M,\nabla)\\
&=\{\varphi\in\Iso(M,g)\mid \varphi_{*}K=K\}.
\end{aligned}
\]
When $M$ is connected, $\operatorname{Aut}(M,g,\nabla)$ is a closed Lie subgroup of $\Iso(M,g)$.
The action is $\operatorname{Aut}(M,g,\nabla)\times M\to M$, $(\varphi,q)\mapsto \varphi(q)$.
This action is smooth because it is the restriction of the smooth action of $\Iso(M,g)$.
Proposition~\ref{prop:stat-morph-equiv} gives
\[
\operatorname{Aut}(M,g,\nabla^{(\eta)})
=\operatorname{Aut}(M,g,\nabla)
\qquad(\eta\ne0),
\]
whereas $\operatorname{Aut}(M,g,\nabla^{(0)})=\Iso(M,g)$.

\begin{remark}
\label{rem:rescale-stat-morph}
For the rescaling $(\overline{g},\overline{C})=(\lambda_{1}g,\lambda_{2}C)$ in Remark~\ref{rem:rescale-stat-data}, the associated connections satisfy
\[
\overline{\nabla}^{(\beta)}
=\nabla^{g}-\frac{\beta\lambda_{2}}{2\lambda_{1}}K
=\nabla^{(\beta\lambda_{2}/\lambda_{1})}
\qquad(\beta\in\mathbb{R}).
\]
Consequently, for all $\alpha,\beta\in\mathbb{R}$,
\[
\nabla^{(\alpha)}=\overline{\nabla}^{(\beta)}
\quad\Longleftrightarrow\quad
\alpha K=\beta\overline{K}
\quad\Longleftrightarrow\quad
\left(
\alpha-\frac{\beta\lambda_{2}}{\lambda_{1}}
\right)K=0.
\]
Thus, if $K\not\equiv0$, the two connections agree if and only if $\alpha=\beta\lambda_{2}/\lambda_{1}$.
If $K\equiv0$, both families consist only of $\nabla^{g}$.

For $\lambda_{2}\ne0$, the rescaled connection family is obtained from the original family by the parameter change $\beta\mapsto\beta\lambda_{2}/\lambda_{1}$.
For $\lambda_{2}=0$, every $\overline{\nabla}^{(\beta)}$ equals $\nabla^{g}$.
For $\lambda_{1}=\lambda_{2}>0$, we have $\overline{\nabla}^{(\alpha)}=\nabla^{(\alpha)}$ for every $\alpha\in\mathbb{R}$.

For $\alpha,\beta\ne0$, the relation $\alpha K=\beta\overline{K}$ is the condition on the skewness tensors in Proposition~\ref{prop:stat-morph-equiv} for the identity map.
Under this condition, the identity map from $(M,g,\nabla)$ to $(M,\overline{g},\overline{\nabla})$ is an $(\alpha,\beta)$-statisticmorphism if and only if it also preserves the metric, that is, $\overline{g}=g$.
If $\dim M>0$, this condition is equivalent to $\lambda_{1}=1$.
\end{remark}

%%%%%%%%%%%%%%%%%%%%%%%%%%%%
%%%%%%%%%%%%%%%%%%%%%%%%%%%%
\subsection{Infinitesimal statistical automorphisms}
\label{subsec:inf-stat-aut}
%%%%%%%%%%%%%%%%%%%%%%%%%%%%
%%%%%%%%%%%%%%%%%%%%%%%%%%%%

Let $\varphi_{t}$ be the local flow generated by a vector field $X$ on $M$.
The Lie derivatives of a tensor field $F\in\Gamma(T^{p}_{q}(M))$ and an affine connection $\nabla$ along $X$ are defined by
\[
\pounds_{X}F
=\left.\frac{d}{dt}\right|_{t=0}\varphi_{t}^{*}F,
\qquad
\pounds_{X}\nabla
=\left.\frac{d}{dt}\right|_{t=0}\varphi_{t}^{*}\nabla.
\]
Here $\varphi_{t}^{*}\nabla=(\varphi_{-t})_{*}\nabla$, and $\pounds_{X}\nabla\in\Gamma(T^{1}_{2}(M))$.

\begin{definition}
\label{def:inf-stat-aut}
A vector field $X$ on $M$ is called a \emph{Killing vector field} if its local flow consists of local isometries.
It is called an \emph{affine Killing vector field} for $\nabla$ if its local flow preserves $\nabla$.
It is called an \emph{infinitesimal statistical automorphism} if its local flow consists of local statistical automorphisms.
\end{definition}

Equivalently, the corresponding spaces of vector fields are
\[
\begin{aligned}
\mathfrak{iso}(M,g)
&:=\{X\in\mathfrak{X}(M)\mid\pounds_{X}g=0\},\\
\mathfrak{aff}(M,\nabla)
&:=\{X\in\mathfrak{X}(M)\mid\pounds_{X}\nabla=0\},\\
\mathfrak{aut}(M,g,\nabla)
&:=\mathfrak{iso}(M,g)\cap\mathfrak{aff}(M,\nabla).
\end{aligned}
\]
For an open subset $O\subset M$, the notation $\mathfrak{iso}(O,g)$, $\mathfrak{aff}(O,\nabla)$, and $\mathfrak{aut}(O,g,\nabla)$ refers to the corresponding spaces of vector fields on $O$, with $g$ and $\nabla$ restricted to $O$.

For vector fields $Y,Z$ on $M$, metric compatibility and torsion-freeness of $\nabla^{g}$ give
\[
\begin{aligned}
(\pounds_{X}g)(Y,Z)
&=X\bigl(g(Y,Z)\bigr)-g([X,Y],Z)-g(Y,[X,Z])\\
&=g(\nabla^{g}_{Y}X,Z)+g(Y,\nabla^{g}_{Z}X).
\end{aligned}
\]
The second covariant derivative $\nabla^{2}X:=\nabla(\nabla X)$ satisfies $(\nabla^{2}X)(Y,Z) =\nabla_{Y}\nabla_{Z}X-\nabla_{\nabla_{Y}Z}X$.
Since $\nabla$ is torsion free, the Lie derivative of the connection is given by
\[
\begin{aligned}
(\pounds_{X}\nabla)(Y,Z)
&=[X,\nabla_{Y}Z]-\nabla_{[X,Y]}Z-\nabla_{Y}[X,Z]\\
&=(\nabla^{2}X)(Y,Z)+R(X,Y)Z.
\end{aligned}
\]

Fix $p\in M$, and let $O\subset M$ be a connected open neighborhood of $p$.

\begin{proposition}
\label{prop:inf-stat-aut}
The following assertions hold on $O$.
\begin{enumerate}[label={\rm(\roman*)}]
\item
The spaces $\mathfrak{iso}(O,g)$ and $\mathfrak{aff}(O,\nabla)$ are Lie algebras under the Lie bracket.
Their intersection $\mathfrak{aut}(O,g,\nabla)$ is a Lie subalgebra of both.
Moreover,
\[
\mathfrak{aut}(O,g,\nabla)
=\{X\in\mathfrak{iso}(O,g)\mid\pounds_{X}K=0\}.
\]

\item
A vector field $X\in\mathfrak{aff}(O,\nabla)$ is uniquely determined by $X_{p}$ and $(\nabla X)_{p}$.
A vector field $X\in\mathfrak{iso}(O,g)$ is uniquely determined by $X_{p}$ and $(\nabla^{g}X)_{p}$.
In particular, all three Lie algebras are finite-dimensional.
\end{enumerate}
\end{proposition}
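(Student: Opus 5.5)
For (i), I will show that $\mathfrak{iso}(O,g)$ and $\mathfrak{aff}(O,\nabla)$ are closed under brackets using the identity $\pounds_{[X,Y]}=[\pounds_{X},\pounds_{Y}]$. For tensor fields this is standard. For connections it follows because $\pounds_{X}\nabla$ is the derivative of $\varphi_{t}^{*}\nabla$ and the difference of two connections is a tensor, so the tensor identity transfers. Alternatively, I can argue with local flows: if the flows of $X$ and $Y$ preserve $g$ (resp.\ $\nabla$), then so does the flow of $Y$ conjugated by the flow of $X$. Differentiating gives $\pounds_{[X,Y]}g=0$ (resp.\ $\pounds_{[X,Y]}\nabla=0$). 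Linearity of both spaces is clear from linearity of $X\mapsto\pounds_{X}$. The intersection of two subalgebras of the Lie algebra $\mathfrak{X}(O)$ is again a subalgebra.

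The characterization of $\mathfrak{aut}$ follows from Lemma~\ref{lem:stat-data}. For $X\in\mathfrak{iso}(O,g)$, the flow $\varphi_{t}$ consists of local isometries, so $\varphi_{t}^{*}\nabla^{g}=\nabla^{g}$. Hence $\varphi_{t}^{*}\nabla=\nabla^{g}-\tfrac12\varphi_{t}^{*}K$, and differentiating at $t=0$ gives
\[
\pounds_{X}\nabla=-\tfrac12\pounds_{X}K .
\]
So $\pounds_{X}\nabla=0$ if and only if $\pounds_{X}K=0$.

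For (ii), I will use the formula $(\pounds_{X}\nabla)(Y,Z)=(\nabla^{2}X)(Y,Z)+R(X,Y)Z$ derived above. If $X\in\mathfrak{aff}(O,\nabla)$, then $\nabla^{2}X=-R(X,\cdot)\cdot$. Consider the pair $(X,A)$ with $A=\nabla X$ along any curve $c$ in $O$ starting at $p$. It satisfies the linear ODE system
\[
\nabla_{\dot c}X=A(\dot c),
\qquad
\nabla_{\dot c}A=-R(X,\dot c).
\]
Uniqueness for linear ODEs, together with connectedness of $O$ (every point is joined to $p$ by a piecewise smooth curve), shows that $(X_{p},(\nabla X)_{p})$ determines $X$. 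Linearity then gives $\dim\mathfrak{aff}(O,\nabla)\le n+n^{2}$.

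The Killing case is the same argument with $\nabla$ replaced by $\nabla^{g}$. A Killing field is affine Killing for $\nabla^{g}$, since its flow preserves $g$ and hence $\nabla^{g}$, so the same ODE applies with $R^{g}$. This gives $\dim\mathfrak{iso}(O,g)\le n+n^{2}$; the sharper bound $n(n+1)/2$ follows from skewness of $\nabla^{g}X$, but finiteness is all that is needed. Finally, $\mathfrak{aut}(O,g,\nabla)$ is finite-dimensional as a subspace of $\mathfrak{iso}(O,g)$.

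The main subtlety is rigor in (i) for $\pounds_{[X,Y]}\nabla$ when flows are only local. I would handle it pointwise, using the flow at each point for small $t$. Alternatively, the algebraic formula for $\pounds_{X}\nabla$ allows a direct verification of $\pounds_{[X,Y]}\nabla=[\pounds_{X},\pounds_{Y}]\nabla$ via Jacobi-type identities.
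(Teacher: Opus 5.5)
Your proof is correct. Part~(i) is essentially the paper's argument: linearity, closure under brackets via $\pounds_{[X,Y]}=\pounds_X\pounds_Y-\pounds_Y\pounds_X$, and $\pounds_X\nabla=-\frac{1}{2}\pounds_XK$ for Killing $X$. For the connection, the paper uses your last fallback, namely the formula $(\pounds_X\nabla)(Y,Z)=[X,\nabla_YZ]-\nabla_{[X,Y]}Z-\nabla_Y[X,Z]$ together with the Jacobi identity. Your first remark, that the tensor identity ``transfers'' because differences of connections are tensors, is only a heuristic and does not prove the identity on its own. The flow-conjugation version does work, once you note that $Z\mapsto\pounds_Z\nabla=\nabla^2Z+R(Z,\cdot)\cdot$ is a linear differential operator, so that $\pounds_{(\psi_s)_*Y}\nabla=0$ may be differentiated in $s$.

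Part~(ii) takes a different route from the paper. The paper evaluates $\pounds_X\nabla=0$ only on $(\dot\gamma,\dot\gamma)$ along $\nabla$-geodesics issuing from $p$. This gives the Jacobi equation for $J=X\circ\gamma$ with initial data $X_p$ and $(\nabla X)_p(v)$, which determines $X$ on a normal neighborhood of $p$. Uniqueness on all of $O$ then follows by repeating the argument in overlapping normal neighborhoods. You instead use the full identity $\nabla^2X=-R(X,\cdot)\cdot$ to obtain a closed first-order linear system for the $1$-jet $(X,\nabla X)$ along an arbitrary curve. Uniqueness then propagates over the connected set $O$ along piecewise smooth paths in a single step, with no normal neighborhoods or chaining. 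Your version is the standard prolongation argument and makes the global step cleaner. The paper's version uses only the radial second-order part of the equation and matches the Jacobi-field technique it uses for spray maps in Lemma~\ref{lem:radial-Jacobi}.
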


\begin{proof}
For \textup{(i)}, the real linearity of the Lie derivative in $X$ shows that $\mathfrak{iso}(O,g)$ and $\mathfrak{aff}(O,\nabla)$ are real vector spaces.
For vector fields $X,Y$ on $O$, the commutator identity for tensor fields gives
\[
\pounds_{[X,Y]}g
=\pounds_{X}(\pounds_{Y}g)-\pounds_{Y}(\pounds_{X}g).
\]
The formula for the Lie derivative of a connection above and the Jacobi identity give
\[
\pounds_{[X,Y]}\nabla
=\pounds_{X}(\pounds_{Y}\nabla)-\pounds_{Y}(\pounds_{X}\nabla),
\]
where the outer Lie derivatives act on the $(1,2)$-tensors $\pounds_{Y}\nabla$ and $\pounds_{X}\nabla$.
Therefore, both spaces are closed under the Lie bracket and are Lie subalgebras of $\mathfrak{X}(O)$.
Their intersection $\mathfrak{aut}(O,g,\nabla)$ is a Lie subalgebra of both.

For $X\in\mathfrak{iso}(O,g)$, the local flow preserves $\nabla^{g}$.
Hence \eqref{eq:nabla-K} gives
\[
\pounds_{X}\nabla=-\frac{1}{2}\pounds_{X}K.
\]
Therefore, for such $X$, the condition $\pounds_{X}\nabla=0$ is equivalent to $\pounds_{X}K=0$.

For \textup{(ii)}, let $X\in\mathfrak{aff}(O,\nabla)$, and let $\gamma$ be a $\nabla$-geodesic in $O$ with $\gamma(0)=p$ and $\dot{\gamma}(0)=v$.
Put $J(t):=X_{\gamma(t)}$.
Since $\nabla_{\dot{\gamma}}\dot{\gamma}=0$, the formula for $\pounds_{X}\nabla$ gives the Jacobi equation
\[
0=(\pounds_{X}\nabla)(\dot{\gamma},\dot{\gamma})
=\left(\nabla_{\dot{\gamma}}\right)^{2}J
+R_{\dot{\gamma}}(J).
\]
Its initial data are
\[
J(0)=X_{p},
\qquad
\left(\nabla_{\dot{\gamma}}J\right)(0)
=(\nabla X)_{p}(v).
\]
Uniqueness for this second-order linear equation along the geodesics issuing from $p$ determines $X$ on a normal neighborhood of $p$ contained in $O$ from $X_{p}$ and $(\nabla X)_{p}$.
Repeating the argument in overlapping normal neighborhoods gives uniqueness throughout $O$.
For $X\in\mathfrak{iso}(O,g)$, the same argument applies to $\nabla^{g}$, since its local flow preserves $\nabla^{g}$.

Hence the linear maps from $\mathfrak{aff}(O,\nabla)$ and $\mathfrak{iso}(O,g)$ to $T_{p}M\oplus\operatorname{End}(T_{p}M)$ given, respectively, by
\[
X\longmapsto\bigl(X_{p},(\nabla X)_{p}\bigr),
\qquad
X\longmapsto\bigl(X_{p},(\nabla^{g}X)_{p}\bigr)
\]
are injective.
Therefore, $\mathfrak{aff}(O,\nabla)$ and $\mathfrak{iso}(O,g)$ are finite-dimensional, as is their common subspace $\mathfrak{aut}(O,g,\nabla)$.
\end{proof}

%%%%%%%%%%%%%%%%%%%%%%%%%%%%
%%%%%%%%%%%%%%%%%%%%%%%%%%%%
%%%%%%%%%%%%%%%%%%%%%%%%%%%%
\section{The alpha-connection formulation of the statistical CAH theorem}
\label{sec:alpha-CAH}
%%%%%%%%%%%%%%%%%%%%%%%%%%%%
%%%%%%%%%%%%%%%%%%%%%%%%%%%%
%%%%%%%%%%%%%%%%%%%%%%%%%%%%

This section proves a local statistical Cartan--Ambrose--Hicks theorem for $(\alpha,\beta)$-statisticmorphisms with $\alpha\ne0$ and $\beta\ne0$.
The proof uses Jacobi fields to determine the differential of the spray map and Gram matrices to prove metric preservation.
The construction of the spray map and the comparison lemmas are formulated for arbitrary real parameters.
The cases in which one or both parameters vanish are discussed in Remark~\ref{rem:zero-alpha-beta}.
Section~\ref{sec:LC-CAH} treats the Levi--Civita case $\alpha=\beta=0$.

%%%%%%%%%%%%%%%%%%%%%%%%%%%%
%%%%%%%%%%%%%%%%%%%%%%%%%%%%
\subsection{Spray maps and parallel displacement}
%%%%%%%%%%%%%%%%%%%%%%%%%%%%
%%%%%%%%%%%%%%%%%%%%%%%%%%%%

If $\gamma:[0,1]\to M$ is a smooth curve and $t\in[0,1]$, its $\nabla$-parallel displacement from $\gamma(0)$ to $\gamma(t)$ is the linear isomorphism
\[
P^{\nabla}_{\gamma,0\to t}:
T_{\gamma(0)}M\longrightarrow T_{\gamma(t)}M,
\qquad
P^{\nabla}_{\gamma,0\to t}(v):=V_{v}(t),
\]
where $V_{v}$ is the unique vector field along $\gamma$ satisfying $\nabla_{\dot{\gamma}}V_{v}=0$ and $V_{v}(0)=v$.

We use the following setting throughout the remainder of this section.

\begin{setting}
\label{set:alpha-beta-CAH}
Fix $\alpha,\beta\in\mathbb{R}$, points $p\in M$, $\overline{p}\in\overline{M}$, and a linear isometry
\[
\Phi:(T_{p}M,g_{p})\longrightarrow
(T_{\overline{p}}\overline{M},\overline{g}_{\overline{p}}).
\]
We write $\operatorname{Exp}_{p}^{(\alpha)}$ for the exponential map of $\nabla^{(\alpha)}$ at $p$ and use the corresponding barred notation for $\overline{\nabla}^{(\beta)}$ at $\overline{p}$.

For $v\in T_{p}M$, let $\gamma_{v}^{(\alpha)}:I_{v}^{(\alpha)}\longrightarrow M$ be the maximal $\nabla^{(\alpha)}$-geodesic satisfying $\gamma_{v}^{(\alpha)}(0)=p$ and $\dot{\gamma}_{v}^{(\alpha)}(0)=v$, and put
\[
\Omega_{p}^{(\alpha)}
:=
\{v\in T_{p}M\mid 1\in I_{v}^{(\alpha)}\}.
\]
Similarly, for $\overline{v}\in T_{\overline{p}}\overline{M}$, let
\[
\overline{\gamma}_{\overline{v}}^{(\beta)}:
\overline{I}_{\overline{v}}^{(\beta)}
\longrightarrow\overline{M}
\]
be the maximal $\overline{\nabla}^{(\beta)}$-geodesic satisfying $\overline{\gamma}_{\overline{v}}^{(\beta)}(0)=\overline{p}$ and $\dot{\overline{\gamma}}_{\overline{v}}^{(\beta)}(0)=\overline{v}$, and put
\[
\overline{\Omega}_{\overline{p}}^{(\beta)}
:=
\{\overline{v}\in T_{\overline{p}}\overline{M}
\mid 1\in\overline{I}_{\overline{v}}^{(\beta)}\}.
\]
The corresponding exponential maps are
\[
\operatorname{Exp}_{p}^{(\alpha)}:
\Omega_{p}^{(\alpha)}\longrightarrow M,
\qquad
\operatorname{Exp}_{p}^{(\alpha)}(v)
:=
\gamma_{v}^{(\alpha)}(1),
\]
and
\[
\overline{\operatorname{Exp}}_{\overline{p}}^{(\beta)}:
\overline{\Omega}_{\overline{p}}^{(\beta)}
\longrightarrow\overline{M},
\qquad
\overline{\operatorname{Exp}}_{\overline{p}}^{(\beta)}
(\overline{v})
:=
\overline{\gamma}_{\overline{v}}^{(\beta)}(1).
\]
The sets $\Omega_{p}^{(\alpha)}$ and $\overline{\Omega}_{\overline{p}}^{(\beta)}$ are open and star-shaped with respect to $0_{p}$ and $0_{\overline{p}}$, respectively.
By the inverse function theorem, we may choose a convex open neighborhood
\[
\mathcal{O}_{p}^{(\alpha,\beta)}
\subset
\Omega_{p}^{(\alpha)}
\cap
\Phi^{-1}\bigl(
\overline{\Omega}_{\overline{p}}^{(\beta)}
\bigr)
\]
of $0_{p}$ and open neighborhoods $O^{(\alpha,\beta)}(p)$ of $p$ and $\overline{O}^{(\alpha,\beta)}(\overline{p})$ of $\overline{p}$ such that
\[
\operatorname{Exp}_{p}^{(\alpha)}:
\mathcal{O}_{p}^{(\alpha,\beta)}
\longrightarrow O^{(\alpha,\beta)}(p)
\]
and
\[
\overline{\operatorname{Exp}}_{\overline{p}}^{(\beta)}:
\Phi\bigl(\mathcal{O}_{p}^{(\alpha,\beta)}\bigr)
\longrightarrow
\overline{O}^{(\alpha,\beta)}(\overline{p})
\]
are diffeomorphisms.
Here and below, the inverses of the exponential maps refer to these restrictions.
Define the local diffeomorphism
\[
\varphi^{(\alpha,\beta)}:
O^{(\alpha,\beta)}(p)
\longrightarrow
\overline{O}^{(\alpha,\beta)}(\overline{p})
\]
by
\begin{equation}
\varphi^{(\alpha,\beta)}
:=
\overline{\operatorname{Exp}}_{\overline{p}}^{(\beta)}
\circ\Phi
\circ
\bigl(\operatorname{Exp}_{p}^{(\alpha)}\bigr)^{-1}.
\label{eq:varphi-alpha-beta}
\end{equation}
Following Hicks \cite{Hicks1966}, we call $\varphi^{(\alpha,\beta)}$ the \emph{spray map} determined by $(p,\overline{p},\Phi)$ with respect to $\nabla^{(\alpha)}$ and $\overline{\nabla}^{(\beta)}$.
Equivalently, the following diagram commutes:
\[
\begin{tikzcd}[column sep=large,row sep=large]
\mathcal{O}_{p}^{(\alpha,\beta)}
  \arrow[r,"\Phi"]
  \arrow[d,"\operatorname{Exp}_{p}^{(\alpha)}"']
&
\Phi\bigl(\mathcal{O}_{p}^{(\alpha,\beta)}\bigr)
  \arrow[d,
  "\overline{\operatorname{Exp}}_{\overline{p}}^{(\beta)}"]
\\
O^{(\alpha,\beta)}(p)
  \arrow[r,"\varphi^{(\alpha,\beta)}"']
&
\overline{O}^{(\alpha,\beta)}(\overline{p})
\end{tikzcd}
\]
For $v\in\mathcal{O}_{p}^{(\alpha,\beta)}$ and $0\leq t\leq1$, the affine reparametrization property of geodesics and \eqref{eq:varphi-alpha-beta} give
\begin{align*}
\gamma_{v}^{(\alpha)}(t)
&=
\operatorname{Exp}_{p}^{(\alpha)}(tv),\\
\overline{\gamma}_{v}^{(\beta)}(t)
&:=
\overline{\gamma}_{\Phi(v)}^{(\beta)}(t)
=
\overline{\operatorname{Exp}}_{\overline{p}}^{(\beta)}
\bigl(t\Phi(v)\bigr)
=
\varphi^{(\alpha,\beta)}
\bigl(\gamma_{v}^{(\alpha)}(t)\bigr).
\end{align*}
Since they are obtained by applying the respective exponential maps to the radial segments $t\mapsto tv$ and $t\mapsto t\Phi(v)$, we call $\gamma_{v}^{(\alpha)}$ and $\overline{\gamma}_{v}^{(\beta)}$ the corresponding \emph{radial geodesics} from $p$ and $\overline{p}$, respectively.
For $v\ne0$, their velocity vectors determine the corresponding \emph{radial directions}.

Let
\[
P^{(\alpha)}_{v,t}
:=
P^{\nabla^{(\alpha)}}_{\gamma_{v}^{(\alpha)},0\to t},
\qquad
\overline{P}^{(\beta)}_{v,t}
:=
P^{\overline{\nabla}^{(\beta)}}
_{\overline{\gamma}_{v}^{(\beta)},0\to t},
\]
and define the linear isomorphism
\[
\Phi^{(\alpha,\beta)}_{v,t}:
T_{\gamma_{v}^{(\alpha)}(t)}M
\longrightarrow
T_{\overline{\gamma}_{v}^{(\beta)}(t)}\overline{M}
\]
by
\begin{equation}
\Phi^{(\alpha,\beta)}_{v,t}
:=
\overline{P}^{(\beta)}_{v,t}
\circ\Phi
\circ
\bigl(P^{(\alpha)}_{v,t}\bigr)^{-1}.
\label{eq:Phi-alpha-beta}
\end{equation}
We call $\Phi^{(\alpha,\beta)}_{v,t}$ the \emph{parallel identification} determined by $\Phi$ along the corresponding radial geodesics.
By construction, the following diagram commutes:
\[
\begin{tikzcd}[column sep=large,row sep=large]
T_{p}M
  \arrow[r,"\Phi"]
  \arrow[d,"P^{(\alpha)}_{v,t}"']
&
T_{\overline{p}}\overline{M}
  \arrow[d,"\overline{P}^{(\beta)}_{v,t}"]
\\
T_{\gamma_{v}^{(\alpha)}(t)}M
  \arrow[r,"\Phi^{(\alpha,\beta)}_{v,t}"']
&
T_{\overline{\gamma}_{v}^{(\beta)}(t)}\overline{M}
\end{tikzcd}
\]
In general, $\Phi^{(\alpha,\beta)}_{v,t}$ need not be an isometry, since the relevant parallel displacements need not preserve the metrics.
Since the two curves are geodesics,
\[
P^{(\alpha)}_{v,t}(v)
=
\dot{\gamma}_{v}^{(\alpha)}(t),
\qquad
\overline{P}^{(\beta)}_{v,t}\bigl(\Phi(v)\bigr)
=
\dot{\overline{\gamma}}_{v}^{(\beta)}(t).
\]
It follows from \eqref{eq:Phi-alpha-beta} that
\begin{equation}
\Phi^{(\alpha,\beta)}_{v,t}
\bigl(\dot{\gamma}_{v}^{(\alpha)}(t)\bigr)
=
\dot{\overline{\gamma}}_{v}^{(\beta)}(t).
\label{eq:radial-tangent}
\end{equation}
\end{setting}

%%%%%%%%%%%%%%%%%%%%%%%%%%%%
%%%%%%%%%%%%%%%%%%%%%%%%%%%%
\subsection{Jacobi fields and metric preservation}
%%%%%%%%%%%%%%%%%%%%%%%%%%%%
%%%%%%%%%%%%%%%%%%%%%%%%%%%%

The proof of the local theorem is based on two lemmas.
The first identifies the differential of the spray map by the classical argument using Jacobi fields; see \cite{Hicks1966}.
The second derives metric preservation from the relation between the scaled skewness operators in the radial tangent direction.

\begin{lemma}
\label{lem:radial-Jacobi}
In Setting~\textup{\ref{set:alpha-beta-CAH}}, with arbitrary $\alpha,\beta\in\mathbb{R}$, suppose that, for every $v\in\mathcal{O}_{p}^{(\alpha,\beta)}$ and every $t\in[0,1]$, the corresponding Jacobi operators satisfy
\begin{equation}
\Phi^{(\alpha,\beta)}_{v,t}\circ
R^{(\alpha)}_{\dot{\gamma}_{v}^{(\alpha)}(t)}
=
\overline{R}^{(\beta)}_{\dot{\overline{\gamma}}_{v}^{(\beta)}(t)}
\circ\Phi^{(\alpha,\beta)}_{v,t}.
\label{eq:radial-Ralpha}
\end{equation}
Then, for every $v\in\mathcal{O}_{p}^{(\alpha,\beta)}$ and every $t\in[0,1]$,
\begin{equation}
(d\varphi^{(\alpha,\beta)})_{\gamma_{v}^{(\alpha)}(t)}
=
\Phi^{(\alpha,\beta)}_{v,t}.
\label{eq:dvarphi-Phi}
\end{equation}
\end{lemma}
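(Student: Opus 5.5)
The plan is the classical Jacobi-field argument of Hicks, applied along the radial geodesics. Fix $v\in\mathcal{O}_{p}^{(\alpha,\beta)}$ and $w\in T_{p}M$. For $t=0$ both sides of \eqref{eq:dvarphi-Phi} equal $\Phi$: the differentials of both exponential maps at the origin are the identity, and $\Phi^{(\alpha,\beta)}_{v,0}=\Phi$. So assume $t\in(0,1]$.

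\textbf{Step 1 (source Jacobi field).} Consider the variation $(s,t)\mapsto\operatorname{Exp}_{p}^{(\alpha)}(t(v+sw))$ through $\nabla^{(\alpha)}$-geodesics. Its variation field
\[
J(t)=(d\operatorname{Exp}_{p}^{(\alpha)})_{tv}(tw)
\]
satisfies the Jacobi equation $\nabla^{(\alpha)}_{\dot\gamma}\nabla^{(\alpha)}_{\dot\gamma}J+R^{(\alpha)}(J,\dot\gamma)\dot\gamma=0$, where $\gamma=\gamma_{v}^{(\alpha)}$. Since $\nabla^{(\alpha)}$ is torsion free, we may commute $\nabla_{\partial_s}$ and $\nabla_{\partial_t}$ and write the curvature term as $R^{(\alpha)}_{\dot\gamma}(J)$, with the sign convention used for $\pounds_{X}\nabla$ in Proposition~\ref{prop:inf-stat-aut}. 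The initial data are $J(0)=0$ and $\nabla^{(\alpha)}_{\dot\gamma}J(0)=w$. The barred construction gives a Jacobi field $\overline{J}$ along $\overline{\gamma}=\overline{\gamma}_{v}^{(\beta)}$ for $\overline{\nabla}^{(\beta)}$ with $\overline{J}(0)=0$ and $\overline{\nabla}^{(\beta)}_{\dot{\overline\gamma}}\overline{J}(0)=\Phi(w)$. By the commuting diagram defining $\varphi^{(\alpha,\beta)}$,
\[
(d\varphi^{(\alpha,\beta)})_{\gamma(t)}J(t)=\overline{J}(t).
\]

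\textbf{Step 2 (transport).} Define $\widetilde{J}(t):=\Phi^{(\alpha,\beta)}_{v,t}J(t)$. Write $J(t)=P^{(\alpha)}_{v,t}a(t)$ with $a(t)\in T_{p}M$. Then $\nabla^{(\alpha)}_{\dot\gamma}J$ corresponds to $\dot a$, and $\widetilde{J}=\overline{P}^{(\beta)}_{v,t}\Phi a(t)$. Hence $\Phi^{(\alpha,\beta)}_{v,t}$ intertwines the covariant derivatives, $\overline{\nabla}^{(\beta)}_{\dot{\overline\gamma}}\widetilde{J}=\Phi^{(\alpha,\beta)}_{v,t}\nabla^{(\alpha)}_{\dot\gamma}J$, and the same holds for second derivatives. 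Applying $\Phi^{(\alpha,\beta)}_{v,t}$ to the Jacobi equation and using hypothesis \eqref{eq:radial-Ralpha}, we get
\[
\overline{\nabla}^{(\beta)2}\widetilde{J}+\overline{R}^{(\beta)}_{\dot{\overline\gamma}}\widetilde{J}=0,
\]
with $\widetilde{J}(0)=0$ and derivative $\Phi(w)$ at $t=0$. In the parallel frame this is a linear second-order ODE. By uniqueness, $\widetilde{J}=\overline{J}$.

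\textbf{Step 3 (conclusion).} Steps 1 and 2 give $(d\varphi^{(\alpha,\beta)})_{\gamma(t)}J(t)=\Phi^{(\alpha,\beta)}_{v,t}J(t)$. The vectors $J(t)=(d\operatorname{Exp}_{p}^{(\alpha)})_{tv}(tw)$ exhaust $T_{\gamma(t)}M$ as $w$ varies, because $\operatorname{Exp}_{p}^{(\alpha)}$ is a diffeomorphism on the star-shaped set $\mathcal{O}_{p}^{(\alpha,\beta)}$ and $t\neq0$. This yields \eqref{eq:dvarphi-Phi}.

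The main point requiring care is Step 2: checking that the non-metric parallel identifications still commute with covariant differentiation along the curves. This follows directly from the definition \eqref{eq:Phi-alpha-beta} via parallel frames, since no metric property of the parallel displacements is used.
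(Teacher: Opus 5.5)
Your proposal is correct and follows essentially the same route as the paper. Both arguments use the geodesic variation $\operatorname{Exp}_{p}^{(\alpha)}(t(v+sw))$ and its target counterpart, transport the source Jacobi field by $\Phi^{(\alpha,\beta)}_{v,t}$ (which commutes with covariant differentiation by construction, without any metric input), invoke uniqueness of Jacobi fields with initial data $(0,\Phi(w))$, use that $w\mapsto J_{w}(t)$ is onto for $t\neq0$, and treat $t=0$ separately. The one point you leave implicit is that convexity of $\mathcal{O}_{p}^{(\alpha,\beta)}$ gives $t(v+sw)\in\mathcal{O}_{p}^{(\alpha,\beta)}$ for small $|s|$; this is what makes both variations defined and related by $\varphi^{(\alpha,\beta)}$.
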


\begin{proof}
Fix $v\in\mathcal{O}_{p}^{(\alpha,\beta)}$.
For each $w\in T_{p}M$, choose $\varepsilon>0$ such that $v+sw\in\mathcal{O}_{p}^{(\alpha,\beta)}$ whenever $|s|<\varepsilon$.
The convexity of $\mathcal{O}_{p}^{(\alpha,\beta)}$ gives $t(v+sw)\in\mathcal{O}_{p}^{(\alpha,\beta)}$ for $|s|<\varepsilon$ and $t\in[0,1]$.

Consider the geodesic variation
\[
\sigma_{w}(s,t)
:=
\operatorname{Exp}_{p}^{(\alpha)}
\bigl(t(v+sw)\bigr),
\qquad
|s|<\varepsilon,\quad 0\leq t\leq1,
\]
and let
\[
J_{w}(t)
:=
\left.\frac{\partial}{\partial s}\right|_{s=0}
\sigma_{w}(s,t)
\]
be its variational vector field.
The vector field $J_{w}$ is a Jacobi field along $\gamma_{v}^{(\alpha)}$ satisfying
\[
\left(
\nabla^{(\alpha)}_{\dot{\gamma}_{v}^{(\alpha)}}
\right)^{2}J_{w}
+
R^{(\alpha)}_{\dot{\gamma}_{v}^{(\alpha)}}(J_{w})
=
0.
\]
Its initial data are
\[
J_{w}(0)=0_{p},
\qquad
\left(
\nabla^{(\alpha)}_{\dot{\gamma}_{v}^{(\alpha)}}J_{w}
\right)(0)
=
w.
\]
The chain rule gives
\[
J_{w}(t)
=
\left(
d\operatorname{Exp}_{p}^{(\alpha)}
\right)_{tv}(tw).
\]

Define a vector field along $\overline{\gamma}_{v}^{(\beta)}$ by
\begin{equation}
\overline{J}_{w}(t)
:=
\Phi^{(\alpha,\beta)}_{v,t}
\bigl(J_{w}(t)\bigr).
\label{eq:Jacobi-compare}
\end{equation}
The definition of $\Phi^{(\alpha,\beta)}_{v,t}$ by parallel displacement, together with \eqref{eq:radial-Ralpha}, gives
\begin{align*}
\left(
\overline{\nabla}^{(\beta)}
_{\dot{\overline{\gamma}}_{v}^{(\beta)}}
\overline{J}_{w}
\right)(t)
&=
\Phi^{(\alpha,\beta)}_{v,t}
\left(
\left(
\nabla^{(\alpha)}_{\dot{\gamma}_{v}^{(\alpha)}}J_{w}
\right)(t)
\right),\\
\left(
\overline{\nabla}^{(\beta)}
_{\dot{\overline{\gamma}}_{v}^{(\beta)}}
\right)^{2}
\overline{J}_{w}(t)
&=
\Phi^{(\alpha,\beta)}_{v,t}
\left(
\left(
\nabla^{(\alpha)}_{\dot{\gamma}_{v}^{(\alpha)}}
\right)^{2}
J_{w}(t)
\right),\\
\overline{R}^{(\beta)}
_{\dot{\overline{\gamma}}_{v}^{(\beta)}(t)}
\bigl(\overline{J}_{w}(t)\bigr)
&=
\Phi^{(\alpha,\beta)}_{v,t}
\left(
R^{(\alpha)}_{\dot{\gamma}_{v}^{(\alpha)}(t)}
\bigl(J_{w}(t)\bigr)
\right).
\end{align*}
The Jacobi equation for $J_{w}$ now gives
\[
\left(
\overline{\nabla}^{(\beta)}
_{\dot{\overline{\gamma}}_{v}^{(\beta)}}
\right)^{2}
\overline{J}_{w}
+
\overline{R}^{(\beta)}
_{\dot{\overline{\gamma}}_{v}^{(\beta)}}
\bigl(\overline{J}_{w}\bigr)
=
0.
\]
Thus $\overline{J}_{w}$ is a Jacobi field along $\overline{\gamma}_{v}^{(\beta)}$.
Since $\Phi^{(\alpha,\beta)}_{v,0}=\Phi$, its initial data are
\[
\overline{J}_{w}(0)=0_{\overline{p}},
\qquad
\left(
\overline{\nabla}^{(\beta)}
_{\dot{\overline{\gamma}}_{v}^{(\beta)}}
\overline{J}_{w}
\right)(0)
=
\Phi(w).
\]

Consider the corresponding target geodesic variation
\[
\overline{\sigma}_{w}(s,t)
:=
\overline{\operatorname{Exp}}_{\overline{p}}^{(\beta)}
\bigl(t\Phi(v+sw)\bigr)
=
\varphi^{(\alpha,\beta)}
\bigl(\sigma_{w}(s,t)\bigr).
\]
Its variational vector field is a Jacobi field along $\overline{\gamma}_{v}^{(\beta)}$ with initial data $0_{\overline{p}}$ and $\Phi(w)$.
Uniqueness for the target Jacobi equation gives
\[
\left.\frac{\partial}{\partial s}\right|_{s=0}
\overline{\sigma}_{w}(s,t)
=
\overline{J}_{w}(t).
\]
Differentiating $\overline{\sigma}_{w} =\varphi^{(\alpha,\beta)}\circ\sigma_{w}$ with respect to $s$ at $s=0$ and using \eqref{eq:Jacobi-compare} gives
\[
(d\varphi^{(\alpha,\beta)})_{\gamma_{v}^{(\alpha)}(t)}
\bigl(J_{w}(t)\bigr)
=
\Phi^{(\alpha,\beta)}_{v,t}
\bigl(J_{w}(t)\bigr).
\]

For each fixed $t\in(0,1]$, the map $w\mapsto J_{w}(t)$ is represented by the following diagram:
\[
\begin{array}{ccccc}
T_{p}M
&\xrightarrow{\;t\,\id\;}&
T_{tv}(T_{p}M)
&\xrightarrow{\;
\left(d\operatorname{Exp}_{p}^{(\alpha)}\right)_{tv}
\;}&
T_{\gamma_{v}^{(\alpha)}(t)}M
\\[2mm]
w
&\longmapsto&
tw
&\longmapsto&
J_{w}(t).
\end{array}
\]
Here $T_{tv}(T_{p}M)$ is canonically identified with $T_{p}M$.
The first map is a linear isomorphism because $t\ne0$, and the second map is a linear isomorphism because $\operatorname{Exp}_{p}^{(\alpha)}$ restricts to a diffeomorphism on $\mathcal{O}_{p}^{(\alpha,\beta)}$.
Hence $w\mapsto J_{w}(t)$ is a linear isomorphism, and \eqref{eq:dvarphi-Phi} holds for $t\in(0,1]$.

At $t=0$, the differentials of the exponential maps at their zero vectors are the identity maps.
Therefore,
\[
(d\varphi^{(\alpha,\beta)})_{p}
=
\left(
d\overline{\operatorname{Exp}}_{\overline{p}}^{(\beta)}
\right)_{0_{\overline{p}}}
\circ\Phi\circ
\left(
d\operatorname{Exp}_{p}^{(\alpha)}
\right)_{0_{p}}^{-1}
=
\Phi
=
\Phi^{(\alpha,\beta)}_{v,0}.
\]
\end{proof}

\begin{lemma}
\label{lem:radial-metric}
In Setting~\textup{\ref{set:alpha-beta-CAH}}, with arbitrary $\alpha,\beta\in\mathbb{R}$, suppose that, for every $v\in\mathcal{O}_{p}^{(\alpha,\beta)}$ and every $t\in[0,1]$,
\begin{equation}
\alpha\,\Phi^{(\alpha,\beta)}_{v,t}\circ
K_{\dot{\gamma}_{v}^{(\alpha)}(t)}
=
\beta\,
\overline{K}_{\dot{\overline{\gamma}}_{v}^{(\beta)}(t)}
\circ\Phi^{(\alpha,\beta)}_{v,t}.
\label{eq:radial-scaled-K}
\end{equation}
Then every $\Phi^{(\alpha,\beta)}_{v,t}$ is a linear isometry.
If, in addition, the radial curvature condition \eqref{eq:radial-Ralpha} holds, then $\varphi^{(\alpha,\beta)}$ is a local isometry\textup{:}
\[
\bigl(\varphi^{(\alpha,\beta)}\bigr)^{*}\overline{g}=g.
\]
\end{lemma}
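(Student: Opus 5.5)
The plan follows the Gram-matrix argument sketched in the introduction. Fix $v\in\mathcal{O}_{p}^{(\alpha,\beta)}$ and choose a $g_{p}$-orthonormal basis $e_{1},\dots,e_{n}$ of $T_{p}M$. Let
\[
E_{i}(t):=P^{(\alpha)}_{v,t}(e_{i}),
\qquad
\overline{E}_{i}(t):=\overline{P}^{(\beta)}_{v,t}\bigl(\Phi(e_{i})\bigr)=\Phi^{(\alpha,\beta)}_{v,t}\bigl(E_{i}(t)\bigr).
\]
These are $\nabla^{(\alpha)}$-parallel and $\overline{\nabla}^{(\beta)}$-parallel frames along $\gamma_{v}^{(\alpha)}$ and $\overline{\gamma}_{v}^{(\beta)}$, respectively. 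Define the Gram matrices
\[
G_{ij}(t):=g\bigl(E_{i}(t),E_{j}(t)\bigr),
\qquad
\overline{G}_{ij}(t):=\overline{g}\bigl(\overline{E}_{i}(t),\overline{E}_{j}(t)\bigr).
\]
Since $\Phi$ is an isometry, $G(0)=\overline{G}(0)=I$. Showing $G\equiv\overline{G}$ on $[0,1]$ is exactly the statement that $\Phi^{(\alpha,\beta)}_{v,t}$ is a linear isometry, since it maps the frame $E_{i}(t)$ to $\overline{E}_{i}(t)$.

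To compare them I will derive linear ODEs. By \eqref{eq:alpha-C} and parallelism of the frames,
\[
G_{ij}'(t)=(\nabla^{(\alpha)}_{\dot{\gamma}}g)(E_{i},E_{j})=\alpha C(\dot{\gamma},E_{i},E_{j})=\alpha\, g\bigl(K_{\dot{\gamma}}E_{i},E_{j}\bigr).
\]
Expand $K_{\dot{\gamma}_{v}^{(\alpha)}(t)}E_{i}(t)=\sum_{k}A_{ki}(t)E_{k}(t)$, which gives $G'=\alpha A^{\mathsf T}G$. In the same way, writing $\overline{K}_{\dot{\overline{\gamma}}}\overline{E}_{i}=\sum_{k}\overline{A}_{ki}\overline{E}_{k}$, one gets $\overline{G}'=\beta\overline{A}^{\mathsf T}\overline{G}$. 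Applying \eqref{eq:radial-scaled-K} to $E_{i}(t)$ and using linearity of $\Phi^{(\alpha,\beta)}_{v,t}$ gives
\[
\sum_{k}\alpha A_{ki}\overline{E}_{k}=\sum_{k}\beta\overline{A}_{ki}\overline{E}_{k}.
\]
Since the $\overline{E}_{k}(t)$ form a basis, this yields $\alpha A=\beta\overline{A}$. Hence $G$ and $\overline{G}$ solve the same linear initial value problem $X'=\alpha A^{\mathsf T}X$, $X(0)=I$, with continuous (indeed smooth) coefficients in $t$. Uniqueness then gives $G=\overline{G}$ on $[0,1]$. The only points needing care are the product-rule identity $\frac{d}{dt}g(E_{i},E_{j})=(\nabla_{\dot\gamma}g)(E_{i},E_{j})$ for parallel fields, and the fact that \eqref{eq:radial-scaled-K} holds along the whole segment. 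Both are immediate, so I do not expect a serious obstacle here.

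For the second assertion, assume in addition \eqref{eq:radial-Ralpha}. Lemma~\ref{lem:radial-Jacobi} then gives $(d\varphi^{(\alpha,\beta)})_{\gamma_{v}^{(\alpha)}(t)}=\Phi^{(\alpha,\beta)}_{v,t}$, which we have just shown is an isometry. Every point $q\in O^{(\alpha,\beta)}(p)$ has the form $q=\gamma_{v}^{(\alpha)}(1)$ with $v=(\operatorname{Exp}_{p}^{(\alpha)})^{-1}(q)\in\mathcal{O}_{p}^{(\alpha,\beta)}$. Therefore $(d\varphi^{(\alpha,\beta)})_{q}$ is a linear isometry at every point, i.e. $(\varphi^{(\alpha,\beta)})^{*}\overline{g}=g$. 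The main step is the first one, namely recognizing that the scaled skewness condition makes the two Gram matrices satisfy identical ODEs. The rest is bookkeeping.
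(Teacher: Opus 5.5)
Your proposal is correct and matches the paper's proof: parallel frames $E_{i}$ and $\overline{E}_{i}=\Phi^{(\alpha,\beta)}_{v,t}(E_{i})$, linear Gram-matrix ODEs from $\nabla^{(\alpha)}g=\alpha C$, identification of the coefficient matrices via \eqref{eq:radial-scaled-K}, uniqueness for the linear initial-value problem, and then Lemma~\ref{lem:radial-Jacobi} for the second assertion. The only differences are cosmetic: you start from an orthonormal basis, and you keep the factors $\alpha,\beta$ outside the coefficient matrices (deducing $\alpha A=\beta\overline{A}$), whereas the paper absorbs them into a single matrix $A$.
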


\begin{proof}
Fix $v\in\mathcal{O}_{p}^{(\alpha,\beta)}$.
Choose a basis $\{e_{1},\ldots,e_{n}\}$ of $T_{p}M$, and set
\[
E_{i}(t):=P^{(\alpha)}_{v,t}e_{i},
\qquad
\overline{E}_{i}(t):=
\overline{P}^{(\beta)}_{v,t}\Phi(e_{i}).
\]
By the definition of $\Phi^{(\alpha,\beta)}_{v,t}$,
\[
\overline{E}_{i}(t)
=
\Phi^{(\alpha,\beta)}_{v,t}\bigl(E_{i}(t)\bigr).
\]
For each $t\in[0,1]$, define $A_{i}{}^{k}(t)$ by
\begin{equation}
\alpha
K_{\dot{\gamma}_{v}^{(\alpha)}(t)}
\bigl(E_{i}(t)\bigr)
=
\sum_{k}A_{i}{}^{k}(t)E_{k}(t).
\label{eq:Gram-A}
\end{equation}
Applying \eqref{eq:radial-scaled-K} to $E_{i}(t)$ and using \eqref{eq:Gram-A} gives
\begin{equation}
\beta
\overline{K}_{\dot{\overline{\gamma}}_{v}^{(\beta)}(t)}
\bigl(\overline{E}_{i}(t)\bigr)
=
\sum_{k}A_{i}{}^{k}(t)\overline{E}_{k}(t).
\label{eq:Gram-Abar}
\end{equation}
Thus the two scaled skewness operators have the same coefficient matrix in the corresponding parallel frames.

Set
\[
G_{ij}(t):=g\bigl(E_{i}(t),E_{j}(t)\bigr),
\qquad
\overline{G}_{ij}(t):=
\overline{g}\bigl(\overline{E}_{i}(t),\overline{E}_{j}(t)\bigr).
\]
Since the fields $E_{i}$ are $\nabla^{(\alpha)}$-parallel, \eqref{eq:alpha-C} and \eqref{eq:Gram-A} give
\begin{align}
\frac{d}{dt}G_{ij}(t)
&=
\bigl(
\nabla^{(\alpha)}_{\dot{\gamma}_{v}^{(\alpha)}(t)}g
\bigr)
\bigl(E_{i}(t),E_{j}(t)\bigr)
\notag\\
&=
g\bigl(
\alpha
K_{\dot{\gamma}_{v}^{(\alpha)}(t)}
\bigl(E_{i}(t)\bigr),
E_{j}(t)
\bigr)
\notag\\
&=
\sum_{k}A_{i}{}^{k}(t)G_{kj}(t).
\label{eq:Gram-source}
\end{align}
Similarly, the fields $\overline{E}_{i}$ are $\overline{\nabla}^{(\beta)}$-parallel.
The barred version of \eqref{eq:alpha-C}, together with \eqref{eq:Gram-Abar}, gives
\begin{align}
\frac{d}{dt}\overline{G}_{ij}(t)
&=
\bigl(
\overline{\nabla}^{(\beta)}
_{\dot{\overline{\gamma}}_{v}^{(\beta)}(t)}
\overline{g}
\bigr)
\bigl(\overline{E}_{i}(t),\overline{E}_{j}(t)\bigr)
\notag\\
&=
\overline{g}\bigl(
\beta
\overline{K}_{\dot{\overline{\gamma}}_{v}^{(\beta)}(t)}
\bigl(\overline{E}_{i}(t)\bigr),
\overline{E}_{j}(t)
\bigr)
\notag\\
&=
\sum_{k}A_{i}{}^{k}(t)\overline{G}_{kj}(t).
\label{eq:Gram-target}
\end{align}

At $t=0$, the isometry of $\Phi$ gives
\[
\overline{G}_{ij}(0)
=
\overline{g}\bigl(\Phi(e_{i}),\Phi(e_{j})\bigr)
=
g(e_{i},e_{j})
=
G_{ij}(0).
\]
Let $A(t):=(A_{i}{}^{k}(t))$, $G(t):=(G_{ij}(t))$, and $\overline{G}(t):=(\overline{G}_{ij}(t))$.
Equations~\eqref{eq:Gram-source} and \eqref{eq:Gram-target} give
\[
\dot{G}(t)=A(t)G(t),
\qquad
\dot{\overline{G}}(t)=A(t)\overline{G}(t).
\]
Since $G(0)=\overline{G}(0)$, uniqueness for the matrix-valued linear initial-value problem gives
\[
G(t)=\overline{G}(t)
\qquad
(t\in[0,1]).
\]
Since $\overline{E}_{i}(t) =\Phi^{(\alpha,\beta)}_{v,t}(E_{i}(t))$ and $\{E_{1}(t),\ldots,E_{n}(t)\}$ is a basis, this equality is equivalent to
\[
\left(\Phi^{(\alpha,\beta)}_{v,t}\right)^{*}
\overline{g}_{\overline{\gamma}_{v}^{(\beta)}(t)}
=
g_{\gamma_{v}^{(\alpha)}(t)}.
\]
Thus $\Phi^{(\alpha,\beta)}_{v,t}$ is a linear isometry.

Assume in addition that \eqref{eq:radial-Ralpha} holds.
For any $q\in O^{(\alpha,\beta)}(p)$, take $v\in\mathcal{O}_{p}^{(\alpha,\beta)}$ such that
\[
q
=
\operatorname{Exp}_{p}^{(\alpha)}(v)
=
\gamma_{v}^{(\alpha)}(1).
\]
Lemma~\ref{lem:radial-Jacobi} gives
\[
(d\varphi^{(\alpha,\beta)})_{q}
=
\Phi^{(\alpha,\beta)}_{v,1}.
\]
The right-hand side is a linear isometry by the first part of the proof.
Thus
\[
\bigl(\varphi^{(\alpha,\beta)}\bigr)^{*}\overline{g}=g.
\]
\end{proof}

%%%%%%%%%%%%%%%%%%%%%%%%%%%%
%%%%%%%%%%%%%%%%%%%%%%%%%%%%
\subsection{The local statistical Cartan--Ambrose--Hicks theorem}
%%%%%%%%%%%%%%%%%%%%%%%%%%%%
%%%%%%%%%%%%%%%%%%%%%%%%%%%%

The preceding lemmas identify the differential of the spray map and establish metric preservation under radial comparison conditions.
The following theorem gives necessary and sufficient conditions for the spray map to be a local $(\alpha,\beta)$-statisticmorphism in terms of the intertwining relations for the radial Jacobi operators and the scaled skewness operators in all tangent directions.

\begin{theorem}
\label{thm:alpha-beta-CAH}
In Setting~\textup{\ref{set:alpha-beta-CAH}}, assume that $\alpha\ne0$ and $\beta\ne0$.
Suppose that, for every $v\in\mathcal{O}_{p}^{(\alpha,\beta)}$ and every $t\in[0,1]$, the following two conditions hold\textup{:}
\begin{subequations}
\label{eq:alpha-CAH}
\begin{align}
&\Phi^{(\alpha,\beta)}_{v,t}\circ
R^{(\alpha)}_{\dot{\gamma}_{v}^{(\alpha)}(t)}
=
\overline{R}^{(\beta)}_{\dot{\overline{\gamma}}_{v}^{(\beta)}(t)}
\circ\Phi^{(\alpha,\beta)}_{v,t},
\label{eq:alpha-CAH-R}\\
&\alpha\,\Phi^{(\alpha,\beta)}_{v,t}\circ K_{u}
=
\beta\,
\overline{K}_{\Phi^{(\alpha,\beta)}_{v,t}(u)}
\circ\Phi^{(\alpha,\beta)}_{v,t}
\qquad
\bigl(u\in T_{\gamma_{v}^{(\alpha)}(t)}M\bigr).
\label{eq:alpha-CAH-K}
\end{align}
\end{subequations}
Then $\varphi^{(\alpha,\beta)}$ is a local $(\alpha,\beta)$-statisticmorphism\textup{:}
\begin{equation}
\bigl(\varphi^{(\alpha,\beta)}\bigr)^{*}\overline{g}=g,
\qquad
(\varphi^{(\alpha,\beta)})_{*}\nabla^{(\alpha)}
=
\overline{\nabla}^{(\beta)}.
\label{eq:alpha-CAH-morph}
\end{equation}

Conversely, let $\varphi$ be a local $(\alpha,\beta)$-statisticmorphism satisfying $\varphi(p)=\overline{p}$ and $(d\varphi)_{p}=\Phi$.
After restricting the domains, $\varphi=\varphi^{(\alpha,\beta)}$, and the two conditions in \eqref{eq:alpha-CAH} hold.
\end{theorem}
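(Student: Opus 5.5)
For sufficiency, I will first reduce the hypotheses to the two lemmas and then identify the connections.

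Condition \eqref{eq:alpha-CAH-K} with $u=\dot{\gamma}_{v}^{(\alpha)}(t)$ and \eqref{eq:radial-tangent} gives the radial condition \eqref{eq:radial-scaled-K}. Together with \eqref{eq:alpha-CAH-R}, Lemma~\ref{lem:radial-Jacobi} yields $(d\varphi^{(\alpha,\beta)})_{\gamma_{v}^{(\alpha)}(t)}=\Phi^{(\alpha,\beta)}_{v,t}$. Lemma~\ref{lem:radial-metric} then shows that $\varphi^{(\alpha,\beta)}$ is a local isometry.

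Once $\varphi:=\varphi^{(\alpha,\beta)}$ is an isometry, Proposition~\ref{prop:stat-morph-equiv} reduces $\varphi_{*}\nabla^{(\alpha)}=\overline{\nabla}^{(\beta)}$ to condition (ii), namely $\alpha\,d\varphi(K(x,y))=\beta\overline{K}(d\varphi\,x,d\varphi\,y)$ at every point. Every $q\in O^{(\alpha,\beta)}(p)$ equals $\gamma_{v}^{(\alpha)}(1)$ for some $v$, so $d\varphi_{q}=\Phi^{(\alpha,\beta)}_{v,1}$. Applying \eqref{eq:alpha-CAH-K} at $t=1$ to arbitrary $u$ and $y$ gives exactly (ii) at $q$. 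This argument does not need to compare the full curvature tensors: the isometry property makes the Levi--Civita parts agree, and the full $K$-comparison handles the rest.

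For the converse, let $\varphi$ be a local $(\alpha,\beta)$-statisticmorphism with $\varphi(p)=\overline{p}$ and $d\varphi_{p}=\Phi$. Since $\varphi$ maps $\nabla^{(\alpha)}$-geodesics to $\overline{\nabla}^{(\beta)}$-geodesics, we get $\varphi(\operatorname{Exp}^{(\alpha)}_{p}(v))=\overline{\operatorname{Exp}}^{(\beta)}_{\overline{p}}(\Phi v)$ on a small star-shaped neighborhood. Hence $\varphi=\varphi^{(\alpha,\beta)}$ after restricting domains.

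The connection-preservation property implies that $d\varphi$ commutes with parallel displacement along corresponding curves, so $d\varphi_{\gamma_{v}^{(\alpha)}(t)}=\Phi^{(\alpha,\beta)}_{v,t}$. It also implies that $\varphi$ intertwines $R^{(\alpha)}$ and $\overline{R}^{(\beta)}$, which gives \eqref{eq:alpha-CAH-R} via \eqref{eq:radial-tangent}. Finally, condition (ii) of Proposition~\ref{prop:stat-morph-equiv} applied at $\gamma_{v}^{(\alpha)}(t)$ gives \eqref{eq:alpha-CAH-K}.

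The only delicate point is the domain bookkeeping in the converse. The identity $\varphi\circ\operatorname{Exp}^{(\alpha)}_{p}=\overline{\operatorname{Exp}}^{(\beta)}_{\overline{p}}\circ\Phi$ holds only where the radial geodesics stay in the domain of $\varphi$. I would handle this by shrinking $\mathcal{O}^{(\alpha,\beta)}_{p}$ to a convex neighborhood whose image lies in that domain, and then using uniqueness of geodesics with given initial data.
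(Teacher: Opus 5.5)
Your proposal is correct and follows the paper's proof essentially step for step. For sufficiency, the radial specialization of the $K$-condition feeds Lemma~\ref{lem:radial-metric}, Lemma~\ref{lem:radial-Jacobi} identifies $d\varphi^{(\alpha,\beta)}$ with $\Phi^{(\alpha,\beta)}_{v,t}$, and condition \textup{(ii)} of Proposition~\ref{prop:stat-morph-equiv} finishes the argument; the converse uses uniqueness of geodesics, transport of parallel fields by $d\varphi$, curvature intertwining, and $\alpha\varphi_{*}K=\beta\overline{K}$, exactly as the paper does. The only cosmetic difference is that you check condition \textup{(ii)} only at $t=1$, which suffices since every point is $\gamma_{v}^{(\alpha)}(1)$ for some $v$, whereas the paper checks it at every point $\gamma_{v}^{(\alpha)}(t)$.
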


\begin{proof}
Taking $u=\dot{\gamma}_{v}^{(\alpha)}(t)$ in \eqref{eq:alpha-CAH-K} and using \eqref{eq:radial-tangent} gives \eqref{eq:radial-scaled-K}.
Lemma~\ref{lem:radial-metric} shows that every $\Phi^{(\alpha,\beta)}_{v,t}$ is a linear isometry.

Equation~\eqref{eq:alpha-CAH-R} is precisely the hypothesis \eqref{eq:radial-Ralpha} of Lemma~\ref{lem:radial-Jacobi}.
Hence
\[
(d\varphi^{(\alpha,\beta)})_{\gamma_{v}^{(\alpha)}(t)}
=
\Phi^{(\alpha,\beta)}_{v,t}.
\]
Thus $\varphi^{(\alpha,\beta)}$ is a local isometry.

At each point $q=\gamma_{v}^{(\alpha)}(t)$, equations \eqref{eq:alpha-CAH-K} and \eqref{eq:dvarphi-Phi} give
\[
\alpha\,
(d\varphi^{(\alpha,\beta)})_{q}\circ K_{u}
=
\beta\,
\overline{K}_{(d\varphi^{(\alpha,\beta)})_{q}(u)}
\circ(d\varphi^{(\alpha,\beta)})_{q}
\qquad
(u\in T_{q}M).
\]
Proposition~\ref{prop:stat-morph-equiv} now shows that $\varphi^{(\alpha,\beta)}$ is a local $(\alpha,\beta)$-statisticmorphism.

Conversely, let $\varphi$ be a local $(\alpha,\beta)$-statisticmorphism satisfying $\varphi(p)=\overline{p}$ and $(d\varphi)_{p}=\Phi$.
Shrink the normal neighborhoods so that $\varphi$ is defined on $O^{(\alpha,\beta)}(p)$.
For each $v\in\mathcal{O}_{p}^{(\alpha,\beta)}$, the curve $\varphi\circ\gamma_{v}^{(\alpha)}$ is a $\overline{\nabla}^{(\beta)}$-geodesic with initial point $\overline{p}$ and initial velocity $\Phi(v)$.
Uniqueness of geodesics gives
\[
\varphi\bigl(\gamma_{v}^{(\alpha)}(t)\bigr)
=
\overline{\gamma}_{v}^{(\beta)}(t).
\]
Thus $\varphi=\varphi^{(\alpha,\beta)}$, where $\varphi^{(\alpha,\beta)}$ is defined by \eqref{eq:varphi-alpha-beta}.

Since $\varphi$ intertwines $\nabla^{(\alpha)}$ and $\overline{\nabla}^{(\beta)}$, its differential maps $\nabla^{(\alpha)}$-parallel vector fields along $\gamma_{v}^{(\alpha)}$ to $\overline{\nabla}^{(\beta)}$-parallel vector fields along $\overline{\gamma}_{v}^{(\beta)}$.
Hence
\[
(d\varphi)_{\gamma_{v}^{(\alpha)}(t)}
\circ P_{v,t}^{(\alpha)}
=
\overline{P}_{v,t}^{(\beta)}\circ\Phi.
\]
By the definition of $\Phi^{(\alpha,\beta)}_{v,t}$, this identity is equivalent to
\[
(d\varphi)_{\gamma_{v}^{(\alpha)}(t)}
=
\Phi_{v,t}^{(\alpha,\beta)}.
\]

At every $q\in O^{(\alpha,\beta)}(p)$, the connection-intertwining relation gives
\[
(d\varphi)_{q}\circ R^{(\alpha)}_{z}
=
\overline{R}^{(\beta)}_{(d\varphi)_{q}(z)}
\circ(d\varphi)_{q}
\qquad
(z\in T_{q}M).
\]
Setting $q=\gamma_{v}^{(\alpha)}(t)$ and $z=\dot{\gamma}_{v}^{(\alpha)}(t)$, and using \eqref{eq:radial-tangent}, gives \eqref{eq:alpha-CAH-R}.

Finally, Proposition~\ref{prop:stat-morph-equiv} gives $\alpha\varphi_{*}K=\beta\overline{K}$.
At $q=\gamma_{v}^{(\alpha)}(t)$, substitution of $(d\varphi)_{q}=\Phi_{v,t}^{(\alpha,\beta)}$ gives \eqref{eq:alpha-CAH-K}.
\end{proof}

\begin{remark}
\label{rem:zero-alpha-beta}
Assume that the comparison conditions \eqref{eq:alpha-CAH} hold and that one or both parameters vanish.
Then $\varphi^{(\alpha,\beta)}$ is a local isometry by Lemma~\ref{lem:radial-metric}.

If $\alpha=0$ and $\beta\ne0$, equation~\eqref{eq:alpha-CAH-K} forces $\overline{K}=0$.
If $\alpha\ne0$ and $\beta=0$, it forces $K=0$.
In either case, $\varphi^{(\alpha,\beta)}$ is a statisticmorphism between the original statistical structures if and only if both $K$ and $\overline{K}$ vanish.

If $\alpha=\beta=0$, equation~\eqref{eq:alpha-CAH-K} is vacuous.
The local isometry $\varphi^{(0,0)}$ is a statisticmorphism if and only if
\[
\bigl(\varphi^{(0,0)}\bigr)_{*}K=\overline{K}.
\]
Neither tensor is required to vanish.
This case is treated in Section~\ref{sec:LC-CAH}.
\end{remark}

\begin{corollary}
\label{cor:Hess-alpha-CAH}
In Setting~\textup{\ref{set:alpha-beta-CAH}}, suppose that both statistical manifolds are Hessian and that $\alpha^{2}=\beta^{2}\ne0$.
Then $\varphi^{(\alpha,\beta)}$ is a local $(\alpha,\beta)$-statisticmorphism if and only if \eqref{eq:alpha-CAH-K} holds for every $v\in\mathcal{O}_{p}^{(\alpha,\beta)}$ and every $t\in[0,1]$.
\end{corollary}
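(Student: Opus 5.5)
The plan is to reduce the corollary to Theorem~\ref{thm:alpha-beta-CAH} by showing that, on Hessian manifolds with $\alpha^{2}=\beta^{2}$, the curvature comparison \eqref{eq:alpha-CAH-R} is an algebraic consequence of the skewness comparison \eqref{eq:alpha-CAH-K}. The \emph{necessity} direction is immediate: if $\varphi^{(\alpha,\beta)}$ is a local $(\alpha,\beta)$-statisticmorphism, the converse part of Theorem~\ref{thm:alpha-beta-CAH}, applied with $\varphi=\varphi^{(\alpha,\beta)}$ (which satisfies $(d\varphi^{(\alpha,\beta)})_{p}=\Phi$), yields \eqref{eq:alpha-CAH-K}. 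So the work is in the sufficiency direction.

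First I express $R^{(\alpha)}$ purely through $K$. On a Hessian manifold $R=R^{*}=0$. By \eqref{eq:conj-sym-dK}, conjugate symmetry gives $d^{\nabla^{g}}K=0$. Then \eqref{eq:RS-RA-K} gives $R^{g}=R^{\mathsf{S}}-\tfrac14[K,K]=-\tfrac14[K,K]$. Substituting into \eqref{eq:Ralpha-Rg} yields
\[
R^{(\alpha)}=\frac{\alpha^{2}-1}{4}[K,K],
\qquad
\overline{R}^{(\beta)}=\frac{\beta^{2}-1}{4}[\overline{K},\overline{K}].
\]
Since $\alpha^{2}=\beta^{2}$, both have the same scalar coefficient $c=(\alpha^{2}-1)/4$.

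Next I use \eqref{eq:alpha-CAH-K}, which says that $\Phi^{(\alpha,\beta)}_{v,t}\circ K_{u}=(\beta/\alpha)\,\overline{K}_{\Phi^{(\alpha,\beta)}_{v,t}(u)}\circ\Phi^{(\alpha,\beta)}_{v,t}$ for every $u$. Writing $\Psi=\Phi^{(\alpha,\beta)}_{v,t}$ and applying this twice gives
\[
\Psi K_{x}K_{y}z=(\beta/\alpha)^{2}\,\overline{K}_{\Psi x}\overline{K}_{\Psi y}\Psi z=\overline{K}_{\Psi x}\overline{K}_{\Psi y}\Psi z.
\]
Hence $\Psi\circ[K,K](x,y)=[\overline{K},\overline{K}](\Psi x,\Psi y)\circ\Psi$, and therefore $\Psi\circ R^{(\alpha)}(x,y)=\overline{R}^{(\beta)}(\Psi x,\Psi y)\circ\Psi$ for all $x,y$. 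Taking $y=\dot{\gamma}_{v}^{(\alpha)}(t)$ and using \eqref{eq:radial-tangent}, $\Psi y=\dot{\overline{\gamma}}_{v}^{(\beta)}(t)$, which gives exactly the radial Jacobi-operator condition \eqref{eq:alpha-CAH-R}. Theorem~\ref{thm:alpha-beta-CAH} then shows that $\varphi^{(\alpha,\beta)}$ is a local $(\alpha,\beta)$-statisticmorphism.

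The only delicate point is that the full skewness comparison \eqref{eq:alpha-CAH-K}, valid for every $u$ and not just radial ones, is what permits composing two $K$'s. Also, the hypothesis $\alpha^{2}=\beta^{2}$ is used twice: to make $(\beta/\alpha)^{2}=1$, and to equalize the curvature coefficients. The degenerate case $\alpha^{2}=1$ gives $c=0$, so both curvatures vanish and the argument still applies.
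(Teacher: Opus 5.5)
Your proposal is correct and follows the paper's own proof. Like the paper, you write $R^{(\alpha)}=\frac{\alpha^{2}-1}{4}[K,K]$ and $\overline{R}^{(\beta)}=\frac{\beta^{2}-1}{4}[\overline{K},\overline{K}]$, apply \eqref{eq:alpha-CAH-K} twice to intertwine the commutators (using $\alpha^{2}=\beta^{2}\ne0$), deduce \eqref{eq:alpha-CAH-R}, and invoke Theorem~\ref{thm:alpha-beta-CAH} in both directions; your derivation of the curvature formula via $d^{\nabla^{g}}K=0$ and $R^{g}=-\frac{1}{4}[K,K]$ only spells out what the paper compresses into a reference to \eqref{eq:Ralpha-Rg}.
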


\begin{proof}
For Hessian manifolds, \eqref{eq:Ralpha-Rg} gives
\[
R^{(\alpha)}
=\frac{\alpha^{2}-1}{4}[K,K],
\qquad
\overline{R}^{(\beta)}
=\frac{\beta^{2}-1}{4}[\overline{K},\overline{K}].
\]

Assume that \eqref{eq:alpha-CAH-K} holds.
Fix $v\in\mathcal{O}_{p}^{(\alpha,\beta)}$ and $t\in[0,1]$.
Applying \eqref{eq:alpha-CAH-K} twice gives
\[
\begin{aligned}
\alpha^{2}\,
\Phi^{(\alpha,\beta)}_{v,t}\circ[K_{u},K_{w}]
=
\beta^{2}
\bigl[
\overline{K}_{\Phi^{(\alpha,\beta)}_{v,t}(u)},
\overline{K}_{\Phi^{(\alpha,\beta)}_{v,t}(w)}
\bigr]
\circ\Phi^{(\alpha,\beta)}_{v,t}
\end{aligned}
\]
for all $u,w\in T_{\gamma_{v}^{(\alpha)}(t)}M$.
Since $\alpha^{2}=\beta^{2}\ne0$, the curvature identities above yield
\[
\begin{aligned}
\Phi^{(\alpha,\beta)}_{v,t}\circ R^{(\alpha)}(u,w)
=
\overline{R}^{(\beta)}
\bigl(
\Phi^{(\alpha,\beta)}_{v,t}(u),
\Phi^{(\alpha,\beta)}_{v,t}(w)
\bigr)
\circ\Phi^{(\alpha,\beta)}_{v,t}.
\end{aligned}
\]
Thus, we obtain \eqref{eq:alpha-CAH-R}.
Therefore, Theorem~\ref{thm:alpha-beta-CAH} shows that $\varphi^{(\alpha,\beta)}$ is a local $(\alpha,\beta)$-statisticmorphism.

Conversely, the necessity of \eqref{eq:alpha-CAH-K} follows from the converse part of Theorem~\ref{thm:alpha-beta-CAH}.
\end{proof}

\begin{remark}
Suppose that both statistical manifolds are Hessian, and let $\varphi$ be a local $(\alpha,\beta)$-statisticmorphism with $\alpha\ne0$ and $\beta\ne0$.
If $\alpha^{2}\ne\beta^{2}$, then $g$ and $\overline{g}$ are flat on the domain and image of $\varphi$, respectively.

Indeed, \eqref{eq:family-scaling} gives $\varphi_{*}\nabla = \overline{\nabla}^{(\beta/\alpha)}$.
Since $\nabla$ is flat, $\overline{\nabla}^{(\beta/\alpha)}$ is also flat.
Therefore, on the Hessian target, \eqref{eq:RS-RA-K} and \eqref{eq:Ralpha-Rg} give
\[
0
=
\overline{R}^{(\beta/\alpha)}
=
\left(1-\frac{\beta^{2}}{\alpha^{2}}\right)
\overline{R}^{(0)}.
\]
Since $\alpha^{2}\ne\beta^{2}$, we obtain $\overline{R}^{(0)}=0$ on the image of $\varphi$.
The metric $g$ is also flat on its domain, because $\varphi$ is an isometry.
\end{remark}

%%%%%%%%%%%%%%%%%%%%%%%%%%%%
%%%%%%%%%%%%%%%%%%%%%%%%%%%%
\subsection{A criterion using covariant derivatives of the skewness tensor}
%%%%%%%%%%%%%%%%%%%%%%%%%%%%
%%%%%%%%%%%%%%%%%%%%%%%%%%%%

Theorem~\ref{thm:alpha-beta-CAH} requires the relation \eqref{eq:alpha-CAH-K} between $\alpha K$ and $\beta\overline{K}$ along every pair of corresponding radial geodesics.
The following corollary gives an equivalent criterion by replacing this relation with the condition \eqref{eq:base-scaled-K} at the base points and the comparison of radial covariant derivatives in~\eqref{eq:radial-nabla-K}.

For $X,Y\in\mathfrak{X}(M)$, write $\bigl(\nabla^{(\alpha)}_{X}K\bigr)_{Y} \in\Gamma\bigl(\operatorname{End}(TM)\bigr)$ for the endomorphism field defined by $\bigl(\nabla^{(\alpha)}_{X}K\bigr)_{Y}(Z) := \bigl(\nabla^{(\alpha)}_{X}K\bigr)(Y,Z)$.
The analogous notation is used for $\overline{K}$.

\begin{corollary}
\label{cor:CAH-nabla-K}
In Setting~\textup{\ref{set:alpha-beta-CAH}}, assume that $\alpha\ne0$ and $\beta\ne0$.
Then $\varphi^{(\alpha,\beta)}$ is a local $(\alpha,\beta)$-statisticmorphism if and only if the following conditions hold\textup{:}
\begin{enumerate}[label={\rm(\roman*)}]
\item
The condition \eqref{eq:alpha-CAH-R} on the Jacobi operators holds for every $v\in\mathcal{O}_{p}^{(\alpha,\beta)}$ and every $t\in[0,1]$.

\item
For every $u_{p}\in T_{p}M$,
\begin{equation}
\alpha\,\Phi\circ K_{u_{p}}
=
\beta\,\overline{K}_{\Phi(u_{p})}\circ\Phi.
\label{eq:base-scaled-K}
\end{equation}

\item
For every $v\in\mathcal{O}_{p}^{(\alpha,\beta)}$, every $t\in[0,1]$, and every $u_{t}\in T_{\gamma_{v}^{(\alpha)}(t)}M$,
\begin{equation}
\alpha\,\Phi^{(\alpha,\beta)}_{v,t}\circ
\bigl(
\nabla^{(\alpha)}_{\dot{\gamma}_{v}^{(\alpha)}(t)}K
\bigr)_{u_{t}}
=
\beta\,
\bigl(
\overline{\nabla}^{(\beta)}
_{\dot{\overline{\gamma}}_{v}^{(\beta)}(t)}
\overline{K}
\bigr)_{\Phi^{(\alpha,\beta)}_{v,t}(u_{t})}
\circ\Phi^{(\alpha,\beta)}_{v,t}.
\label{eq:radial-nabla-K}
\end{equation}
\end{enumerate}
\end{corollary}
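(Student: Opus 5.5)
The plan is to reduce the corollary to Theorem~\ref{thm:alpha-beta-CAH}. Condition (i) coincides with \eqref{eq:alpha-CAH-R}, so it suffices to show that, given (i), conditions (ii) and (iii) together are equivalent to \eqref{eq:alpha-CAH-K} holding along every radial geodesic. Necessity is straightforward. If $\varphi^{(\alpha,\beta)}$ is a local $(\alpha,\beta)$-statisticmorphism, then the converse part of Theorem~\ref{thm:alpha-beta-CAH} gives (i) and \eqref{eq:alpha-CAH-K}. Taking $t=0$ in \eqref{eq:alpha-CAH-K} gives (ii), since $\Phi^{(\alpha,\beta)}_{v,0}=\Phi$. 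For (iii), note that $\varphi$ intertwines $\nabla^{(\alpha)}$ and $\overline{\nabla}^{(\beta)}$ and satisfies $\alpha\varphi_{*}K=\beta\overline{K}$. Hence $\alpha\varphi_{*}(\nabla^{(\alpha)}K)=\beta\,\overline{\nabla}^{(\beta)}\overline{K}$. Evaluating at $q=\gamma_{v}^{(\alpha)}(t)$ in the radial direction, and using $(d\varphi)_{q}=\Phi^{(\alpha,\beta)}_{v,t}$ together with \eqref{eq:radial-tangent}, gives \eqref{eq:radial-nabla-K}.

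For sufficiency, fix $v$ and work along $\gamma=\gamma_{v}^{(\alpha)}$ in the parallel frames $E_{i}(t)=P^{(\alpha)}_{v,t}e_{i}$ and $\overline{E}_{i}(t)=\overline{P}^{(\beta)}_{v,t}\Phi(e_{i})$ from the proof of Lemma~\ref{lem:radial-metric}. Write $\alpha K(E_{i},E_{j})=\sum_{k}a_{ij}{}^{k}(t)E_{k}$ and $\beta\overline{K}(\overline{E}_{i},\overline{E}_{j})=\sum_{k}\overline{a}_{ij}{}^{k}(t)\overline{E}_{k}$. Relation \eqref{eq:alpha-CAH-K} at time $t$ is equivalent to $a_{ij}{}^{k}(t)=\overline{a}_{ij}{}^{k}(t)$ for all $i,j,k$, because $\Phi^{(\alpha,\beta)}_{v,t}$ sends $E_{i}(t)$ to $\overline{E}_{i}(t)$ and is linear. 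The frames are parallel, so
\[
\alpha\bigl(\nabla^{(\alpha)}_{\dot{\gamma}}K\bigr)(E_{i},E_{j})=\sum_{k}\dot{a}_{ij}{}^{k}(t)E_{k},
\]
and the barred analogue holds as well. Condition (iii), applied with $u_{t}=E_{i}(t)$ and evaluated on $E_{j}(t)$, therefore says exactly that $\dot{a}_{ij}{}^{k}=\dot{\overline{a}}_{ij}{}^{k}$ on $[0,1]$. Condition (ii) gives $a_{ij}{}^{k}(0)=\overline{a}_{ij}{}^{k}(0)$. Integrating then yields $a=\overline{a}$ for all $t\in[0,1]$. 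This is \eqref{eq:alpha-CAH-K} for every $u\in T_{\gamma(t)}M$, by linearity in $u$.

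With (i) and \eqref{eq:alpha-CAH-K} established, Theorem~\ref{thm:alpha-beta-CAH} shows that $\varphi^{(\alpha,\beta)}$ is a local $(\alpha,\beta)$-statisticmorphism. The argument does not need $\Phi^{(\alpha,\beta)}_{v,t}$ to be an isometry in advance, because everything is expressed in the parallel frames.

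I expect the main obstacle to be the frame computation: one must check that the covariant derivative of $K$ along $\gamma$ in parallel frames is exactly the ordinary derivative of the structure coefficients. This holds because all three slots of $K$ are filled with parallel fields, and the metric plays no role at this stage. A secondary point is making sure that the necessity of (iii) uses $\dot{\gamma}$ as the differentiating direction together with the identification $(d\varphi)_{q}=\Phi^{(\alpha,\beta)}_{v,t}$ from the converse part of the theorem.
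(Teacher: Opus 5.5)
Your proposal is correct and follows essentially the paper's proof. Necessity goes through the converse of Theorem~\ref{thm:alpha-beta-CAH} together with naturality of $\nabla^{(\alpha)}K$ under the connection-intertwining map. Sufficiency shows that the discrepancy $\alpha\Phi^{(\alpha,\beta)}_{v,t}\circ K_{u}-\beta\overline{K}_{\Phi^{(\alpha,\beta)}_{v,t}(u)}\circ\Phi^{(\alpha,\beta)}_{v,t}$ has zero radial derivative by (iii) and vanishes initially by (ii). The paper phrases this as a $\overline{\nabla}^{(\beta)}$-parallel field $\overline{F}$ that vanishes at $s=0$; your comparison of structure coefficients and their derivatives in the corresponding parallel frames is the same computation written in components.
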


\begin{proof}
For sufficiency, assume that \textup{(i)}--\textup{(iii)} hold.
Fix $v\in\mathcal{O}_{p}^{(\alpha,\beta)}$ and $t\in[0,1]$, and let $u_{t},w_{t}\in T_{\gamma_{v}^{(\alpha)}(t)}M$.
Put
\[
u_{p}
:=
\bigl(P^{(\alpha)}_{v,t}\bigr)^{-1}(u_{t}),
\qquad
w_{p}
:=
\bigl(P^{(\alpha)}_{v,t}\bigr)^{-1}(w_{t}).
\]
For $0\leq s\leq t$, define
\[
U(s):=P^{(\alpha)}_{v,s}(u_{p}),
\qquad
W(s):=P^{(\alpha)}_{v,s}(w_{p}).
\]
These are $\nabla^{(\alpha)}$-parallel vector fields along $\gamma_{v}^{(\alpha)}$ satisfying
\[
U(0)=u_{p},
\qquad
W(0)=w_{p},
\qquad
U(t)=u_{t},
\qquad
W(t)=w_{t}.
\]
Set
\[
\overline{U}(s)
:=
\Phi^{(\alpha,\beta)}_{v,s}\bigl(U(s)\bigr),
\qquad
\overline{W}(s)
:=
\Phi^{(\alpha,\beta)}_{v,s}\bigl(W(s)\bigr).
\]
The definition of $\Phi^{(\alpha,\beta)}_{v,s}$ shows that $\overline{U}$ and $\overline{W}$ are $\overline{\nabla}^{(\beta)}$-parallel along $\overline{\gamma}_{v}^{(\beta)}$.

Define the vector field $\overline{F}$ along $\overline{\gamma}_{v}^{(\beta)}$ by
\[
\overline{F}(s)
:=
\left(
\alpha\,\Phi^{(\alpha,\beta)}_{v,s}\circ K_{U(s)}
-
\beta\,\overline{K}_{\overline{U}(s)}
\circ\Phi^{(\alpha,\beta)}_{v,s}
\right)\bigl(W(s)\bigr).
\]
By \eqref{eq:Phi-alpha-beta}, every vector field $Y$ along $\gamma_{v}^{(\alpha)}$ satisfies
\[
\overline{\nabla}^{(\beta)}
_{\dot{\overline{\gamma}}_{v}^{(\beta)}(s)}
\left(
\Phi^{(\alpha,\beta)}_{v,s}\bigl(Y(s)\bigr)
\right)
=
\Phi^{(\alpha,\beta)}_{v,s}
\left(
\nabla^{(\alpha)}
_{\dot{\gamma}_{v}^{(\alpha)}(s)}Y
\right).
\]
Since $U,W,\overline{U},\overline{W}$ are parallel, condition~\textup{(iii)}, applied to $U(s)$ and evaluated at $W(s)$, gives
\[
\begin{aligned}
\overline{\nabla}^{(\beta)}
_{\dot{\overline{\gamma}}_{v}^{(\beta)}(s)}
\overline{F}
&=
\left(
\alpha\,\Phi^{(\alpha,\beta)}_{v,s}\circ
\bigl(
\nabla^{(\alpha)}_{\dot{\gamma}_{v}^{(\alpha)}(s)}K
\bigr)_{U(s)}
-
\beta\,
\bigl(
\overline{\nabla}^{(\beta)}
_{\dot{\overline{\gamma}}_{v}^{(\beta)}(s)}
\overline{K}
\bigr)_{\overline{U}(s)}
\circ\Phi^{(\alpha,\beta)}_{v,s}
\right)\bigl(W(s)\bigr)
=0.
\end{aligned}
\]
At $s=0$, one has
\[
\Phi^{(\alpha,\beta)}_{v,0}=\Phi,
\qquad
\overline{U}(0)=\Phi(u_{p}),
\qquad
\overline{W}(0)=\Phi(w_{p}).
\]
Condition~\textup{(ii)} shows that the operator in brackets defining $\overline{F}(0)$ vanishes.
Hence $\overline{F}(0)=0$.
Since $\overline{F}$ is $\overline{\nabla}^{(\beta)}$-parallel,
\[
\overline{F}(s)
=
\overline{P}^{(\beta)}_{v,s}
\bigl(\overline{F}(0)\bigr)
=
0
\]
for every $s\in[0,t]$.
Evaluating at $s=t$ gives
\[
\left(
\alpha\,\Phi^{(\alpha,\beta)}_{v,t}\circ K_{u_{t}}
-
\beta\,\overline{K}_{\Phi^{(\alpha,\beta)}_{v,t}(u_{t})}
\circ\Phi^{(\alpha,\beta)}_{v,t}
\right)(w_{t})
=
0.
\]
Since $w_{t}$ is arbitrary,
\[
\alpha\,\Phi^{(\alpha,\beta)}_{v,t}\circ K_{u_{t}}
=
\beta\,\overline{K}_{\Phi^{(\alpha,\beta)}_{v,t}(u_{t})}
\circ\Phi^{(\alpha,\beta)}_{v,t}.
\]
The arbitrariness of $v$, $t$, and $u_{t}$ gives \eqref{eq:alpha-CAH-K}.
Theorem~\ref{thm:alpha-beta-CAH} proves sufficiency.

Conversely, suppose that $\varphi:=\varphi^{(\alpha,\beta)}$ is a local $(\alpha,\beta)$-statisticmorphism.
Theorem~\ref{thm:alpha-beta-CAH} gives \textup{(i)} and \textup{(ii)}.
Since $\varphi$ intertwines the two connections and $(d\varphi)_{p}=\Phi$, its differential satisfies
\[
(d\varphi)_{\gamma_{v}^{(\alpha)}(t)}
=
\Phi^{(\alpha,\beta)}_{v,t}
\]
along every pair of corresponding radial geodesics.
The connection identity and $\alpha\varphi_{*}K=\beta\overline{K}$ give
\[
\alpha\,\varphi_{*}\bigl(\nabla^{(\alpha)}K\bigr)
=
\overline{\nabla}^{(\beta)}
\bigl(\alpha\varphi_{*}K\bigr)
=
\beta\,\overline{\nabla}^{(\beta)}\overline{K}.
\]
Together with \eqref{eq:radial-tangent}, this identity yields \textup{(iii)}.
\end{proof}

%%%%%%%%%%%%%%%%%%%%%%%%%%%%
%%%%%%%%%%%%%%%%%%%%%%%%%%%%
\subsection{Spray maps between dual statistical structures}
%%%%%%%%%%%%%%%%%%%%%%%%%%%%
%%%%%%%%%%%%%%%%%%%%%%%%%%%%

For $\alpha\ne0$, the following corollary applies Theorem~\ref{thm:alpha-beta-CAH} on the same statistical manifold with $\overline{p}=p$, $\beta=-\alpha$, and $\Phi=-\id_{T_{p}M}$.
It gives necessary and sufficient conditions for the spray map determined by these data to preserve $g$ and interchange $\nabla^{(\alpha)}$ and $\nabla^{(-\alpha)}$.

\begin{corollary}
\label{cor:alpha-dual-CAH}
Let $(M,g,\nabla)$ be a statistical manifold, let $p\in M$, and let $\alpha\ne0$.
In Setting~\textup{\ref{set:alpha-beta-CAH}}, take
\[
(\overline{M},\overline{g},\overline{\nabla})
=
(M,g,\nabla),
\qquad
\overline{p}=p,
\qquad
\beta=-\alpha,
\qquad
\Phi=-\id_{T_{p}M}.
\]
For these data, $\overline{\gamma}_{v}^{(-\alpha)} = \gamma_{-v}^{(-\alpha)}$.
Suppose that, for every $v\in\mathcal{O}_{p}^{(\alpha,-\alpha)}$ and every $t\in[0,1]$, the following two conditions hold\textup{:}
\begin{subequations}
\label{eq:alpha-dual-CAH}
\begin{align}
&\Phi^{(\alpha,-\alpha)}_{v,t}\circ
R^{(\alpha)}_{\dot{\gamma}_{v}^{(\alpha)}(t)}
=
R^{(-\alpha)}_{\dot{\gamma}_{-v}^{(-\alpha)}(t)}
\circ\Phi^{(\alpha,-\alpha)}_{v,t},
\label{eq:alpha-dual-R}\\
&\Phi^{(\alpha,-\alpha)}_{v,t}\circ K_{u}
=
-K_{\Phi^{(\alpha,-\alpha)}_{v,t}(u)}
\circ\Phi^{(\alpha,-\alpha)}_{v,t}
\qquad
\bigl(u\in T_{\gamma_{v}^{(\alpha)}(t)}M\bigr).
\label{eq:alpha-dual-K}
\end{align}
\end{subequations}
Then
\begin{equation}
s_{p}^{(\alpha)}
:=
\operatorname{Exp}_{p}^{(-\alpha)}
\circ
\bigl(-\id_{T_{p}M}\bigr)
\circ
\bigl(\operatorname{Exp}_{p}^{(\alpha)}\bigr)^{-1}
\label{eq:alpha-dual-spray}
\end{equation}
is a local isometry satisfying
\[
s_{p}^{(\alpha)}(p)=p,
\qquad
\bigl(ds_{p}^{(\alpha)}\bigr)_{p}
=
-\id_{T_{p}M},
\qquad
\bigl(s_{p}^{(\alpha)}\bigr)_{*}\nabla^{(\eta)}
=
\nabla^{(-\eta)}
\quad
(\eta\in\mathbb{R}).
\]

Conversely, let $s_{p}$ be a local isometry defined on a neighborhood of $p$ such that
\[
s_{p}(p)=p,
\qquad
(ds_{p})_{p}=-\id_{T_{p}M},
\qquad
(s_{p})_{*}\nabla^{(\alpha)}=\nabla^{(-\alpha)}.
\]
After restricting the domains, $s_{p}=s_{p}^{(\alpha)}$, and the two conditions in \eqref{eq:alpha-dual-CAH} hold.
\end{corollary}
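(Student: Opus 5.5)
This corollary is the specialization of Theorem~\ref{thm:alpha-beta-CAH} to $(\overline{M},\overline{g},\overline{\nabla})=(M,g,\nabla)$, $\overline{p}=p$, $\beta=-\alpha$ and $\Phi=-\id_{T_{p}M}$, so the plan is to check that the hypotheses translate, then to upgrade the conclusion from the single pair $(\nabla^{(\alpha)},\nabla^{(-\alpha)})$ to the whole family via Proposition~\ref{prop:stat-morph-equiv}.

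First, $\Phi=-\id_{T_{p}M}$ is a linear isometry of $(T_{p}M,g_{p})$. The barred radial geodesic is $t\mapsto\operatorname{Exp}^{(-\alpha)}_{p}(t\Phi(v))=\operatorname{Exp}^{(-\alpha)}_{p}(-tv)$, so $\overline{\gamma}_{v}^{(-\alpha)}=\gamma_{-v}^{(-\alpha)}$. With this identification, \eqref{eq:alpha-dual-R} is literally \eqref{eq:alpha-CAH-R} with $\beta=-\alpha$. Dividing \eqref{eq:alpha-CAH-K} by $\alpha\ne0$ turns it into \eqref{eq:alpha-dual-K}. The spray map $\varphi^{(\alpha,-\alpha)}$ of \eqref{eq:varphi-alpha-beta} is exactly $s_{p}^{(\alpha)}$ of \eqref{eq:alpha-dual-spray}.

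Theorem~\ref{thm:alpha-beta-CAH} then gives that $s_{p}^{(\alpha)}$ is a local isometry with $(s_{p}^{(\alpha)})_{*}\nabla^{(\alpha)}=\nabla^{(-\alpha)}$. By construction $s_{p}^{(\alpha)}(p)=p$. The differential at $p$ equals $\Phi$, which holds either by the $t=0$ computation in Lemma~\ref{lem:radial-Jacobi} or directly from $(d\operatorname{Exp})_{0}=\id$. For the statement for every $\eta$, apply Proposition~\ref{prop:stat-morph-equiv}: condition (i) with $\beta=-\alpha$ implies (vi), that is, $(s_{p}^{(\alpha)})_{*}\nabla^{(\eta)}=\nabla^{(\eta\beta/\alpha)}=\nabla^{(-\eta)}$ for all $\eta\in\mathbb{R}$.

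For the converse, a local isometry $s_{p}$ with $s_{p}(p)=p$, $(ds_{p})_{p}=-\id$ and $(s_{p})_{*}\nabla^{(\alpha)}=\nabla^{(-\alpha)}$ is a local $(\alpha,-\alpha)$-statisticmorphism with initial data $(p,p,-\id)$. The converse part of Theorem~\ref{thm:alpha-beta-CAH} then gives, after shrinking domains, $s_{p}=\varphi^{(\alpha,-\alpha)}=s_{p}^{(\alpha)}$, together with \eqref{eq:alpha-CAH-R} and \eqref{eq:alpha-CAH-K}. These become \eqref{eq:alpha-dual-R} and \eqref{eq:alpha-dual-K} after the same translation, again dividing by $\alpha$.

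The only points needing care are bookkeeping. One is that the neighborhoods $\mathcal{O}_{p}^{(\alpha,-\alpha)}$ may be chosen as in Setting~\ref{set:alpha-beta-CAH} with target $\Phi(\mathcal{O})=-\mathcal{O}$. The other is that "local isometry" in the statement means that $s_{p}^{(\alpha)}$ maps a neighborhood of $p$ diffeomorphically onto another neighborhood of $p$, which is exactly what Theorem~\ref{thm:alpha-beta-CAH} supplies. There is no genuine obstacle beyond this.
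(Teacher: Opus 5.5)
Your proposal is correct and is essentially the paper's proof. Both arguments identify $s_{p}^{(\alpha)}$ with $\varphi^{(\alpha,-\alpha)}$, translate \eqref{eq:alpha-dual-R} and \eqref{eq:alpha-dual-K} into \eqref{eq:alpha-CAH-R} and \eqref{eq:alpha-CAH-K} using $\overline{K}=K$, $\beta=-\alpha$ and $\alpha\ne0$, apply Theorem~\ref{thm:alpha-beta-CAH} in both directions, and deduce the statement for every $\eta$ from \eqref{eq:family-scaling} in Proposition~\ref{prop:stat-morph-equiv}.
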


\begin{proof}
For the specified data, $\varphi^{(\alpha,-\alpha)}=s_{p}^{(\alpha)}$.
Equation~\eqref{eq:alpha-dual-R} is the condition \eqref{eq:alpha-CAH-R} on the Jacobi operators.
Since $\overline{K}=K$, $\beta=-\alpha$, and $\alpha\ne0$, equation~\eqref{eq:alpha-dual-K} is equivalent to the relation \eqref{eq:alpha-CAH-K} between the scaled skewness operators.
Theorem~\ref{thm:alpha-beta-CAH} shows that $s_{p}^{(\alpha)}$ is a local $(\alpha,-\alpha)$-statisticmorphism and hence a local isometry.
The identities at $p$ follow from the definition of $s_{p}^{(\alpha)}$.
Finally, \eqref{eq:family-scaling} with $\beta=-\alpha$ shows that $s_{p}^{(\alpha)}$ interchanges $\nabla^{(\eta)}$ and $\nabla^{(-\eta)}$ for every $\eta\in\mathbb{R}$.

Conversely, the assumptions on $s_{p}$ say that it is a local $(\alpha,-\alpha)$-statisticmorphism with initial data $s_{p}(p)=p$ and $(ds_{p})_{p}=-\id_{T_{p}M}$.
The converse assertion of Theorem~\ref{thm:alpha-beta-CAH} gives, after restricting the domains, $s_{p}=\varphi^{(\alpha,-\alpha)}=s_{p}^{(\alpha)}$ and the two conditions in \eqref{eq:alpha-CAH}.
For the specified data, these are precisely \eqref{eq:alpha-dual-R} and \eqref{eq:alpha-dual-K}.
\end{proof}

\begin{remark}
Since $K^{*}=-K$, equation~\eqref{eq:alpha-dual-K} can equivalently be written as
\[
\Phi^{(\alpha,-\alpha)}_{v,t}\circ K_{u}
=
K^{*}_{\Phi^{(\alpha,-\alpha)}_{v,t}(u)}
\circ\Phi^{(\alpha,-\alpha)}_{v,t}.
\]
Thus \eqref{eq:alpha-dual-K} states that the linear isomorphisms $\Phi^{(\alpha,-\alpha)}_{v,t}$ intertwine the skewness operators of the original and dual statistical structures.
\end{remark}

%%%%%%%%%%%%%%%%%%%%%%%%%%%%
%%%%%%%%%%%%%%%%%%%%%%%%%%%%
%%%%%%%%%%%%%%%%%%%%%%%%%%%%
\section{The Levi--Civita formulation of the statistical CAH theorem}
\label{sec:LC-CAH}
%%%%%%%%%%%%%%%%%%%%%%%%%%%%
%%%%%%%%%%%%%%%%%%%%%%%%%%%%
%%%%%%%%%%%%%%%%%%%%%%%%%%%%

This section treats Setting~\ref{set:alpha-beta-CAH} with $\alpha=\beta=0$, a case excluded from Theorem~\ref{thm:alpha-beta-CAH}.
In this case, \eqref{eq:alpha-CAH-K} is vacuous, and \eqref{eq:alpha-CAH-R} compares only the Levi--Civita Jacobi operators.
To obtain a statisticmorphism between $(M,g,\nabla)$ and $(\overline{M},\overline{g},\overline{\nabla})$, the intertwining of $K$ with $\overline{K}$ must be imposed separately.

Throughout this section, $\varphi^{(0)}$, $\Phi^{(0)}_{v,t}$, $\mathcal{O}_{p}^{(0)}$, and $O^{(0)}(p)$ abbreviate $\varphi^{(0,0)}$, $\Phi^{(0,0)}_{v,t}$, $\mathcal{O}_{p}^{(0,0)}$, and $O^{(0,0)}(p)$, respectively.

%%%%%%%%%%%%%%%%%%%%%%%%%%%%
%%%%%%%%%%%%%%%%%%%%%%%%%%%%
\subsection{The local statistical Cartan--Ambrose--Hicks theorem}
%%%%%%%%%%%%%%%%%%%%%%%%%%%%
%%%%%%%%%%%%%%%%%%%%%%%%%%%%

For this specialization, the spray map in \eqref{eq:varphi-alpha-beta} is
\begin{equation}
\varphi^{(0)}
:=
\overline{\operatorname{Exp}}_{\overline{p}}^{\,\overline{g}}
\circ\Phi\circ
\bigl(\operatorname{Exp}_{p}^{g}\bigr)^{-1}.
\label{eq:varphi-LC}
\end{equation}
For every $v\in\mathcal{O}_{p}^{(0)}$ and every $t\in[0,1]$, the corresponding radial geodesics are
\[
\gamma_{v}^{(0)}(t)
=
\operatorname{Exp}_{p}^{g}(tv),
\qquad
\overline{\gamma}_{v}^{(0)}(t)
=
\overline{\operatorname{Exp}}_{\overline{p}}^{\,\overline{g}}
\bigl(t\Phi(v)\bigr)
=
\varphi^{(0)}\bigl(\gamma_{v}^{(0)}(t)\bigr).
\]
The Levi--Civita parallel displacements along these geodesics are
\[
P^{(0)}_{v,t}
:=
P^{\nabla^{g}}_{\gamma_{v}^{(0)},0\to t},
\qquad
\overline{P}^{(0)}_{v,t}
:=
P^{\nabla^{\overline{g}}}_{\overline{\gamma}_{v}^{(0)},0\to t}.
\]
The radial Levi--Civita parallel identification
\[
\Phi^{(0)}_{v,t}:
T_{\gamma_{v}^{(0)}(t)}M
\longrightarrow
T_{\overline{\gamma}_{v}^{(0)}(t)}\overline{M}
\]
is defined by
\begin{equation}
\Phi^{(0)}_{v,t}
:=
\overline{P}^{(0)}_{v,t}
\circ\Phi\circ
\bigl(P^{(0)}_{v,t}\bigr)^{-1}.
\label{eq:Phi-LC}
\end{equation}
Since $\Phi$ and the two Levi--Civita parallel displacements are linear isometries, $\Phi^{(0)}_{v,t}$ is a linear isometry.

\begin{theorem}
\label{thm:LC-CAH}
In Setting~\textup{\ref{set:alpha-beta-CAH}} with $\alpha=\beta=0$, suppose that, for every $v\in\mathcal{O}_{p}^{(0)}$ and every $t\in[0,1]$, the following two conditions hold\textup{:}
\begin{subequations}
\label{eq:LC-CAH}
\begin{align}
&\Phi^{(0)}_{v,t}\circ
R^{g}_{\dot{\gamma}_{v}^{(0)}(t)}
=
R^{\overline{g}}_{\dot{\overline{\gamma}}_{v}^{(0)}(t)}
\circ\Phi^{(0)}_{v,t},
\label{eq:LC-CAH-Rg}\\
&\Phi^{(0)}_{v,t}\circ K_{u}
=
\overline{K}_{\Phi^{(0)}_{v,t}(u)}
\circ\Phi^{(0)}_{v,t}
\qquad
\bigl(u\in T_{\gamma_{v}^{(0)}(t)}M\bigr).
\label{eq:LC-CAH-K}
\end{align}
\end{subequations}
Then $\varphi^{(0)}$ is a local statisticmorphism\textup{:}
\begin{equation}
(\varphi^{(0)})^{*}\overline{g}=g,
\qquad
(\varphi^{(0)})_{*}\nabla=\overline{\nabla}.
\label{eq:LC-CAH-morph}
\end{equation}

Conversely, let $\varphi$ be a local statisticmorphism satisfying $\varphi(p)=\overline{p}$ and $(d\varphi)_{p}=\Phi$.
After restricting the domains, $\varphi=\varphi^{(0)}$, and the two conditions in \eqref{eq:LC-CAH} hold.
\end{theorem}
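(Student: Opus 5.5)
The plan is to reduce the sufficiency direction to the classical Riemannian CAH argument already packaged in Lemmas~\ref{lem:radial-Jacobi} and \ref{lem:radial-metric}, and then to upgrade to a statisticmorphism using Proposition~\ref{prop:stat-morph-equiv}. Here the parameters vanish, so that proposition does not apply literally. We therefore argue directly with \eqref{eq:nabla-K}.

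\textbf{Metric and differential.} Condition \eqref{eq:LC-CAH-Rg} is exactly hypothesis \eqref{eq:radial-Ralpha} of Lemma~\ref{lem:radial-Jacobi} with $\alpha=\beta=0$. Hence
\[
(d\varphi^{(0)})_{\gamma_{v}^{(0)}(t)}=\Phi^{(0)}_{v,t}
\]
for all $v\in\mathcal{O}_{p}^{(0)}$ and $t\in[0,1]$. The maps $\Phi^{(0)}_{v,t}$ are linear isometries, because Levi--Civita parallel displacement preserves the metrics. Equivalently, Lemma~\ref{lem:radial-metric} with $\alpha=\beta=0$ applies, its $K$-hypothesis being vacuous. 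Every point $q\in O^{(0)}(p)$ has the form $\gamma_{v}^{(0)}(1)$, so $(\varphi^{(0)})^{*}\overline{g}=g$.

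\textbf{Connections.} A local isometry intertwines the Levi--Civita connections, so $(\varphi^{(0)})_{*}\nabla^{g}=\nabla^{\overline{g}}$. Substitute $(d\varphi^{(0)})_{q}=\Phi^{(0)}_{v,1}$ into \eqref{eq:LC-CAH-K}, applied at $t=1$ with arbitrary $u\in T_{q}M$. This gives $\varphi^{(0)}_{*}K=\overline{K}$ at every point. Then \eqref{eq:nabla-K} yields
\[
(\varphi^{(0)})_{*}\nabla=\nabla^{\overline{g}}-\tfrac{1}{2}\varphi^{(0)}_{*}K=\nabla^{\overline{g}}-\tfrac{1}{2}\overline{K}=\overline{\nabla}.
\]
This proves \eqref{eq:LC-CAH-morph}.

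\textbf{Converse.} Let $\varphi$ be a local statisticmorphism with $\varphi(p)=\overline{p}$ and $(d\varphi)_{p}=\Phi$. Then $\varphi$ is a local isometry, so it maps $\nabla^{g}$-geodesics to $\nabla^{\overline{g}}$-geodesics. Uniqueness of geodesics gives $\varphi\circ\gamma_{v}^{(0)}=\overline{\gamma}_{v}^{(0)}$, hence $\varphi=\varphi^{(0)}$ after shrinking the domains.

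The same intertwining of Levi--Civita connections shows that $d\varphi$ carries $\nabla^{g}$-parallel fields along $\gamma_{v}^{(0)}$ to $\nabla^{\overline{g}}$-parallel fields along $\overline{\gamma}_{v}^{(0)}$. Hence $(d\varphi)_{\gamma_{v}^{(0)}(t)}=\Phi^{(0)}_{v,t}$.

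\textbf{Conditions \eqref{eq:LC-CAH} for the converse.} Curvature is natural under connection-preserving maps, and $(d\varphi)$ sends $\dot{\gamma}_{v}^{(0)}$ to $\dot{\overline{\gamma}}_{v}^{(0)}$ by \eqref{eq:radial-tangent}. Together these give \eqref{eq:LC-CAH-Rg}. Next, $\varphi_{*}\nabla=\overline{\nabla}$ and $\varphi_{*}\nabla^{g}=\nabla^{\overline{g}}$ combine with \eqref{eq:nabla-K} to give $\varphi_{*}K=\overline{K}$. Substituting $(d\varphi)=\Phi^{(0)}_{v,t}$ into this identity yields \eqref{eq:LC-CAH-K}.

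The main point needing care is the sufficiency of the $K$-condition. Unlike the case $\alpha\beta\ne0$, the comparison of $K$ plays no role in metric preservation here. It enters only at the final step, through the identification $d\varphi^{(0)}=\Phi^{(0)}_{v,t}$ at every point of the normal neighborhood. So we must check that every point of $O^{(0)}(p)$ is reached at $t=1$ by some radial geodesic, so that the pointwise comparison covers all of $T_{q}M$. This holds because $\operatorname{Exp}_{p}^{g}$ restricts to a diffeomorphism from $\mathcal{O}_{p}^{(0)}$ onto $O^{(0)}(p)$.
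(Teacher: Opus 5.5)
Your proposal is correct and follows the paper's proof essentially step for step. Lemma~\ref{lem:radial-Jacobi} with $\alpha=\beta=0$ identifies $d\varphi^{(0)}$ with the isometries $\Phi^{(0)}_{v,t}$, the $K$-comparison then gives $(\varphi^{(0)})_{*}K=\overline{K}$, and the converse uses uniqueness of Levi--Civita geodesics and parallel displacement. The one difference is cosmetic: where you argue directly from \eqref{eq:nabla-K}, the paper invokes Proposition~\ref{prop:stat-morph-equiv} with $\alpha=\beta=1$. That proposition does apply here, since a statisticmorphism is a $(1,1)$-statisticmorphism, and its proof is exactly your computation.
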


\begin{proof}
Assume \eqref{eq:LC-CAH}.
Equation~\eqref{eq:LC-CAH-Rg} is the hypothesis of Lemma~\ref{lem:radial-Jacobi} with $\alpha=\beta=0$.
Hence
\[
(d\varphi^{(0)})_{\gamma_{v}^{(0)}(t)}
=
\Phi^{(0)}_{v,t}.
\]
Since every $\Phi^{(0)}_{v,t}$ is a linear isometry, $\varphi^{(0)}$ is a local isometry.

At each point $q=\gamma_{v}^{(0)}(t)$, equations~\eqref{eq:LC-CAH-K} and \eqref{eq:dvarphi-Phi} give
\[
(d\varphi^{(0)})_{q}\circ K_{u}
=
\overline{K}_{(d\varphi^{(0)})_{q}(u)}
\circ(d\varphi^{(0)})_{q}
\qquad
(u\in T_{q}M).
\]
Proposition~\ref{prop:stat-morph-equiv}, with $\alpha=\beta=1$, shows that $\varphi^{(0)}$ is a local statisticmorphism.

Conversely, let $\varphi$ be a local statisticmorphism satisfying $\varphi(p)=\overline{p}$ and $(d\varphi)_{p}=\Phi$.
Since $\varphi$ is a local isometry, it intertwines the Levi--Civita connections.
Uniqueness of the corresponding radial geodesics gives $\varphi=\varphi^{(0)}$ after restricting the domains.
Moreover, preservation of Levi--Civita parallel displacement and $(d\varphi)_{p}=\Phi$ give
\[
(d\varphi)_{\gamma_{v}^{(0)}(t)}\circ P^{(0)}_{v,t}
=
\overline{P}^{(0)}_{v,t}\circ\Phi.
\]
By \eqref{eq:Phi-LC}, this identity gives $(d\varphi)_{\gamma_{v}^{(0)}(t)}=\Phi^{(0)}_{v,t}$.
The Levi--Civita connection-intertwining relation gives
\[
(d\varphi)_{q}\circ R^{g}_{z}
=
R^{\overline{g}}_{(d\varphi)_{q}(z)}
\circ(d\varphi)_{q}
\qquad
(z\in T_{q}M).
\]
Taking $q=\gamma_{v}^{(0)}(t)$ and $z=\dot{\gamma}_{v}^{(0)}(t)$ gives \eqref{eq:LC-CAH-Rg}.
Proposition~\ref{prop:stat-morph-equiv} gives $\varphi_{*}K=\overline{K}$, which yields \eqref{eq:LC-CAH-K} under the same identification of $d\varphi$ with $\Phi^{(0)}_{v,t}$.
\end{proof}

\begin{remark}
Theorem~\ref{thm:LC-CAH} recovers \cite[Theorem~3.2]{Satoh2019} and refines its curvature hypothesis by replacing the comparison of the full Riemannian curvature tensors with the comparison \eqref{eq:LC-CAH-Rg} of their Jacobi operators along the corresponding radial geodesics.
\end{remark}

%%%%%%%%%%%%%%%%%%%%%%%%%%%%
%%%%%%%%%%%%%%%%%%%%%%%%%%%%
\subsection{Criteria using statistical curvature or the curvature of the statistical connection}
%%%%%%%%%%%%%%%%%%%%%%%%%%%%
%%%%%%%%%%%%%%%%%%%%%%%%%%%%

The following theorem expresses the curvature criterion of Theorem~\ref{thm:LC-CAH} in terms of $R^{\mathsf{S}}$ or $R$, while retaining the relation \eqref{eq:LC-CAH-K} between the skewness operators.

On $\overline{M}$, write
\[
\overline{R}:=\overline{R}^{(1)},
\qquad
\overline{R}^{*}:=\overline{R}^{(-1)},
\qquad
\overline{R}^{\mathsf{S}}
:=
\frac{1}{2}
\bigl(\overline{R}+\overline{R}^{*}\bigr).
\]

\begin{theorem}
\label{thm:LC-CAH-curv}
In Setting~\textup{\ref{set:alpha-beta-CAH}} with $\alpha=\beta=0$, the following conditions are equivalent.
\begin{enumerate}[label={\rm(\roman*)}]
\item
The map $\varphi^{(0)}$ defined by \eqref{eq:varphi-LC} is a local statisticmorphism.

\item
The relation \eqref{eq:LC-CAH-Rg} between the Jacobi operators of the Riemannian curvature tensors and the relation \eqref{eq:LC-CAH-K} between the skewness operators hold.

\item
The relation \eqref{eq:LC-CAH-K} between the skewness operators holds.
In addition, for every $v\in\mathcal{O}_{p}^{(0)}$ and every $t\in[0,1]$,
\begin{equation}
\Phi^{(0)}_{v,t}\circ
R^{\mathsf{S}}_{\dot{\gamma}_{v}^{(0)}(t)}
=
\overline{R}^{\mathsf{S}}_{\dot{\overline{\gamma}}_{v}^{(0)}(t)}
\circ\Phi^{(0)}_{v,t}.
\label{eq:LC-CAH-RS}
\end{equation}

\item
The relation \eqref{eq:LC-CAH-K} between the skewness operators holds.
In addition, for every $v\in\mathcal{O}_{p}^{(0)}$, every $t\in[0,1]$, and every $u\in T_{\gamma_{v}^{(0)}(t)}M$,
\begin{equation}
\Phi^{(0)}_{v,t}\circ
R\bigl(u,\dot{\gamma}_{v}^{(0)}(t)\bigr)
=
\overline{R}\bigl(
\Phi^{(0)}_{v,t}(u),
\dot{\overline{\gamma}}_{v}^{(0)}(t)
\bigr)
\circ\Phi^{(0)}_{v,t}.
\label{eq:LC-CAH-mixed-R}
\end{equation}
\end{enumerate}
\end{theorem}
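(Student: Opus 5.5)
(i)$\Leftrightarrow$(ii) is exactly Theorem~\ref{thm:LC-CAH}, so the plan is to show (ii)$\Leftrightarrow$(iii) and (ii)$\Leftrightarrow$(iv) under the standing assumption that \eqref{eq:LC-CAH-K} holds. Throughout, $\Phi^{(0)}_{v,t}$ is a linear isometry, and \eqref{eq:LC-CAH-K} says that it intertwines every skewness operator $K_{u}$ with $\overline{K}_{\Phi^{(0)}_{v,t}(u)}$. Hence it also intertwines every algebraic expression built from $K$. In particular, applying \eqref{eq:LC-CAH-K} twice gives
\[
\Phi^{(0)}_{v,t}\circ[K_{u},K_{w}]
=
\bigl[\overline{K}_{\Phi^{(0)}_{v,t}(u)},\overline{K}_{\Phi^{(0)}_{v,t}(w)}\bigr]
\circ\Phi^{(0)}_{v,t}
\]
for all $u,w$.

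For (ii)$\Leftrightarrow$(iii), I will use \eqref{eq:RS-RA-K}, namely $R^{\mathsf{S}}=R^{g}+\frac14[K,K]$, on both manifolds. Taking $w=\dot{\gamma}_{v}^{(0)}(t)$ in the commutator relation and using $\Phi^{(0)}_{v,t}(\dot{\gamma}_{v}^{(0)}(t))=\dot{\overline{\gamma}}_{v}^{(0)}(t)$ from \eqref{eq:radial-tangent}, the radial Jacobi operators of $[K,K]$ are intertwined. So \eqref{eq:LC-CAH-Rg} holds iff \eqref{eq:LC-CAH-RS} holds, by subtracting or adding this intertwining relation.

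For (ii)$\Rightarrow$(iv), I will argue via (i): $\varphi^{(0)}$ is then a local statisticmorphism with $d\varphi^{(0)}=\Phi^{(0)}_{v,t}$ along radial geodesics, by the converse part of Theorem~\ref{thm:LC-CAH}. Naturality of the curvature of $\nabla$ under $\varphi^{(0)}$ then gives \eqref{eq:LC-CAH-mixed-R} directly.

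For (iv)$\Rightarrow$(iii), I will specialize \eqref{eq:LC-CAH-mixed-R} to Jacobi operators: applying it to $\dot{\gamma}$ gives the intertwining of $R_{\dot\gamma}$. The difficulty is to recover $R^{*}$, or equivalently $R^{\mathsf{S}}$, from $R$. Here I will use the adjoint relation $R(X,Y)^{\dagger}=-R^{*}(X,Y)$ from \eqref{eq:dual-curv}. Since $\Phi^{(0)}_{v,t}$ is an isometry, taking $g$-adjoints of \eqref{eq:LC-CAH-mixed-R} gives the intertwining of $R^{*}(u,\dot\gamma)$ as operators. This is why the mixed form with arbitrary $u$ and full operators is needed. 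To make the adjoint step work, I will write \eqref{eq:LC-CAH-mixed-R} as $\Phi R(u,\dot\gamma)\Phi^{-1}=\overline{R}(\Phi u,\dot{\overline\gamma})$; taking adjoints, with $\Phi^{-1}=\Phi^{\dagger}$, yields $\Phi R^{*}(u,\dot\gamma)\Phi^{-1}=\overline{R}^{*}(\Phi u,\dot{\overline\gamma})$. Evaluating both relations at $\dot\gamma$ and averaging gives \eqref{eq:LC-CAH-RS}. Since the Jacobi operator is $R^{*}_{y}(x)=R^{*}(x,y)y$, what is needed is $R^{*}(x,\dot\gamma)\dot\gamma$ for arbitrary $x$, which is exactly what the operator identity applied to $\dot\gamma$ provides. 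The main obstacle is keeping the argument slots straight in this adjoint step; the remaining steps are formal.
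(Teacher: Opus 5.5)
Your proposal is correct and uses the same ingredients as the paper's proof: the converse of Theorem~\ref{thm:LC-CAH} and naturality of the curvature of $\nabla$ under $\varphi^{(0)}$ to get \eqref{eq:LC-CAH-mixed-R}, the adjoint relation \eqref{eq:dual-curv} with the isometry $\Phi^{(0)}_{v,t}$ followed by averaging to pass from $R$ to $R^{\mathsf{S}}$, and $R^{\mathsf{S}}=R^{g}+\frac14[K,K]$ with the twice-applied skewness relation to pass between $R^{\mathsf{S}}$ and $R^{g}$. The only difference is bookkeeping: the paper runs the single cycle (i)$\Rightarrow$(iv)$\Rightarrow$(iii)$\Rightarrow$(ii)$\Rightarrow$(i), whereas you prove (ii)$\Leftrightarrow$(iii) in both directions and reach (iv) from (ii) via (i).
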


\begin{proof}
It suffices to prove
\[
\textup{(i)}
\Longrightarrow
\textup{(iv)}
\Longrightarrow
\textup{(iii)}
\Longrightarrow
\textup{(ii)}
\Longrightarrow
\textup{(i)}.
\]

Assume \textup{(i)}.
Since $\varphi^{(0)}$ intertwines $\nabla$ and $\overline{\nabla}$, its differential intertwines their curvature endomorphisms:
\[
(d\varphi^{(0)})_{q}\circ R(u,z)
=
\overline{R}\bigl(
(d\varphi^{(0)})_{q}(u),
(d\varphi^{(0)})_{q}(z)
\bigr)
\circ(d\varphi^{(0)})_{q}
\]
for every $q\in O^{(0)}(p)$ and all $u,z\in T_{q}M$.
Since $\varphi^{(0)}$ is a local isometry, its differential intertwines the Levi--Civita parallel displacements.
The identity $(d\varphi^{(0)})_{p}=\Phi$ gives
\[
(d\varphi^{(0)})_{\gamma_{v}^{(0)}(t)}
=
\overline{P}^{(0)}_{v,t}
\circ\Phi\circ
\bigl(P^{(0)}_{v,t}\bigr)^{-1}
=
\Phi^{(0)}_{v,t}.
\]
Substituting $q=\gamma_{v}^{(0)}(t)$ and $z=\dot{\gamma}_{v}^{(0)}(t)$ into the curvature-intertwining relation, and using \eqref{eq:radial-tangent} with $\alpha=\beta=0$, gives \eqref{eq:LC-CAH-mixed-R}.
Proposition~\ref{prop:stat-morph-equiv} gives $(\varphi^{(0)})_{*}K=\overline{K}$.
Together with $(d\varphi^{(0)})_{\gamma_{v}^{(0)}(t)}=\Phi^{(0)}_{v,t}$, this gives \eqref{eq:LC-CAH-K}.
Thus \textup{(iv)} holds.

Assume \textup{(iv)}.
Fix $v\in\mathcal{O}_{p}^{(0)}$, $t\in[0,1]$, and $u\in T_{\gamma_{v}^{(0)}(t)}M$.
Taking the $g$-adjoint of the source curvature endomorphism and the $\overline{g}$-adjoint of the target curvature endomorphism in \eqref{eq:LC-CAH-mixed-R}, and using the isometry of $\Phi^{(0)}_{v,t}$ together with \eqref{eq:dual-curv}, gives
\[
\Phi^{(0)}_{v,t}\circ
R^{*}\bigl(u,\dot{\gamma}_{v}^{(0)}(t)\bigr)
=
\overline{R}^{*}\bigl(
\Phi^{(0)}_{v,t}(u),
\dot{\overline{\gamma}}_{v}^{(0)}(t)
\bigr)
\circ\Phi^{(0)}_{v,t}.
\]
Averaging this identity with \eqref{eq:LC-CAH-mixed-R} gives
\[
\Phi^{(0)}_{v,t}\circ
R^{\mathsf{S}}\bigl(u,\dot{\gamma}_{v}^{(0)}(t)\bigr)
=
\overline{R}^{\mathsf{S}}\bigl(
\Phi^{(0)}_{v,t}(u),
\dot{\overline{\gamma}}_{v}^{(0)}(t)
\bigr)
\circ\Phi^{(0)}_{v,t}.
\]
Applying both sides to $\dot{\gamma}_{v}^{(0)}(t)$ and using \eqref{eq:radial-tangent} with $\alpha=\beta=0$ gives \eqref{eq:LC-CAH-RS}.
Together with \eqref{eq:LC-CAH-K}, this proves \textup{(iii)}.

Assume \textup{(iii)}.
Fix $v\in\mathcal{O}_{p}^{(0)}$, $t\in[0,1]$, and $u\in T_{\gamma_{v}^{(0)}(t)}M$.
Applying \eqref{eq:LC-CAH-K} with $u$ and with $\dot{\gamma}_{v}^{(0)}(t)$, and using \eqref{eq:radial-tangent} with $\alpha=\beta=0$, gives
\[
\Phi^{(0)}_{v,t}\circ
[K_{u},K_{\dot{\gamma}_{v}^{(0)}(t)}]
=
[\overline{K}_{\Phi^{(0)}_{v,t}(u)},
 \overline{K}_{\dot{\overline{\gamma}}_{v}^{(0)}(t)}]
\circ\Phi^{(0)}_{v,t}.
\]
Since $u$ is arbitrary, applying this identity to $\dot{\gamma}_{v}^{(0)}(t)$ and using \eqref{eq:radial-tangent} with $\alpha=\beta=0$ gives
\[
\Phi^{(0)}_{v,t}\circ
[K,K]_{\dot{\gamma}_{v}^{(0)}(t)}
=
[\overline{K},\overline{K}]_{\dot{\overline{\gamma}}_{v}^{(0)}(t)}
\circ\Phi^{(0)}_{v,t}.
\]
Equation~\eqref{eq:RS-RA-K} gives
\[
R^{g}_{\dot{\gamma}_{v}^{(0)}(t)}
=
R^{\mathsf{S}}_{\dot{\gamma}_{v}^{(0)}(t)}
-\frac{1}{4}[K,K]_{\dot{\gamma}_{v}^{(0)}(t)}
\]
and
\[
R^{\overline{g}}_{\dot{\overline{\gamma}}_{v}^{(0)}(t)}
=
\overline{R}^{\mathsf{S}}_{\dot{\overline{\gamma}}_{v}^{(0)}(t)}
-\frac{1}{4}
[\overline{K},\overline{K}]_{\dot{\overline{\gamma}}_{v}^{(0)}(t)}.
\]
Combining these identities with \eqref{eq:LC-CAH-RS} gives \eqref{eq:LC-CAH-Rg}.
Together with \eqref{eq:LC-CAH-K}, this proves \textup{(ii)}.

Assume \textup{(ii)}.
Theorem~\ref{thm:LC-CAH} proves \textup{(i)}.
\end{proof}

\begin{remark}
Corollary~3.4 of \cite{Satoh2019} compares the full tensors $R$, $K$, and $\nabla K$ along Levi--Civita geodesics using Levi--Civita parallel displacement.
Theorem~\ref{thm:LC-CAH-curv} refines this criterion: with the same geodesics and parallel identifications, condition~\textup{(iv)} requires only the comparison of $R(u,\dot{\gamma}_{v}^{(0)}(t))w$ and $K(u,w)$ for arbitrary tangent vectors $u,w$ at each point of the geodesics, without a separate comparison of $\nabla K$.
\end{remark}

\begin{corollary}
\label{cor:LC-CAH-sec-curv}
In the setting of Theorem~\textup{\ref{thm:LC-CAH-curv}}, assume that $\dim M\geq2$.
Then condition \textup{(iii)} of that theorem is equivalent to the following two conditions.
\begin{enumerate}[label={\rm(\alph*)}]
\item
The relation \eqref{eq:LC-CAH-K} between the skewness operators holds.

\item
For every $v\in\mathcal{O}_{p}^{(0)}\setminus\{0\}$, every $t\in[0,1]$, and every two-plane $\Pi\subset T_{\gamma_{v}^{(0)}(t)}M$ containing $\dot{\gamma}_{v}^{(0)}(t)$, the statistical sectional curvatures satisfy
\[
\operatorname{Sec}_{R^{\mathsf{S}}}(\Pi)
=
\operatorname{Sec}_{\overline{R}^{\mathsf{S}}}
\bigl(\Phi^{(0)}_{v,t}(\Pi)\bigr).
\]
\end{enumerate}
\end{corollary}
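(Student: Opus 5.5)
Since (a) is common to both sides, the claim reduces to showing that, for fixed $v\ne0$ and $t$, the radial Jacobi operator intertwining \eqref{eq:LC-CAH-RS} is equivalent to equality of statistical sectional curvatures on all radial two-planes. For $v=0$ the radial direction vanishes and both Jacobi operators are zero, so \eqref{eq:LC-CAH-RS} is trivial there. Throughout I use that $\Phi^{(0)}_{v,t}$ is a linear isometry, that $R^{\mathsf{S}}$ and $\overline{R}^{\mathsf{S}}$ are curvature-like (Proposition~\ref{prop:curv-split}(4)), hence their Jacobi operators are self-adjoint, and that \eqref{eq:radial-tangent} holds with $\alpha=\beta=0$. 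Write $y=\dot{\gamma}_{v}^{(0)}(t)$, $\overline{y}=\dot{\overline{\gamma}}_{v}^{(0)}(t)=\Phi^{(0)}_{v,t}(y)$, and $\Psi=\Phi^{(0)}_{v,t}$; note $y\ne0$ since $v\ne0$.

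For (iii)$\Rightarrow$(b), take a plane $\Pi\ni y$ with basis $\{x,y\}$. Then $\Psi(\Pi)$ has basis $\{\Psi x,\overline{y}\}$. By \eqref{eq:LC-CAH-RS},
\[
\overline{g}\bigl(\overline{R}^{\mathsf{S}}_{\overline{y}}(\Psi x),\Psi x\bigr)
=\overline{g}\bigl(\Psi R^{\mathsf{S}}_{y}(x),\Psi x\bigr)
=g\bigl(R^{\mathsf{S}}_{y}(x),x\bigr).
\]
The Gram determinants also agree because $\Psi$ is an isometry, so the sectional curvatures coincide.

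For (b)$\Rightarrow$(iii), consider the symmetric bilinear forms
\[
B(x,z)=g\bigl(R^{\mathsf{S}}_{y}x,z\bigr),
\qquad
\overline{B}(x,z)=\overline{g}\bigl(\overline{R}^{\mathsf{S}}_{\overline{y}}\Psi x,\Psi z\bigr)
\]
on $T_{\gamma_{v}^{(0)}(t)}M$. Condition (b) multiplied by the common Gram determinant gives $B(x,x)=\overline{B}(x,x)$ for every $x$ not parallel to $y$. For $x=\lambda y$, both sides vanish, since $R^{\mathsf{S}}_{y}y=R^{\mathsf{S}}(y,y)y=0$. Hence the quadratic forms agree everywhere, and polarization gives $B=\overline{B}$. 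Since $\Psi$ is an isometry,
\[
\overline{B}(x,z)=g\bigl(\Psi^{-1}\overline{R}^{\mathsf{S}}_{\overline{y}}\Psi x,z\bigr),
\]
so nondegeneracy of $g$ yields $R^{\mathsf{S}}_{y}=\Psi^{-1}\overline{R}^{\mathsf{S}}_{\overline{y}}\Psi$, which is \eqref{eq:LC-CAH-RS}.

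The main care point is the polarization step. Symmetry of $B$ and $\overline{B}$ is needed so that the quadratic forms determine the bilinear forms; this is exactly where the curvature-like property of $R^{\mathsf{S}}$ enters, and it would fail for $R$ alone. One must also handle the vectors $x$ parallel to $y$, which are not covered by (b), and this is done by the vanishing above. The hypothesis $\dim M\ge2$ only guarantees that radial planes exist. In dimension one, (b) is vacuous, but then every Jacobi operator vanishes anyway.
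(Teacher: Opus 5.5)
Your proposal is correct and follows essentially the same route as the paper. In the forward direction you compare numerators via the radial intertwining and use that $\Phi^{(0)}_{v,t}$ preserves Gram determinants; in the converse you extend the quadratic identity to radial vectors, polarize using self-adjointness of the curvature-like Jacobi operators, conclude by nondegeneracy of the metric, and treat $v=0$ separately.
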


\begin{proof}
Assume condition \textup{(iii)} of Theorem~\ref{thm:LC-CAH-curv}.
Equation~\eqref{eq:LC-CAH-K} gives condition \textup{(a)}.
To prove condition \textup{(b)}, fix $v\in\mathcal{O}_{p}^{(0)}\setminus\{0\}$, $t\in[0,1]$, and a two-plane $\Pi$ containing $\dot{\gamma}_{v}^{(0)}(t)$.
Choose $x\in T_{\gamma_{v}^{(0)}(t)}M$ such that $\bigl\{x,\dot{\gamma}_{v}^{(0)}(t)\bigr\}$ is a basis of $\Pi$.
By \eqref{eq:radial-tangent} with $\alpha=\beta=0$, $\bigl\{ \Phi^{(0)}_{v,t}(x), \dot{\overline{\gamma}}_{v}^{(0)}(t) \bigr\}$ is a basis of $\Phi^{(0)}_{v,t}(\Pi)$.
Applying \eqref{eq:LC-CAH-RS} to $x$ and taking the $\overline{g}$-inner product with $\Phi^{(0)}_{v,t}(x)$ gives
\[
g\bigl(
R^{\mathsf{S}}_{\dot{\gamma}_{v}^{(0)}(t)}(x),x
\bigr)
=
\overline{g}\bigl(
\overline{R}^{\mathsf{S}}_{\dot{\overline{\gamma}}_{v}^{(0)}(t)}
\bigl(\Phi^{(0)}_{v,t}(x)\bigr),
\Phi^{(0)}_{v,t}(x)
\bigr).
\]
Moreover, the isometry of $\Phi^{(0)}_{v,t}$ and \eqref{eq:radial-tangent} give
\begin{align*}
&\overline{g}\bigl(
\Phi^{(0)}_{v,t}(x),\Phi^{(0)}_{v,t}(x)
\bigr)
\overline{g}\bigl(
\dot{\overline{\gamma}}_{v}^{(0)}(t),
\dot{\overline{\gamma}}_{v}^{(0)}(t)
\bigr)
-
\overline{g}\bigl(
\Phi^{(0)}_{v,t}(x),
\dot{\overline{\gamma}}_{v}^{(0)}(t)
\bigr)^{2}
\\
&\qquad=
g(x,x)
g\bigl(
\dot{\gamma}_{v}^{(0)}(t),
\dot{\gamma}_{v}^{(0)}(t)
\bigr)
-
g\bigl(
x,\dot{\gamma}_{v}^{(0)}(t)
\bigr)^{2}.
\end{align*}
Thus the numerators and denominators in the definitions of the two sectional curvatures are equal.
Hence condition \textup{(b)} holds.

Conversely, assume conditions \textup{(a)} and \textup{(b)}.
Fix $v\in\mathcal{O}_{p}^{(0)}\setminus\{0\}$ and $t\in[0,1]$.
Since $\dot{\gamma}_{v}^{(0)}$ is Levi--Civita parallel and $\dot{\gamma}_{v}^{(0)}(0)=v\ne0$, one has $\dot{\gamma}_{v}^{(0)}(t)\ne0$.
Moreover, \eqref{eq:radial-tangent} with $\alpha=\beta=0$ gives
\[
\Phi^{(0)}_{v,t}
\bigl(\dot{\gamma}_{v}^{(0)}(t)\bigr)
=
\dot{\overline{\gamma}}_{v}^{(0)}(t).
\]

Let $x\in T_{\gamma_{v}^{(0)}(t)}M$ be linearly independent of $\dot{\gamma}_{v}^{(0)}(t)$.
Applying condition \textup{(b)} to the two-plane $\operatorname{span} \bigl\{x,\dot{\gamma}_{v}^{(0)}(t)\bigr\}$ and using the definition of statistical sectional curvature gives
\begin{equation}
g\bigl(
R^{\mathsf{S}}_{\dot{\gamma}_{v}^{(0)}(t)}(x),x
\bigr)
=
\overline{g}\bigl(
\overline{R}^{\mathsf{S}}_{\dot{\overline{\gamma}}_{v}^{(0)}(t)}
\bigl(\Phi^{(0)}_{v,t}(x)\bigr),
\Phi^{(0)}_{v,t}(x)
\bigr),
\label{eq:radial-RS-quad}
\end{equation}
because the isometry of $\Phi^{(0)}_{v,t}$ makes the denominators in the two sectional curvature formulas equal.
Equation~\eqref{eq:radial-RS-quad} also holds when $x$ is proportional to $\dot{\gamma}_{v}^{(0)}(t)$, since the two Jacobi operators annihilate the corresponding radial tangent vectors.
Thus it holds for every $x\in T_{\gamma_{v}^{(0)}(t)}M$.

Since $R^{\mathsf{S}}$ and $\overline{R}^{\mathsf{S}}$ are curvature-like, their Jacobi operators are self-adjoint.
Applying \eqref{eq:radial-RS-quad} to $x+z$ and subtracting its instances for $x$ and $z$ gives
\[
2g\bigl(
R^{\mathsf{S}}_{\dot{\gamma}_{v}^{(0)}(t)}(x),z
\bigr)
=
2\overline{g}\bigl(
\overline{R}^{\mathsf{S}}_{\dot{\overline{\gamma}}_{v}^{(0)}(t)}
\bigl(\Phi^{(0)}_{v,t}(x)\bigr),
\Phi^{(0)}_{v,t}(z)
\bigr)
\]
for all $x,z\in T_{\gamma_{v}^{(0)}(t)}M$.
Dividing by $2$ and using the isometry of $\Phi^{(0)}_{v,t}$ gives
\[
\overline{g}\Bigl(
\Phi^{(0)}_{v,t}\bigl(
R^{\mathsf{S}}_{\dot{\gamma}_{v}^{(0)}(t)}(x)
\bigr)
-
\overline{R}^{\mathsf{S}}_{\dot{\overline{\gamma}}_{v}^{(0)}(t)}
\bigl(\Phi^{(0)}_{v,t}(x)\bigr),
\Phi^{(0)}_{v,t}(z)
\Bigr)
=0.
\]
As $z$ ranges over $T_{\gamma_{v}^{(0)}(t)}M$, $\Phi^{(0)}_{v,t}(z)$ ranges over $T_{\overline{\gamma}_{v}^{(0)}(t)}\overline{M}$.
The nondegeneracy of $\overline{g}$ gives
\[
\Phi^{(0)}_{v,t}\circ
R^{\mathsf{S}}_{\dot{\gamma}_{v}^{(0)}(t)}
=
\overline{R}^{\mathsf{S}}_{\dot{\overline{\gamma}}_{v}^{(0)}(t)}
\circ\Phi^{(0)}_{v,t}.
\]
This is \eqref{eq:LC-CAH-RS}.

For $v=0$, both radial tangent vectors vanish, so both Jacobi operators vanish.
Together with condition \textup{(a)}, this proves condition \textup{(iii)}.
\end{proof}

\begin{corollary}
\label{cor:LC-CAH-conj-Hess}
In the setting of Theorem~\textup{\ref{thm:LC-CAH-curv}}, the following assertions hold.
\begin{enumerate}[label=\textup{(\arabic*)}]
\item
If both statistical manifolds are conjugate symmetric, then $\varphi^{(0)}$ is a local statisticmorphism if and only if the relation \eqref{eq:LC-CAH-K} between the skewness operators holds and, for every $v\in\mathcal{O}_{p}^{(0)}$ and every $t\in[0,1]$,
\[
\Phi^{(0)}_{v,t}\circ
R_{\dot{\gamma}_{v}^{(0)}(t)}
=
\overline{R}_{\dot{\overline{\gamma}}_{v}^{(0)}(t)}
\circ\Phi^{(0)}_{v,t}.
\]

\item
If both statistical manifolds are Hessian, then $\varphi^{(0)}$ is a local statisticmorphism if and only if the relation \eqref{eq:LC-CAH-K} between the skewness operators holds.
\end{enumerate}
\end{corollary}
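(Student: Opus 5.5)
Both assertions follow from the equivalence (i)$\Leftrightarrow$(iii) of Theorem~\ref{thm:LC-CAH-curv}, once $R^{\mathsf{S}}$ is rewritten in the two special cases. In each case the plan is to show that, when \eqref{eq:LC-CAH-K} is assumed, the curvature condition in the corollary is equivalent to \eqref{eq:LC-CAH-RS}.

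For assertion (1), conjugate symmetry means $R=R^{*}$ on $M$ and $\overline{R}=\overline{R}^{*}$ on $\overline{M}$. By Proposition~\ref{prop:curv-split}(4), this gives $R^{\mathsf{A}}=0$, so $R^{\mathsf{S}}=R$, and likewise $\overline{R}^{\mathsf{S}}=\overline{R}$. Hence the Jacobi operators satisfy $R^{\mathsf{S}}_{\dot{\gamma}_{v}^{(0)}(t)}=R_{\dot{\gamma}_{v}^{(0)}(t)}$ and $\overline{R}^{\mathsf{S}}_{\dot{\overline{\gamma}}_{v}^{(0)}(t)}=\overline{R}_{\dot{\overline{\gamma}}_{v}^{(0)}(t)}$. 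The displayed radial condition in (1) is therefore literally \eqref{eq:LC-CAH-RS}. Together with \eqref{eq:LC-CAH-K}, it is exactly condition (iii) of Theorem~\ref{thm:LC-CAH-curv}, so the equivalence with (i) proves assertion (1).

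For assertion (2), a Hessian manifold has $R=R^{*}=0$, so it is conjugate symmetric with $R^{\mathsf{S}}=0$, and likewise $\overline{R}^{\mathsf{S}}=0$. Then \eqref{eq:LC-CAH-RS} holds trivially, since both sides vanish. Condition (iii) of Theorem~\ref{thm:LC-CAH-curv} thus reduces to \eqref{eq:LC-CAH-K} alone, and this gives sufficiency. Necessity follows from (i)$\Rightarrow$(iii) of the same theorem, which in particular yields \eqref{eq:LC-CAH-K}.

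The argument is a direct specialization with no computation. The only point needing care is that $R^{\mathsf{S}}=R$ under conjugate symmetry, and that this equality passes to the radial Jacobi operators.
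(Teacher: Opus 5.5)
Your proposal is correct and matches the paper's proof. Under conjugate symmetry, $R=R^{*}=R^{\mathsf{S}}$ and $\overline{R}=\overline{R}^{*}=\overline{R}^{\mathsf{S}}$, so condition~(iii) of Theorem~\ref{thm:LC-CAH-curv} is exactly the stated pair of relations; in the Hessian case, $R^{\mathsf{S}}=\overline{R}^{\mathsf{S}}=0$ makes the Jacobi-operator relation automatic, and the equivalence (i)$\Leftrightarrow$(iii) then gives both assertions.
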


\begin{proof}
Assume that both statistical manifolds are conjugate symmetric.
Then
\[
R=R^{*}=R^{\mathsf{S}},
\qquad
\overline{R}
=
\overline{R}^{*}
=
\overline{R}^{\mathsf{S}}.
\]
Consequently, condition \textup{(iii)} of Theorem~\ref{thm:LC-CAH-curv} is precisely the conjunction of the two relations in assertion \textup{(1)}.
The equivalence of conditions \textup{(i)} and \textup{(iii)} in that theorem proves assertion \textup{(1)}.

In the Hessian case, $R=R^{*}=R^{\mathsf{S}}=0$ and $\overline{R} = \overline{R}^{*} = \overline{R}^{\mathsf{S}}=0$.
Hence the relation between the Jacobi operators in condition \textup{(iii)} is automatic, and that condition reduces to \eqref{eq:LC-CAH-K}.
The equivalence of conditions \textup{(i)} and \textup{(iii)} proves assertion \textup{(2)}.
\end{proof}

%%%%%%%%%%%%%%%%%%%%%%%%%%%%
%%%%%%%%%%%%%%%%%%%%%%%%%%%%
\subsection{Parallel Riemannian curvature and skewness tensors}
%%%%%%%%%%%%%%%%%%%%%%%%%%%%
%%%%%%%%%%%%%%%%%%%%%%%%%%%%

Theorem~\ref{thm:LC-CAH} requires relations between the Jacobi operators and between the skewness operators along corresponding radial Levi--Civita geodesics.
If $R^{g}$ and $K$ are $\nabla^{g}$-parallel and $R^{\overline{g}}$ and $\overline{K}$ are $\nabla^{\overline{g}}$-parallel, these relations hold along the geodesics whenever $\Phi$ intertwines the corresponding operators at the base points.
The following corollary recovers \cite[Corollary~3.5]{Satoh2019}.

\begin{corollary}
\label{cor:LC-parallel-CAH}
In Setting~\textup{\ref{set:alpha-beta-CAH}} with $\alpha=\beta=0$, assume that
\[
\nabla^{g}R^{g}=0,
\qquad
\nabla^{g}K=0,
\qquad
\nabla^{\overline{g}}R^{\overline{g}}=0,
\qquad
\nabla^{\overline{g}}\overline{K}=0.
\]
Suppose that the base-point isometry $\Phi:T_{p}M\to T_{\overline{p}}\overline{M}$ satisfies
\[
\Phi\circ R^{g}_{v}
=
R^{\overline{g}}_{\Phi(v)}\circ\Phi,
\qquad
\Phi\circ K_{u}
=
\overline{K}_{\Phi(u)}\circ\Phi
\qquad
(u,v\in T_{p}M).
\]
Then, after restricting the normal neighborhoods, $\varphi^{(0)}$ is a local statisticmorphism.
Moreover, any local statisticmorphism $\varphi$ satisfying
\[
\varphi(p)=\overline{p},
\qquad
(d\varphi)_{p}=\Phi
\]
agrees with $\varphi^{(0)}$ on a neighborhood of $p$.
\end{corollary}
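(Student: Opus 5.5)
The plan is to reduce everything to Theorem~\ref{thm:LC-CAH}. We must check that the base-point relations for $\Phi$ propagate to the relations \eqref{eq:LC-CAH-Rg} and \eqref{eq:LC-CAH-K} along every pair of corresponding radial Levi--Civita geodesics. The uniqueness assertion will then come directly from the converse part of that theorem.

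Fix $v\in\mathcal{O}_{p}^{(0)}$ and $t\in[0,1]$. First, since $\nabla^{g}K=0$, the tensor $K$ is invariant under Levi--Civita parallel displacement:
\[
P^{(0)}_{v,t}\circ K_{u_{p}}=K_{P^{(0)}_{v,t}(u_{p})}\circ P^{(0)}_{v,t}
\qquad(u_{p}\in T_{p}M).
\]
The same identity holds for $\overline{K}$ and $\overline{P}^{(0)}_{v,t}$. Given $u\in T_{\gamma_{v}^{(0)}(t)}M$, put $u_{p}:=(P^{(0)}_{v,t})^{-1}(u)$. By \eqref{eq:Phi-LC},
\[
\Phi^{(0)}_{v,t}(u)=\overline{P}^{(0)}_{v,t}\Phi(u_{p}).
\]
Conjugating the base-point relation $\Phi\circ K_{u_{p}}=\overline{K}_{\Phi(u_{p})}\circ\Phi$ by the two parallel displacements then yields \eqref{eq:LC-CAH-K}.

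The curvature relation works the same way. The condition $\nabla^{g}R^{g}=0$ gives
\[
P^{(0)}_{v,t}\circ R^{g}_{w}=R^{g}_{P^{(0)}_{v,t}(w)}\circ P^{(0)}_{v,t}
\qquad(w\in T_{p}M),
\]
and similarly on $\overline{M}$. Take $w=v$ and note that $P^{(0)}_{v,t}(v)=\dot{\gamma}_{v}^{(0)}(t)$ and $\overline{P}^{(0)}_{v,t}(\Phi(v))=\dot{\overline{\gamma}}_{v}^{(0)}(t)$. The base-point hypothesis $\Phi\circ R^{g}_{v}=R^{\overline{g}}_{\Phi(v)}\circ\Phi$ then transports to \eqref{eq:LC-CAH-Rg}.

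Theorem~\ref{thm:LC-CAH} now shows that $\varphi^{(0)}$ is a local statisticmorphism on the chosen normal neighborhoods. For uniqueness, let $\varphi$ be any local statisticmorphism with $\varphi(p)=\overline{p}$ and $(d\varphi)_{p}=\Phi$. The converse part of Theorem~\ref{thm:LC-CAH} gives $\varphi=\varphi^{(0)}$ after restricting domains, that is, on a neighborhood of $p$.

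The only point needing care is the first step: making precise that a $\nabla^{g}$-parallel tensor commutes with parallel displacement in the stated operator form. The approach I would take is to pick $\nabla^{g}$-parallel fields $U,W$ along $\gamma_{v}^{(0)}$. Then $K(U,W)$, and likewise $R^{g}(W,\dot{\gamma})\dot{\gamma}$ with $\dot{\gamma}$ itself parallel, is parallel, because the covariant derivative of the tensor vanishes. Hence it equals the parallel transport of its initial value. Apart from this, the argument is bookkeeping, and the restriction of neighborhoods is already built into Setting~\ref{set:alpha-beta-CAH}.
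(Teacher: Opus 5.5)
Your proposal is correct and follows the paper's proof essentially step by step. Like the paper, you transport the base-point relations along corresponding radial Levi--Civita geodesics using $\nabla^{g}R^{g}=0$, $\nabla^{g}K=0$ and their barred versions, together with $P^{(0)}_{v,t}(v)=\dot{\gamma}_{v}^{(0)}(t)$, to obtain \eqref{eq:LC-CAH-Rg} and \eqref{eq:LC-CAH-K}, and then invoke Theorem~\ref{thm:LC-CAH} for existence and its converse part for uniqueness.
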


\begin{proof}
Fix $v\in\mathcal{O}_{p}^{(0)}$ and $t\in[0,1]$.
Since $R^{g}$ and $K$ are $\nabla^{g}$-parallel and $\dot{\gamma}_{v}^{(0)}(t)=P^{(0)}_{v,t}(v)$, one has
\[
R^{g}_{\dot{\gamma}_{v}^{(0)}(t)}\circ P^{(0)}_{v,t}
=
P^{(0)}_{v,t}\circ R^{g}_{v}
\]
and
\[
K_{P^{(0)}_{v,t}(w)}\circ P^{(0)}_{v,t}
=
P^{(0)}_{v,t}\circ K_{w}
\qquad
(w\in T_{p}M).
\]
The analogous identities hold for $R^{\overline{g}}$ and $\overline{K}$ with $\overline{P}^{(0)}_{v,t}$ and initial vectors $\Phi(v)$ and $\Phi(w)$.
Since $P^{(0)}_{v,t}$ is a linear isomorphism, every $u\in T_{\gamma_{v}^{(0)}(t)}M$ has the form $u=P^{(0)}_{v,t}(w)$ for some $w\in T_{p}M$.
Combining these transport identities with the two base-point relations and \eqref{eq:Phi-LC} gives
\[
\Phi^{(0)}_{v,t}\circ
R^{g}_{\dot{\gamma}_{v}^{(0)}(t)}
=
R^{\overline{g}}_{\dot{\overline{\gamma}}_{v}^{(0)}(t)}
\circ\Phi^{(0)}_{v,t}
\]
and
\[
\Phi^{(0)}_{v,t}\circ K_{u}
=
\overline{K}_{\Phi^{(0)}_{v,t}(u)}
\circ\Phi^{(0)}_{v,t}
\qquad
\bigl(u\in T_{\gamma_{v}^{(0)}(t)}M\bigr).
\]
Thus the two conditions in \eqref{eq:LC-CAH} hold.
Theorem~\ref{thm:LC-CAH} shows that $\varphi^{(0)}$ is a local statisticmorphism with the prescribed initial data.

Let $\psi$ be another local statisticmorphism satisfying
\[
\psi(p)=\overline{p},
\qquad
(d\psi)_{p}=\Phi.
\]
The converse assertion in Theorem~\ref{thm:LC-CAH} shows that $\psi=\varphi^{(0)}$ on a neighborhood of $p$.
\end{proof}

%%%%%%%%%%%%%%%%%%%%%%%%%%%%
%%%%%%%%%%%%%%%%%%%%%%%%%%%%
\subsection{Levi--Civita spray maps between dual statistical structures}
\label{subsec:LC-dual-spray}
%%%%%%%%%%%%%%%%%%%%%%%%%%%%
%%%%%%%%%%%%%%%%%%%%%%%%%%%%

Theorem~\ref{thm:LC-CAH-curv} admits the following specialization to the dual statistical structure.

\begin{corollary}
\label{cor:LC-dual-CAH}
Let $(M,g,\nabla)$ be a statistical manifold, and let $p\in M$.
In Setting~\textup{\ref{set:alpha-beta-CAH}} with $\alpha=\beta=0$, take
\[
(\overline{M},\overline{g},\overline{\nabla})
=
(M,g,\nabla^{*}),
\qquad
\overline{p}=p,
\qquad
\Phi=-\id_{T_{p}M}.
\]
For these data, $\overline{\gamma}_{v}^{(0)}=\gamma_{-v}^{(0)}$ for every $v\in\mathcal{O}_{p}^{(0)}$.
Moreover, for every such $v$ and every $t\in[0,1]$,
\begin{equation}
\Phi_{v,t}^{(0)}
=
P_{-v,t}^{(0)}
\circ
\bigl(-\id_{T_{p}M}\bigr)
\circ
\bigl(P_{v,t}^{(0)}\bigr)^{-1}.
\label{eq:LC-dual-Phi}
\end{equation}
The corresponding spray map is
\begin{equation}
s_{p}^{(0)}
:=
\varphi^{(0)}
=
\operatorname{Exp}_{p}^{g}
\circ
\bigl(-\id_{T_{p}M}\bigr)
\circ
\bigl(\operatorname{Exp}_{p}^{g}\bigr)^{-1}.
\label{eq:LC-dual-spray}
\end{equation}
The following conditions are equivalent.
\begin{enumerate}[label={\rm(\roman*)}]
\item
The map $s_{p}^{(0)}$ in \eqref{eq:LC-dual-spray} is a local statisticmorphism from $(M,g,\nabla)$ to $(M,g,\nabla^{*})$.

\item
For every $v\in\mathcal{O}_{p}^{(0)}$ and every $t\in[0,1]$,
\begin{equation}
\Phi_{v,t}^{(0)}
\circ
R^{g}_{\dot{\gamma}_{v}^{(0)}(t)}
=
R^{g}_{\dot{\gamma}_{-v}^{(0)}(t)}
\circ
\Phi_{v,t}^{(0)}.
\label{eq:LC-dual-Rg}
\end{equation}
In addition, for every $u\in T_{\gamma_{v}^{(0)}(t)}M$,
\begin{equation}
\Phi_{v,t}^{(0)}
\circ K_{u}
=
-K_{\Phi_{v,t}^{(0)}(u)}
\circ
\Phi_{v,t}^{(0)}.
\label{eq:LC-dual-K}
\end{equation}

\item
The relation \eqref{eq:LC-dual-K} between the skewness operators holds.
In addition, for every $v\in\mathcal{O}_{p}^{(0)}$ and every $t\in[0,1]$,
\begin{equation}
\Phi_{v,t}^{(0)}
\circ
R^{\mathsf{S}}_{\dot{\gamma}_{v}^{(0)}(t)}
=
R^{\mathsf{S}}_{\dot{\gamma}_{-v}^{(0)}(t)}
\circ
\Phi_{v,t}^{(0)}.
\label{eq:LC-dual-RS}
\end{equation}

\item
The relation \eqref{eq:LC-dual-K} between the skewness operators holds.
In addition, for every $v\in\mathcal{O}_{p}^{(0)}$, every $t\in[0,1]$, and every $u\in T_{\gamma_{v}^{(0)}(t)}M$,
\begin{equation}
\Phi_{v,t}^{(0)}
\circ
R\bigl(u,\dot{\gamma}_{v}^{(0)}(t)\bigr)
=
R^{*}\bigl(
\Phi_{v,t}^{(0)}(u),
\dot{\gamma}_{-v}^{(0)}(t)
\bigr)
\circ
\Phi_{v,t}^{(0)}.
\label{eq:LC-dual-mixed-R}
\end{equation}
\end{enumerate}

By construction, $s_{p}^{(0)}(p)=p$ and $(ds_{p}^{(0)})_{p}=-\id_{T_{p}M}$.
Under the equivalent conditions above, the same map also satisfies
\begin{equation}
(s_{p}^{(0)})_{*}\nabla^{(\eta)}
=
\nabla^{(-\eta)}
\qquad
(\eta\in\mathbb{R}).
\label{eq:LC-dual-family}
\end{equation}

Conversely, let $s_{p}$ be a local isometry such that $s_{p}(p)=p$ and $(ds_{p})_{p}=-\id_{T_{p}M}$, and suppose that $(s_{p})_{*}\nabla^{(\eta_{0})}=\nabla^{(-\eta_{0})}$ for some $\eta_{0}\ne0$.
After restriction to a common neighborhood of $p$, one has $s_{p}=s_{p}^{(0)}$, and all four equivalent conditions above hold.
\end{corollary}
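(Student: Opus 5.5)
The plan is to specialize Theorem~\ref{thm:LC-CAH-curv} to the data $(\overline{M},\overline{g},\overline{\nabla})=(M,g,\nabla^{*})$, $\overline{p}=p$, $\Phi=-\id_{T_{p}M}$, and then translate each barred object into an unbarred one.

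First I record the preliminary identifications. The target metric is $g$, so the target Levi--Civita connection, exponential map and parallel displacement are those of $g$. The target radial geodesic is $t\mapsto\operatorname{Exp}_{p}^{g}(t\Phi(v))=\operatorname{Exp}_{p}^{g}(-tv)=\gamma_{-v}^{(0)}(t)$. Hence $\overline{P}^{(0)}_{v,t}=P^{(0)}_{-v,t}$, and \eqref{eq:Phi-LC} becomes \eqref{eq:LC-dual-Phi}. Likewise \eqref{eq:varphi-LC} becomes \eqref{eq:LC-dual-spray}. Both convex neighborhoods can be taken symmetric, so $-\mathcal{O}_{p}^{(0)}$ is admissible.

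Next I translate the barred curvature and skewness data. The target statistical structure $(g,\nabla^{*})$ has skewness tensor $\overline{K}=K^{*}=-K$ and curvature $\overline{R}=R^{*}$, with $\overline{R}^{*}=R$ since $(\nabla^{*})^{*}=\nabla$. Therefore $\overline{R}^{\mathsf{S}}=R^{\mathsf{S}}$ and $R^{\overline{g}}=R^{g}$. Substituting these into \eqref{eq:LC-CAH-Rg}, \eqref{eq:LC-CAH-K}, \eqref{eq:LC-CAH-RS} and \eqref{eq:LC-CAH-mixed-R} gives exactly \eqref{eq:LC-dual-Rg}, \eqref{eq:LC-dual-K}, \eqref{eq:LC-dual-RS} and \eqref{eq:LC-dual-mixed-R}. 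Conditions (i)--(iv) are then the corresponding conditions of Theorem~\ref{thm:LC-CAH-curv}, and their equivalence follows from that theorem. The identities $s_{p}^{(0)}(p)=p$ and $(ds_{p}^{(0)})_{p}=-\id$ hold because the differentials of the exponential maps at zero are the identity.

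For \eqref{eq:LC-dual-family}, I note that a local statisticmorphism from $(M,g,\nabla)$ to $(M,g,\nabla^{*})$ is an isometry satisfying $(s_{p}^{(0)})_{*}\nabla=\nabla^{*}=\nabla^{(-1)}$. This makes it a $(1,-1)$-statisticmorphism of $(M,g,\nabla)$ to itself. Proposition~\ref{prop:stat-morph-equiv}\,(vi) with $\alpha=1$, $\beta=-1$ then gives $(s_{p}^{(0)})_{*}\nabla^{(\eta)}=\nabla^{(-\eta)}$ for every $\eta$.

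For the converse, the hypothesis $(s_{p})_{*}\nabla^{(\eta_{0})}=\nabla^{(-\eta_{0})}$ with $\eta_{0}\ne0$ is condition (v) of Proposition~\ref{prop:stat-morph-equiv} for the pair $(1,-1)$, with $\eta=\eta_{0}$ and $\eta\beta/\alpha=-\eta_{0}$. That proposition gives (vi), in particular $(s_{p})_{*}\nabla=\nabla^{*}$. So $s_{p}$ is a local statisticmorphism $(M,g,\nabla)\to(M,g,\nabla^{*})$ with $s_{p}(p)=p$ and $(ds_{p})_{p}=-\id$. The converse part of Theorem~\ref{thm:LC-CAH} then yields $s_{p}=s_{p}^{(0)}$ after restricting domains, so (i) holds and, by the equivalence, all four conditions hold.

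I expect no real obstacle. The only points needing care are the bookkeeping $\overline{R}^{*}=R$, so that $\overline{R}^{\mathsf{S}}=R^{\mathsf{S}}$, and checking that the domain of $\varphi^{(0)}$ is compatible with $\Phi=-\id$.
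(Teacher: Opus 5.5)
Your proposal is correct and follows the paper's proof essentially step for step. You specialize Theorem~\ref{thm:LC-CAH-curv} to the dual target via $\overline{K}=-K$, $\overline{R}=R^{*}$, $\overline{R}^{*}=R$, $\overline{R}^{\mathsf{S}}=R^{\mathsf{S}}$, then obtain \eqref{eq:LC-dual-family} and the converse from Proposition~\ref{prop:stat-morph-equiv} together with the converse part of Theorem~\ref{thm:LC-CAH}. The only difference is cosmetic and equivalent: you apply Proposition~\ref{prop:stat-morph-equiv} to $(M,g,\nabla)$ itself with $(\alpha,\beta)=(1,-1)$, while the paper applies it with $\alpha=\beta=1$ to the dual target $(M,g,\nabla^{*})$, where $\overline{\nabla}^{(\eta)}=\nabla^{(-\eta)}$.
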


\begin{proof}
For the target statistical manifold $(M,g,\nabla^{*})$, the Levi--Civita connection remains $\nabla^{g}$, whereas the skewness tensor is $-K$.
Since $\overline{g}=g$ and $\Phi(v)=-v$, uniqueness of Levi--Civita geodesics gives $\overline{\gamma}_{v}^{(0)}=\gamma_{-v}^{(0)}$, and the corresponding parallel displacements satisfy $\overline{P}_{v,t}^{(0)}=P_{-v,t}^{(0)}$.
Equations~\eqref{eq:varphi-LC} and \eqref{eq:Phi-LC} give \eqref{eq:LC-dual-spray} and \eqref{eq:LC-dual-Phi}, respectively.

Moreover, equation~\eqref{eq:alpha-family} and $\overline{K}=-K$ give $\overline{\nabla}^{(\eta)} =\nabla^{g}-(\eta/2)\overline{K} =\nabla^{(-\eta)}$ for every $\eta\in\mathbb{R}$.
Dualization interchanges $R$ and $R^{*}$ and leaves their average invariant.
Hence $\overline{R}=R^{*}$, $\overline{R}^{*}=R$, and $\overline{R}^{\mathsf{S}}=R^{\mathsf{S}}$.

Under these identifications, conditions \textup{(i)}--\textup{(iv)} are precisely conditions \textup{(i)}--\textup{(iv)} of Theorem~\ref{thm:LC-CAH-curv}.
Consequently, their equivalence follows from that theorem.

Furthermore, equation~\eqref{eq:LC-dual-spray} implies $s_{p}^{(0)}(p)=p$ and $(ds_{p}^{(0)})_{p}=-\id_{T_{p}M}$.
Applying \eqref{eq:family-scaling} with $\alpha=\beta=1$ to the dual target, for which $\overline{\nabla}^{(\eta)}=\nabla^{(-\eta)}$, gives \eqref{eq:LC-dual-family}.

Conversely, the assumed relation shows that condition \textup{(v)} of Proposition~\ref{prop:stat-morph-equiv} holds for the statistical manifolds $(M,g,\nabla)$ and $(M,g,\nabla^{*})$, with $\alpha=\beta=1$.
Hence $s_{p}$ is a local statisticmorphism from $(M,g,\nabla)$ to $(M,g,\nabla^{*})$.
The converse assertion in Theorem~\ref{thm:LC-CAH} yields $s_{p}=s_{p}^{(0)}$ after restriction to a common neighborhood of $p$, together with condition \textup{(ii)}.
Finally, conditions \textup{(iii)} and \textup{(iv)} follow from the equivalence already established.
\end{proof}

For the dual target with $\overline{p}=p$ and $\Phi=-\id_{T_{p}M}$, the base-point conditions in Corollary~\ref{cor:LC-parallel-CAH} hold automatically.

\begin{corollary}
\label{cor:LC-parallel-dual}
Let $(M,g,\nabla)$ be a statistical manifold satisfying $\nabla^{g}R^{g}=0$ and $\nabla^{g}K=0$.
Then, for every $p\in M$, the equivalent conditions in Corollary~\textup{\ref{cor:LC-dual-CAH}} hold on sufficiently small normal neighborhoods.
In particular, the map $s_{p}^{(0)}$ in \eqref{eq:LC-dual-spray} is a local statisticmorphism from $(M,g,\nabla)$ to $(M,g,\nabla^{*})$.
\end{corollary}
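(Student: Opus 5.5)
The plan is to reduce Corollary~\ref{cor:LC-parallel-dual} to Corollary~\ref{cor:LC-parallel-CAH}, applied to the source $(M,g,\nabla)$, the target $(\overline{M},\overline{g},\overline{\nabla})=(M,g,\nabla^{*})$, the base points $\overline{p}=p$, and the initial isometry $\Phi=-\id_{T_{p}M}$. Then I would read off condition (i) of Corollary~\ref{cor:LC-dual-CAH}, and the other equivalent conditions follow from that corollary.

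\emph{Target hypotheses.} For the target, the Levi--Civita connection is still $\nabla^{g}$, and its skewness tensor is $\overline{K}=K^{*}=-K$. Hence
\[
\nabla^{\overline{g}}R^{\overline{g}}=\nabla^{g}R^{g}=0,
\qquad
\nabla^{\overline{g}}\overline{K}=-\nabla^{g}K=0,
\]
so the four parallelism assumptions of Corollary~\ref{cor:LC-parallel-CAH} hold. The map $-\id_{T_{p}M}$ is a linear isometry of $(T_{p}M,g_{p})$.

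\emph{Base-point relations.} Since $R^{g}_{v}(x)=R^{g}(x,v)v$ is quadratic in $v$ and linear in $x$,
\[
(-\id)\circ R^{g}_{v}=-R^{g}_{v}=R^{g}_{-v}\circ(-\id).
\]
Since $K$ is bilinear,
\[
(-\id)\bigl(K_{u}(w)\bigr)=-K(u,w),
\qquad
\overline{K}_{-u}(-w)=-K(-u,-w)=-K(u,w),
\]
so $\Phi\circ K_{u}=\overline{K}_{\Phi(u)}\circ\Phi$. Thus both base-point relations in Corollary~\ref{cor:LC-parallel-CAH} hold.

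\emph{Conclusion.} Corollary~\ref{cor:LC-parallel-CAH} then shows that, after restricting the normal neighborhoods, $\varphi^{(0)}$ is a local statisticmorphism from $(M,g,\nabla)$ to $(M,g,\nabla^{*})$. For these data, $\varphi^{(0)}$ is exactly the map $s_{p}^{(0)}$ in \eqref{eq:LC-dual-spray}, as identified in Corollary~\ref{cor:LC-dual-CAH}. This is condition~(i) there, and that corollary gives the equivalence with (ii)--(iv) together with \eqref{eq:LC-dual-family}.

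The only point needing care is making sure the normal neighborhoods used in Corollary~\ref{cor:LC-parallel-CAH} are the domains $O^{(0)}(p)$ of Corollary~\ref{cor:LC-dual-CAH}. Since $\mathcal{O}^{(0)}_{p}$ may be shrunk to a convex (ball) neighborhood invariant under $-\id$, both corollaries can be stated on the same neighborhood. There is no real obstacle; everything else is a direct substitution.
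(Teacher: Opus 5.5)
Your proposal is correct and follows essentially the same route as the paper's proof. Both take the dual target $(M,g,\nabla^{*})$ with $\overline{p}=p$ and $\Phi=-\id_{T_{p}M}$, check the parallelism of $R^{\overline{g}}=R^{g}$ and $\overline{K}=-K$, verify the base-point relations by multilinearity, invoke Corollary~\ref{cor:LC-parallel-CAH}, and read off the remaining conditions from Corollary~\ref{cor:LC-dual-CAH}.
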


\begin{proof}
Fix $p\in M$ and take the source and target data specified in Corollary~\ref{cor:LC-dual-CAH}.
Since $\overline{g}=g$ and $\overline{K}=-K$, the dual target satisfies $\nabla^{\overline{g}}R^{\overline{g}}=0$ and $\nabla^{\overline{g}}\overline{K}=0$.
Moreover, by multilinearity, the base-point isometry $\Phi=-\id_{T_{p}M}$ satisfies
\[
\Phi\circ R^{g}_{v}
=
R^{g}_{\Phi(v)}\circ\Phi,
\qquad
\Phi\circ K_{u}
=
\overline{K}_{\Phi(u)}\circ\Phi
\qquad
(u,v\in T_{p}M).
\]
Thus all hypotheses of Corollary~\ref{cor:LC-parallel-CAH} are satisfied, and $s_{p}^{(0)}$ is a local statisticmorphism from $(M,g,\nabla)$ to $(M,g,\nabla^{*})$.
Finally, the remaining assertions follow from Corollary~\ref{cor:LC-dual-CAH}.
\end{proof}

%%%%%%%%%%%%%%%%%%%%%%%%%%%%
%%%%%%%%%%%%%%%%%%%%%%%%%%%%
%%%%%%%%%%%%%%%%%%%%%%%%%%%%
\section{Dual geodesic symmetries and locally statistical symmetric spaces}
\label{sec:local-stat-sym}
%%%%%%%%%%%%%%%%%%%%%%%%%%%%
%%%%%%%%%%%%%%%%%%%%%%%%%%%%
%%%%%%%%%%%%%%%%%%%%%%%%%%%%

This section introduces dual geodesic symmetries and compares the spray maps in Corollaries~\ref{cor:alpha-dual-CAH} and~\ref{cor:LC-dual-CAH}.
For each fixed $\alpha\ne0$, the two maps agree on a sufficiently small common neighborhood whenever either map is a dual geodesic symmetry.
The existence of local dual geodesic symmetries at every point is characterized by the conditions $\nabla^{g}R^{g}=0$ and $\nabla^{g}K=0$.

%%%%%%%%%%%%%%%%%%%%%%%%%%%%
%%%%%%%%%%%%%%%%%%%%%%%%%%%%
\subsection{Dual geodesic symmetries on normal neighborhoods}
%%%%%%%%%%%%%%%%%%%%%%%%%%%%
%%%%%%%%%%%%%%%%%%%%%%%%%%%%

For $\alpha\ne0$, the spray map in Corollary~\ref{cor:alpha-dual-CAH} has the form
\begin{equation}
s_{p}^{(\alpha)}
=
\operatorname{Exp}_{p}^{(-\alpha)}
\circ\bigl(-\id_{T_{p}M}\bigr)
\circ\bigl(\operatorname{Exp}_{p}^{(\alpha)}\bigr)^{-1}.
\label{eq:alpha-sym-spray}
\end{equation}
This construction uses the dual connections $\nabla^{(\alpha)}$ and $\nabla^{(-\alpha)}$ on the same statistical manifold.
In Corollary~\ref{cor:LC-dual-CAH}, both connection parameters are zero, and the target statistical manifold is $(M,g,\nabla^{*})$.
The corresponding Levi--Civita spray map in \eqref{eq:LC-dual-spray} is
\begin{equation}
s_{p}^{(0)}
=
\operatorname{Exp}_{p}^{g}
\circ\bigl(-\id_{T_{p}M}\bigr)
\circ\bigl(\operatorname{Exp}_{p}^{g}\bigr)^{-1}.
\label{eq:LC-sym-spray}
\end{equation}
Its exponential formula is obtained by formally setting $\alpha=0$ in \eqref{eq:alpha-sym-spray}.
However, the corresponding CAH criterion does not follow by this substitution: at zero parameter the condition comparing the scaled skewness tensors is vacuous, whereas the Levi--Civita criterion imposes the relation with the skewness tensor of the dual target independently.

Write $\mathcal{O}_{p}^{(0)}:=\mathcal{O}_{p}^{(0,0)}$, $O^{(0)}(p):=O^{(0,0)}(p)$, and $\overline{O}^{(0)}(p):=\overline{O}^{(0,0)}(p)$ for the tangent-space, source, and target neighborhoods, respectively, in the specialization used in Corollary~\ref{cor:LC-dual-CAH}.
After replacing the corresponding convex neighborhood $\mathcal{O}_{p}^{(0)}$ by $\mathcal{O}_{p}^{(0)}\cap\bigl(-\mathcal{O}_{p}^{(0)}\bigr)$ and replacing its exponential images accordingly, one may assume that $\mathcal{O}_{p}^{(0)}=-\mathcal{O}_{p}^{(0)}$ and $O^{(0)}(p)=\overline{O}^{(0)}(p)$.
Consequently, the map in \eqref{eq:LC-sym-spray} is a self-diffeomorphism of $O^{(0)}(p)$.
Since the statistical structure is fixed throughout this section, put
\begin{equation}
s_{p}:=s_{p}^{(0)}:
O^{(0)}(p)\longrightarrow O^{(0)}(p).
\label{eq:sp-domain}
\end{equation}
By construction,
\begin{equation}
s_{p}(p)=p,
\qquad
\bigl(ds_{p}\bigr)_{p}=-\id_{T_{p}M},
\qquad
s_{p}^{2}=\id_{O^{(0)}(p)}.
\label{eq:sp-properties}
\end{equation}
The map $s_{p}$ is the Riemannian geodesic symmetry determined by $g$ on this normal neighborhood.
Whether it is a local isometry and interchanges the statistical connections is an additional condition.

\begin{definition}
\label{def:dual-geod-sym}
The map $s_{p}$ in \eqref{eq:sp-domain} is called a \emph{dual geodesic symmetry of $(M,g,\nabla)$ at $p$} if it is a local statisticmorphism
\[
s_{p}:
(O^{(0)}(p),g|_{O^{(0)}(p)},\nabla|_{O^{(0)}(p)})
\longrightarrow
(O^{(0)}(p),g|_{O^{(0)}(p)},\nabla^{*}|_{O^{(0)}(p)}).
\]
Equivalently,
\begin{equation}
s_{p}^{*}g=g,
\qquad
(s_{p})_{*}\nabla=\nabla^{*}
\quad\text{on }O^{(0)}(p).
\label{eq:dual-stat-morph}
\end{equation}
The statistical manifold $(M,g,\nabla)$ is called a \emph{locally statistical symmetric space} if, for every $p\in M$, the map $s_{p}$ is a dual geodesic symmetry on a sufficiently small normal neighborhood $O^{(0)}(p)$ as above.
\end{definition}

A dual geodesic symmetry restricts to a dual geodesic symmetry on every smaller $s_{p}$-invariant normal neighborhood $O(p)\subset O^{(0)}(p)$.
As a local isometry, it is precisely the local Riemannian geodesic symmetry.
In particular, the underlying Riemannian manifold of a locally statistical symmetric space is locally symmetric.
By Corollary~\ref{cor:LC-dual-CAH}, the condition in Definition~\ref{def:dual-geod-sym} is equivalent to the Levi--Civita conditions \eqref{eq:LC-dual-Rg}--\eqref{eq:LC-dual-K} on the corresponding radial geodesics.

\begin{lemma}
\label{lem:dual-sym-equiv}
Let $p\in M$, and let $s_{p}:O^{(0)}(p)\to O^{(0)}(p)$ be the map in \eqref{eq:sp-domain}.
The following conditions are equivalent.
\begin{enumerate}[label={\rm(\roman*)}]
\item
$s_{p}$ is a dual geodesic symmetry at $p$.

\item
$s_{p}$ is a local isometry satisfying
\begin{equation}
(s_{p})_{*}K=K^{*}=-K.
\label{eq:dual-sym-K}
\end{equation}
Equivalently, $s_{p}$ is a local isometry satisfying $s_{p}^{*}C=-C$.

\item
For some $\alpha\ne0$, the map $s_{p}$ is a local isometry satisfying
\[
(s_{p})_{*}\nabla^{(\alpha)}=\nabla^{(-\alpha)}.
\]

\item
The map $s_{p}$ is a local isometry satisfying
\begin{equation}
(s_{p})_{*}\nabla^{(\eta)}=\nabla^{(-\eta)}
\label{eq:dual-sym-alpha}
\end{equation}
for every $\eta\in\mathbb{R}$.
\end{enumerate}
All tensor and connection identities are on $O^{(0)}(p)$.
\end{lemma}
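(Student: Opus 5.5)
The plan is to read the lemma as Proposition~\ref{prop:stat-morph-equiv} specialized to one self-map. We take the source $(O^{(0)}(p),g,\nabla)$ and the target $(O^{(0)}(p),g,\nabla)$, that is, the same statistical manifold with $\overline{g}=g$, $\overline{K}=K$, $\overline{C}=C$. Every condition in (ii)--(iv) includes that $s_{p}$ is a local isometry. In (i), the isometry condition is part of \eqref{eq:dual-stat-morph}. So in each implication we may assume $s_{p}^{*}g=g$, and we only need to compare the conditions on $K$ and on the connections.

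First, (i)$\Leftrightarrow$(ii). The target of a dual geodesic symmetry is $(O^{(0)}(p),g,\nabla^{*})$, whose skewness tensor is $K^{*}=-K$ and whose Levi--Civita connection is again $\nabla^{g}$. Proposition~\ref{prop:stat-morph-equiv} with $\alpha=\beta=1$, applied to this dual target, says that for a local isometry, $(s_{p})_{*}\nabla=\nabla^{*}$ is equivalent to $(s_{p})_{*}K=K^{*}=-K$. The same proposition, item (iii), converts this into $s_{p}^{*}C^{*}=C$, i.e.\ $s_{p}^{*}C=-C$, using $C^{*}=-C$ from $\nabla^{*}g=-\nabla g$. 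This also gives the equivalent formulation stated in (ii).

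Next, to link (ii), (iii) and (iv), apply Proposition~\ref{prop:stat-morph-equiv} to the pair $(M,g,\nabla)$, $(M,g,\nabla)$ with parameters $(\alpha,\beta)=(1,-1)$. Its item (ii) reads $\varphi_{*}K=-K$, which is \eqref{eq:dual-sym-K}. Its item (v) says that $(s_{p})_{*}\nabla^{(\eta)}=\nabla^{(-\eta)}$ for some $\eta\ne0$, which is (iii). Its item (vi) says the same identity holds for every $\eta\in\mathbb{R}$, which is (iv). Hence (ii)$\Leftrightarrow$(iii)$\Leftrightarrow$(iv). If one prefers a direct check over citing the proposition, use that the isometry $s_{p}$ preserves $\nabla^{g}$. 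Then \eqref{eq:eta-K} gives $(s_{p})_{*}\nabla^{(\eta)}=\nabla^{g}-\frac{\eta}{2}(s_{p})_{*}K$, and comparing with $\nabla^{(-\eta)}=\nabla^{g}+\frac{\eta}{2}K$ shows that the identity for a single $\eta\ne0$ is equivalent to $(s_{p})_{*}K=-K$.

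There is no real obstacle here. The only care needed is bookkeeping: Proposition~\ref{prop:stat-morph-equiv} is stated for a diffeomorphism between two statistical manifolds, so each application must state which target structure is used (the dual structure for (i), the original structure with $\beta=-1$ for (iii)--(iv)). One also has to note that everything happens on the $s_{p}$-invariant neighborhood $O^{(0)}(p)$, where $s_{p}$ is a diffeomorphism by \eqref{eq:sp-properties}.
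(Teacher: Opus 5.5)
Your proposal is correct and follows the paper's proof, which likewise factors out the isometry condition and reduces everything to Proposition~\ref{prop:stat-morph-equiv}. The only difference is bookkeeping. The paper applies that proposition once, to the dual target $(M,g,\nabla^{*})$ with $\alpha=\beta=1$, using $\overline{K}=-K$, $\overline{C}=-C$ and $\overline{\nabla}^{(\eta)}=\nabla^{(-\eta)}$, whereas your second application with $(\alpha,\beta)=(1,-1)$ on the original target is the same computation re-indexed (on its own it would already cover all four conditions, since its item (i) reads $(s_{p})_{*}\nabla=\nabla^{(-1)}=\nabla^{*}$).
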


\begin{proof}
Each condition requires $s_{p}$ to be a local isometry.
For the dual target $(M,g,\nabla^{*})$, one has $\overline{K}=-K$, $\overline{C}=-C$, and $\overline{\nabla}^{(\eta)}=\nabla^{(-\eta)}$.
Applying Proposition~\ref{prop:stat-morph-equiv} with $\alpha=\beta=1$ to this target gives the stated equivalences.
\end{proof}

%%%%%%%%%%%%%%%%%%%%%%%%%%%%
%%%%%%%%%%%%%%%%%%%%%%%%%%%%
\subsection{Dual geodesic symmetries and exponential maps of alpha-connections}
%%%%%%%%%%%%%%%%%%%%%%%%%%%%
%%%%%%%%%%%%%%%%%%%%%%%%%%%%

Lemma~\ref{lem:dual-sym-equiv} concerns the Levi--Civita spray map $s_{p}=s_{p}^{(0)}$.
For a fixed $\alpha\ne0$, Corollary~\ref{cor:alpha-dual-CAH} shows that conditions \eqref{eq:alpha-dual-R} and \eqref{eq:alpha-dual-K} hold if and only if $s_{p}^{(\alpha)}$ is a local isometry satisfying $(s_{p}^{(\alpha)})_{*}\nabla^{(\alpha)}=\nabla^{(-\alpha)}$.
The following proposition shows that these conditions imply $s_{p}^{(\alpha)}=s_{p}^{(0)}$ on a sufficiently small common neighborhood of $p$.

\begin{proposition}
\label{prop:dual-sym-sprays}
Let $p\in M$.
The following conditions are equivalent, after restricting the domains as necessary.
\begin{enumerate}[label={\rm(\roman*)}]
\item
The Levi--Civita spray map $s_{p}=s_{p}^{(0)}$ is a dual geodesic symmetry at $p$.

\item
For some $\alpha\ne0$, the spray map $s_{p}^{(\alpha)}$ is a local isometry satisfying
\[
(s_{p}^{(\alpha)})_{*}\nabla^{(\alpha)}
=
\nabla^{(-\alpha)}.
\]
\end{enumerate}

If these conditions hold, then \textup{(ii)} holds for every $\alpha\ne0$.
For each such $\alpha$, the domains of $s_{p}^{(\alpha)}$ and $s_{p}^{(0)}$ may be restricted to a common neighborhood $O$ of $p$ on which
\begin{equation}
s_{p}^{(\alpha)}=s_{p}^{(0)}=s_{p}.
\label{eq:dual-sym-sprays}
\end{equation}
Moreover, for each $\eta\in\mathbb{R}$,
\begin{equation}
s_{p}\bigl(\operatorname{Exp}_{p}^{(\eta)}(v)\bigr)
=
\operatorname{Exp}_{p}^{(-\eta)}(-v)
\label{eq:dual-sym-exp}
\end{equation}
for all sufficiently small $v\in T_{p}M$.
\end{proposition}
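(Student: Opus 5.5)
The plan is to turn both conditions into statements about a single local $(\eta,-\eta)$-statisticmorphism and then use the uniqueness half of the CAH theorems, namely the converse parts of Theorem~\ref{thm:alpha-beta-CAH} and Corollary~\ref{cor:alpha-dual-CAH}. The central observation is that a local isometry $s$ with $s(p)=p$, $(ds)_{p}=-\id_{T_{p}M}$ and $s_{*}K=-K$ carries $\nabla^{(\eta)}$-geodesics to $\nabla^{(-\eta)}$-geodesics for every $\eta$. This follows from Proposition~\ref{prop:stat-morph-equiv}(vi) with $\beta=-\alpha$. Once we have this, the exponential formula \eqref{eq:dual-sym-exp} and the agreement of the various spray maps follow from uniqueness of geodesics.

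For \textup{(i)}$\Rightarrow$\textup{(ii)}, assume $s_{p}$ is a dual geodesic symmetry. Lemma~\ref{lem:dual-sym-equiv}(iv) then gives $(s_{p})_{*}\nabla^{(\eta)}=\nabla^{(-\eta)}$ for every $\eta$. Fix $\eta$ and a small $v$ such that the radial geodesic $\gamma_{v}^{(\eta)}([0,1])$ lies in $O^{(0)}(p)$. The curve $s_{p}\circ\gamma_{v}^{(\eta)}$ is a $\nabla^{(-\eta)}$-geodesic starting at $p$ with initial velocity $-v$, so it equals $\gamma_{-v}^{(-\eta)}$. This proves \eqref{eq:dual-sym-exp}.

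For $\eta=\alpha\ne0$, identity \eqref{eq:dual-sym-exp} reads $s_{p}\circ\operatorname{Exp}_{p}^{(\alpha)}=\operatorname{Exp}_{p}^{(-\alpha)}\circ(-\id)$ on a small star-shaped neighborhood of $0_{p}$. Composing with $(\operatorname{Exp}^{(\alpha)}_{p})^{-1}$ gives $s_{p}=s_{p}^{(\alpha)}$ on the common neighborhood $O$, which is \eqref{eq:dual-sym-sprays}. On $O$ the map $s_{p}^{(\alpha)}$ inherits the property of being a local isometry exchanging $\nabla^{(\alpha)}$ and $\nabla^{(-\alpha)}$. So \textup{(ii)} holds for every $\alpha\ne0$. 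Alternatively, one can invoke the converse of Corollary~\ref{cor:alpha-dual-CAH} with $s_{p}$ as the given local isometry.

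For \textup{(ii)}$\Rightarrow$\textup{(i)}, fix $\alpha\ne0$ such that $s_{p}^{(\alpha)}$ is a local isometry with $(s_{p}^{(\alpha)})_{*}\nabla^{(\alpha)}=\nabla^{(-\alpha)}$. Proposition~\ref{prop:stat-morph-equiv}, applied with the same manifold as target and $\beta=-\alpha$, shows that $(s_{p}^{(\alpha)})_{*}\nabla^{(\eta)}=\nabla^{(-\eta)}$ for all $\eta$. In particular, taking $\eta=0$ gives that $s_{p}^{(\alpha)}$ preserves $\nabla^{g}$, which we also know directly because it is an isometry. Hence $s_{p}^{(\alpha)}$ maps Levi--Civita geodesics from $p$ to Levi--Civita geodesics, and its differential at $p$ is $-\id$. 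Uniqueness of geodesics then gives $s_{p}^{(\alpha)}(\operatorname{Exp}^{g}_{p}(v))=\operatorname{Exp}_{p}^{g}(-v)$ for small $v$, that is, $s_{p}^{(\alpha)}=s_{p}^{(0)}$ near $p$. Therefore $s_{p}$, restricted to a small $s_{p}$-invariant normal neighborhood, satisfies condition (iii) of Lemma~\ref{lem:dual-sym-equiv}, and so it is a dual geodesic symmetry. This restriction is allowed by the remark following Definition~\ref{def:dual-geod-sym}. The same uniqueness argument yields $s_{p}^{(\alpha)}=s_{p}$, which is consistent with the first direction.

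I expect the main obstacle to be domain bookkeeping rather than any new mathematics. The maps $s_{p}^{(\alpha)}$ and $s_{p}^{(0)}$ live on different normal neighborhoods, namely $O^{(\alpha,-\alpha)}(p)$ and $O^{(0)}(p)$. To make the identification work, I will choose a neighborhood $O$ with three properties: it is $s_{p}$-invariant; it is the diffeomorphic image under $\operatorname{Exp}^{(\alpha)}_{p}$ of a convex set $\mathcal{U}$ with $-\mathcal{U}\subset\Phi^{-1}(\overline{\Omega}^{(-\alpha)}_{p})$; and the radial $\nabla^{(\pm\alpha)}$-geodesics from $p$ to points of $O$ stay inside $O^{(0)}(p)$. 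Such an $O$ exists by continuity of the exponential maps at $0_{p}$. With this choice, the geodesic-uniqueness arguments above are justified, and the identity \eqref{eq:dual-sym-exp} holds for all $v$ in a neighborhood of $0_{p}$ whose size depends on $\eta$.
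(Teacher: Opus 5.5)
Your proposal is correct and follows essentially the paper's argument. For (i)$\Rightarrow$(ii) you use Lemma~\ref{lem:dual-sym-equiv}(iv) and uniqueness of geodesics to get \eqref{eq:dual-sym-exp}, hence $s_{p}=s_{p}^{(\alpha)}$ near $p$; for (ii)$\Rightarrow$(i) you use that the isometry $s_{p}^{(\alpha)}$ fixes $p$ with differential $-\id_{T_{p}M}$, so it coincides with the Levi--Civita spray map near $p$, and then Lemma~\ref{lem:dual-sym-equiv}(iii) applies. There is one harmless sign slip in your domain bookkeeping: since $\Phi=-\id_{T_{p}M}$, the requirement from Setting~\ref{set:alpha-beta-CAH} is $\mathcal{U}\subset\Phi^{-1}(\overline{\Omega}^{(-\alpha)}_{p})$, i.e.\ $-\mathcal{U}\subset\Omega^{(-\alpha)}_{p}$, rather than $-\mathcal{U}\subset\Phi^{-1}(\overline{\Omega}^{(-\alpha)}_{p})$, and taking $\mathcal{U}$ small and centrally symmetric avoids the issue anyway.
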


\begin{proof}
Assume \textup{(i)}.
Lemma~\ref{lem:dual-sym-equiv} gives \eqref{eq:dual-sym-alpha} for every $\eta\in\mathbb{R}$.
Hence $s_{p}$ sends every sufficiently short $\nabla^{(\eta)}$-geodesic issuing from $p$ with initial velocity $v$ to the $\nabla^{(-\eta)}$-geodesic issuing from $p$ with initial velocity $-v$.
Uniqueness of geodesics gives \eqref{eq:dual-sym-exp}.
Taking $\eta=\alpha\ne0$ and using \eqref{eq:alpha-sym-spray} gives \eqref{eq:dual-sym-sprays}.
The local isometry property of $s_{p}$, together with \eqref{eq:dual-sym-alpha}, gives \textup{(ii)} for every $\alpha\ne0$.

Conversely, assume \textup{(ii)} for some $\alpha\ne0$.
By the definition of $s_{p}^{(\alpha)}$, we have $s_{p}^{(\alpha)}(p)=p$ and $\bigl(ds_{p}^{(\alpha)}\bigr)_{p}=-\id_{T_{p}M}$.
Since $s_{p}^{(\alpha)}$ is a local isometry, it preserves the Levi--Civita connection and its geodesics.
Together with the preceding identities at $p$, uniqueness of geodesics gives $s_{p}^{(\alpha)}(\operatorname{Exp}_{p}^{g}(v)) =\operatorname{Exp}_{p}^{g}(-v)$ for every sufficiently small $v\in T_{p}M$.
Comparing the preceding equality with \eqref{eq:LC-sym-spray} gives \eqref{eq:dual-sym-sprays} after restriction to a common neighborhood of $p$.
Thus $s_{p}$ satisfies condition \textup{(iii)} of Lemma~\ref{lem:dual-sym-equiv}, which proves \textup{(i)}.
\end{proof}

%%%%%%%%%%%%%%%%%%%%%%%%%%%%
%%%%%%%%%%%%%%%%%%%%%%%%%%%%
\subsection{Characterization of locally statistical symmetric spaces}
%%%%%%%%%%%%%%%%%%%%%%%%%%%%
%%%%%%%%%%%%%%%%%%%%%%%%%%%%

If local dual geodesic symmetries exist at every point, then $\nabla^{g}R^{g}=0$ and $\nabla^{g}K=0$.
Conversely, Corollary~\ref{cor:LC-parallel-dual} gives local dual geodesic symmetries at every point under these conditions.

\begin{proposition}
\label{prop:dual-sym-parity}
If a dual geodesic symmetry $s_{p}$ exists at $p$, then
\begin{equation}
(\nabla^{g}R^{g})_{p}=0,
\qquad
(\nabla^{g}K)_{p}=0.
\label{eq:dual-sym-parity}
\end{equation}
\end{proposition}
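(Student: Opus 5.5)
The plan is the classical parity argument. By Lemma~\ref{lem:dual-sym-equiv}, $s_{p}$ is a local isometry on $O^{(0)}(p)$ with $(s_{p})_{*}K=-K$ and $(ds_{p})_{p}=-\id_{T_{p}M}$. A local isometry intertwines $\nabla^{g}$ with itself, so it preserves every tensor built naturally from $g$. Hence
\[
(s_{p})_{*}R^{g}=R^{g},
\qquad
(s_{p})_{*}\bigl(\nabla^{g}R^{g}\bigr)=\nabla^{g}R^{g},
\qquad
(s_{p})_{*}\bigl(\nabla^{g}K\bigr)=\nabla^{g}\bigl((s_{p})_{*}K\bigr)=-\nabla^{g}K .
\]
Here the last equality uses naturality of covariant differentiation under connection-preserving maps, together with $(s_{p})_{*}\nabla^{g}=\nabla^{g}$.

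Next I evaluate these identities at the fixed point $p$. There $(ds_{p})_{p}=-\id$ acts on tensors of type $(1,q)$ by the scalar $(-1)^{1+q}$; this is the step where multilinearity does the work. The tensor $\nabla^{g}R^{g}$ has type $(1,4)$, so $\bigl((s_{p})_{*}\nabla^{g}R^{g}\bigr)_{p}=(-1)^{5}(\nabla^{g}R^{g})_{p}=-(\nabla^{g}R^{g})_{p}$. Comparing with the invariance $(s_{p})_{*}\nabla^{g}R^{g}=\nabla^{g}R^{g}$ gives $(\nabla^{g}R^{g})_{p}=0$.

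Similarly, $\nabla^{g}K$ has type $(1,3)$, so $\bigl((s_{p})_{*}\nabla^{g}K\bigr)_{p}=(-1)^{4}(\nabla^{g}K)_{p}=(\nabla^{g}K)_{p}$. The identity above gives $-(\nabla^{g}K)_{p}$ for the same quantity, so $(\nabla^{g}K)_{p}=0$. As a consistency check, the same bookkeeping on $K$ itself (type $(1,2)$, sign $-1$) agrees with $(s_{p})_{*}K=-K$ at $p$, so no contradiction arises at order zero.

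The main care point is the naturality step $(s_{p})_{*}(\nabla^{g}K)=\nabla^{g}((s_{p})_{*}K)$ on the open set $O^{(0)}(p)$. I will justify it by writing the definition of $\nabla^{g}_{X}K$ and pushing forward each term using $(s_{p})_{*}\nabla^{g}=\nabla^{g}$ together with the pushforward rule for connections stated in Section~\ref{sec:stat-morph}. I also need the pointwise sign convention for $(s_{p})_{*}$ at a fixed point, which follows directly from the pushforward formula with $(d s_{p})_{p}=(ds_{p}^{-1})_{p}=-\id$.
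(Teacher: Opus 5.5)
Your proposal is correct and follows the same route as the paper's proof: invariance of $\nabla^{g}R^{g}$ and anti-invariance of $\nabla^{g}K$ under the local isometry $s_{p}$ (via Lemma~\ref{lem:dual-sym-equiv} and $(s_{p})_{*}\nabla^{g}=\nabla^{g}$), combined with the parity of $(ds_{p})_{p}=-\id_{T_{p}M}$ on tensors of types $(1,4)$ and $(1,3)$ at the fixed point. Your sign bookkeeping $(-1)^{1+q}$ is exactly what the paper uses.
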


\begin{proof}
Since $s_{p}$ is a local isometry,
\[
(s_{p})_{*}(\nabla^{g}R^{g})=\nabla^{g}R^{g}.
\]
At $p$, the linear map $(ds_{p})_{p}=-\id_{T_{p}M}$ acts by multiplication by $-1$ on tensors of type $(1,4)$.
Hence $(\nabla^{g}R^{g})_{p}=0$.

Lemma~\ref{lem:dual-sym-equiv} gives $(s_{p})_{*}K=-K$.
Since $s_{p}$ preserves $\nabla^{g}$,
\[
(s_{p})_{*}(\nabla^{g}K)=-\nabla^{g}K.
\]
The map $-\id_{T_{p}M}$ acts as the identity on tensors of type $(1,3)$.
Evaluation at $p$ gives $(\nabla^{g}K)_{p}=0$.
\end{proof}

\begin{theorem}
\label{thm:local-stat-sym}
Let $(M,g,\nabla)$ be a statistical manifold.
The following conditions are equivalent.
\begin{enumerate}[label={\rm(\arabic*)}]
\item
The statistical manifold $(M,g,\nabla)$ is a locally statistical symmetric space.

\item
The Riemannian manifold $(M,g)$ is locally symmetric, and the statistical structure $(g,\nabla)$ has a Levi--Civita parallel cubic form.

\item
The following two conditions hold\textup{:}
\begin{equation}
\nabla^{g}R^{g}=0,
\qquad
\nabla^{g}K=0.
\label{eq:parallel-R-K}
\end{equation}
\end{enumerate}

Under these conditions, the dual geodesic symmetry at every $p\in M$ is the local Riemannian geodesic symmetry $s_{p}=s_{p}^{(0)}$ and satisfies $(s_{p})_{*}\nabla=\nabla^{*}$.
It is unique after restriction to a common neighborhood and satisfies \eqref{eq:dual-sym-sprays}, \eqref{eq:dual-sym-alpha}, and \eqref{eq:dual-sym-exp}.
\end{theorem}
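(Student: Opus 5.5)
The plan is to prove the cycle of implications (1) $\Rightarrow$ (3) $\Rightarrow$ (1), with (2) $\Leftrightarrow$ (3) handled by definitions, and then to read off the final assertions from Proposition~\ref{prop:dual-sym-sprays}.

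\emph{Equivalence of (2) and (3).} Local symmetry of $(M,g)$ means $\nabla^{g}R^{g}=0$; this is Cartan's classical characterization, and we use it as the definition of local symmetry here. By \eqref{eq:parallel-K-C}, $\nabla^{g}K=0$ is equivalent to $\nabla^{g}C=0$. Together these give (2) $\Leftrightarrow$ (3) immediately.

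\emph{(1) $\Rightarrow$ (3).} If $(M,g,\nabla)$ is a locally statistical symmetric space, then a dual geodesic symmetry $s_{p}$ exists at every $p\in M$. Proposition~\ref{prop:dual-sym-parity} then gives $(\nabla^{g}R^{g})_{p}=0$ and $(\nabla^{g}K)_{p}=0$ at each $p$. Since $p$ is arbitrary, \eqref{eq:parallel-R-K} holds on $M$. This is a parity argument: $-\id$ acts on the odd-rank tensor $\nabla^{g}R^{g}$ by $-1$, while $(s_{p})_{*}K=-K$ flips the sign of $\nabla^{g}K$, which has even total rank.

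\emph{(3) $\Rightarrow$ (1).} Assuming \eqref{eq:parallel-R-K}, Corollary~\ref{cor:LC-parallel-dual} shows that at every $p$ the map $s_{p}^{(0)}$ is a local statisticmorphism from $(M,g,\nabla)$ to $(M,g,\nabla^{*})$ on a sufficiently small normal neighborhood. The one point needing care is matching the domain convention of Definition~\ref{def:dual-geod-sym}. We shrink the convex neighborhood $\mathcal{O}_{p}^{(0)}$ to a symmetric one, $\mathcal{O}_{p}^{(0)}\cap(-\mathcal{O}_{p}^{(0)})$, contained in the domain supplied by Corollary~\ref{cor:LC-parallel-CAH}. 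Then $s_{p}$ is an involutive self-map of $O^{(0)}(p)$, and restriction preserves the statisticmorphism property. Hence $s_{p}$ is a dual geodesic symmetry at every point, which is (1).

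\emph{Final assertions.} Under these conditions each dual geodesic symmetry is $s_{p}=s_{p}^{(0)}$ by Definition~\ref{def:dual-geod-sym}, and $(s_{p})_{*}\nabla=\nabla^{*}$ by \eqref{eq:dual-stat-morph}. Lemma~\ref{lem:dual-sym-equiv} gives \eqref{eq:dual-sym-alpha}, and Proposition~\ref{prop:dual-sym-sprays} gives \eqref{eq:dual-sym-sprays} and \eqref{eq:dual-sym-exp}. For uniqueness, any local isometry $s$ with $s(p)=p$, $(ds)_{p}=-\id_{T_{p}M}$ and $s_{*}\nabla=\nabla^{*}$ coincides with $s_{p}^{(0)}$ near $p$ by the converse part of Corollary~\ref{cor:LC-dual-CAH}.

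The main obstacle is not analytic but bookkeeping: keeping the neighborhoods symmetric and compatible across Corollaries~\ref{cor:LC-parallel-CAH}, \ref{cor:LC-dual-CAH} and Proposition~\ref{prop:dual-sym-sprays}. This is why the uniqueness statement is phrased after restriction to a common neighborhood.
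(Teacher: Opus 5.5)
Your proposal is correct and follows the paper's proof essentially step for step. Both use (2)$\Leftrightarrow$(3) via Cartan's characterization and \eqref{eq:parallel-K-C}, Proposition~\ref{prop:dual-sym-parity} for the forward direction, Corollary~\ref{cor:LC-parallel-dual} with shrinking to a centrally symmetric convex neighborhood for the converse, and Lemma~\ref{lem:dual-sym-equiv}, Proposition~\ref{prop:dual-sym-sprays} and the converse of Corollary~\ref{cor:LC-dual-CAH} for the final assertions. The only cosmetic difference is that you obtain $\nabla^{g}R^{g}=0$ directly from the parity argument, whereas the paper gets local symmetry of $(M,g)$ from the fact that each $s_{p}$ is a Riemannian geodesic symmetry that is a local isometry.
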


\begin{proof}
By \eqref{eq:parallel-K-C}, the parallelism condition in \textup{(2)} is equivalent to $\nabla^{g}K=0$.

Assume \textup{(1)}.
Each $s_{p}$ is the Riemannian geodesic symmetry at $p$ and is a local isometry.
Hence $(M,g)$ is locally symmetric.
Proposition~\ref{prop:dual-sym-parity}, applied at every point, gives $\nabla^{g}K=0$.
Thus \textup{(2)} holds.

Conditions \textup{(2)} and \textup{(3)} are equivalent because a Riemannian manifold is locally symmetric if and only if $\nabla^{g}R^{g}=0$.

Assume \textup{(3)}.
Corollary~\ref{cor:LC-parallel-dual} shows that, for every $p\in M$, the Levi--Civita spray map $s_{p}^{(0)}$ is a local statisticmorphism from $(M,g,\nabla)$ to $(M,g,\nabla^{*})$ on a sufficiently small normal neighborhood.
Shrinking the tangent-space neighborhood to a centrally symmetric convex neighborhood as above makes $s_{p}^{(0)}$ a self-diffeomorphism of $O^{(0)}(p)$.
Thus \textup{(1)} holds.
Uniqueness follows from the converse assertion of Corollary~\ref{cor:LC-dual-CAH}.
The remaining identities follow from Lemma~\ref{lem:dual-sym-equiv} and Proposition~\ref{prop:dual-sym-sprays}.
\end{proof}

\begin{corollary}
\label{cor:parallel-Ralpha}
Under the equivalent conditions of Theorem~\textup{\ref{thm:local-stat-sym}},
\begin{equation}
R^{(\eta)}
=R^{g}+\frac{\eta^{2}}{4}[K,K],
\qquad
R^{(\eta)}=R^{(-\eta)}
\label{eq:even-Ralpha}
\end{equation}
for every $\eta\in\mathbb{R}$.
Moreover, $\nabla^{g}R^{(\eta)}=0$ for every $\eta\in\mathbb{R}$.
\end{corollary}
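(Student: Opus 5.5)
The plan is to use Proposition~\ref{prop:alpha-curv}, equation \eqref{eq:Ralpha-Rg}:
\[
R^{(\eta)}=R^{g}-\frac{\eta}{2}d^{\nabla^{g}}K+\frac{\eta^{2}}{4}[K,K].
\]
Under the equivalent conditions of Theorem~\ref{thm:local-stat-sym}, we have $\nabla^{g}K=0$. By \eqref{eq:dK}, applied with the torsion-free connection $\nabla^{g}$, the tensor $d^{\nabla^{g}}K$ is the alternation of $\nabla^{g}K$ in its first two arguments, so it vanishes. This is the implication already recorded in \eqref{eq:conj-sym-dK}. Substituting gives the first identity in \eqref{eq:even-Ralpha}. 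The right-hand side is even in $\eta$, which yields $R^{(\eta)}=R^{(-\eta)}$.

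For the parallelism, apply $\nabla^{g}$ to $R^{(\eta)}=R^{g}+\frac{\eta^{2}}{4}[K,K]$. The first term vanishes by $\nabla^{g}R^{g}=0$. For the second term, $[K,K](X,Y)Z=K_{X}K_{Y}Z-K_{Y}K_{X}Z$ is obtained from $K\otimes K$ by contraction and alternation. Covariant differentiation commutes with contractions and with permutations of arguments, and it obeys the Leibniz rule on tensor products. Hence
\[
(\nabla^{g}_{W}[K,K])(X,Y)Z=(\nabla^{g}_{W}K)_{X}K_{Y}Z+K_{X}(\nabla^{g}_{W}K)_{Y}Z-(X\leftrightarrow Y),
\]
and this vanishes identically because $\nabla^{g}K=0$. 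Therefore $\nabla^{g}R^{(\eta)}=0$ for every $\eta\in\mathbb{R}$.

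There is no substantial obstacle here. The only point needing care is the Leibniz computation for $[K,K]$, which is routine.
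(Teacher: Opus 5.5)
Your proposal is correct and follows the paper's own proof: substitute $\nabla^{g}K=0$ into \eqref{eq:Ralpha-Rg} to kill $d^{\nabla^{g}}K$, read off evenness in $\eta$, and deduce $\nabla^{g}R^{(\eta)}=0$ from the parallelism of $R^{g}$ and of $[K,K]$. Your explicit Leibniz formula for $\nabla^{g}_{W}[K,K]$ just spells out the step the paper states in one line.
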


\begin{proof}
Equation~\eqref{eq:Ralpha-Rg} and $\nabla^{g}K=0$ give \eqref{eq:even-Ralpha}.
Since $R^{g}$ and $K$ are $\nabla^{g}$-parallel, so is $[K,K]$.
Hence $\nabla^{g}R^{(\eta)}=0$ for every $\eta\in\mathbb{R}$.
\end{proof}

%%%%%%%%%%%%%%%%%%%%%%%%%%%%
%%%%%%%%%%%%%%%%%%%%%%%%%%%%
%%%%%%%%%%%%%%%%%%%%%%%%%%%%
\section{Statistical transvections and statistical symmetric spaces}
\label{sec:transv-stat-sym}
%%%%%%%%%%%%%%%%%%%%%%%%%%%%
%%%%%%%%%%%%%%%%%%%%%%%%%%%%
%%%%%%%%%%%%%%%%%%%%%%%%%%%%

This section constructs local statistical transvections and their infinitesimal generators on locally statistical symmetric spaces.
The local dual geodesic symmetries extend uniquely to global dual geodesic symmetries when $g$ is complete and $M$ is simply connected.
For connected statistical manifolds, the existence of a global dual geodesic symmetry at every point is equivalent to the homogeneous definition of Inoguchi and Ohno \cite{InoguchiOhno}.
For Hessian manifolds, local dual geodesic symmetries exist at every point if and only if $Q=Q^{*}$.
For standard results on Riemannian symmetric spaces and invariant connections on homogeneous spaces, see, for example, \cite{Helgason,KN2,Wolf}.

%%%%%%%%%%%%%%%%%%%%%%%%%%%%
%%%%%%%%%%%%%%%%%%%%%%%%%%%%
\subsection{Local statistical transvections and their infinitesimal generators}
\label{subsec:local-stat-transv}
%%%%%%%%%%%%%%%%%%%%%%%%%%%%
%%%%%%%%%%%%%%%%%%%%%%%%%%%%

Assume that $(M,g,\nabla)$ is a locally statistical symmetric space.
By Theorem~\ref{thm:local-stat-sym}, $(M,g)$ is locally symmetric.
Products of two dual geodesic symmetries generate a locally transitive pseudogroup of local statistical automorphisms.
Compositions of local maps in this subsection are defined on sufficiently small common open domains.

The Riemannian geodesic symmetry $s_{p}$ is the map on a normal neighborhood of $p$ introduced in \eqref{eq:sp-domain}; it satisfies $s_{p}^{2}=\id$ and $(ds_{p})_{p}=-\id$.
The map $(p,q)\mapsto s_{p}(q)$ is smooth on a neighborhood of the diagonal in $M\times M$.
For a local isometry $f$ defined on a neighborhood of $q$, uniqueness of Riemannian geodesic symmetries gives
\begin{equation}
f\circ s_{q}\circ f^{-1}=s_{f(q)}
\label{eq:sym-conjugation}
\end{equation}
on a neighborhood of $f(q)$.
In particular, taking $f=s_{p}$ gives the reflection identity
\begin{equation}
s_{p}\circ s_{q}\circ s_{p}=s_{s_{p}(q)}
\label{eq:sym-reflection}
\end{equation}
on a common neighborhood of $s_{p}(q)$.
By Lemma~\ref{lem:dual-sym-equiv}, every $s_{p}$ preserves the Levi--Civita connection and exchanges each connection $\nabla^{(\eta)}$ with its dual:
\[
(s_{p})_{*}\nabla^{g}=\nabla^{g},\qquad
(s_{p})_{*}\nabla^{(\eta)}=\nabla^{(-\eta)}
\quad(\eta\in\mathbb{R}).
\]

Let $\gamma^{g}$ be a $\nabla^{g}$-geodesic with $\gamma^{g}(0)=p$, and write $P^{\nabla^{g}}_{\gamma^{g},a\to b}$ for Levi--Civita parallel displacement along $\gamma^{g}$ from $\gamma^{g}(a)$ to $\gamma^{g}(b)$.
The \emph{local Riemannian transvections} along $\gamma^{g}$ are defined by
\begin{equation}
\tau_{t}:=s_{\gamma^{g}(t/2)}\circ s_{p}.
\label{eq:transv}
\end{equation}
Each $\tau_{t}$ moves points along $\gamma^{g}$, and its differential is Levi--Civita parallel displacement along $\gamma^{g}$:
\begin{equation}
\tau_{t}(\gamma^{g}(r))=\gamma^{g}(r+t),\qquad
(d\tau_{t})_{\gamma^{g}(r)}
=P^{\nabla^{g}}_{\gamma^{g},r\to r+t}.
\label{eq:transv-parallel}
\end{equation}

The maps $\tau_{t}$ form a smooth local one-parameter group of local isometries: there are an open interval $I$ containing $0$ and a normal neighborhood $O$ of $p$ such that $(t,q)\mapsto\tau_{t}(q)$ is smooth on $I\times O$ and
\[
\tau_{0}=\id,\qquad \tau_{t}\circ\tau_{r}=\tau_{t+r}
\]
whenever $r,t,r+t\in I$ and the composition is defined.
The interval $I$ is chosen so that \eqref{eq:transv-parallel} also holds whenever $r,t,r+t\in I$.

\begin{proposition}
\label{prop:local-transv}
Let $\mathcal{T}$ be the pseudogroup of local diffeomorphisms of $M$ generated by the maps $s_{p}\circ s_{q}$.
Every $\varphi\in\mathcal{T}$ is a local statistical automorphism\textup{:}
\begin{equation}
\varphi^{*}g=g,\qquad
\varphi_{*}\nabla=\nabla.
\label{eq:transv-preserves}
\end{equation}
Furthermore, $\mathcal{T}$ acts locally transitively on $M$\textup{:} for every $p\in M$, there is a normal neighborhood $O$ of $p$ such that every $q\in O$ has the form $q=\varphi(p)$ for some $\varphi\in\mathcal{T}$.
In particular, the local transvections \eqref{eq:transv} are local statistical automorphisms.
\end{proposition}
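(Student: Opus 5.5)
The proof splits into two parts: the automorphism property of the generators, which is a one-line composition argument, and local transitivity, which uses the midpoint construction from the Riemannian theory.

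\emph{Automorphism property.} By Theorem~\ref{thm:local-stat-sym} and Lemma~\ref{lem:dual-sym-equiv}, every $s_{p}$ is a local isometry with $(s_{p})_{*}\nabla=\nabla^{*}$. Applying the case $\eta=-1$ of \eqref{eq:dual-sym-alpha} gives $(s_{p})_{*}\nabla^{*}=\nabla$. For a generator $\varphi=s_{p}\circ s_{q}$ on a common domain, we therefore have $\varphi^{*}g=s_{q}^{*}s_{p}^{*}g=g$ and
\[
\varphi_{*}\nabla=(s_{p})_{*}(s_{q})_{*}\nabla=(s_{p})_{*}\nabla^{*}=\nabla.
\]
The set of local diffeomorphisms satisfying \eqref{eq:transv-preserves} is closed under composition, inversion, restriction to open subsets, and gluing of maps that agree on overlaps, because both conditions are local and natural. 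Hence the whole pseudogroup $\mathcal{T}$ generated by the $s_{p}\circ s_{q}$ consists of local statistical automorphisms. Note that $(s_{p}\circ s_{q})^{-1}=s_{q}\circ s_{p}$, which is again a generator.

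\emph{Local transitivity.} Fix $p$ and choose a normal neighborhood $O=\operatorname{Exp}^{g}_{p}(\mathcal{U})$, where $\mathcal{U}$ is a small ball. For $q=\operatorname{Exp}^{g}_{p}(v)$ with $v\in\mathcal{U}$, let $\gamma^{g}(t)=\operatorname{Exp}^{g}_{p}(tv)$ and put $m=\gamma^{g}(1/2)$. The map $s_{m}$ reverses Levi--Civita geodesics through $m$, so $s_{m}(p)=q$. Using $s_{p}(p)=p$, we get
\[
\tau_{1}(p)=s_{m}\circ s_{p}(p)=q,
\]
and $\tau_{1}\in\mathcal{T}$. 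This is exactly \eqref{eq:transv-parallel} with $r=0$ and $t=1$, after rescaling $v$ so that $1\in I$; alternatively, use $\tau_{t}$ along the geodesic with initial velocity $v/|v|$.

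\emph{Main obstacle.} The only delicate point is the domain bookkeeping. We need $p$ to lie in the domain of $s_{p}$ and $p=s_{p}(p)$ to lie in the domain of $s_{m}$, with $m$ depending on $q$. The plan is to use the smoothness of $(x,y)\mapsto s_{x}(y)$ near the diagonal. This gives a uniform $\varepsilon>0$ such that every $s_{x}$ with $x$ in a compact neighborhood of $p$ is defined on the geodesic ball of radius $\varepsilon$ around $x$. Taking $\mathcal{U}$ to be the ball of radius $\varepsilon$ then ensures that $d(m,p)<\varepsilon/2$, so $p$ lies in the domain of $s_{m}$.

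The final assertion about the local transvections \eqref{eq:transv} follows because each $\tau_{t}$ is a generator $s_{\gamma^{g}(t/2)}\circ s_{p}$ of $\mathcal{T}$.
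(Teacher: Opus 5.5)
Your proof is correct and follows the paper's argument. Composing two symmetries that each exchange the dual structures yields a local statistical automorphism: the paper phrases this via $(s_p)_{*}K=-K$ and Proposition~\ref{prop:stat-morph-equiv}, you via $(s_p)_{*}\nabla=\nabla^{*}$ and $(s_p)_{*}\nabla^{*}=\nabla$, which is equivalent. Local transitivity then comes from $s_m\circ s_p$ with $m$ the midpoint of the radial geodesic, and your uniform radius $\varepsilon$ simply spells out the domain bookkeeping that the paper attributes to the joint smoothness and common domain of the local geodesic symmetries.
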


\begin{proof}
For each Riemannian geodesic symmetry, the metric and the skewness tensor satisfy
\[
s_{p}^{*}g=g,\qquad (s_{p})_{*}K=-K.
\]
Therefore, every composition $s_{p}\circ s_{q}$ preserves $g$ and $K$, and is a local statisticmorphism by Proposition~\ref{prop:stat-morph-equiv}.
Since local statisticmorphisms form a pseudogroup, every element of $\mathcal{T}$ is a local statisticmorphism.
By \eqref{eq:alpha-family}, the entire family of statistical connections is preserved:
\[
\varphi_{*}\nabla^{(\eta)}=\nabla^{(\eta)}
\qquad(\varphi\in\mathcal{T},\ \eta\in\mathbb{R}).
\]

Fix $p\in M$ and choose a normal neighborhood $O$ of $p$ small enough that, for every $q\in O$, the point $p$ is in the domain of the symmetry at the midpoint $m$ of the radial $\nabla^{g}$-geodesic from $p$ to $q$.
Such a choice is possible by the joint smoothness and common domain of the local Riemannian geodesic symmetries.
Then $(s_{m}\circ s_{p})(p)=s_{m}(p)=q$.
Therefore, $\mathcal{T}$ acts locally transitively on $M$.

Finally, since $\tau_{t}=s_{\gamma^{g}(t/2)}\circ s_{p}\in\mathcal{T}$, each $\tau_{t}$ is a local statistical automorphism.
\end{proof}

For $w\in T_{p}M$, put $W(t):=(d\tau_{t})_{p}(w)$.
By \eqref{eq:transv-parallel}, $W$ is $\nabla^{g}$-parallel along $\gamma^{g}$.
Equation~\eqref{eq:alpha-family} gives
\[
\nabla^{(\eta)}_{\dot{\gamma}^{g}}W
=-\frac{\eta}{2}K(\dot{\gamma}^{g},W).
\]
Although each $\tau_{t}$ preserves $\nabla^{(\eta)}$, for $\eta\ne0$ the field $W$ is $\nabla^{(\eta)}$-parallel if and only if $K(\dot{\gamma}^{g},W)=0$ along $\gamma^{g}$.

Fix $p\in M$.
The local transvection actions at $p$ have infinitesimal generators on a common connected, $s_{p}$-invariant normal neighborhood $O$ of $p$.
The joint smoothness of $(x,q)\mapsto s_{x}(q)$ permits $O$ to be chosen independently of $v\in T_{p}M$; the time interval of the corresponding local flow may depend on $v$.
For $v\in T_{p}M$, let $\gamma^{g}$ be the $\nabla^{g}$-geodesic with initial data $\gamma^{g}(0)=p$ and $\dot{\gamma}^{g}(0)=v$.
The \emph{infinitesimal transvection at $p$ corresponding to $v$} is the generator of the corresponding local action \eqref{eq:transv},
\[
X_{v}(q):=\left.\frac{d}{dt}\right|_{t=0}\tau_{t}(q).
\]
By Proposition~\ref{prop:local-transv} and $\tau_{t}(p)=\gamma^{g}(t)$,
\[
X_{v}\in\mathfrak{aut}(O,g,\nabla),\qquad X_{v}(p)=v.
\]

Let $\mathfrak{g}_{p}$ be the Lie subalgebra of $\mathfrak{aut}(O,g,\nabla)$ generated by these vector fields.
By Proposition~\ref{prop:inf-stat-aut} \textup{(ii)}, $\mathfrak{g}_{p}$ is finite-dimensional.
Since $\mathfrak{g}_{p}\subset\mathfrak{aut}(O,g,\nabla)$, the local flow of every $X\in\mathfrak{g}_{p}$ consists of local statisticmorphisms.

The Riemannian geodesic symmetry $s_{p}$ induces an involutive automorphism of $\mathfrak{g}_{p}$.
Indeed, \eqref{eq:sym-reflection} gives
\[
s_{p}\circ\tau_{t}\circ s_{p}^{-1}=\tau_{-t},\qquad
(s_{p})_{*}X_{v}=-X_{v}.
\]
Pushforward preserves Lie brackets:
\[
(s_{p})_{*}[X,Y]=[(s_{p})_{*}X,(s_{p})_{*}Y]
\qquad(X,Y\in\mathfrak{g}_{p}).
\]
Since the $X_{v}$ generate $\mathfrak{g}_{p}$ and $s_{p}^{2}=\id$,
\[
(s_{p})_{*}\mathfrak{g}_{p}=\mathfrak{g}_{p},\qquad
\bigl((s_{p})_{*}\bigr)^{2}=\id_{\mathfrak{g}_{p}}.
\]
Therefore, $(s_{p})_{*}$ restricts to an involutive automorphism of $\mathfrak{g}_{p}$.

Let $\mathfrak{h}_{p}$ and $\mathfrak{m}_{p}$ be the $+1$- and $-1$-eigenspaces of $(s_{p})_{*}$, respectively.
Since $\bigl((s_{p})_{*}\bigr)^{2}=\id_{\mathfrak{g}_{p}}$, these eigenspaces give the direct-sum decomposition
\[
\mathfrak{g}_{p}=\mathfrak{h}_{p}\oplus\mathfrak{m}_{p}.
\]
Since $(s_{p})_{*}$ preserves Lie brackets,
\begin{equation}
[\mathfrak{h}_{p},\mathfrak{h}_{p}]\subset\mathfrak{h}_{p},
\qquad
[\mathfrak{h}_{p},\mathfrak{m}_{p}]\subset\mathfrak{m}_{p},
\qquad
[\mathfrak{m}_{p},\mathfrak{m}_{p}]\subset\mathfrak{h}_{p}.
\label{eq:local-sym-pair}
\end{equation}
Thus $(\mathfrak{g}_{p},\mathfrak{h}_{p})$ is a symmetric pair.
For $X\in\mathfrak{g}_{p}$, preservation of $\nabla^{g}$ and $(ds_{p})_{p}=-\id$ give
\[
\begin{aligned}
\bigl((s_{p})_{*}X\bigr)_{p}
&=(ds_{p})_{p}(X_{p})=-X_{p},\\
\bigl(\nabla^{g}((s_{p})_{*}X)\bigr)_{p}
&=(ds_{p})_{p}\circ(\nabla^{g}X)_{p}
\circ\bigl((ds_{p})_{p}\bigr)^{-1}
=(\nabla^{g}X)_{p}.
\end{aligned}
\]
For $X\in\mathfrak{h}_{p}$, the first identity gives $X_{p}=0$.
For $X\in\mathfrak{m}_{p}$, the second identity gives $(\nabla^{g}X)_{p}=0$.
Conversely, if $X_{p}=0$, then $(s_{p})_{*}X$ and $X$ have the same value and covariant derivative at $p$.
If $(\nabla^{g}X)_{p}=0$, then $(s_{p})_{*}X$ and $-X$ have the same value and covariant derivative at $p$.
By the uniqueness assertion in Proposition~\ref{prop:inf-stat-aut} \textup{(ii)}, we obtain $(s_{p})_{*}X=X$ in the first case and $(s_{p})_{*}X=-X$ in the second case.
Thus
\[
\mathfrak{h}_{p}=\{X\in\mathfrak{g}_{p}\mid X_{p}=0\},\qquad
\mathfrak{m}_{p}=\{X\in\mathfrak{g}_{p}\mid(\nabla^{g}X)_{p}=0\}.
\]
In particular, $\mathfrak{h}_{p}$ is the isotropy subalgebra at $p$.

Evaluation at $p$, which assigns to each vector field $X$ its value $X_{p}$, identifies $\mathfrak{m}_{p}$ with the tangent space $T_{p}M$ via the linear isomorphism
\[
\operatorname{ev}_{p}:\mathfrak{m}_{p}\longrightarrow T_{p}M,
\qquad X\longmapsto X_{p},
\]
with inverse $v\mapsto X_{v}$.
Indeed, the same uniqueness result makes evaluation injective on $\mathfrak{m}_{p}$.
Moreover, $(s_{p})_{*}X_{v}=-X_{v}$ implies $X_{v}\in\mathfrak{m}_{p}$ and hence $(\nabla^{g}X_{v})_{p}=0$.
Together with $X_{v}(p)=v$, this proves surjectivity.
By Proposition~\ref{prop:inf-stat-aut} \textup{(ii)}, $X_{v}$ is the unique Killing vector field $X$ on $O$ satisfying $X_{p}=v$ and $(\nabla^{g}X)_{p}=0$.

%%%%%%%%%%%%%%%%%%%%%%%%%%%%
%%%%%%%%%%%%%%%%%%%%%%%%%%%%
\subsection{Global dual geodesic symmetries and the transvection group}
%%%%%%%%%%%%%%%%%%%%%%%%%%%%
%%%%%%%%%%%%%%%%%%%%%%%%%%%%

This subsection characterizes statistical symmetric spaces, gives a globalization criterion, and shows that the Riemannian transvection group preserves the statistical structure.
All manifolds considered in the remainder of this section are connected.

\begin{definition}
\label{def:stat-sym-space}
A \emph{global dual geodesic symmetry at $p\in M$} is a statisticmorphism
\[
s_{p}:(M,g,\nabla)\longrightarrow(M,g,\nabla^{*})
\]
satisfying
\[
s_{p}(p)=p,\qquad (ds_{p})_{p}=-\id_{T_{p}M}.
\]
The statistical manifold $(M,g,\nabla)$ is a \emph{statistical symmetric space} if such a symmetry exists at every point $p\in M$.
\end{definition}

Since $s_{p}$ is an isometry fixing $p$ with differential $-\id$, uniqueness of Levi--Civita geodesics shows that its restriction to a sufficiently small normal neighborhood of $p$ is the map in \eqref{eq:sp-domain}.
Thus every statistical symmetric space is a locally statistical symmetric space.
Theorem~\ref{thm:homo-stat-sym} shows that $(M,g,\nabla)$ is a statistical symmetric space if and only if it admits a homogeneous presentation satisfying the conditions in \cite[Definition~10.3]{InoguchiOhno}.

\begin{proposition}
\label{prop:global-stat-sym}
The following conditions are equivalent.
\begin{enumerate}[label={\rm(\roman*)}]
\item
$(M,g,\nabla)$ is a statistical symmetric space.

\item
$(M,g)$ is a Riemannian symmetric space, and the statistical structure $(g,\nabla)$ has a Levi--Civita parallel cubic form.

\item
For some fixed $\alpha\ne0$, every $p\in M$ admits a global isometry $s_{p}$ satisfying
\[
s_{p}(p)=p,\qquad (ds_{p})_{p}=-\id_{T_{p}M},\qquad
(s_{p})_{*}\nabla^{(\alpha)}=\nabla^{(-\alpha)}.
\]
\end{enumerate}
Under these conditions, the global dual geodesic symmetry at each point is unique and involutive.
For every $\eta\in\mathbb{R}$, it satisfies
\begin{equation}
(s_{p})_{*}\nabla^{(\eta)}=\nabla^{(-\eta)}.
\label{eq:global-dual-family}
\end{equation}
\end{proposition}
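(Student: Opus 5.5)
We will prove the cycle (i)$\Rightarrow$(ii)$\Rightarrow$(iii)$\Rightarrow$(i), and read off uniqueness, involutivity and \eqref{eq:global-dual-family} at the end.

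\emph{Step (i)$\Rightarrow$(ii).} A global dual geodesic symmetry $s_{p}$ is a global isometry of $(M,g)$ with $s_{p}(p)=p$ and $(ds_{p})_{p}=-\id_{T_{p}M}$. Hence $(M,g)$ is a Riemannian symmetric space in the classical sense; connectedness and the existence of symmetries at every point give completeness by the standard argument. Its restriction to a small normal neighborhood is the map \eqref{eq:sp-domain}, as noted after Definition~\ref{def:stat-sym-space}. So $(M,g,\nabla)$ is a locally statistical symmetric space, and Theorem~\ref{thm:local-stat-sym} gives $\nabla^{g}K=0$, which is equivalent to $\nabla^{g}C=0$ by \eqref{eq:parallel-K-C}.

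\emph{Step (ii)$\Rightarrow$(iii).} Fix $\alpha\ne0$ and $p\in M$, and let $s_{p}$ be the global Riemannian symmetry at $p$. The key point is that $(s_{p})_{*}K$ and $-K$ are both $\nabla^{g}$-parallel tensor fields on the connected manifold $M$, because $s_{p}$ preserves $\nabla^{g}$. They agree at $p$, since $-\id$ acts by $-1$ on tensors of type $(1,2)$. A parallel tensor is determined by its value at one point on a connected manifold, via parallel transport along curves. Hence $(s_{p})_{*}K=-K$ globally. Then \eqref{eq:alpha-family} gives $(s_{p})_{*}\nabla^{(\eta)}=\nabla^{g}+\frac{\eta}{2}K=\nabla^{(-\eta)}$ for every $\eta$, and in particular for $\eta=\alpha$.

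\emph{Step (iii)$\Rightarrow$(i).} By Proposition~\ref{prop:stat-morph-equiv}(v) applied with target $(M,g,\nabla^{*})$, for which $\overline{\nabla}^{(\eta)}=\nabla^{(-\eta)}$, together with the isometry hypothesis, each $s_{p}$ is a statisticmorphism $(M,g,\nabla)\to(M,g,\nabla^{*})$. This is exactly Definition~\ref{def:stat-sym-space}.

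\emph{Uniqueness and involutivity.} Two isometries of a connected Riemannian manifold with the same value and differential at $p$ coincide, so $s_{p}$ is unique. Since $s_{p}^{2}$ is an isometry fixing $p$ with differential $\id$, we get $s_{p}^{2}=\id$. Identity \eqref{eq:global-dual-family} was obtained in the step (ii)$\Rightarrow$(iii).

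\emph{Main obstacle.} The delicate point is the standard Riemannian fact used in (i)$\Rightarrow$(ii): a global isometric symmetry at every point of a connected manifold makes $g$ complete, so that $(M,g)$ is a symmetric space in the usual sense. I would cite it from \cite{Helgason}, or prove it by extending geodesics using $s_{\gamma(t_{0})}$. The remaining steps use only rigidity of parallel tensors and of isometries.
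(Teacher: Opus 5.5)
Your proof is correct and follows the paper's argument. You use the same restriction step through Theorem~\ref{thm:local-stat-sym} for (i)$\Rightarrow$(ii), the same rigidity of $\nabla^{g}$-parallel tensors to obtain $(s_{p})_{*}K=-K$, and the same isometry rigidity for uniqueness and involutivity. The only difference is how you close the cycle: you go from (iii) directly to (i) via Proposition~\ref{prop:stat-morph-equiv}\,(v) with the dual target $(M,g,\nabla^{*})$, whereas the paper extracts $(s_{p})_{*}K=-K$ from (iii) and reuses the fixed-point argument of Proposition~\ref{prop:dual-sym-parity} to return to (ii). Your shortcut is slightly more economical.
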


\begin{proof}
By \eqref{eq:parallel-K-C}, the parallelism condition in \textup{(ii)} is equivalent to $\nabla^{g}K=0$.

Under \textup{(i)}, each $s_{p}$ is a global isometry fixing $p$ with differential $-\id$.
Thus $(M,g)$ is a Riemannian symmetric space.
Applying Theorem~\ref{thm:local-stat-sym} to the local restrictions of these symmetries at every point gives $\nabla^{g}K=0$.
This proves \textup{(ii)}.

Assume \textup{(ii)} and let $s_{p}$ be the Riemannian geodesic symmetry at $p$.
Since $s_{p}$ preserves $\nabla^{g}$, the tensors $(s_{p})_{*}K$ and $-K$ are both $\nabla^{g}$-parallel.
They agree at $p$, because $(ds_{p})_{p}=-\id$ acts by $-1$ on a tensor of type $(1,2)$.
Their difference remains zero under parallel displacement along any curve issuing from $p$.
Connectedness of $M$ gives $(s_{p})_{*}K=-K$ on all of $M$.
By \eqref{eq:nabla-K}, this is equivalent to $(s_{p})_{*}\nabla=\nabla^{*}$, proving \textup{(i)}.
Equation~\eqref{eq:alpha-family} also gives \eqref{eq:global-dual-family} for every $\eta\in\mathbb{R}$, and in particular \textup{(iii)}.

Conversely, \textup{(iii)} implies that $(M,g)$ is a Riemannian symmetric space.
Since $s_{p}$ preserves $\nabla^{g}$, \eqref{eq:alpha-family} expresses the connection identity in \textup{(iii)} as $-\frac{\alpha}{2}(s_{p})_{*}K=\frac{\alpha}{2}K$.
The assumption $\alpha\ne0$ gives $(s_{p})_{*}K=-K$.
The fixed-point argument of Proposition~\ref{prop:dual-sym-parity} at every point gives $\nabla^{g}K=0$, proving \textup{(ii)}.

An isometry of a connected Riemannian manifold is determined by its value and differential at one point.
This proves uniqueness of $s_{p}$.
The isometry $s_{p}^{2}$ fixes $p$ and has differential $\id_{T_{p}M}$ there, so $s_{p}^{2}=\id$.
\end{proof}

For a statistical symmetric space, Proposition~\ref{prop:global-stat-sym} identifies the maps $s_{p}$ with the Riemannian geodesic symmetries.
The map $(p,q)\mapsto s_{p}(q)$ is smooth on all of $M\times M$.
For every global isometry $f$ and every $q\in M$, \eqref{eq:sym-conjugation} holds on all of $M$: both sides are global isometries agreeing on a neighborhood of $f(q)$, and $M$ is connected.
In particular, \eqref{eq:sym-reflection} holds globally.
Moreover, $(M,g)$ is complete.

On a complete simply connected Riemannian manifold $(M,g)$ with $\nabla^{g}R^{g}=0$, Ambrose constructs the global geodesic symmetry at each $p\in M$ by applying his global curvature comparison theorem to $M$ itself, with both base points equal to $p$ and initial isometry $-\id_{T_{p}M}$ \cite{Ambrose}.
For a statistical structure $(g,\nabla)$ on $M$ with $\nabla^{g}K=0$, Proposition~\ref{prop:global-stat-sym} shows that this symmetry also interchanges $\nabla$ and $\nabla^{*}$.

\begin{theorem}
\label{thm:global-stat-sym}
Let $(M,g,\nabla)$ be a simply connected statistical manifold such that $g$ is complete.
The following conditions are equivalent.
\begin{enumerate}[label={\rm(\arabic*)}]
\item
$(M,g,\nabla)$ is a locally statistical symmetric space.

\item
$\nabla^{g}R^{g}=0$ and $\nabla^{g}K=0$.

\item
$(M,g,\nabla)$ is a statistical symmetric space.
\end{enumerate}
Under these conditions, every local dual geodesic symmetry $s_{p}$ has a unique global extension, which satisfies \eqref{eq:global-dual-family}.
\end{theorem}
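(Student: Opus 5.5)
The plan is to prove the cycle (1)$\Leftrightarrow$(2)$\Rightarrow$(3)$\Rightarrow$(1). The equivalence (1)$\Leftrightarrow$(2) is Theorem~\ref{thm:local-stat-sym}, which needs neither completeness nor simple connectivity. The implication (3)$\Rightarrow$(1) was already observed after Definition~\ref{def:stat-sym-space}. By uniqueness of Levi--Civita geodesics, a global dual geodesic symmetry restricts on a small normal neighborhood to the map $s_{p}$ of \eqref{eq:sp-domain}. So the substance lies in (2)$\Rightarrow$(3) and in the extension statement.

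For (2)$\Rightarrow$(3), I will use the Riemannian theory and then Proposition~\ref{prop:global-stat-sym}. Since $g$ is complete, $M$ is simply connected and $\nabla^{g}R^{g}=0$, the classical Cartan--Ambrose result gives a Riemannian symmetric space. Concretely, Ambrose's global comparison theorem is applied to $(M,g)$ with both base points equal to $p$ and $\Phi=-\id_{T_{p}M}$. Since $R^{g}$ is parallel and $-\id$ preserves $R^{g}_{p}$, the curvature condition along broken geodesics holds automatically. This yields a global isometry $s_{p}$ with $s_{p}(p)=p$ and $(ds_{p})_{p}=-\id_{T_{p}M}$. Thus $(M,g)$ is Riemannian symmetric, and together with $\nabla^{g}K=0$ condition (ii) of Proposition~\ref{prop:global-stat-sym} holds. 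That proposition then gives (i), namely that $s_{p}$ is a global dual geodesic symmetry, which is (3), together with \eqref{eq:global-dual-family}. The one point to verify is that Proposition~\ref{prop:global-stat-sym} only requires connectedness, which follows from simple connectivity.

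For the extension assertion, take a local dual geodesic symmetry $s_{p}$ on some $O^{(0)}(p)$. Let $\tilde s_{p}$ be the global symmetry just constructed. Both are local isometries fixing $p$ with differential $-\id_{T_{p}M}$. Hence they agree on the connected normal neighborhood where both are defined, so $\tilde s_{p}$ extends $s_{p}$. For uniqueness, any other global extension is an isometry of the connected manifold $M$ with the same value and differential at $p$. Alternatively, two continuous extensions agree on an open set and are local isometries, so they coincide by the standard rigidity of isometries on connected $M$. The extension satisfies \eqref{eq:global-dual-family} by Proposition~\ref{prop:global-stat-sym}.

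I expect the main obstacle to be justifying the Riemannian globalization step, namely the existence of the global isometry from Ambrose's theorem, which requires both completeness and simple connectivity. I would cite \cite{Ambrose} directly, as in the paragraph preceding the statement, rather than reprove it. A subsidiary subtlety is that a priori an arbitrary extension need only be a local isometry and a statisticmorphism onto $(M,g,\nabla^{*})$. Uniqueness therefore requires the statement about determination of local isometries by first-order data at one point on a connected manifold, which I would invoke via the analytic-continuation argument along curves.
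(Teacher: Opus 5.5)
Your proposal is correct and follows the paper's proof essentially step for step. You get (1)$\Leftrightarrow$(2) from Theorem~\ref{thm:local-stat-sym}, (2)$\Rightarrow$(3) via Riemannian global symmetry (from completeness, simple connectivity and $\nabla^{g}R^{g}=0$) followed by Proposition~\ref{prop:global-stat-sym}, (3)$\Rightarrow$(1) by restriction, and the extension and its uniqueness from the rigidity of isometries with the same value and differential at $p$; the paper only differs cosmetically, identifying the restriction through $\hat{s}_{p}(\operatorname{Exp}_{p}^{g}(v))=\operatorname{Exp}_{p}^{g}(-v)$ and citing Proposition~\ref{prop:global-stat-sym} for uniqueness.
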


\begin{proof}
The equivalence of \textup{(1)} and \textup{(2)} is Theorem~\ref{thm:local-stat-sym}.
Assume \textup{(2)}.
The equality $\nabla^{g}R^{g}=0$ makes $(M,g)$ locally symmetric.
Since $(M,g)$ is complete and simply connected, it is a Riemannian symmetric space.
Since $\nabla^{g}K=0$, Proposition~\ref{prop:global-stat-sym} shows that its Riemannian geodesic symmetries are global dual geodesic symmetries.
Thus \textup{(3)} holds.
Conversely, \textup{(3)} implies \textup{(1)} by restriction to normal neighborhoods.

For $p\in M$, let $\hat{s}_{p}$ be the resulting global Riemannian geodesic symmetry.
Since it is an isometry fixing $p$ with differential $-\id_{T_{p}M}$, uniqueness of Levi--Civita geodesics gives
\[
\hat{s}_{p}\bigl(\operatorname{Exp}_{p}^{g}(v)\bigr)
=\operatorname{Exp}_{p}^{g}(-v)
=s_{p}\bigl(\operatorname{Exp}_{p}^{g}(v)\bigr)
\qquad(v\in\mathcal{O}_{p}^{(0)}).
\]
Here the second equality follows from \eqref{eq:LC-sym-spray}.
Thus $\hat{s}_{p}|_{O^{(0)}(p)}=s_{p}$.
Uniqueness of the extension and \eqref{eq:global-dual-family} follow from Proposition~\ref{prop:global-stat-sym}.
\end{proof}

\begin{remark}
On any connected statistical symmetric space, $g$ is complete.
Equivalently, the Levi--Civita connection $\nabla^{g}$ is complete.
The statistical connection $\nabla$ is complete if and only if $K=0$.
Indeed, $\nabla^{g}K=0$ and \cite[Theorem~4.1]{Opozda2021} show that completeness of $\nabla$ forces $K=0$.
Conversely, if $K=0$, then $\nabla=\nabla^{g}$ is complete.
\end{remark}

For a connected statistical symmetric space, the global dual geodesic symmetries lift to the universal Riemannian cover, as described in the following corollary.

\begin{corollary}
\label{cor:univ-cover}
Let $(M,g,\nabla)$ be a connected statistical symmetric space, and let $\pi:(\widetilde{M},\widetilde{g})\longrightarrow(M,g)$ be its universal Riemannian covering, where $\widetilde{g}=\pi^{*}g$.
Equip $\widetilde{M}$ with the lifted connection $\widetilde{\nabla}=\pi^{*}\nabla$.
Then $(\widetilde{M},\widetilde{g},\widetilde{\nabla})$ is a statistical symmetric space, and both $g$ and $\widetilde{g}$ are complete.
The global dual geodesic symmetries on the two spaces satisfy
\[
\pi\circ \widetilde{s}_{\widetilde{p}}
=s_{\pi(\widetilde{p})}\circ\pi
\qquad(\widetilde{p}\in\widetilde{M}).
\]
\end{corollary}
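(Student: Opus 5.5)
The plan is to establish three things: $(\widetilde{M},\widetilde{g},\widetilde{\nabla})$ is a statistical manifold, it satisfies condition (ii) of Proposition~\ref{prop:global-stat-sym}, and its symmetries intertwine with those of $M$.

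\textbf{Lifted structure.} Since $\pi$ is a local diffeomorphism, $\widetilde{g}=\pi^{*}g$ and $\widetilde{\nabla}=\pi^{*}\nabla$ are well defined locally, and they patch to global objects. Statistical structure is a local tensorial condition, so $\widetilde{\nabla}\widetilde{g}=\pi^{*}C$ is totally symmetric and $\widetilde{\nabla}$ is torsion free. Because $\pi$ is a local isometry, it intertwines the Levi--Civita connections, so $\nabla^{\widetilde{g}}=\pi^{*}\nabla^{g}$, $\widetilde{K}=\pi^{*}K$ and $R^{\widetilde{g}}=\pi^{*}R^{g}$. Hence $\nabla^{\widetilde{g}}R^{\widetilde{g}}=0$ and $\nabla^{\widetilde{g}}\widetilde{K}=0$ follow from the corresponding identities on $M$, which hold by Proposition~\ref{prop:global-stat-sym}(ii) and Theorem~\ref{thm:local-stat-sym}.

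\textbf{Completeness and the symmetric-space property.} Completeness of $g$ was noted after Proposition~\ref{prop:global-stat-sym}: each $s_{p}$ reverses geodesics through $p$, so geodesics extend indefinitely. A Riemannian covering of a complete manifold is complete, because geodesics of $\widetilde{g}$ are lifts of geodesics of $g$ and lifts exist for all time. So $\widetilde{g}$ is complete and $\widetilde{M}$ is simply connected. Theorem~\ref{thm:global-stat-sym}, applied via (2)$\Rightarrow$(3), then shows that $(\widetilde{M},\widetilde{g},\widetilde{\nabla})$ is a statistical symmetric space.

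\textbf{Intertwining relation.} Fix $\widetilde{p}$ and put $p=\pi(\widetilde{p})$. The map $s_{p}\circ\pi:\widetilde{M}\to M$ is a local isometry from a simply connected manifold, so I would lift it through the covering $\pi$ to a map $f:\widetilde{M}\to\widetilde{M}$ with $\pi\circ f=s_{p}\circ\pi$ and $f(\widetilde{p})=\widetilde{p}$; this is possible because $s_{p}(p)=p$. Then $f$ is a local isometry with $(df)_{\widetilde{p}}=-\id$, using $d\pi$ to identify tangent spaces. A local isometry between complete connected manifolds of the same dimension is a covering map, hence surjective; since $\widetilde{M}$ is simply connected, $f$ is a global isometry. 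By uniqueness in Proposition~\ref{prop:global-stat-sym}, $f=\widetilde{s}_{\widetilde{p}}$, provided $f$ is a global dual geodesic symmetry. This holds because $f_{*}\widetilde{K}=-\widetilde{K}$, which follows from $(s_{p})_{*}K=-K$ by pulling back through $\pi$. Alternatively, an isometry fixing $\widetilde{p}$ with differential $-\id$ already coincides with the Riemannian geodesic symmetry, and that equals $\widetilde{s}_{\widetilde{p}}$.

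The main obstacle I expect is the lifting step: checking that $f$ is a bijective isometry rather than merely a local one. I would handle this with the standard fact that a local isometry from a complete manifold is a Riemannian covering, combined with simple connectivity of the target $\widetilde{M}$.
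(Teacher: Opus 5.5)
Your proof is correct, but it is organized differently from the paper's.

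\textbf{The paper's route.} The paper never uses the tensorial criterion on $\widetilde{M}$ or Theorem~\ref{thm:global-stat-sym}. It lifts $s_{p}\circ\pi$ to $\widetilde{S}$ with $\widetilde{S}(\widetilde{p})=\widetilde{p}$. Then $\widetilde{S}^{2}$ is the lift of $s_{p}^{2}\circ\pi=\pi$ fixing $\widetilde{p}$, so uniqueness of lifts gives $\widetilde{S}^{2}=\id$, and $\widetilde{S}$ is a diffeomorphism. Finally it reads off $\widetilde{S}^{*}\widetilde{g}=\widetilde{g}$ and $\widetilde{S}^{*}\widetilde{\nabla}^{*}=\widetilde{\nabla}$ from $\pi\circ\widetilde{S}=s_{p}\circ\pi$. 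This one construction yields both the statistical symmetric-space property of $\widetilde{M}$ and the intertwining relation at once.

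\textbf{Your route.} You get the symmetric-space property from $\nabla^{\widetilde{g}}R^{\widetilde{g}}=0$, $\nabla^{\widetilde{g}}\widetilde{K}=0$ and Theorem~\ref{thm:global-stat-sym}. That theorem rests on the classical fact that complete simply connected locally symmetric spaces are globally symmetric. You then prove bijectivity of the lift $f$ using the theorem that a local isometry from a complete manifold is a covering map. Both steps are valid.

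\textbf{Where your argument is heavier than needed.}
\begin{itemize}
\item Once you know $f$ is an isometry with $f(\widetilde{p})=\widetilde{p}$, $(df)_{\widetilde{p}}=-\id$ and $f_{*}\widetilde{K}=-\widetilde{K}$, it already \emph{is} a global dual geodesic symmetry at $\widetilde{p}$. Doing this for every $\widetilde{p}$ proves that $\widetilde{M}$ is a statistical symmetric space, so the appeal to Theorem~\ref{thm:global-stat-sym} is redundant.
\item The covering-space argument for bijectivity can be replaced by the one-line involution trick $f^{2}=\id$ via uniqueness of lifts. Completeness of $\widetilde{g}$ is then only a stated conclusion, not an ingredient.
\end{itemize}

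What your detour does buy is a clear view of why the result holds: the defining tensorial conditions $\nabla^{g}R^{g}=0$ and $\nabla^{g}K=0$ are local, so they pass to any Riemannian covering.
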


\begin{proof}
By Proposition~\ref{prop:global-stat-sym}, $(M,g)$ is a Riemannian symmetric space, so $g$ is complete.
Since $\pi$ is a Riemannian covering, $\widetilde{g}$ is complete as well.
The lifted pair $(\widetilde{g},\widetilde{\nabla})$ is a statistical structure, and its dual connection is $\widetilde{\nabla}^{*}=\pi^{*}\nabla^{*}$.

Fix $\widetilde{p}\in\widetilde{M}$ and put $p=\pi(\widetilde{p})$.
Since $\widetilde{M}$ is simply connected, there is a unique lift $\widetilde{S}:\widetilde{M}\longrightarrow\widetilde{M}$ satisfying
\[
\pi\circ\widetilde{S}=s_{p}\circ\pi,
\qquad
\widetilde{S}(\widetilde{p})=\widetilde{p}.
\]
Since $s_{p}^{2}=\id$, uniqueness of lifts gives $\widetilde{S}^{2}=\id$.
Thus $\widetilde{S}$ is a diffeomorphism.
Differentiating $\pi\circ\widetilde{S}=s_{p}\circ\pi$ at $\widetilde{p}$ and using $(ds_{p})_{p}=-\id$ gives $(d\widetilde{S})_{\widetilde{p}}=-\id_{T_{\widetilde{p}}\widetilde{M}}$.

The identity $\pi\circ\widetilde{S}=s_{p}\circ\pi$ also gives
\[
\begin{aligned}
\widetilde{S}^{*}\widetilde{g}
&=\pi^{*}(s_{p}^{*}g)
=\widetilde{g},\\
\widetilde{S}^{*}\widetilde{\nabla}^{*}
&=\pi^{*}(s_{p}^{*}\nabla^{*})
=\widetilde{\nabla}.
\end{aligned}
\]
Therefore, $\widetilde{S}$ is a global dual geodesic symmetry at $\widetilde{p}$.
Since $\widetilde{p}$ was arbitrary, $(\widetilde{M},\widetilde{g},\widetilde{\nabla})$ is a statistical symmetric space.
By uniqueness in Proposition~\ref{prop:global-stat-sym}, $\widetilde{S}=\widetilde{s}_{\widetilde{p}}$.
Hence $\pi\circ\widetilde{s}_{\widetilde{p}}=s_{p}\circ\pi$.
\end{proof}

Let $(M,g,\nabla)$ be a connected statistical symmetric space.
Define its \emph{Riemannian transvection group} by
\[
G:=\bigl\langle s_{p}\circ s_{q}\mid p,q\in M\bigr\rangle
\subset\Iso(M,g).
\]
The group $G$ is a connected Lie subgroup of $\Iso(M,g)$.

We denote the action of $G$ on $M$ by
\[
\rho:G\times M\longrightarrow M,\qquad
\rho(a,x)=a\cdot x.
\]
For each $a\in G$, define
\[
\rho_{a}:M\longrightarrow M,\qquad
\rho_{a}(x):=\rho(a,x)=a\cdot x.
\]
Each $\rho_{a}$ is a diffeomorphism of $M$ with inverse $\rho_{a^{-1}}$.
Since $G$ is a Lie subgroup of $\Iso(M,g)$, this action is smooth and \emph{effective}.
Here effectiveness means that the identity is the only element of $G$ fixing every point of $M$.

Since $M$ is connected and $g$ is complete, the Hopf--Rinow theorem gives a Levi--Civita geodesic joining any $p,q\in M$.
If $m$ is its midpoint, then $(s_{m}\circ s_{p})(p)=q$.
Thus $G$ acts transitively on $M$.

\begin{proposition}
\label{prop:global-stat-transv}
Let $(M,g,\nabla)$ be a connected statistical symmetric space, and let $G$ be its Riemannian transvection group.
Then the following statements hold.
\begin{enumerate}[label={\rm(\roman*)}]
\item
The group $G$ is a connected Lie group of statistical automorphisms.
In particular,
\[
G\subset\operatorname{Aut}(M,g,\nabla)_{0}
\subset\Iso(M,g)_{0},
\]
where the subscript $0$ denotes the identity component.

\item
The groups $G$, $\operatorname{Aut}(M,g,\nabla)_{0}$, and $\operatorname{Aut}(M,g,\nabla)$ act effectively and transitively on $M$.

\item
Conjugation by each $s_{p}$ preserves both $\operatorname{Aut}(M,g,\nabla)$ and $G$.
\end{enumerate}
\end{proposition}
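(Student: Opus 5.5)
The three statements reduce to one computation: each global dual geodesic symmetry reverses the skewness tensor, so compositions of two of them preserve it. Everything else is standard Lie-group bookkeeping inside $\Iso(M,g)$.

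For (i), Proposition~\ref{prop:global-stat-sym} gives $s_{p}^{*}g=g$ and $(s_{p})_{*}K=-K$ for every $p\in M$, using \eqref{eq:nabla-K} and Lemma~\ref{lem:dual-sym-equiv}. Hence each generator $s_{p}\circ s_{q}$ is an isometry with $(s_{p}\circ s_{q})_{*}K=K$. By Proposition~\ref{prop:stat-morph-equiv} with $\alpha=\beta=1$, every generator lies in
\[
\operatorname{Aut}(M,g,\nabla)=\{\varphi\in\Iso(M,g)\mid\varphi_{*}K=K\}.
\]
This set is a group, so $G\subset\operatorname{Aut}(M,g,\nabla)$.

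Since $G$ is connected (already stated in the text), it lies in the identity component of $\operatorname{Aut}(M,g,\nabla)$. Here $\operatorname{Aut}(M,g,\nabla)$ is a closed Lie subgroup of $\Iso(M,g)$, with the subspace topology. Its identity component is connected and contains the identity, so it lies in $\Iso(M,g)_{0}$. I expect the main care point of the whole proof here: the connectedness of $G$ is asserted in the text, and the argument depends on its topology being compatible with the subspace topology. If a justification is wanted, I would note that each $s_{p}\circ s_{q}$ is joined to $\id=s_{q}\circ s_{q}$ by the continuous path $r\mapsto s_{\gamma(r)}\circ s_{q}$ in $\Iso(M,g)$, where $\gamma$ is a curve from $q$ to $p$. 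This uses joint smoothness of $(p,x)\mapsto s_{p}(x)$ and the compact-open topology on $\Iso(M,g)$. It shows that the generators lie in a connected subset of $\operatorname{Aut}(M,g,\nabla)$ containing $\id$, hence in $\operatorname{Aut}(M,g,\nabla)_{0}$.

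For (ii), the text before the proposition already proves transitivity of $G$ using the midpoint construction $s_{m}\circ s_{p}$. Any group containing a transitive group acts transitively, so $\operatorname{Aut}(M,g,\nabla)_{0}$ and $\operatorname{Aut}(M,g,\nabla)$ are transitive by (i). All three groups are subgroups of $\Iso(M,g)$, which acts effectively by definition. Hence each acts effectively: an element fixing every point is the identity map, and therefore the identity element.

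For (iii), let $\varphi\in\operatorname{Aut}(M,g,\nabla)$. Then $s_{p}\varphi s_{p}^{-1}$ is an isometry, and since $s_{p}^{-1}=s_{p}$,
\[
(s_{p}\varphi s_{p})_{*}K=(s_{p})_{*}\varphi_{*}(s_{p})_{*}K=(s_{p})_{*}\varphi_{*}(-K)=-(s_{p})_{*}K=K.
\]
So conjugation preserves $\operatorname{Aut}(M,g,\nabla)$; the inverse conjugation is the same map. For $G$, the global form of \eqref{eq:sym-conjugation} gives $s_{p}s_{q}s_{p}=s_{s_{p}(q)}$. Therefore
\[
s_{p}(s_{q}s_{r})s_{p}=s_{s_{p}(q)}\,s_{s_{p}(r)},
\]
which is again a generator. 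Conjugation is an automorphism of $\Iso(M,g)$, so it maps $G$ into $G$, and it is involutive, so it maps $G$ onto $G$.
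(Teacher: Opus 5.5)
Your proposal is correct and follows essentially the same route as the paper. Each generator $s_{p}\circ s_{q}$ is a statistical automorphism because each $s_{p}$ reverses the structure; you phrase this via $(s_{p})_{*}K=-K$, the paper via \eqref{eq:global-dual-family}. Transitivity and effectiveness are inherited from the midpoint construction and from $G\subset\Iso(M,g)$. Conjugation invariance comes from the same sign computation and the global reflection identity $s_{p}(s_{q}s_{r})s_{p}=s_{s_{p}(q)}s_{s_{p}(r)}$.

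Your path argument $r\mapsto s_{\gamma(r)}\circ s_{q}$ is a harmless extra that places the generators directly in $\operatorname{Aut}(M,g,\nabla)_{0}$. The paper instead simply invokes the connectedness of $G$ established before the proposition.
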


\begin{proof}
The preceding discussion shows that $G$ is a connected Lie group.

By \eqref{eq:global-dual-family}, every generator $s_{p}\circ s_{q}$ preserves $\nabla$.
Each generator also preserves $g$, so it is a statistical automorphism.
Since statistical automorphisms form a group, $G\subset\operatorname{Aut}(M,g,\nabla)$.
By the connectedness of $G$, $G\subset\operatorname{Aut}(M,g,\nabla)_{0}$.
The group $\operatorname{Aut}(M,g,\nabla)_{0}$ is a connected subgroup of $\Iso(M,g)$ containing the identity, so $\operatorname{Aut}(M,g,\nabla)_{0}\subset\Iso(M,g)_{0}$.
This proves \textup{(i)}.

The preceding discussion also shows that $G$ acts effectively and transitively on $M$.
Since both $\operatorname{Aut}(M,g,\nabla)_{0}$ and $\operatorname{Aut}(M,g,\nabla)$ contain $G$, they act transitively on $M$.
Their actions are effective because they are subgroups of $\Iso(M,g)$.
This proves \textup{(ii)}.

Fix $p\in M$.
For $f\in\operatorname{Aut}(M,g,\nabla)$, the isometry $s_{p}\circ f\circ s_{p}^{-1}$ satisfies
\[
(s_{p}\circ f\circ s_{p}^{-1})_{*}K
=(s_{p})_{*}\bigl(f_{*}(-K)\bigr)=K.
\]
Thus conjugation by $s_{p}$ maps $\operatorname{Aut}(M,g,\nabla)$ into itself.
The global reflection identity also gives
\[
s_{p}\circ(s_{q}\circ s_{r})\circ s_{p}^{-1}
=s_{s_{p}(q)}\circ s_{s_{p}(r)}.
\]
Thus conjugation by $s_{p}$ maps each generator of $G$ to a generator of $G$.
Hence it maps $G$ into itself.
Since $s_{p}^{-1}=s_{p}$, this conjugation is involutive.
Therefore, it preserves both groups.
This proves \textup{(iii)}.
\end{proof}

\begin{remark}
The group $G$ is determined by the Riemannian metric $g$.
The inclusion $G\subset\operatorname{Aut}(M,g,\nabla)$ holds under the assumption that $(M,g,\nabla)$ is a statistical symmetric space.
Riemannian symmetry of $(M,g)$ alone does not imply that $G$ preserves $\nabla$.
\end{remark}

Every globally defined infinitesimal statistical automorphism $X$ generates a one-parameter subgroup of $\operatorname{Aut}(M,g,\nabla)$.
Since $X$ is a Killing vector field and $g$ is complete, its flow $\varphi_{t}$ is defined for all $t\in\mathbb{R}$.
The identities $\pounds_{X}g=0$ and $\pounds_{X}\nabla=0$ show that each $\varphi_{t}$ preserves both $g$ and $\nabla$.

\begin{corollary}
\label{cor:global-inf-transv}
Let $(M,g,\nabla)$ be a connected statistical symmetric space, and let $G$ be its Riemannian transvection group.
Fix $o\in M$.
For each $v\in T_{o}M$, the local infinitesimal transvection $X_{v}$ extends to a complete infinitesimal statistical automorphism whose flow is a one-parameter subgroup of $G$.
These one-parameter subgroups generate $G$.
\end{corollary}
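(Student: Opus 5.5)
The plan is to extend $X_{v}$ to a global Killing field by realizing it as the generator of a global one-parameter family of transvections. Let $\gamma^{g}$ be the Levi--Civita geodesic with $\gamma^{g}(0)=o$ and $\dot{\gamma}^{g}(0)=v$. Since $g$ is complete, $\gamma^{g}$ is defined on all of $\mathbb{R}$. Define global transvections
\[
T_{t}:=s_{\gamma^{g}(t/2)}\circ s_{o}\in G \qquad (t\in\mathbb{R}).
\]
By Proposition~\ref{prop:global-stat-sym}, these maps are global isometries, and by Proposition~\ref{prop:global-stat-transv} they are statistical automorphisms. On a neighborhood of $o$ and for small $t$, $T_{t}$ agrees with the local transvection $\tau_{t}$ in \eqref{eq:transv}, because the global symmetries restrict to the local ones.

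Next, I will show that $t\mapsto T_{t}$ is a one-parameter subgroup. The map $T_{t}\circ T_{r}\circ T_{t+r}^{-1}$ is an isometry. Using \eqref{eq:transv-parallel}, I will check that it fixes $\gamma^{g}(t+r)$ and has identity differential there. This works because each $T_{t}$ maps $\gamma^{g}(x)$ to $\gamma^{g}(x+t)$ and its differential along $\gamma^{g}$ is Levi--Civita parallel displacement. These properties hold globally since each $s_{\gamma^{g}(a)}$ reverses $\gamma^{g}$ about $\gamma^{g}(a)$ and reverses parallel fields along it.

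An isometry of a connected manifold is determined by its value and differential at one point. Hence $T_{t}\circ T_{r}=T_{t+r}$ for all $t,r\in\mathbb{R}$. Smoothness in $t$ follows from the smoothness of $(p,q)\mapsto s_{p}(q)$ on $M\times M$. Thus $\{T_{t}\}$ is a one-parameter subgroup of $G$, and its generator $\widetilde{X}_{v}(q):=\frac{d}{dt}\big|_{t=0}T_{t}(q)$ is a complete vector field on $M$. Its flow consists of statistical automorphisms, so $\pounds_{\widetilde{X}_{v}}g=0$ and $\pounds_{\widetilde{X}_{v}}\nabla=0$. Since $\widetilde{X}_{v}$ coincides with $X_{v}$ on the neighborhood $O$, it is the required extension. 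Uniqueness of the extension follows from Proposition~\ref{prop:inf-stat-aut}~(ii) and connectedness.

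For generation, let $G'$ be the subgroup generated by all the $T_{t}$, with $v$ ranging over $T_{o}M$. Clearly $G'\subset G$, and $G'$ acts transitively: by Hopf--Rinow every $q$ equals $T_{1}(o)$ for a suitable $v$. The reverse inclusion $G\subset G'$ is the main obstacle. I will try to write each generator $s_{p}\circ s_{q}$ as a product of transvections along geodesics through $o$. First, conjugation gives $s_{p}=h\circ s_{o}\circ h^{-1}$ for any $h\in G'$ with $h(o)=p$, using the global form of \eqref{eq:sym-conjugation}. Choose $h,k\in G'$ with $h(o)=p$ and $k(o)=q$. Then
\[
s_{p}\circ s_{q}=h\,s_{o}h^{-1}k\,s_{o}k^{-1}=h\,(s_{o}h^{-1}k\,s_{o})\,k^{-1}.
\]
By Proposition~\ref{prop:global-stat-transv}~(iii) and the identity $s_{o}T_{t}s_{o}=T_{-t}$ (the global version of $s_{p}\tau_{t}s_{p}^{-1}=\tau_{-t}$), conjugation by $s_{o}$ preserves $G'$. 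Hence $s_{p}\circ s_{q}\in G'$, which gives $G=G'$.
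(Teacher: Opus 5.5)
Your proof is correct. Its first half follows the paper's route. The paper also takes the global transvections $\tau_{t}=s_{\gamma^{g}(t/2)}\circ s_{o}$ along the complete geodesic, uses Proposition~\ref{prop:global-stat-transv} to see that they are statistical automorphisms, and identifies their generator with $X_{v}$ near $o$. Where the paper simply cites the classical transvection construction for the global group law and the global form of \eqref{eq:transv-parallel}, you verify both via the rigidity of isometries, which is a welcome addition.

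The generation step is where you genuinely differ. The paper observes that if $\gamma^{g}(1)=p$, then the transvection at time $2$ is $\tau_{2}=s_{\gamma^{g}(1)}\circ s_{o}=s_{p}\circ s_{o}$. So each $s_{p}\circ s_{o}$ already lies on one of the one-parameter subgroups, and the identity $s_{p}\circ s_{q}=(s_{p}\circ s_{o})\circ(s_{q}\circ s_{o})^{-1}$ finishes in one line. You instead combine three ingredients: transitivity of $G'$, the global conjugation identity $h\circ s_{o}\circ h^{-1}=s_{h(o)}$, and stability of $G'$ under conjugation by $s_{o}$. That last stability follows from $s_{o}\circ T_{t}\circ s_{o}=T_{-t}$ alone; Proposition~\ref{prop:global-stat-transv}~(iii) concerns $G$, not $G'$, and is not needed.

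Your route is valid but longer. What it buys is a more general fact: $G$ is contained in every group of isometries that acts transitively and is normalized by $s_{o}$. The paper's identity is the shortest argument for the specific group generated by the transvections at $o$. Your own construction already contains that shortcut, since $T_{2}$ along the geodesic with $\gamma^{g}(1)=p$ equals $s_{p}\circ s_{o}$.
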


\begin{proof}
Let $v\in T_{o}M$, and let $\gamma^{g}:\mathbb{R}\longrightarrow M$ be the complete Levi--Civita geodesic with initial data $\gamma^{g}(0)=o$ and $\dot{\gamma}^{g}(0)=v$.
By the classical Riemannian transvection construction, \eqref{eq:transv} defines a smooth global one-parameter group $\{\tau_{t}\}_{t\in\mathbb{R}}\subset G$, and \eqref{eq:transv-parallel} holds for all $r,t\in\mathbb{R}$.
By Proposition~\ref{prop:global-stat-transv}, each $\tau_{t}$ is a statistical automorphism.
The local flow defining $X_{v}$ is the restriction of this global flow.
Therefore, the infinitesimal generator of the global flow is a complete extension of $X_{v}$.

By completeness of $g$, for each $p\in M$ there is a complete Levi--Civita geodesic $\gamma^{g}$ with $\gamma^{g}(0)=o$ and $\gamma^{g}(1)=p$.
For the corresponding transvections $\tau_{t}=s_{\gamma^{g}(t/2)}\circ s_{o}$, we have $\tau_{2}=s_{\gamma^{g}(1)}\circ s_{o}=s_{p}\circ s_{o}$.
Since
\[
s_{p}\circ s_{q}
=(s_{p}\circ s_{o})\circ(s_{q}\circ s_{o})^{-1}
\qquad(p,q\in M),
\]
the one-parameter subgroups based at $o$ generate $G$.
\end{proof}

%%%%%%%%%%%%%%%%%%%%%%%%%%%%
%%%%%%%%%%%%%%%%%%%%%%%%%%%%
\subsection{Homogeneous characterization of statistical symmetric spaces}
%%%%%%%%%%%%%%%%%%%%%%%%%%%%
%%%%%%%%%%%%%%%%%%%%%%%%%%%%

This subsection establishes the equivalence between Definition~\ref{def:stat-sym-space} and the homogeneous definition of statistical symmetric spaces in \cite{InoguchiOhno}.
%%%%%%%%%%%%%%%%%%%%%%%%%%%%
\subsubsection{Canonical connections and invariant tensors on reductive homogeneous spaces}
%%%%%%%%%%%%%%%%%%%%%%%%%%%%

Let $(M,g,\nabla)$ be a connected statistical manifold, and let $G$ be a connected Lie group acting smoothly, effectively, and transitively on $M$ by statistical automorphisms.
Proposition~\ref{prop:global-stat-transv} shows that the Riemannian transvection action of a connected statistical symmetric space satisfies these assumptions.
Denote the action by $\rho:G\times M\longrightarrow M$, and write $\rho_{a}(x):=\rho(a,x)=a\cdot x$ for $a\in G$ and $x\in M$.

The action defines a group homomorphism
\[
G\longrightarrow\operatorname{Aut}(M,g,\nabla),\qquad
a\longmapsto\rho_{a}.
\]
Since the action is effective, this homomorphism is injective.
We identify $G$ with its image and use $a\in G$ also to denote the corresponding statistical automorphism of $(M,g,\nabla)$.

Fix $o\in M$ and let
\[
H=G_{o}:=\{a\in G\mid a\cdot o=o\}
\]
be its isotropy subgroup.
The orbit map at $o$ is
\[
\rho^{o}:G\longrightarrow M,\qquad
\rho^{o}(a):=\rho(a,o)=a\cdot o.
\]
The subgroup $H$ is closed, and $\rho^{o}$ induces the $G$-equivariant diffeomorphism
\[
\overline{\rho}^{\,o}:G/H\longrightarrow M,\qquad
\overline{\rho}^{\,o}(aH):=a\cdot o.
\]
Equip $G/H$ with the $G$-invariant statistical structure $((\overline{\rho}^{\,o})^{*}g, (\overline{\rho}^{\,o})^{*}\nabla)$, and denote it again by $(g,\nabla)$.
Thus, $\overline{\rho}^{\,o}$ is a statisticmorphism, and we identify $(G/H,g,\nabla)$ with $(M,g,\nabla)$.
Under this identification, $o=eH$, and the action of $G$ becomes left translation on $G/H$, given by $a\cdot(bH)=(ab)H$.

Put
\[
\mathfrak{g}=\operatorname{Lie}(G),\qquad
\mathfrak{h}=\operatorname{Lie}(H).
\]
Write $\exp:\mathfrak{g}\longrightarrow G$ for the Lie group exponential map.
For $X\in\mathfrak{g}$, define the fundamental vector field $X^{\#}$ by
\[
X^{\#}_{x}
=
\left.\frac{d}{dt}\right|_{t=0}\exp(tX)\cdot x.
\]
Each $X^{\#}$ is a complete infinitesimal statistical automorphism, with flow $\rho_{\exp(tX)}$.
For this left action, the correspondence $X\mapsto X^{\#}$ is an antihomomorphism: $[X,Y]^{\#}=-[X^{\#},Y^{\#}]$ for $X,Y\in\mathfrak{g}$.

For $a\in G$, let $i_{a}:G\longrightarrow G$ be the inner automorphism defined by $i_{a}(b)=aba^{-1}$.
Write $\operatorname{Ad}:G\longrightarrow\operatorname{Aut}(\mathfrak{g})$ for the adjoint representation, given by $\operatorname{Ad}(a)=(di_{a})_{e}$.
The differential at $e$ of the action homomorphism $G\to\Iso(M,g)$ identifies $\mathfrak{g}$ with a Lie subalgebra of $\operatorname{Lie}(\Iso(M,g))$.
We also use $\operatorname{Ad}$ for the adjoint representation of $\Iso(M,g)$; for $a\in G$, the two adjoint actions agree on $\mathfrak{g}$.

A vector-space decomposition $\mathfrak{g}=\mathfrak{h}\oplus\mathfrak{m}$ is called \emph{reductive} if $\mathfrak{m}$ is $\operatorname{Ad}(H)$-invariant, that is, $\operatorname{Ad}(h)(\mathfrak{m})=\mathfrak{m}$ for every $h\in H$.
If $H$ is disconnected, the infinitesimal condition $[\mathfrak{h},\mathfrak{m}]\subset\mathfrak{m}$ guarantees invariance under the identity component $H_{0}$, but not necessarily under all of $H$.

Fix an arbitrary reductive decomposition.
The differential of the orbit map at $e$ is
\[
(d\rho^{o})_{e}:\mathfrak{g}\longrightarrow T_{o}M,\qquad
(d\rho^{o})_{e}(X)=X^{\#}_{o}.
\]
It is surjective with kernel $\mathfrak{h}$.
Thus, its restriction to $\mathfrak{m}$ is a linear isomorphism, which gives the identification $\mathfrak{m}\simeq T_{o}M$.

Since each $h\in H$ fixes $o$, the differential $(d\rho_{h})_{o}:T_{o}M\longrightarrow T_{o}M$ is a linear automorphism.
The homomorphism $h\mapsto(d\rho_{h})_{o}$ is the isotropy representation of $H$ on $T_{o}M$.
Under the above identification, this representation corresponds to $h\mapsto\operatorname{Ad}(h)|_{\mathfrak{m}}$.
Indeed, for $h\in H$ and $X\in\mathfrak{m}$, the identity $h^{-1}\cdot o=o$ and the equivariance of the Lie group exponential map under conjugation give
\[
h\cdot\bigl(\exp(tX)\cdot o\bigr)
=(h\exp(tX)h^{-1})\cdot o
=\exp\!\bigl(t\,\operatorname{Ad}(h)X\bigr)\cdot o.
\]
Differentiating at $t=0$ yields
\begin{equation}
(d\rho_{h})_{o}\bigl(X^{\#}_{o}\bigr)
=
\bigl(\operatorname{Ad}(h)X\bigr)^{\#}_{o}
\qquad(h\in H,\ X\in\mathfrak{m}).
\label{eq:isotropy-Ad}
\end{equation}

The definition of a statistical symmetric space in \cite[Definition~10.3]{InoguchiOhno} requires such a reductive decomposition satisfying the additional condition
\begin{equation}
[\mathfrak{m},\mathfrak{m}]\subset\mathfrak{h}.
\label{eq:sym-pair-bracket}
\end{equation}
This additional condition is not assumed in the construction of the canonical connection or in Lemma~\ref{lem:canonical-parallel} below.

Let $\nabla^{-}$ denote the $G$-invariant canonical affine connection of the second kind associated with the chosen reductive decomposition; see \cite{Nomizu1954}.
This auxiliary connection need not be torsion free, as an exception to the convention in Section~\ref{sec:stat-curv}.
For $X\in\mathfrak{m}$, its parallel displacement from $o$ along $t\mapsto\exp(tX)\cdot o$ is the map $\bigl(d\rho_{\exp(tX)}\bigr)_{o}: T_{o}M\longrightarrow T_{\exp(tX)\cdot o}M$.
Under the identification $\mathfrak{m}\simeq T_{o}M$, its torsion satisfies
\begin{equation}
T^{-}_{o}(X,Y)=-[X,Y]_{\mathfrak{m}}
\qquad(X,Y\in\mathfrak{m}),
\label{eq:canonical-torsion}
\end{equation}
where $[X,Y]_{\mathfrak{m}}$ denotes the $\mathfrak{m}$-component of $[X,Y]$; see \cite[Chapter~X, \S2]{KN2}.

The following standard property is given in \cite[Chapter~X, Proposition~2.7]{KN2}.

\begin{lemma}
\label{lem:canonical-parallel}
For all nonnegative integers $p$ and $q$, every $G$-invariant $F\in\Gamma(T^{p}_{q}(M))$ satisfies $\nabla^{-}F=0$.
\end{lemma}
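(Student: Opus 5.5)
The plan is to show that $G$-invariant tensors are $\nabla^{-}$-parallel by combining two facts: such a tensor is determined by its value at $o$ and carried along by the group action, and $\nabla^{-}$-parallel displacement along the distinguished curves through $o$ is given by the same group action. Fix a $G$-invariant $F\in\Gamma(T^{p}_{q}(M))$, so that $(\rho_{a})_{*}F=F$ for every $a\in G$. The connection $\nabla^{-}$ is itself $G$-invariant, so $(\rho_{a})_{*}(\nabla^{-}F)=\nabla^{-}\bigl((\rho_{a})_{*}F\bigr)=\nabla^{-}F$. Thus $\nabla^{-}F$ is again $G$-invariant. Since $G$ acts transitively, it suffices to prove $(\nabla^{-}F)_{o}=0$. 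In other words, we must show $(\nabla^{-}_{x}F)_{o}=0$ for every $x\in T_{o}M$.

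Under the identification $\mathfrak{m}\simeq T_{o}M$, write $x=X^{\#}_{o}$ with $X\in\mathfrak{m}$, and consider the curve $c(t)=\exp(tX)\cdot o$. By the stated property of the canonical connection of the second kind, $\nabla^{-}$-parallel displacement along $c$ from $o$ to $c(t)$ is $(d\rho_{\exp(tX)})_{o}$. Extend this to tensors: parallel displacement of tensors along $c$ is the tensor extension of the displacement on vectors, namely $(d\rho_{\exp(tX)})_{o}^{\otimes p}\otimes\bigl((d\rho_{\exp(-tX)})_{c(t)}\bigr)^{*\otimes q}$.

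By $G$-invariance, $F_{c(t)}=\bigl((\rho_{\exp(tX)})_{*}F\bigr)_{c(t)}$, which is exactly this tensor map applied to $F_{o}$. Hence $t\mapsto F_{c(t)}$ is a $\nabla^{-}$-parallel tensor field along $c$. Consequently $(\nabla^{-}_{\dot c(0)}F)_{o}=0$, and since $\dot c(0)=X^{\#}_{o}=x$, this gives $(\nabla^{-}_{x}F)_{o}=0$. As $x$ was arbitrary, $(\nabla^{-}F)_{o}=0$, and therefore $\nabla^{-}F=0$ everywhere.

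The only delicate point is the step that extends parallel displacement to tensors, together with the identification of covariant derivatives along a curve with the derivative of the parallel-transported field. This is standard for any affine connection, whether or not it has torsion, so the torsion of $\nabla^{-}$ causes no trouble. The remaining work is checking that the form of the displacement matches the pushforward convention fixed in Section~\ref{sec:stat-morph}.
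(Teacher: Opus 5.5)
Your proof is correct, and it is essentially the standard argument behind \cite[Chapter~X, Proposition~2.7]{KN2}; the paper cites that result rather than proving the lemma itself. The argument runs as follows: $\nabla^{-}$-parallel displacement along $t\mapsto\exp(tX)\cdot o$ is $(d\rho_{\exp(tX)})_{o}$, so a $G$-invariant tensor is parallel along these curves and $(\nabla^{-}F)_{o}=0$; then $G$-invariance of $\nabla^{-}F$ and transitivity give $\nabla^{-}F=0$ everywhere.
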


The Levi--Civita parallelism of $C$ under the hypotheses of the following proposition is established in \cite{KobayashiOkuda}.
By \eqref{eq:parallel-K-C}, this is equivalent to the Levi--Civita parallelism of $K$.
The parallelism of $R^{g}$ is classical.
Both assertions follow from the preceding argument using the canonical connection and give the conditions needed for Theorem~\ref{thm:homo-stat-sym}.

\begin{proposition}
\label{prop:homo-parallel}
Let $M=G/H$ be a connected homogeneous space of a connected Lie group $G$, equipped with a $G$-invariant statistical structure $(g,\nabla)$.
Assume that the action of $G$ is effective and that there is a reductive decomposition $\mathfrak{g}=\mathfrak{h}\oplus\mathfrak{m}$ satisfying $[\mathfrak{m},\mathfrak{m}]\subset\mathfrak{h}$.
Then both $R^{g}$ and $K$ are parallel with respect to $\nabla^{g}$\textup{:}
\begin{equation}
\nabla^{g}R^{g}=0,\qquad \nabla^{g}K=0.
\label{eq:homo-parallel-R-K}
\end{equation}
\end{proposition}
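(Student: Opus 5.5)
The approach is to compare $\nabla^{g}$ with the canonical connection $\nabla^{-}$ of the chosen reductive decomposition. The bracket condition \eqref{eq:sym-pair-bracket} forces these two connections to coincide. Then Lemma~\ref{lem:canonical-parallel} applies directly to the $G$-invariant tensors $R^{g}$ and $K$.

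Step one: show that $\nabla^{-}$ is torsion free. By \eqref{eq:canonical-torsion} and $[\mathfrak{m},\mathfrak{m}]\subset\mathfrak{h}$, we get $T^{-}_{o}=0$. The torsion $T^{-}$ is a $G$-invariant tensor field, since $\nabla^{-}$ is $G$-invariant and $G$ acts by diffeomorphisms. Transitivity of the action then gives $T^{-}=0$ on all of $M$.

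Step two: show that $\nabla^{-}$ is metric. The metric $g$ is $G$-invariant because $G$ acts by statistical automorphisms, so Lemma~\ref{lem:canonical-parallel} gives $\nabla^{-}g=0$. A torsion-free metric connection is the Levi--Civita connection, so $\nabla^{-}=\nabla^{g}$.

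Step three: apply invariance to the curvature and skewness tensors. Each $a\in G$ is an isometry preserving $\nabla$, so it preserves $\nabla^{g}$ and hence $R^{g}$. By Proposition~\ref{prop:stat-morph-equiv}, in the displayed description of $\operatorname{Aut}(M,g,\nabla)$, it also satisfies $a_{*}K=K$. Thus $R^{g}$ and $K$ are $G$-invariant tensor fields. Lemma~\ref{lem:canonical-parallel} then gives $\nabla^{-}R^{g}=0$ and $\nabla^{-}K=0$, and since $\nabla^{-}=\nabla^{g}$ this is \eqref{eq:homo-parallel-R-K}.

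The main obstacle is the first step, which rests on the torsion formula \eqref{eq:canonical-torsion} together with $G$-invariance of $T^{-}$. Two points need checking. First, the formula must be used in the form valid for the chosen $\operatorname{Ad}(H)$-invariant $\mathfrak{m}$, as in \cite{KN2}. Second, $\nabla^{-}$ must be genuinely $G$-invariant, which is where the full $\operatorname{Ad}(H)$-invariance of $\mathfrak{m}$, rather than only $[\mathfrak{h},\mathfrak{m}]\subset\mathfrak{m}$, is used. Effectiveness only serves to identify $G$ with a group of statistical automorphisms.
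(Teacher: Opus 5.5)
Your proposal is correct and follows essentially the same route as the paper: you use the torsion formula and transitivity to get $T^{-}=0$, Lemma~\ref{lem:canonical-parallel} applied to $g$ to get $\nabla^{-}=\nabla^{g}$, and then the same lemma applied to the $G$-invariant tensors $R^{g}$ and $K$. Your side remark that full $\operatorname{Ad}(H)$-invariance of $\mathfrak{m}$ is what makes $\nabla^{-}$ $G$-invariant matches the paper's definition of a reductive decomposition.
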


\begin{proof}
By the torsion formula~\eqref{eq:canonical-torsion}, condition~\eqref{eq:sym-pair-bracket} gives $T^{-}_{o}=0$.
Since $\nabla^{-}$ is $G$-invariant, its torsion $T^{-}$ is also $G$-invariant.
Transitivity gives $T^{-}=0$ on $M$.
By Lemma~\ref{lem:canonical-parallel}, the $G$-invariant metric $g$ is $\nabla^{-}$-parallel.
Thus $\nabla^{-}$ is torsion free and metric.
Uniqueness of the Levi--Civita connection gives $\nabla^{-}=\nabla^{g}$.

Since $G$ acts by statistical automorphisms, the tensor $K$ is $G$-invariant.
Since $\nabla^{g}$ is $G$-invariant, its curvature tensor $R^{g}$ is also $G$-invariant.
Applying Lemma~\ref{lem:canonical-parallel} to these two tensors gives \eqref{eq:homo-parallel-R-K}.
\end{proof}

%%%%%%%%%%%%%%%%%%%%%%%%%%%%
\subsubsection{Global Riemannian symmetry of homogeneous spaces}
%%%%%%%%%%%%%%%%%%%%%%%%%%%%

Proposition~\ref{prop:homo-parallel} gives local Riemannian symmetry.
Under the same assumptions on the homogeneous presentation and its invariant metric, the following lemma proves that $(M,g)$ is a Riemannian symmetric space without requiring simple connectivity.

\begin{lemma}
\label{lem:homo-global-Riem-sym}
Let $M=G/H$ be a connected homogeneous space of a connected Lie group $G$, and let $g$ be a $G$-invariant Riemannian metric on $M$.
Assume that the action of $G$ is effective and that there is an $\operatorname{Ad}(H)$-invariant decomposition $\mathfrak{g}=\mathfrak{h}\oplus\mathfrak{m}$ satisfying $[\mathfrak{m},\mathfrak{m}]\subset\mathfrak{h}$.
Then $(M,g)$ is a Riemannian symmetric space.
Moreover, the deck transformation group of its universal Riemannian covering $\pi:(\widetilde{M},\widetilde{g})\longrightarrow(M,g)$ centralizes the Riemannian transvection group of $(\widetilde{M},\widetilde{g})$.
\end{lemma}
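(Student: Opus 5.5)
The plan is to pass to the universal Riemannian covering, use the global symmetric structure there, and show that the geodesic symmetry at a lift of $o$ descends to $M$. The descent will follow from the centralizing property asserted in the second half of the lemma, so I prove that property first.

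\emph{Step 1: local symmetry and completeness.} Apply Proposition~\ref{prop:homo-parallel} to the statistical structure $(g,\nabla^{g})$, whose skewness tensor vanishes. It is $G$-invariant, so the proposition gives $\nabla^{g}R^{g}=0$. Its proof also shows that the canonical connection satisfies $\nabla^{-}=\nabla^{g}$. Since $G$ acts transitively by isometries, $g$ is complete. Hence $(\widetilde{M},\widetilde{g})$ is complete, simply connected and locally symmetric, and so it is a Riemannian symmetric space. Let $\widetilde{s}$ denote its global geodesic symmetry at a point $\widetilde{o}$ over $o$, and let $\widetilde{T}$ denote its Riemannian transvection group.

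\emph{Step 2: identify transvections with lifted flows.} For $X\in\mathfrak{m}$, the parallel displacement of $\nabla^{-}=\nabla^{g}$ along $t\mapsto\exp(tX)\cdot o$ is $(d\rho_{\exp(tX)})_{o}$. Hence $\rho_{\exp(tX)}$ translates along the geodesic through $o$ with velocity $X^{\#}_{o}$ while realizing Levi--Civita parallel displacement. These are therefore the transvections \eqref{eq:transv-parallel} along that geodesic.

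The flow $\rho_{\exp(tX)}$ lifts to a one-parameter group $\widetilde{\tau}_{t}$ of isometries of $\widetilde{M}$: lift the complete Killing field $X^{\#}$ to $\widetilde{M}$ and take its flow. Each $\widetilde{\tau}_{t}$ commutes with every deck transformation $\gamma\in\Gamma$. Indeed, $\gamma\widetilde{\tau}_{t}\gamma^{-1}$ is another lift of the same flow, and it equals the identity at $t=0$. By continuity in $t$ and uniqueness of lifts, the two lifts coincide.

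Passing to $\widetilde{M}$, $\widetilde{\tau}_{t}$ is a transvection along the lifted geodesic through $\widetilde{o}$. Corollary~\ref{cor:global-inf-transv}, applied to $(\widetilde{M},\widetilde{g},\nabla^{\widetilde{g}})$ with $K=0$, shows that these one-parameter groups based at $\widetilde{o}$ generate $\widetilde{T}$. Therefore $\Gamma$ centralizes $\widetilde{T}$. I expect this step to be the main obstacle. One must check carefully that the lift of $\rho_{\exp(tX)}$ is precisely the transvection of $\widetilde{M}$, with the correct differential along the geodesic, so that no extra isotropy factor appears.

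\emph{Step 3: $\widetilde{s}$ normalizes $\Gamma$.} Fix $\gamma\in\Gamma$. By Hopf--Rinow, choose a transvection $\tau\in\widetilde{T}$ along a geodesic from $\widetilde{o}$ to $\gamma(\widetilde{o})$ with $\tau(\widetilde{o})=\gamma(\widetilde{o})$; it satisfies $\widetilde{s}\tau\widetilde{s}=\tau^{-1}$.

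\begin{itemize}
\item Since $\gamma$ commutes with $\tau$, the relation $\tau^{-1}\gamma(\widetilde{o})=\widetilde{o}$ gives $\gamma^{-1}(\widetilde{o})=\tau^{-1}(\widetilde{o})$.
\item On the other hand, $\widetilde{s}\gamma\widetilde{s}(\widetilde{o})=\widetilde{s}\tau(\widetilde{o})=\tau^{-1}\widetilde{s}(\widetilde{o})=\tau^{-1}(\widetilde{o})$.
\end{itemize}

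Both $\widetilde{s}\gamma\widetilde{s}$ and $\gamma^{-1}$ commute with $\widetilde{T}$, because conjugation by $\widetilde{s}$ preserves $\widetilde{T}$. Two maps that commute with the transitive group $\widetilde{T}$ and agree at one point agree everywhere: $f(\tau'\widetilde{o})=\tau' f(\widetilde{o})$ for every $\tau'\in\widetilde{T}$. Hence $\widetilde{s}\gamma\widetilde{s}=\gamma^{-1}\in\Gamma$.

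\emph{Step 4: descend.} Since $\widetilde{s}$ normalizes $\Gamma$, it descends to an isometry $s_{o}$ of $M$ with $s_{o}(o)=o$ and $(ds_{o})_{o}=-\id_{T_{o}M}$. Homogeneity transports $s_{o}$ to every point by conjugation, $s_{a\cdot o}=\rho_{a}\circ s_{o}\circ\rho_{a}^{-1}$. Therefore $(M,g)$ is a Riemannian symmetric space, and the centralizing assertion was proved in Step 2.
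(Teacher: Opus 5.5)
Your proof is correct. Steps 1 and 2 follow the paper's argument. Both use $\nabla^{-}=\nabla^{g}$ and the fact that $\rho_{\exp(tX)}$ realizes Levi--Civita parallel displacement along $t\mapsto\exp(tX)\cdot o$. Both lift these one-parameter groups to $\widetilde{M}$, get commutation with deck transformations from uniqueness of lifts, and identify the lifts with transvections of $\widetilde{M}$ by comparing value and differential at one point. That identification is routine and is not the obstacle you feared.

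You differ from the paper in two places:
\begin{itemize}
\item \emph{Generating the transvection group.} The paper conjugates by $\rho_{a}$ to get transvections along geodesics through every point, so each generator $\widetilde{s}_{p}\circ\widetilde{s}_{q}$ is directly a lifted flow commuting with the deck group. You use only geodesics through $\widetilde{o}$ and cite the generation statement of Corollary~\ref{cor:global-inf-transv} for $(\widetilde{M},\widetilde{g},\nabla^{\widetilde{g}})$. This is legitimate, because that corollary precedes the lemma and $\widetilde{M}$ is already known to be symmetric.
\item \emph{Concluding that $(M,g)$ is symmetric.} The paper cites \cite[Theorem~8.3.11]{Wolf}. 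You prove the needed implication by hand: from the centralizing property, the transitivity of $\widetilde{T}$, and $\widetilde{s}\tau\widetilde{s}=\tau^{-1}$, you obtain $\widetilde{s}\gamma\widetilde{s}=\gamma^{-1}$. Hence $\widetilde{s}$ descends to an isometric involution of $M$ fixing $o$ with differential $-\id$, and homogeneity transports it to every point.
\end{itemize}

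Your route is self-contained and exhibits the symmetry of $M$ explicitly as the quotient of $\widetilde{s}$. The paper's route is shorter but leaves the descent argument to Wolf.
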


\begin{proof}
Put $o=eH$ and let $\nabla^{-}$ be the canonical connection associated with the chosen reductive decomposition.
The metric and torsion arguments in the proof of Proposition~\ref{prop:homo-parallel} give $\nabla^{-}=\nabla^{g}$.
Since $R^{g}$ is $G$-invariant, Lemma~\ref{lem:canonical-parallel} gives $\nabla^{g}R^{g}=0$.

For $a\in G$ and $X\in\mathfrak{m}$, put
\[
p:=a\cdot o,\qquad
\gamma^{g}(t):=a\cdot(\exp(tX)\cdot o),\qquad
\varphi_{t}:=\rho_{a}\circ\rho_{\exp(tX)}\circ\rho_{a^{-1}}.
\]
By the property of parallel displacement for the canonical connection, $\gamma^{g}$ is a Levi--Civita geodesic and
\[
\varphi_{t}(p)=\gamma^{g}(t),\qquad
(d\varphi_{t})_{p}=P^{\nabla^{g}}_{\gamma^{g},0\to t}
\qquad(t\in\mathbb{R}).
\]
For $a=e$, the initial velocity of this geodesic is $X^{\#}_{o}$.
Therefore,
\begin{equation}
\operatorname{Exp}_{o}^{g}(tX^{\#}_{o})
=\rho^{o}\bigl(\exp(tX)\bigr)
=\exp(tX)\cdot o
\qquad(X\in\mathfrak{m},\ t\in\mathbb{R}).
\label{eq:Riem-Lie-exp}
\end{equation}
Since the map $X\mapsto X^{\#}_{o}$ identifies $\mathfrak{m}\simeq T_{o}M$, every initial tangent vector at every point $p$ is obtained in this way.
Thus every Levi--Civita geodesic is defined on $\mathbb{R}$, and $(M,g)$ is complete.
Therefore, its universal Riemannian covering $\pi:(\widetilde{M},\widetilde{g})\longrightarrow(M,g)$ is complete, simply connected, and locally symmetric.
Hence $(\widetilde{M},\widetilde{g})$ is a Riemannian symmetric space.

Let $\widetilde{\varphi}_{t}$ be the lift of the one-parameter group $\varphi_{t}$ with $\widetilde{\varphi}_{0}=\id$.
Uniqueness of the lift of this flow implies that $\widetilde{\varphi}_{t}$ commutes with every deck transformation of $\pi$.
For $\widetilde{p}\in\pi^{-1}(p)$, let $\widetilde{\gamma}^{\,\widetilde{g}}$ be the lift of $\gamma^{g}$ starting at $\widetilde{p}$, and put
\[
\widetilde{\tau}_{t}
:=\widetilde{s}_{\widetilde{\gamma}^{\,\widetilde{g}}(t/2)}
\circ \widetilde{s}_{\widetilde{p}}.
\]
These transvections satisfy the identity for parallel displacement in~\eqref{eq:transv-parallel} on $(\widetilde{M},\widetilde{g})$.
Since $\pi$ is a local isometry, we obtain
\[
\widetilde{\varphi}_{t}(\widetilde{p})
=\widetilde{\tau}_{t}(\widetilde{p})
=\widetilde{\gamma}^{\,\widetilde{g}}(t),\qquad
(d\widetilde{\varphi}_{t})_{\widetilde{p}}
=(d\widetilde{\tau}_{t})_{\widetilde{p}}
=P^{\nabla^{\widetilde{g}}}_{\widetilde{\gamma}^{\,\widetilde{g}},0\to t}.
\]
An isometry of a connected Riemannian manifold is determined by its value and differential at one point.
Therefore, $\widetilde{\varphi}_{t}=\widetilde{\tau}_{t}$.
Every geodesic on $\widetilde{M}$ is obtained by lifting a geodesic $\gamma^{g}$ of the above form.
Thus the deck transformation group of $\pi$ centralizes the Riemannian transvection group of $(\widetilde{M},\widetilde{g})$.
By \cite[Theorem~8.3.11]{Wolf}, the quotient $(M,g)$ is a Riemannian symmetric space.
\end{proof}

%%%%%%%%%%%%%%%%%%%%%%%%%%%%
\subsubsection{Global dual geodesic symmetries and homogeneous presentations}
%%%%%%%%%%%%%%%%%%%%%%%%%%%%

\begin{theorem}
\label{thm:homo-stat-sym}
Let $(M,g,\nabla)$ be a connected statistical manifold.
The following conditions are equivalent.
\begin{enumerate}[label={\rm(\roman*)}]
\item
Every point $p\in M$ admits a global dual geodesic symmetry in the sense of Definition~\textup{\ref{def:stat-sym-space}}.

\item
There is a homogeneous presentation $M=G/H$ in which $G$ is a connected Lie group acting effectively by statistical automorphisms.
Put $\mathfrak{g}=\operatorname{Lie}(G)$ and $\mathfrak{h}=\operatorname{Lie}(H)$.
There is an $\operatorname{Ad}(H)$-invariant decomposition $\mathfrak{g}=\mathfrak{h}\oplus\mathfrak{m}$ satisfying
\[
[\mathfrak{m},\mathfrak{m}]\subset\mathfrak{h}.
\]
\end{enumerate}
When these conditions hold, the group $G$ in \textup{(ii)} may be chosen as the Riemannian transvection group.
\end{theorem}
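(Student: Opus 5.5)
We prove the two implications separately; the tools are Proposition~\ref{prop:global-stat-sym}, Proposition~\ref{prop:global-stat-transv}, Proposition~\ref{prop:homo-parallel} and Lemma~\ref{lem:homo-global-Riem-sym}.

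\emph{(ii) $\Rightarrow$ (i).} Given the presentation $M=G/H$, Proposition~\ref{prop:homo-parallel} gives $\nabla^{g}R^{g}=0$ and $\nabla^{g}K=0$. Lemma~\ref{lem:homo-global-Riem-sym} shows that $(M,g)$ is a Riemannian symmetric space, with no assumption of simple connectivity. Thus condition~(ii) of Proposition~\ref{prop:global-stat-sym} holds, and that proposition yields global dual geodesic symmetries at every point, which is~(i). This direction is a direct assembly of earlier results.

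\emph{(i) $\Rightarrow$ (ii).} Take $G$ to be the Riemannian transvection group. By Proposition~\ref{prop:global-stat-transv}, $G$ is a connected Lie group acting effectively and transitively by statistical automorphisms. Fix $o$, put $H=G_{o}$, and write $M=G/H$. Let $\sigma$ be conjugation by $s_{o}$. By Proposition~\ref{prop:global-stat-transv}\,(iii), $\sigma$ is an involutive automorphism of $G$. Its differential $\theta=d\sigma_{e}$ is therefore an involutive automorphism of $\mathfrak{g}$. Let $\mathfrak{g}=\mathfrak{k}\oplus\mathfrak{m}$ be its $\pm1$-eigenspace decomposition; then $[\mathfrak{m},\mathfrak{m}]\subset\mathfrak{k}$ is automatic.

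The two remaining points are that $\mathfrak{k}=\mathfrak{h}$ and that $\mathfrak{m}$ is $\operatorname{Ad}(H)$-invariant.

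\emph{Invariance.} For $h\in H$, the isometry $s_{o}hs_{o}$ fixes $o$. Its differential at $o$ is $(-\id)(dh)_{o}(-\id)=(dh)_{o}$. An isometry of a connected manifold is determined by its value and differential at one point, so $\sigma(h)=h$ and $H\subset G^{\sigma}$. Hence $\operatorname{Ad}(h)$ commutes with $\theta$ and preserves both eigenspaces.

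\emph{$\mathfrak{h}=\mathfrak{k}$.} From $H\subset G^{\sigma}$ we get $\mathfrak{h}\subset\mathfrak{k}$. Conversely, for $X\in\mathfrak{k}$ the fundamental field satisfies $(s_{o})_{*}X^{\#}=X^{\#}$. Evaluating at $o$, where $(ds_{o})_{o}=-\id$, gives $X^{\#}_{o}=-X^{\#}_{o}$, so $X^{\#}_{o}=0$ and $X\in\mathfrak{h}$. This is the kernel argument already used for $\mathfrak{h}_{p}$ in Section~\ref{subsec:local-stat-transv}. Thus $\mathfrak{g}=\mathfrak{h}\oplus\mathfrak{m}$ is $\operatorname{Ad}(H)$-invariant with $[\mathfrak{m},\mathfrak{m}]\subset\mathfrak{h}$.

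\emph{Main obstacle.} The most delicate point is making $\sigma$ a smooth Lie group automorphism, so that $\theta$ is defined. I would handle it inside the Lie group $\Iso(M,g)$: conjugation by $s_{o}$ is a smooth automorphism there, and it preserves the connected Lie subgroup $G$. Since $G$ is an integral subgroup, its restriction to $G$ is smooth. Because $\sigma$ carries generators of $G$ to generators, it preserves $G$; this identifies $\theta$ with $\operatorname{Ad}(s_{o})$ restricted to $\mathfrak{g}$.

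The final claim, that $G$ may be taken to be the transvection group, is exactly the choice made in this construction.
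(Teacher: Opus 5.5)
Your proposal is correct and follows the paper's proof essentially step for step. For (ii)$\Rightarrow$(i) you assemble Lemma~\ref{lem:homo-global-Riem-sym}, Proposition~\ref{prop:homo-parallel} and Proposition~\ref{prop:global-stat-sym}, and for (i)$\Rightarrow$(ii) you take the transvection group and identify $\mathfrak{h}$ with the $+1$-eigenspace of the differential of conjugation by $s_{o}$, using $s_{o}hs_{o}=h$ for $h\in H$ and evaluation at $o$; your explicit argument that this conjugation is smooth on the integral subgroup $G$ fills in a point the paper only asserts.
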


\begin{proof}
\textup{(i)} $\Longrightarrow$ \textup{(ii)}.
Assume \textup{(i)}.
By Proposition~\ref{prop:global-stat-sym}, $(M,g)$ is a Riemannian symmetric space.
Let $G$ be its Riemannian transvection group.
By Proposition~\ref{prop:global-stat-transv}, $G$ is connected and acts effectively and transitively on $M$ by statistical automorphisms.
Fix $o\in M$ and put $H=G_{o}$.

By Proposition~\ref{prop:global-stat-transv}, conjugation by $s_{o}$ preserves $G$ and defines a Lie group automorphism
\[
s:G\longrightarrow G,\qquad
s(a):=s_{o}\circ a\circ s_{o}^{-1}.
\]
Its differential $(ds)_{e}:\mathfrak{g}\longrightarrow\mathfrak{g}$ at $e\in G$ is given by $(ds)_{e}=\operatorname{Ad}(s_{o})|_{\mathfrak{g}}$.
Since $s_{o}^{2}=\id_{M}$, we have $s^{2}=\id_{G}$ and $((ds)_{e})^{2}=\id_{\mathfrak{g}}$.

For every $h\in H$, the isometry $h\circ s_{o}\circ h^{-1}$ fixes $o$ and has differential $-\id_{T_{o}M}$ at $o$.
By uniqueness of the Riemannian geodesic symmetry, $h\circ s_{o}\circ h^{-1}=s_{o}$.
Thus $s(h)=h$ for every $h\in H$, that is, $s|_{H}=\id_{H}$.
Differentiating this identity at $e$ gives $(ds)_{e}|_{\mathfrak{h}}=\id_{\mathfrak{h}}$.
Hence
\[
\mathfrak{h}
\subset\{X\in\mathfrak{g}\mid (ds)_{e}X=X\}.
\]

Conversely, let $X\in\mathfrak{g}$ satisfy $(ds)_{e}X=X$.
Equivariance of the Lie group exponential map under $s$ gives the following commutative diagram:
\[
\begin{tikzcd}
\mathfrak{g} \arrow[r, "(ds)_{e}"] \arrow[d, "\exp"']
  & \mathfrak{g} \arrow[d, "\exp"] \\
G \arrow[r, "s"'] & G
\end{tikzcd}
\]

Evaluating the two compositions at $tX$ and using $(ds)_{e}X=X$ yields
\[
s\bigl(\exp(tX)\bigr)
=\exp\bigl(t(ds)_{e}X\bigr)
=\exp(tX)
\qquad(t\in\mathbb{R}).
\]
By the definition of $s$, the equality $s(\exp(tX))=\exp(tX)$ is equivalent to
\[
s_{o}\circ\exp(tX)=\exp(tX)\circ s_{o}.
\]
Evaluating this identity at $o$ and using $s_{o}(o)=o$ yields
\[
\begin{aligned}
s_{o}\bigl(\exp(tX)\cdot o\bigr)
&=\exp(tX)\cdot s_{o}(o)\\
&=\exp(tX)\cdot o.
\end{aligned}
\]

Differentiating at $t=0$ gives $(ds_{o})_{o}X^{\#}_{o}=X^{\#}_{o}$.
Since $(ds_{o})_{o}=-\id_{T_{o}M}$, it follows that $X^{\#}_{o}=0$.
Since $\ker((d\rho^{o})_{e})=\mathfrak{h}$, we obtain $X\in\mathfrak{h}$.
Together with the preceding inclusion, this proves
\[
\mathfrak{h}
=\{X\in\mathfrak{g}\mid (ds)_{e}X=X\}
=\ker\bigl((ds)_{e}-\id_{\mathfrak{g}}\bigr).
\]
Thus $\mathfrak{h}$ is the $+1$-eigenspace of $(ds)_{e}$.

Let $\mathfrak{m}$ be the $-1$-eigenspace of $(ds)_{e}$.
Since $((ds)_{e})^{2}=\id_{\mathfrak{g}}$, the eigenspaces give the direct-sum decomposition $\mathfrak{g}=\mathfrak{h}\oplus\mathfrak{m}$.

Since $s$ is a group automorphism, the following diagram commutes for every $h\in G$:
\[
\begin{tikzcd}
G \arrow[r, "i_{h}"] \arrow[d, "s"']
  & G \arrow[d, "s"] \\
G \arrow[r, "i_{s(h)}"'] & G
\end{tikzcd}
\]
Differentiating $s\circ i_{h}=i_{s(h)}\circ s$ at $e$ and using $s(h)=h$ for $h\in H$ gives
\[
(ds)_{e}\circ\operatorname{Ad}(h)
=\operatorname{Ad}(s(h))\circ(ds)_{e}
=\operatorname{Ad}(h)\circ(ds)_{e}
\qquad(h\in H).
\]

For $X\in\mathfrak{m}$ and $h\in H$, this yields
\[
(ds)_{e}\bigl(\operatorname{Ad}(h)X\bigr)
=\operatorname{Ad}(h)\bigl((ds)_{e}X\bigr)
=-\operatorname{Ad}(h)X.
\]
Since $\mathfrak{m}$ is the $-1$-eigenspace of $(ds)_{e}$, we have $\operatorname{Ad}(h)X\in\mathfrak{m}$.
Hence $\operatorname{Ad}(h)(\mathfrak{m})\subset\mathfrak{m}$.
Since $H$ is a subgroup, $h^{-1}\in H$.
Therefore, we also obtain $\operatorname{Ad}(h^{-1})(\mathfrak{m})\subset\mathfrak{m}$.
Applying $\operatorname{Ad}(h)$ to both sides and using $\operatorname{Ad}(h)\circ\operatorname{Ad}(h^{-1}) =\id_{\mathfrak{g}}$ yields
\[
\mathfrak{m}
=\operatorname{Ad}(h)\bigl(\operatorname{Ad}(h^{-1})(\mathfrak{m})\bigr)
\subset\operatorname{Ad}(h)(\mathfrak{m}).
\]
Thus $\operatorname{Ad}(h)(\mathfrak{m})=\mathfrak{m}$ for every $h\in H$, proving that $\mathfrak{m}$ is $\operatorname{Ad}(H)$-invariant.

Moreover, for $X,Y\in\mathfrak{m}$, the Lie algebra automorphism $(ds)_{e}$ satisfies
\[
(ds)_{e}[X,Y]
=[(ds)_{e}X,(ds)_{e}Y]
=[-X,-Y]=[X,Y].
\]
Hence $[X,Y]\in\mathfrak{h}$, which proves \eqref{eq:sym-pair-bracket}.
This proves \textup{(ii)} with $G$ equal to the Riemannian transvection group.

\textup{(ii)} $\Longrightarrow$ \textup{(i)}.
Assume \textup{(ii)}.
By Lemma~\ref{lem:homo-global-Riem-sym}, $(M,g)$ is a Riemannian symmetric space.
By Proposition~\ref{prop:homo-parallel}, $\nabla^{g}K=0$.
Therefore, Proposition~\ref{prop:global-stat-sym} shows that every point of $M$ admits a global dual geodesic symmetry, proving \textup{(i)}.
\end{proof}

Proposition~\ref{prop:homo-parallel} gives the tensorial consequences of the homogeneous condition.
Theorem~\ref{thm:homo-stat-sym} characterizes the same condition by global dual geodesic symmetries, whose action on the dual connections is given by \eqref{eq:global-dual-family}.
These results provide the curvature and geodesic interpretation requested in \cite[Problem~10.5]{InoguchiOhno}.

For a fixed connected Riemannian symmetric space $(M,g)$, let $G$ be its effective Riemannian transvection group, fix $o\in M$, and put $H=G_{o}$.
By Proposition~\ref{prop:global-stat-transv} and Theorem~\ref{thm:homo-stat-sym}, a statistical connection $\nabla$ on $(M,g)$ makes $(M,g,\nabla)$ a statistical symmetric space if and only if $\nabla$ is $G$-invariant.
Combining this equivalence with the correspondence of Ishi, Kobayashi, and Okuda \cite[Proposition~5.4]{IshiKobayashiOkuda} gives a bijection from the statistical connections $\nabla$ for which $(M,g,\nabla)$ is a statistical symmetric space onto $\bigl(S^{3}(T_{o}^{*}M)\bigr)^{H}$ by $\nabla\longmapsto(\nabla g)_{o}$.
Here $\bigl(S^{3}(T_{o}^{*}M)\bigr)^{H}$ denotes the space of symmetric trilinear forms on $T_{o}M$ invariant under the isotropy representation of $H$.

\begin{remark}
\label{rem:local-global-transv}
Use the Riemannian transvection presentation and symmetric decomposition from the proof of \textup{(i)} $\Longrightarrow$ \textup{(ii)} in Theorem~\ref{thm:homo-stat-sym}.
Let $O$ and $\mathfrak{g}_{o}=\mathfrak{h}_{o}\oplus\mathfrak{m}_{o}$ be as in Section~\ref{subsec:local-stat-transv}.
Restriction of fundamental vector fields gives a Lie algebra anti-isomorphism
\[
\mathfrak{g}\longrightarrow\mathfrak{g}_{o},\qquad
X\longmapsto X^{\#}|_{O},
\]
which restricts to vector-space isomorphisms $\mathfrak{h}\simeq\mathfrak{h}_{o}$ and $\mathfrak{m}\simeq\mathfrak{m}_{o}$.

Indeed, the classical construction of Riemannian transvections along Levi--Civita geodesics gives $X^{\#}|_{O}=X_{v}$ for $X\in\mathfrak{m}$ and $v=X^{\#}_{o}$.
The generation of $G$ by transvections in Corollary~\ref{cor:global-inf-transv} and the uniqueness of Killing vector fields in Proposition~\ref{prop:inf-stat-aut} \textup{(ii)}, together with the effectiveness of the action of $G$, give bijectivity.
The identity $[X,Y]^{\#}=-[X^{\#},Y^{\#}]$ shows that the restriction map is an antihomomorphism.
\end{remark}

The tensorial conditions~\eqref{eq:parallel-R-K} characterize local statistical symmetry.
When $g$ is complete, simple connectivity is a sufficient additional hypothesis for these conditions to imply the existence of a global homogeneous presentation.

\begin{corollary}
\label{cor:complete-homo-sym}
Let $(M,g,\nabla)$ be a connected statistical manifold.
Suppose that $M$ is simply connected and $g$ is complete.
Then $(M,g,\nabla)$ admits a homogeneous presentation satisfying condition~\textup{(ii)} of Theorem~\textup{\ref{thm:homo-stat-sym}} if and only if
\begin{equation}
\nabla^{g}R^{g}=0,\qquad \nabla^{g}K=0.
\label{eq:homo-stat-sym}
\end{equation}
\end{corollary}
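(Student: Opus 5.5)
The plan is to deduce Corollary~\ref{cor:complete-homo-sym} by chaining results already established, with no new geometric input. The key observation is that, for a connected manifold, condition~(ii) of Theorem~\ref{thm:homo-stat-sym} is equivalent to condition~(i) of that theorem. Condition~(i) says that $(M,g,\nabla)$ is a statistical symmetric space in the sense of Definition~\ref{def:stat-sym-space}. So the corollary reduces to showing that, when $M$ is simply connected and $g$ is complete, being a statistical symmetric space is equivalent to the tensorial conditions~\eqref{eq:homo-stat-sym}.

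For the ``if'' direction, assume $\nabla^{g}R^{g}=0$ and $\nabla^{g}K=0$. These are exactly condition~(2) of Theorem~\ref{thm:global-stat-sym}, whose hypotheses (simple connectivity and completeness of $g$) are those of the corollary. That theorem then gives condition~(3): $(M,g,\nabla)$ is a statistical symmetric space. Definition~\ref{def:stat-sym-space} says that every point then admits a global dual geodesic symmetry. This is condition~(i) of Theorem~\ref{thm:homo-stat-sym}, which yields the homogeneous presentation in~(ii). The same theorem shows that $G$ can be taken to be the Riemannian transvection group.

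For the ``only if'' direction, assume a homogeneous presentation satisfying~(ii) exists. I would invoke Proposition~\ref{prop:homo-parallel} directly. Its hypotheses are a connected $G$ acting effectively, a $G$-invariant statistical structure (automatic, since $G$ acts by statistical automorphisms), and a reductive decomposition with $[\mathfrak{m},\mathfrak{m}]\subset\mathfrak{h}$. These are all contained in~(ii), and the proposition gives~\eqref{eq:homo-stat-sym} at once. Alternatively, one could pass through Theorem~\ref{thm:homo-stat-sym} to~(i), then Proposition~\ref{prop:global-stat-sym}, then Theorem~\ref{thm:local-stat-sym}. This direction does not use simple connectivity or completeness at all.

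I expect no real obstacle. The only point needing care is bookkeeping: checking that the hypotheses of Theorem~\ref{thm:global-stat-sym} are precisely those assumed in the corollary, and that Definition~\ref{def:stat-sym-space} coincides with condition~(i) of Theorem~\ref{thm:homo-stat-sym}. I would also remark that simple connectivity is used only for the ``if'' direction, where it lets the local symmetries globalize via Theorem~\ref{thm:global-stat-sym}.
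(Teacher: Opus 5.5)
Your proposal is correct and matches the paper's proof: the ``only if'' direction is Proposition~\ref{prop:homo-parallel} applied directly. The ``if'' direction goes through Theorem~\ref{thm:global-stat-sym}, which gives a statistical symmetric space, and then Theorem~\ref{thm:homo-stat-sym}, (i)~$\Rightarrow$~(ii), which gives the homogeneous presentation.
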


\begin{proof}
If such a homogeneous presentation exists, Proposition~\ref{prop:homo-parallel} gives \eqref{eq:homo-stat-sym}.
Conversely, suppose that \eqref{eq:homo-stat-sym} holds.
Since $M$ is simply connected and $g$ is complete, Theorem~\ref{thm:global-stat-sym} shows that $(M,g,\nabla)$ is a statistical symmetric space.
Therefore, Theorem~\ref{thm:homo-stat-sym} gives the required homogeneous presentation.
\end{proof}

%%%%%%%%%%%%%%%%%%%%%%%%%%%%
%%%%%%%%%%%%%%%%%%%%%%%%%%%%
\subsection{Dual geodesic symmetry on Hessian manifolds}
\label{subsec:Hess-sym}
%%%%%%%%%%%%%%%%%%%%%%%%%%%%
%%%%%%%%%%%%%%%%%%%%%%%%%%%%

For Hessian manifolds, the local and global criteria for statistical symmetry in Theorems~\ref{thm:local-stat-sym} and~\ref{thm:global-stat-sym} can be expressed in terms of the Hessian curvature tensors $Q$ and $Q^{*}$.

\begin{corollary}
\label{cor:Hess-stat-sym}
Let $(M,g,\nabla)$ be a Hessian manifold.
The following conditions are equivalent\textup{:}
\begin{enumerate}[label={\rm(\roman*)}]
\item $(M,g,\nabla)$ is a locally statistical symmetric space\textup{;}
\item $\nabla^{g}K=0$\textup{;}
\item $Q=Q^{*}$.
\end{enumerate}
If, in addition, $M$ is simply connected and $g$ is complete, these conditions are also equivalent to $(M,g,\nabla)$ being a statistical symmetric space.
\end{corollary}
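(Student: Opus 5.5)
The plan is to reduce everything to Theorem~\ref{thm:local-stat-sym}, which gives (i) $\Longleftrightarrow$ $\nabla^{g}R^{g}=0$ and $\nabla^{g}K=0$. The only thing special to the Hessian case is that the Riemannian curvature is determined algebraically by $K$. I will also use Proposition~\ref{prop:Hess-curv-dual} for the comparison with $Q$ and $Q^{*}$.

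First, (ii) $\Longleftrightarrow$ (iii) is exactly Proposition~\ref{prop:Hess-curv-dual}(3), which comes from the identity $Q-Q^{*}=\nabla^{g}K$ in \eqref{eq:Q-dual-diff}.

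Next, (i) $\Longrightarrow$ (ii) is immediate from Theorem~\ref{thm:local-stat-sym}. For (ii) $\Longrightarrow$ (i), it suffices to show that $\nabla^{g}K=0$ forces $\nabla^{g}R^{g}=0$.
\begin{itemize}
\item Since $R=0$, Proposition~\ref{prop:alpha-curv} with $\alpha=1$ in \eqref{eq:Rg-R} gives $R^{g}=\frac12 d^{\nabla}K+\frac14[K,K]$.
\item Cleaner still, Proposition~\ref{prop:curv-split} gives $R^{\mathsf{S}}=0$ because $R=R^{*}=0$. Then \eqref{eq:RS-RA-K} yields $R^{g}=-\frac14[K,K]$, as noted after Proposition~\ref{prop:Hess-curv-dual}.
\item The bracket $[K,K](X,Y)Z=K_XK_YZ-K_YK_XZ$ is a tensorial contraction of $K\otimes K$. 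Since $\nabla^{g}$ is a derivation commuting with contractions, $\nabla^{g}[K,K]$ is a sum of terms each containing $\nabla^{g}K$, and hence vanishes.
\item Therefore $\nabla^{g}R^{g}=0$, and Theorem~\ref{thm:local-stat-sym} gives (i).
\end{itemize}

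For the final assertion, assume in addition that $M$ is simply connected and $g$ is complete. Then Theorem~\ref{thm:global-stat-sym} gives that being a locally statistical symmetric space is equivalent to being a statistical symmetric space, so the extra condition joins the chain of equivalences.

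No step is a real obstacle. The only point needing care is the Leibniz argument that parallelism of $K$ passes to $[K,K]$. The same argument already appears in the proof of Corollary~\ref{cor:parallel-Ralpha}, so I would cite it there rather than repeat it.
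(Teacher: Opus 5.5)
Your proposal is correct and follows the paper's proof essentially step for step. The paper also uses $R=R^{*}=0$ and \eqref{eq:RS-RA-K} to get $R^{g}=-\frac{1}{4}[K,K]$, so that $\nabla^{g}K=0$ forces $\nabla^{g}R^{g}=0$. It then applies Theorem~\ref{thm:local-stat-sym} for (i)$\Leftrightarrow$(ii), the identity $Q-Q^{*}=\nabla^{g}K$ for (ii)$\Leftrightarrow$(iii), and Theorem~\ref{thm:global-stat-sym} for the global assertion; your first bullet via \eqref{eq:Rg-R} is unnecessary and can be dropped.
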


\begin{proof}
For every Hessian manifold, $R=R^{*}=0$ and \eqref{eq:RS-RA-K} give
\begin{equation}
R^{g}=-\frac{1}{4}[K,K].
\label{eq:Hess-Rg}
\end{equation}

Condition \textup{(i)} implies \textup{(ii)} by Theorem~\ref{thm:local-stat-sym}.
Conversely, suppose that $\nabla^{g}K=0$.
By \eqref{eq:Hess-Rg}, $\nabla^{g}R^{g}=0$.
Together with $\nabla^{g}K=0$, this gives \textup{(i)} by Theorem~\ref{thm:local-stat-sym}.

The equivalence of \textup{(ii)} and \textup{(iii)} follows from $Q-Q^{*}=\nabla^{g}K$ in \eqref{eq:Q-dual-diff}.

If, in addition, $M$ is simply connected and $g$ is complete, Theorem~\ref{thm:global-stat-sym} shows that \textup{(i)} is equivalent to $(M,g,\nabla)$ being a statistical symmetric space.
\end{proof}

For every Hessian manifold, $R=R^{*}=0$ and \eqref{eq:conj-sym-dK} give $d^{\nabla^{g}}K=0$.
Therefore, \eqref{eq:Ralpha-Rg} and~\eqref{eq:Hess-Rg} give, for every $\alpha\in\mathbb{R}$,
\begin{equation}
R^{(\alpha)}
=(1-\alpha^{2})R^{g}
=\frac{\alpha^{2}-1}{4}[K,K].
\label{eq:Hess-Ralpha}
\end{equation}
Under the equivalent conditions in Corollary~\ref{cor:Hess-stat-sym}, both $R^{g}$ and $K$ are $\nabla^{g}$-parallel.
Therefore, $\nabla^{g}R^{(\alpha)}=0$ for every $\alpha\in\mathbb{R}$.

%%%%%%%%%%%%%%%%%%%%%%%%%%%%
%%%%%%%%%%%%%%%%%%%%%%%%%%%%
%%%%%%%%%%%%%%%%%%%%%%%%%%%%
\section{Dual geodesic symmetry in homogeneous examples}
\label{sec:examples}
%%%%%%%%%%%%%%%%%%%%%%%%%%%%
%%%%%%%%%%%%%%%%%%%%%%%%%%%%
%%%%%%%%%%%%%%%%%%%%%%%%%%%%

The cone of symmetric positive-definite matrices and the Euclidean motion group admit global dual geodesic symmetries.
On the cone, the spray maps $s_{P}^{(\alpha)}$ agree with the Riemannian geodesic symmetries.
On the Euclidean motion group, the transvection action and the left translation action give two homogeneous presentations by statistical automorphisms.
Only the transvection presentation satisfies $[\mathfrak{m},\mathfrak{m}]\subset\mathfrak{h}$.
For the family of normal distributions, global dual geodesic symmetries exist when $\alpha=0$, whereas no local dual geodesic symmetry exists when $\alpha\ne0$.
The spray map for $\alpha=1$ carries $\nabla$ to $\nabla^{*}$ but does not preserve the metric.

%%%%%%%%%%%%%%%%%%%%%%%%%%%%
%%%%%%%%%%%%%%%%%%%%%%%%%%%%
\subsection{The cone of symmetric positive-definite matrices}
\label{subsec:SPD}
\label{subsec:SPD-Hess}
%%%%%%%%%%%%%%%%%%%%%%%%%%%%
%%%%%%%%%%%%%%%%%%%%%%%%%%%%

%%%%%%%%%%%%%%%%%%%%%%%%%%%%
\subsubsection{Statistical structure and homogeneity}
%%%%%%%%%%%%%%%%%%%%%%%%%%%%

Let $n\geq2$, and let $E_{n}$ denote the $n\times n$ identity matrix.
Write $\operatorname{Sym}(n,\mathbb{R})$ for the vector space of real symmetric $n\times n$ matrices, and put $\mathcal{P}_{n}=\operatorname{Sym}^{+}(n,\mathbb{R})$.
For every $\Sigma\in\mathcal{P}_{n}$, we identify $T_{\Sigma}\mathcal{P}_{n}\simeq\operatorname{Sym}(n,\mathbb{R})$ by viewing a symmetric matrix $X$ as the initial velocity of the curve $t\mapsto\Sigma+tX$, defined for sufficiently small $|t|$.
Subscripts on $g$, $C$, and $K$ indicate their values at a point.
On this open cone, the Fisher metric of the zero-mean multivariate normal family is \cite{Skovgaard}
\begin{equation}
g_{\Sigma}(X,Y)
=\frac{1}{2}\operatorname{tr}
\bigl(\Sigma^{-1}X\Sigma^{-1}Y\bigr),
\qquad
X,Y\in T_{\Sigma}\mathcal{P}_{n}
\simeq\operatorname{Sym}(n,\mathbb{R}).
\label{eq:SPD-g}
\end{equation}
The Amari--Chentsov cubic form is \cite{Mitchell}
\begin{equation}
C_{\Sigma}(X,Y,Z)
=\operatorname{tr}
\bigl(\Sigma^{-1}X\Sigma^{-1}Y\Sigma^{-1}Z\bigr).
\label{eq:SPD-C}
\end{equation}
Cyclic invariance of the trace and invariance under transposition show that $C$ is totally symmetric and that
\[
g_{\Sigma}\bigl(X\Sigma^{-1}Y+Y\Sigma^{-1}X,Z\bigr)
=C_{\Sigma}(X,Y,Z).
\]
Therefore, the corresponding skewness tensor is
\begin{equation}
K_{\Sigma}(X,Y)
=X\Sigma^{-1}Y+Y\Sigma^{-1}X.
\label{eq:SPD-K}
\end{equation}

The congruence action of $\mathrm{GL}(n,\mathbb{R})$, $A\cdot\Sigma=A\Sigma A^{\mathsf{T}}$, preserves $g$ and $C$.
By Lemma~\ref{lem:stat-data}, it also preserves the corresponding statistical connection.
Its subgroup $\mathrm{GL}^{+}(n,\mathbb{R})$ acts transitively, with isotropy subgroup $\mathrm{SO}(n)$ at $E_{n}$.
The orbit map $\rho^{E_{n}}(A)=AA^{\mathsf{T}}$ induces the diffeomorphism
\[
\overline{\rho}^{\,E_{n}}:
\mathrm{GL}^{+}(n,\mathbb{R})/\mathrm{SO}(n)
\longrightarrow\mathcal{P}_{n},\qquad
A\mathrm{SO}(n)\longmapsto AA^{\mathsf{T}}.
\]
This identifies $(\mathcal{P}_{n},g)$ with a Riemannian symmetric space.
The cone is simply connected and complete with respect to $g$, and $g$ is nonflat for $n\geq2$.
Its Riemannian geodesic symmetry at $P\in\mathcal{P}_{n}$ is
\begin{equation}
s_{P}(\Sigma)=P\Sigma^{-1}P.
\label{eq:SPD-sym}
\end{equation}
Indeed, the symmetry at $E_{n}$ is the inversion map $j:\mathcal{P}_{n}\longrightarrow\mathcal{P}_{n}$, $j(\Sigma)=\Sigma^{-1}$.
The congruence action gives the formula at $P$.

For the cubic form \eqref{eq:SPD-C}, let $D^{\Sigma}$ be the standard flat connection in the covariance coordinates $\Sigma$.
The Levi--Civita connection is
\[
\nabla^{g}_{X}Y
=D^{\Sigma}_{X}Y
-\frac{1}{2}\bigl(X\Sigma^{-1}Y+Y\Sigma^{-1}X\bigr).
\]
Consequently,
\[
\nabla_{X}Y
=D^{\Sigma}_{X}Y-X\Sigma^{-1}Y-Y\Sigma^{-1}X.
\]
Let $D^{\Theta}$ be the standard flat connection in the precision coordinates $\Theta=\Sigma^{-1}$.
Direct calculation gives
\[
\nabla=D^{\Theta}=j^{*}D^{\Sigma},
\qquad
\nabla^{*}=D^{\Sigma}=j^{*}D^{\Theta}.
\]

%%%%%%%%%%%%%%%%%%%%%%%%%%%%
\subsubsection{Curvature and parallelism of the skewness tensor}
%%%%%%%%%%%%%%%%%%%%%%%%%%%%

The kernel of the $\mathrm{GL}^{+}(n,\mathbb{R})$ action is $\{\pm E_{n}\}\cap\mathrm{GL}^{+}(n,\mathbb{R})$.
Passing to the quotient by this discrete central subgroup gives an effective action and leaves the Lie algebra decomposition
\[
\mathfrak{gl}(n,\mathbb{R})
=\mathfrak{so}(n)\oplus\operatorname{Sym}(n,\mathbb{R})
\]
unchanged.
This decomposition is $\operatorname{Ad}(\mathrm{SO}(n))$-invariant.
Moreover, $[X,Y]\in\mathfrak{so}(n)$ for all $X,Y\in\operatorname{Sym}(n,\mathbb{R})$.
Therefore, Proposition~\ref{prop:homo-parallel} gives $\nabla^{g}R^{g}=0$ and $\nabla^{g}K=0$.
Every $\nabla^{(\alpha)}$ is invariant under the same effective action.
By Theorem~\ref{thm:homo-stat-sym}, $(\mathcal{P}_{n},g,\nabla^{(\alpha)})$ is a statistical symmetric space for every $\alpha\in\mathbb{R}$.
Its global dual geodesic symmetries are the Riemannian geodesic symmetries \eqref{eq:SPD-sym}, and \eqref{eq:global-dual-family} gives
\[
(s_{P})_{*}\nabla^{(\alpha)}=\nabla^{(-\alpha)}
\qquad(P\in\mathcal{P}_{n}).
\]
In particular, since $j=s_{E_{n}}$, we have $j_{*}\nabla=\nabla^{*}$; see also \cite[Section~5.1]{NielsenOkamura}.

Since $\nabla=D^{\Theta}$ and $\nabla^{*}=D^{\Sigma}$, we have $R=R^{*}=0$, and the structure is dually flat.
This dually flat structure, with metric $2g$, is studied in \cite{OharaSudaAmari}.
Since $\nabla^{g}K=0$, its Hessian curvature tensors satisfy $Q=Q^{*}$ by Corollary~\ref{cor:Hess-stat-sym}.

At $E_{n}$, the formula for $K$ gives $[K_{X},K_{Y}]Z=[[X,Y],Z]$.
Therefore, \eqref{eq:Hess-Ralpha} gives
\[
R^{(\alpha)}_{E_{n}}(X,Y)Z
=\frac{\alpha^{2}-1}{4}[[X,Y],Z].
\]
The double commutator is not identically zero for $n\geq2$.
Hence $(\mathcal{P}_{n},g,\nabla^{(\alpha)})$ is Hessian precisely when $\alpha=\pm1$, whereas it is a statistical symmetric space for every $\alpha$.

These connections form the usual Amari--Chentsov $\alpha$-family \cite{Mitchell}.
The associated Hessian geometry is studied in \cite{ShimaHao}.

%%%%%%%%%%%%%%%%%%%%%%%%%%%%
\subsubsection{Spray maps and global dual geodesic symmetries}
%%%%%%%%%%%%%%%%%%%%%%%%%%%%

For $\alpha\ne0$, the spray map in Corollary~\ref{cor:alpha-dual-CAH} can be computed from the $\nabla^{(\alpha)}$-geodesics.
For every $\alpha\in\mathbb{R}$, the identity $\nabla^{(\alpha)}=\nabla^{g}-\frac{\alpha}{2}K$ and the preceding formulas for $\nabla^{g}$ and $K$ give
\[
\nabla^{(\alpha)}_{X}Y
=D^{\Sigma}_{X}Y
-\frac{1+\alpha}{2}
\bigl(X\Sigma^{-1}Y+Y\Sigma^{-1}X\bigr).
\]
For a curve $\gamma(t)\in\mathcal{P}_{n}$, evaluate this formula at $\Sigma=\gamma(t)$ with $X=Y=\dot{\gamma}(t)$.
The standard flat connection satisfies $D^{\Sigma}_{\dot{\gamma}}\dot{\gamma}=\ddot{\gamma}$.
Therefore, the geodesic condition is
\[
\begin{aligned}
0=\nabla^{(\alpha)}_{\dot{\gamma}}\dot{\gamma}
&=\ddot{\gamma}-\frac{1+\alpha}{2}
\bigl(\dot{\gamma}\gamma^{-1}\dot{\gamma}
+\dot{\gamma}\gamma^{-1}\dot{\gamma}\bigr)\\
&=\ddot{\gamma}-(1+\alpha)\dot{\gamma}\gamma^{-1}\dot{\gamma}.
\end{aligned}
\]
Fix $\alpha\ne0$ and $V\in\operatorname{Sym}(n,\mathbb{R})$.
Choose an orthogonal diagonalization
\[
V=Q\operatorname{diag}(v_{1},\ldots,v_{n})Q^{\mathsf{T}},
\qquad Q^{\mathsf{T}}Q=E_{n},
\]
where $v_{1},\ldots,v_{n}\in\mathbb{R}$ are the eigenvalues of $V$.
The $\nabla^{(\alpha)}$-geodesic satisfying $\gamma_{V}^{(\alpha)}(0)=E_{n}$ and $\dot{\gamma}_{V}^{(\alpha)}(0)=V$ is given by
\[
\begin{aligned}
\gamma_{V}^{(\alpha)}(t)
&=(E_{n}-\alpha tV)^{-1/\alpha}\\
&=Q\operatorname{diag}\bigl(
(1-\alpha tv_{1})^{-1/\alpha},\ldots,
(1-\alpha tv_{n})^{-1/\alpha}
\bigr)Q^{\mathsf{T}}
\end{aligned}
\]
on the interval containing $0$ on which $1-\alpha tv_{i}>0$ for every $i=1,\ldots,n$.
Here and below, real powers of symmetric positive-definite matrices are defined by taking the corresponding powers of their positive eigenvalues, as in the second line.
Indeed, differentiation of the asserted geodesic formula, with $Q$ fixed, gives
\[
\begin{aligned}
\dot{\gamma}_{V}^{(\alpha)}(t)
&=V(E_{n}-\alpha tV)^{-1/\alpha-1},\\
\ddot{\gamma}_{V}^{(\alpha)}(t)
&=(1+\alpha)V^{2}(E_{n}-\alpha tV)^{-1/\alpha-2}.
\end{aligned}
\]
All matrix factors are functions of $V$ and commute.
Therefore,
\[
\dot{\gamma}_{V}^{(\alpha)}(t)
\bigl(\gamma_{V}^{(\alpha)}(t)\bigr)^{-1}
\dot{\gamma}_{V}^{(\alpha)}(t)
=V^{2}(E_{n}-\alpha tV)^{-1/\alpha-2}.
\]
Thus the displayed expression satisfies the preceding geodesic equation.
Evaluation at $t=0$ gives $\gamma_{V}^{(\alpha)}(0)=E_{n}$ and $\dot{\gamma}_{V}^{(\alpha)}(0)=V$.
Uniqueness of solutions to the geodesic equation proves the asserted formula.
See also \cite[equation~(5.1)]{NielsenOkamura}.

At any finite endpoint of the interval defined by the inequalities $1-\alpha tv_{i}>0$ for every $i=1,\ldots,n$, at least one of these quantities tends to zero.
The corresponding eigenvalue $(1-\alpha tv_{i})^{-1/\alpha}$ of $\gamma_{V}^{(\alpha)}(t)$ tends to infinity if $\alpha>0$, and to zero if $\alpha<0$.
Therefore, the curve cannot be extended in $\mathcal{P}_{n}$ through such an endpoint, and its maximal interval of definition is
\[
\begin{aligned}
I_{V}^{(\alpha)}
&=\{t\in\mathbb{R}\mid 1-\alpha tv_{i}>0
\text{ for every }i=1,\ldots,n\}\\
&=\{t\in\mathbb{R}\mid E_{n}-\alpha tV>0\}.
\end{aligned}
\]
Here $E_{n}-\alpha tV>0$ means that this matrix is positive definite, that is, $w^{\mathsf{T}}(E_{n}-\alpha tV)w>0$ for every $0\ne w\in\mathbb{R}^{n}$.

The exponential map evaluates this geodesic at $t=1$.
Therefore, its domain consists of the initial velocities $V$ for which $E_{n}-\alpha tV>0$ for every $t\in[0,1]$.
At $t=1$, this requires $E_{n}-\alpha V>0$.
Conversely, this inequality implies
\[
E_{n}-\alpha tV=(1-t)E_{n}+t(E_{n}-\alpha V)>0
\qquad(0\le t\le1).
\]
Consequently,
\[
\begin{aligned}
\Omega_{E_{n}}^{(\alpha)}
&=\{V\in\operatorname{Sym}(n,\mathbb{R})\mid 1\in I_{V}^{(\alpha)}\}\\
&=\{V\in\operatorname{Sym}(n,\mathbb{R})\mid E_{n}-\alpha V>0\},\\
\operatorname{Exp}_{E_{n}}^{(\alpha)}(V)
&=\gamma_{V}^{(\alpha)}(1)
=(E_{n}-\alpha V)^{-1/\alpha}.
\end{aligned}
\]

For $\alpha\ne0$, this exponential map is a diffeomorphism onto $\mathcal{P}_{n}$, with inverse
\[
\bigl(\operatorname{Exp}_{E_{n}}^{(\alpha)}\bigr)^{-1}(\Sigma)
=\frac{E_{n}-\Sigma^{-\alpha}}{\alpha}.
\]
For $V$ in this domain, $-V$ belongs to $\Omega_{E_{n}}^{(-\alpha)}$.
Therefore, for every $\Sigma\in\mathcal{P}_{n}$,
\[
\begin{aligned}
s_{E_{n}}^{(\alpha)}(\Sigma)
&=\operatorname{Exp}_{E_{n}}^{(-\alpha)}
\left(-\bigl(\operatorname{Exp}_{E_{n}}^{(\alpha)}\bigr)^{-1}(\Sigma)\right)\\
&=(\Sigma^{-\alpha})^{1/\alpha}
=\Sigma^{-1}=j(\Sigma).
\end{aligned}
\]
For $\alpha=0$, we use the Levi--Civita exponential map.
For the congruence action, $X^{\#}_{E_{n}}=2X$ for $X\in\operatorname{Sym}(n,\mathbb{R})$.
Therefore, \eqref{eq:Riem-Lie-exp} with $X=V/2$ gives
\[
\operatorname{Exp}_{E_{n}}^{(0)}(V)
=\exp(V/2)\cdot E_{n}
=\exp(V),
\qquad V\in\operatorname{Sym}(n,\mathbb{R}),
\]
where $\exp$ denotes the matrix exponential.
Thus $\Omega_{E_{n}}^{(0)}=\operatorname{Sym}(n,\mathbb{R})$.
The matrix exponential is a diffeomorphism from $\operatorname{Sym}(n,\mathbb{R})$ onto $\mathcal{P}_{n}$, with inverse
\[
\bigl(\operatorname{Exp}_{E_{n}}^{(0)}\bigr)^{-1}(\Sigma)
=\log\Sigma,
\]
where $\log\Sigma$ is the symmetric matrix logarithm.
Since $\exp(-V)=\exp(V)^{-1}$, the construction using exponential maps gives
\[
\begin{aligned}
s_{E_{n}}^{(0)}(\Sigma)
&=\operatorname{Exp}_{E_{n}}^{(0)}
\left(-\bigl(\operatorname{Exp}_{E_{n}}^{(0)}\bigr)^{-1}(\Sigma)\right)\\
&=\exp(-\log\Sigma)
=\Sigma^{-1}=j(\Sigma).
\end{aligned}
\]
Thus the same inversion is obtained for $\alpha=0$.
For every $\alpha\in\mathbb{R}$ and $P\in\mathcal{P}_{n}$, choose $A\in\mathrm{GL}^{+}(n,\mathbb{R})$ such that $AA^{\mathsf{T}}=P$.
The congruence map $\rho_{A}$ sends $E_{n}$ to $P$ and satisfies $(d\rho_{A})_{E_{n}}(V)=AVA^{\mathsf{T}}$.
Since $\rho_{A}$ preserves $\nabla^{(\alpha)}$, uniqueness of geodesics with prescribed initial conditions gives
\[
\operatorname{Exp}_{P}^{(\alpha)}(AVA^{\mathsf{T}})
=\rho_{A}\bigl(\operatorname{Exp}_{E_{n}}^{(\alpha)}(V)\bigr)
\qquad(V\in\Omega_{E_{n}}^{(\alpha)}).
\]
Consequently, for every $\alpha\in\mathbb{R}$, the exponential map at $P$ is a diffeomorphism from $\Omega_{P}^{(\alpha)} =\{V\in\operatorname{Sym}(n,\mathbb{R})\mid P-\alpha V>0\}$ onto $\mathcal{P}_{n}$.
For $\alpha=0$, this domain is all of $\operatorname{Sym}(n,\mathbb{R})$.
The identity for the exponential maps also holds with $\alpha$ replaced by $-\alpha$.
Moreover, $A(-V)A^{\mathsf{T}}=-AVA^{\mathsf{T}}$.
Therefore, the spray maps satisfy
\[
s_{P}^{(\alpha)}
=\rho_{A}\circ s_{E_{n}}^{(\alpha)}\circ\rho_{A}^{-1}.
\]
Using $s_{E_{n}}^{(\alpha)}=j$, we obtain
\[
\begin{aligned}
s_{P}^{(\alpha)}(\Sigma)
&=\operatorname{Exp}_{P}^{(-\alpha)}
\left(-\bigl(\operatorname{Exp}_{P}^{(\alpha)}\bigr)^{-1}(\Sigma)\right)\\
&=A\left(A^{-1}\Sigma(A^{\mathsf{T}})^{-1}\right)^{-1}A^{\mathsf{T}}\\
&=P\Sigma^{-1}P
\qquad(\Sigma\in\mathcal{P}_{n}).
\end{aligned}
\]
Thus, for every $\alpha\in\mathbb{R}$, the spray map $s_{P}^{(\alpha)}$ is defined on the entire cone and agrees with the global symmetry \eqref{eq:SPD-sym}.

The parallel identification $\Phi_{v,t}^{(\alpha,-\alpha)}$ can also be computed explicitly.
Fix $\alpha\ne0$, take the initial map to be $-\id$ at $E_{n}$, and put $\Sigma=\gamma_{v}^{(\alpha)}(t)$.
The corresponding $\nabla^{(-\alpha)}$-geodesic with initial velocity $-v$ has value $\Sigma^{-1}$ at time $t$.
Along these two geodesics, the parallel displacements in Setting~\ref{set:alpha-beta-CAH} are
\[
\begin{aligned}
P_{v,t}^{(\alpha)}(W)
&=\Sigma^{(1+\alpha)/2}W\Sigma^{(1+\alpha)/2},\\
\overline{P}_{v,t}^{(-\alpha)}(W)
&=\Sigma^{(\alpha-1)/2}W\Sigma^{(\alpha-1)/2}
\qquad\bigl(W\in\operatorname{Sym}(n,\mathbb{R})\bigr).
\end{aligned}
\]
These formulas follow by differentiating along the radial geodesics and substituting into the equations for parallel displacement.
Substitution into \eqref{eq:Phi-alpha-beta} gives
\[
\Phi_{v,t}^{(\alpha,-\alpha)}(W)
=-\Sigma^{-1}W\Sigma^{-1}
=(dj)_{\Sigma}(W).
\]
Moreover, \eqref{eq:SPD-g} gives $g_{\Sigma^{-1}}\bigl(\Phi_{v,t}^{(\alpha,-\alpha)}(W), \Phi_{v,t}^{(\alpha,-\alpha)}(Z)\bigr)=g_{\Sigma}(W,Z)$.
Thus the parallel identification is an isometry and is exactly the differential of the constructed spray map.

The statistical connections need not be geodesically complete for this construction to be global.
For example, $\gamma(t)=(1-t)^{-1}E_{n}$, $t<1$, is a $\nabla^{(1)}$-geodesic that cannot be extended to $t=1$.
Nevertheless, the spray map $s_{E_{n}}^{(1)}$ is the global inversion.

%%%%%%%%%%%%%%%%%%%%%%%%%%%%
%%%%%%%%%%%%%%%%%%%%%%%%%%%%
\subsection{The orientation-preserving Euclidean motion group \texorpdfstring{$\mathrm{E}(2)$}{E(2)}}
\label{subsec:E2}
%%%%%%%%%%%%%%%%%%%%%%%%%%%%
%%%%%%%%%%%%%%%%%%%%%%%%%%%%

With the statistical structure described below, the Euclidean motion group is a statistical symmetric space with a flat Riemannian metric and a nonflat statistical connection.
This example also distinguishes two homogeneous presentations of the same structure: the Riemannian transvection group gives a symmetric homogeneous presentation, whereas the left action of $\mathrm{E}(2)$ does not satisfy~\eqref{eq:sym-pair-bracket}.

%%%%%%%%%%%%%%%%%%%%%%%%%%%%
\subsubsection{Statistical structure and homogeneity}
%%%%%%%%%%%%%%%%%%%%%%%%%%%%

Let $\mathbb{E}^{2}=(\mathbb{R}^{2}(x,y),h)$ be the Euclidean plane, with $h=dx^{2}+dy^{2}$.
Its orientation-preserving isometry group is $\mathrm{E}(2)=\mathrm{SO}(2)\ltimes\mathbb{R}^{2}$.
Its matrix realization is
\[
\mathrm{E}(2)
=\left\{
\begin{pmatrix}
\cos\theta&-\sin\theta&x\\
\sin\theta&\cos\theta&y\\
0&0&1
\end{pmatrix}
\;\middle|\;
0\le\theta<2\pi,\quad x,y\in\mathbb{R}
\right\}.
\]
Write the matrix above as $([\theta],u)$, where $u=(x,y)\in\mathbb{R}^{2}$ and $[\theta]\in\mathbb{R}/2\pi\mathbb{Z}$.
If $A_{\theta}$ denotes rotation of $\mathbb{R}^{2}$ through angle $\theta$, the group law is
\[
([\theta],u)([\phi],v)
=([\theta+\phi],u+A_{\theta}v).
\]
The local forms $d\theta$ agree on overlaps and define a global one-form on $\mathrm{E}(2)$.
Put
\[
g=d\theta^{2}+h=d\theta^{2}+dx^{2}+dy^{2}.
\]
The left-invariant orthonormal frame
\[
e_{1}=\cos\theta\,\partial_{x}+\sin\theta\,\partial_{y},\qquad
e_{2}=-\sin\theta\,\partial_{x}+\cos\theta\,\partial_{y},\qquad
e_{3}=\partial_{\theta}
\]
satisfies
\[
[e_{1},e_{2}]=0,\qquad
[e_{2},e_{3}]=e_{1},\qquad
[e_{3},e_{1}]=e_{2}.
\]
Define the totally symmetric cubic form
\[
C(X,Y,Z)
=d\theta(X)h(Y,Z)+d\theta(Y)h(Z,X)+d\theta(Z)h(X,Y),
\]
and let $K$ be determined by $g(K(X,Y),Z)=C(X,Y,Z)$.
Its components in this frame are
\[
\begin{aligned}
K(e_{1},e_{1})&=K(e_{2},e_{2})=e_{3},\qquad
K(e_{1},e_{3})=K(e_{3},e_{1})=e_{1},\\
K(e_{2},e_{3})&=K(e_{3},e_{2})=e_{2},\qquad
K(e_{1},e_{2})=K(e_{2},e_{1})=K(e_{3},e_{3})=0.
\end{aligned}
\]
Put $\nabla=\nabla^{g}-\frac{1}{2}K$.
This is the case $c=1$ and $(\alpha,\beta)=(1,0)$ of \cite[Example~11]{InoguchiOhno}, after correcting the component $K^{(\alpha,\beta)}(e_{1},e_{1})$ in equation~(8.1) to $\alpha e_{3}$.
Here $\alpha$ and $\beta$ are the parameters used there.
Left translations of $\mathrm{E}(2)$ preserve $g$, $d\theta$, and $h$.
Therefore, they preserve $C$ and $\nabla$.

%%%%%%%%%%%%%%%%%%%%%%%%%%%%
\subsubsection{Curvature and parallelism of the skewness tensor}
%%%%%%%%%%%%%%%%%%%%%%%%%%%%

The coordinates $([\theta],u)$ give a diffeomorphism $\mathrm{E}(2)\simeq S^{1}\times\mathbb{R}^{2}$.
Under this identification, $g=d\theta^{2}+h$ is the complete flat product metric.
The tensors $d\theta$ and $h$ are $\nabla^{g}$-parallel.
Therefore, using \eqref{eq:parallel-K-C}, we obtain
\[
R^{g}=0,\qquad \nabla^{g}C=0,\qquad \nabla^{g}K=0.
\]
By Theorem~\ref{thm:local-stat-sym}, $(\mathrm{E}(2),g,\nabla)$ is a locally statistical symmetric space.
The curvature formula~\eqref{eq:Ralpha-Rg} gives
\[
R(X,Y)Z=\frac{1}{4}[K_{X},K_{Y}]Z.
\]
Since $K_{e_{2}}e_{1}=0$, $K_{e_{1}}e_{1}=e_{3}$, and $K_{e_{2}}e_{3}=e_{2}$, we obtain
\[
R(e_{1},e_{2})e_{1}
=\frac{1}{4}[K_{e_{1}},K_{e_{2}}]e_{1}
=-\frac{1}{4}e_{2}\ne0.
\]
Thus the Riemannian metric is flat, whereas the statistical connection is nonflat.

%%%%%%%%%%%%%%%%%%%%%%%%%%%%
\subsubsection{Global dual geodesic symmetries}
%%%%%%%%%%%%%%%%%%%%%%%%%%%%

For $p=([\theta_{0}],u_{0})\in\mathrm{E}(2)$, define
\[
s_{p}:\mathrm{E}(2)\longrightarrow\mathrm{E}(2),\qquad
s_{p}([\theta],u)=([2\theta_{0}-\theta],2u_{0}-u).
\]
This formula is independent of the representatives of the angles: changing $\theta_{0}$ and $\theta$ by multiples of $2\pi$ changes $2\theta_{0}-\theta$ by a multiple of $2\pi$.
It gives $s_{p}(p)=p$, $(ds_{p})_{p}=-\id$, and $s_{p}^{2}=\id$.
Moreover,
\[
s_{p}^{*}d\theta=-d\theta,\qquad s_{p}^{*}h=h,\qquad
s_{p}^{*}g=g.
\]
Each term of $C$ contains one factor $d\theta$ and one factor $h$.
Therefore, $s_{p}^{*}C=-C$.
The defining relation $g(K(X,Y),Z)=C(X,Y,Z)$ and the equality $s_{p}^{*}g=g$ give
\[
g\bigl((s_{p}^{*}K)(X,Y),Z\bigr)
=(s_{p}^{*}C)(X,Y,Z)
=-g(K(X,Y),Z).
\]
The nondegeneracy of $g$ gives $s_{p}^{*}K=-K$.
As an isometry, $s_{p}$ preserves the Levi--Civita connection.
Therefore,
\[
s_{p}^{*}\nabla
=\nabla^{g}-\frac{1}{2}s_{p}^{*}K
=\nabla^{g}+\frac{1}{2}K
=\nabla^{*}.
\]
Since $s_{p}^{-1}=s_{p}$, we obtain
\[
(s_{p})_{*}\nabla
=(s_{p}^{-1})^{*}\nabla
=s_{p}^{*}\nabla
=\nabla^{*}.
\]
Consequently, for each $\nabla$-geodesic $\gamma$ with $\gamma(0)=p$ and $\dot{\gamma}(0)=v$, the curve $s_{p}\circ\gamma$ is a $\nabla^{*}$-geodesic on the same interval, with initial point $p$ and initial velocity $-v$.
Thus $s_{p}$ is the global dual geodesic symmetry at $p$.
This proves directly that $(\mathrm{E}(2),g,\nabla)$ is a statistical symmetric space, although $\mathrm{E}(2)$ is not simply connected.

%%%%%%%%%%%%%%%%%%%%%%%%%%%%
\subsubsection{Homogeneous presentations by the transvection group and \texorpdfstring{$\mathrm{E}(2)$}{E(2)}}
%%%%%%%%%%%%%%%%%%%%%%%%%%%%

For $p=([\theta_{0}],u_{0})$ and $q=([\phi_{0}],v_{0})$, composition gives
\[
(s_{p}\circ s_{q})([\theta],u)
=([\theta+2(\theta_{0}-\phi_{0})],u+2(u_{0}-v_{0})).
\]
Every translation of the two factors occurs in this way: for $[\psi]\in\mathbb{R}/2\pi\mathbb{Z}$ and $a\in\mathbb{R}^{2}$, choose a representative $\psi\in\mathbb{R}$ and take $p=([\psi/2],a/2)$ and $q=([0],0)$.
The resulting composition is
\[
(s_{p}\circ s_{q})([\theta],u)=([\theta+\psi],u+a).
\]
Therefore, the Riemannian transvection group is identified with the abelian group $G=\mathrm{SO}(2)\times\mathbb{R}^{2}$, whose group law is
\[
([\psi],a)([\chi],b)=([\psi+\chi],a+b).
\]
Its action is
\[
\rho:G\times\mathrm{E}(2)\longrightarrow\mathrm{E}(2),\qquad
([\psi],a)\cdot([\theta],u)
:=\rho\bigl(([\psi],a),([\theta],u)\bigr)
=([\theta+\psi],u+a).
\]
This action preserves $d\theta$ and $h$.
Therefore, it preserves $g$, $C$, and $\nabla$.
For any two points $([\theta],u)$ and $([\phi],v)$, the unique element of $G$ that sends the first to the second is $([\phi-\theta],v-u)$.
Thus the action is effective and simply transitive.
Its isotropy subgroup is trivial, and its Lie algebra is abelian.
Consequently, its reductive decomposition has $\mathfrak{h}=0$ and $\mathfrak{m}=\mathfrak{g}$, and $[\mathfrak{m},\mathfrak{m}]=0\subset\mathfrak{h}$.
This is the symmetric homogeneous presentation in Theorem~\ref{thm:homo-stat-sym}.

On the same manifold, the left action of $\mathrm{E}(2)$ is
\[
([\psi],a)\cdot([\theta],u)
=([\psi+\theta],a+A_{\psi}u).
\]
This action also preserves $g$, $C$, and $\nabla$ and is effective and simply transitive.
Its isotropy subgroup is again trivial.
However, its Lie algebra is nonabelian, as the brackets of $e_{1},e_{2},e_{3}$ show.
Therefore, the corresponding reductive decomposition has $\mathfrak{h}=0$ and $\mathfrak{m}=\operatorname{Lie}(\mathrm{E}(2))$, but $[\mathfrak{m},\mathfrak{m}]\ne0$.
Thus this presentation does not satisfy~\eqref{eq:sym-pair-bracket}.

%%%%%%%%%%%%%%%%%%%%%%%%%%%%
%%%%%%%%%%%%%%%%%%%%%%%%%%%%
\subsection{The family of normal distributions}
\label{subsec:normal}
\label{subsec:normal-nonsym}
%%%%%%%%%%%%%%%%%%%%%%%%%%%%
%%%%%%%%%%%%%%%%%%%%%%%%%%%%

%%%%%%%%%%%%%%%%%%%%%%%%%%%%
\subsubsection{Statistical structure and homogeneity}
%%%%%%%%%%%%%%%%%%%%%%%%%%%%

Let $\mathbb{H}^{2}=\{(x,y)\in\mathbb{R}^{2}\mid y>0\}$, and associate to each $(x,y)$ the normal distribution with probability density
\[
p(t;x,y)=\frac{1}{\sqrt{2\pi}\,y}
\exp\!\left(-\frac{(t-x)^{2}}{2y^{2}}\right),
\qquad t\in\mathbb{R}.
\]
Here $x$ is the mean and $y>0$ is the standard deviation.
This parameterization identifies the family of normal distributions with $\mathbb{H}^{2}$.
Its Fisher metric is $g_{\sqrt{2}}=\frac{dx^{2}+2dy^{2}}{y^{2}}$.
Consider the one-parameter deformation
\[
g_{\lambda}=\frac{dx^{2}+\lambda^{2}dy^{2}}{y^{2}},
\qquad \lambda>0.
\]
Consider the skewness tensor
\begin{align}
K(\partial_{x},\partial_{x})
=\frac{2}{\lambda^{2}y}\partial_{y},
\qquad
K(\partial_{x},\partial_{y})
=K(\partial_{y},\partial_{x})
=\frac{2}{y}\partial_{x},
\qquad
K(\partial_{y},\partial_{y})
=\frac{4}{y}\partial_{y},
\label{eq:normal-K}
\end{align}
and put $\nabla^{(\alpha)}=\nabla^{g}-\frac{\alpha}{2}K$.
This family appears in \cite{FIK}.
At $\lambda=\sqrt{2}$, the metric and the $\alpha$-connections are those of the normal distribution family above.

The orientation-preserving affine group of the real line, $\mathrm{GA}^{+}(1)$, is represented by the matrices
\[
(a,b):=\begin{pmatrix}b&a\\0&1\end{pmatrix},
\qquad a\in\mathbb{R},\quad b>0.
\]
Here $a$ is the translation parameter and $b$ is the positive dilation parameter.
Matrix multiplication gives the group law
\[
(a,b)(c,d)=(a+bc,bd).
\]
The affine transformation $t\mapsto bt+a$ sends the normal distribution with parameters $(x,y)$ to the one with parameters $(bx+a,by)$.
Thus the group acts on the parameter space $\mathbb{H}^{2}$ by
\[
\rho:\mathrm{GA}^{+}(1)\times\mathbb{H}^{2}\longrightarrow\mathbb{H}^{2},
\qquad
\rho\bigl((a,b),(x,y)\bigr)=(bx+a,by).
\]
For $(a,b)\in\mathrm{GA}^{+}(1)$, write $\rho_{a,b}(x,y):=\rho\bigl((a,b),(x,y)\bigr)$.
At $o=(0,1)$, the orbit map $\rho^{o}:\mathrm{GA}^{+}(1)\to\mathbb{H}^{2}$ is given by $\rho^{o}(a,b)=\rho_{a,b}(o)=(a,b)$.
Its inverse is
\[
(\rho^{o})^{-1}(x,y)=\begin{pmatrix}y&x\\0&1\end{pmatrix}.
\]
$\rho^{o}$ is a diffeomorphism that identifies the group coordinates $(a,b)$ with the upper half-plane coordinates $(x,y)$, and the action corresponds to left multiplication.
Therefore, the action is simply transitive.
The frame $e_{1}=y\partial_{x}$, $e_{2}=\frac{y}{\lambda}\partial_{y}$ is left invariant and orthonormal, and its Levi--Civita derivatives are
\begin{alignat*}{2}
\nabla^{g}_{e_{1}}e_{1}=\frac{1}{\lambda} e_{2},\qquad
\nabla^{g}_{e_{1}}e_{2}=-\frac{1}{\lambda} e_{1},\qquad
\nabla^{g}_{e_{2}}e_{1}=0,\qquad
\nabla^{g}_{e_{2}}e_{2}=0.
\end{alignat*}
In this frame, $K$ has the constant components
\begin{equation}
K(e_{1},e_{1})=\frac{2}{\lambda} e_{2},\qquad
K(e_{1},e_{2})=\frac{2}{\lambda} e_{1},\qquad
K(e_{2},e_{2})=\frac{4}{\lambda} e_{2}.
\label{eq:normal-frame-K}
\end{equation}
Therefore, the action preserves $g_{\lambda}$, $K$, and every $\nabla^{(\alpha)}$.
In particular, both $(g_{\lambda},\nabla)$ and $(g_{\lambda},\nabla^{*})$ are homogeneous statistical structures.
The isotropy subgroup of this action is trivial, and $[e_{1},e_{2}]=-\frac{1}{\lambda} e_{1}$.
Thus this homogeneous presentation does not satisfy the bracket condition~\eqref{eq:sym-pair-bracket}.

%%%%%%%%%%%%%%%%%%%%%%%%%%%%
\subsubsection{Curvature and parallelism of the skewness tensor}
%%%%%%%%%%%%%%%%%%%%%%%%%%%%

In the coordinates $u=x/\lambda$ and $y$, $g_{\lambda}=\lambda^{2}(du^{2}+dy^{2})/y^{2}$.
Hence the Riemannian manifold is simply connected and complete, with constant sectional curvature $-1/\lambda^{2}$.
In particular, $\nabla^{g}R^{g}=0$.
However, direct covariant differentiation of \eqref{eq:normal-frame-K} gives $(\nabla^{g}_{e_{1}}K)(e_{1},e_{1}) =-\frac{6}{\lambda^{2}}e_{1}$; all other components $(\nabla^{g}_{e_{i}}K)(e_{j},e_{k})$ vanish.
Equivalently,
\begin{equation}
(\nabla^{g}_{\partial_{x}}K)(\partial_{x},\partial_{x})
=-\frac{6}{\lambda^{2}y^{2}}\partial_{x}.
\label{eq:normal-LC-K}
\end{equation}
Thus $\nabla^{g}K\ne0$ at every point, although $d^{\nabla^{g}}K=0$.

The Riemannian curvature and the commutator of the skewness operators satisfy
\begin{alignat*}{2}
R^{g}(e_{1},e_{2})e_{1}&=\frac{1}{\lambda^{2}}e_{2},\qquad
&R^{g}(e_{1},e_{2})e_{2}&=-\frac{1}{\lambda^{2}}e_{1},\\
[K_{e_{1}},K_{e_{2}}]e_{1}&=-\frac{4}{\lambda^{2}}e_{2},\qquad
&[K_{e_{1}},K_{e_{2}}]e_{2}&=\frac{4}{\lambda^{2}}e_{1}.
\end{alignat*}
Since $d^{\nabla^{g}}K=0$, \eqref{eq:Ralpha-Rg} gives
\begin{equation}
R^{(\alpha)}(X,Y)Z
=-\frac{1-\alpha^{2}}{\lambda^{2}}
\{g_{\lambda}(Y,Z)X-g_{\lambda}(X,Z)Y\}.
\label{eq:normal-Ralpha}
\end{equation}
In particular, $R^{(\alpha)}=R^{(-\alpha)}$ for every $\alpha$.
Thus every member of the family is conjugate symmetric.
At $\alpha=\pm1$, the connections are flat.
Therefore, $(\mathbb{H}^{2},g_{\lambda},\nabla)$ is a dually flat Hessian manifold.
The Hessian curvature tensors of this manifold and its dual satisfy
\[
Q(e_{1},e_{1})e_{1}=-\frac{2}{\lambda^{2}}e_{1},\qquad
Q^{*}(e_{1},e_{1})e_{1}=\frac{4}{\lambda^{2}}e_{1}.
\]
By \eqref{eq:Q-dual-diff}, $Q-Q^{*}=\nabla^{g}K\ne0$.
Thus the condition $Q=Q^{*}$ in Corollary~\ref{cor:Hess-stat-sym} fails.

For $\alpha=0$, the skewness tensor of $(g_{\lambda},\nabla^{(0)})$ vanishes, and $\nabla^{(0)}=\nabla^{g}$.
Therefore, the Riemannian geodesic symmetries are global dual geodesic symmetries of $(\mathbb{H}^{2},g_{\lambda},\nabla^{(0)})$.
In contrast, for $\alpha\ne0$, the skewness tensor is $\alpha K$, with $\nabla^{g}(\alpha K)=\alpha\nabla^{g}K\ne0$ at every point.
By Proposition~\ref{prop:dual-sym-parity}, no local dual geodesic symmetry exists at any point.
Therefore, by Theorem~\ref{thm:homo-stat-sym}, $(\mathbb{H}^{2},g_{\lambda},\nabla^{(\alpha)})$ has no homogeneous presentation satisfying condition~\textup{(ii)} of that theorem.

%%%%%%%%%%%%%%%%%%%%%%%%%%%%
\subsubsection{Riemannian geodesic symmetry and the spray map}
%%%%%%%%%%%%%%%%%%%%%%%%%%%%

In the coordinate $z=x/\lambda+iy$, the M\"{o}bius transformations form the Riemannian transvection group
\[
G=\mathrm{PSL}(2,\mathbb{R})=\mathrm{SL}(2,\mathbb{R})/\{\pm I\}.
\]
Its isotropy subgroup at $o$ is $\mathrm{PSO}(2)$, and $\mathbb{H}^{2}\simeq G/\mathrm{PSO}(2)$.
The $\mathrm{GA}^{+}(1)$-action above preserves every $\nabla^{(\alpha)}$, whereas the $G$-action preserves $\nabla^{(\alpha)}$ only for $\alpha=0$.

At $o=(0,1)$, the Riemannian geodesic symmetry is
\[
s_{o}^{(0)}(x,y)=\operatorname{Exp}_{o}^{(0)}\left(-\bigl(\operatorname{Exp}_{o}^{(0)}\bigr)^{-1}(x,y)\right)=\left(-\frac{\lambda^{2}x}{x^{2}+\lambda^{2}y^{2}},\frac{\lambda^{2}y}{x^{2}+\lambda^{2}y^{2}}\right).
\]
Indeed, in the coordinate $z$, this is the hyperbolic isometry $z\mapsto-1/z$.
Thus $s_{o}^{(0)}$ preserves $g_{\lambda}$, fixes $o$, and has differential $-\id$ there.
However, the preceding nonexistence result shows that $(s_{o}^{(0)})_{*}\nabla=\nabla^{*}$ does not hold on any neighborhood of $o$.

In contrast, the spray map $s_{o}^{(1)}$ satisfies $(s_{o}^{(1)})_{*}\nabla=\nabla^{*}$ but fails to preserve $g_{\lambda}$.
The flat connections $\nabla$ and $\nabla^{*}$ admit the affine coordinates
\[
\theta(x,y)=\left(\frac{x}{y^{2}},\frac{1}{y^{2}}\right),
\qquad
\eta(x,y)=\left(x,y^{2}+\frac{2x^{2}}{\lambda^{2}}\right),
\]
respectively, as follows by substitution into their connection coefficients.
At $o$, $\theta(o)=\eta(o)=(0,1)$.
With respect to the coordinate basis $(\partial_{x},\partial_{y})$, the differentials are $d\theta_{o}=\operatorname{diag}(1,-2)$ and $d\eta_{o}=\operatorname{diag}(1,2)$.
In their respective affine coordinates, geodesics are straight lines.
Therefore, for sufficiently small $v=a(\partial_{x})_{o}+b(\partial_{y})_{o}$, the exponential maps satisfy
\begin{align*}
\theta\bigl(\operatorname{Exp}_{o}^{(1)}(v)\bigr)
&=\theta(o)+d\theta_{o}(v)=(a,1-2b),\\
\eta\bigl(\operatorname{Exp}_{o}^{(-1)}(-v)\bigr)
&=\eta(o)+d\eta_{o}(-v)=(-a,1-2b).
\end{align*}
For $(x,y)=\operatorname{Exp}_{o}^{(1)}(v)$, the definition of $s_{o}^{(1)}$ gives
\[
s_{o}^{(1)}(x,y)
=\operatorname{Exp}_{o}^{(-1)}
\left(-\bigl(\operatorname{Exp}_{o}^{(1)}\bigr)^{-1}(x,y)\right)
=\operatorname{Exp}_{o}^{(-1)}(-v).
\]
Therefore, the preceding formulas give $\theta(x,y)=(a,1-2b)$ and $\eta\bigl(s_{o}^{(1)}(x,y)\bigr)=(-a,1-2b)$.
Comparison of these coordinate values yields
\[
\eta\bigl(s_{o}^{(1)}(x,y)\bigr)
=\bigl(-\theta^{1}(x,y),\theta^{2}(x,y)\bigr)
=\left(-\frac{x}{y^{2}},\frac{1}{y^{2}}\right).
\]
The inverse coordinate map is $\eta^{-1}(r,s)=(r,\sqrt{s-2r^{2}/\lambda^{2}})$ on the domain $s>2r^{2}/\lambda^{2}$.
Therefore, applying $\eta^{-1}$ to this equality gives
\begin{align*}
s_{o}^{(1)}(x,y)
&=\eta^{-1}\bigl(-\theta^{1}(x,y),\theta^{2}(x,y)\bigr)
=\eta^{-1}\left(-\frac{x}{y^{2}},\frac{1}{y^{2}}\right)\\
&=\left(-\frac{x}{y^{2}},
\frac{\sqrt{y^{2}-2x^{2}/\lambda^{2}}}{y^{2}}\right)
\end{align*}
on a sufficiently small neighborhood of $o$ contained in $\{(x,y)\in\mathbb{H}^{2}\mid \lambda^{2}y^{2}>2x^{2}\}$.
Its coordinate expression $\bigl(\eta\circ s_{o}^{(1)}\circ\theta^{-1}\bigr)(\xi^{1},\xi^{2}) =(-\xi^{1},\xi^{2})$ is linear and invertible.
The affine coordinate vector fields satisfy $\nabla_{\partial_{\theta^{i}}}\partial_{\theta^{j}}=0$ and $\nabla^{*}_{\partial_{\eta^{i}}}\partial_{\eta^{j}}=0$ for $i,j=1,2$.
Differentiation gives $(s_{o}^{(1)})_{*}\partial_{\theta^{1}}=-\partial_{\eta^{1}}$ and $(s_{o}^{(1)})_{*}\partial_{\theta^{2}}=\partial_{\eta^{2}}$.
The signs in this correspondence are constant.
Hence, for $i,j=1,2$,
\[
\nabla^{*}_{(s_{o}^{(1)})_{*}\partial_{\theta^{i}}}
\bigl((s_{o}^{(1)})_{*}\partial_{\theta^{j}}\bigr)
=(s_{o}^{(1)})_{*}
\bigl(\nabla_{\partial_{\theta^{i}}}\partial_{\theta^{j}}\bigr)=0.
\]
Therefore, by the definition of the pushforward connection, $(s_{o}^{(1)})_{*}\nabla=\nabla^{*}$.
Moreover, $s_{o}^{(1)}(o)=o$ and $(ds_{o}^{(1)})_{o}=-\id$.
Thus the fixed-point, differential, and connection conditions for a local dual geodesic symmetry are satisfied.
However, for sufficiently small $x\ne0$, write $s_{o}^{(1)}(x,1)=(X(x),Y(x))$, where $X(x)=-x$ and $Y(x)=\sqrt{1-2x^{2}/\lambda^{2}}$.
The differential sends $\partial_{x}$ to a vector with components $X^{\prime}(x)=-1$ and $Y^{\prime}(x)=-2x/(\lambda^{2}Y(x))$ at $s_{o}^{(1)}(x,1)$.
Therefore, the definition of the pullback metric and the target-coordinate expression $g_{\lambda}=(dX^{2}+\lambda^{2}dY^{2})/Y^{2}$ give
\begin{align*}
\bigl((s_{o}^{(1)})^{*}g_{\lambda}\bigr)_{(x,1)}
(\partial_{x},\partial_{x})
=&\,\frac{(X^{\prime}(x))^{2}
+\lambda^{2}(Y^{\prime}(x))^{2}}{Y(x)^{2}}
=\frac{1+2x^{2}/\lambda^{2}}
{(1-2x^{2}/\lambda^{2})^{2}}\\
>&\,(g_{\lambda})_{(x,1)}(\partial_{x},\partial_{x})=1.
\end{align*}
Consequently, $s_{o}^{(1)}$ fails the isometry condition and is not a local dual geodesic symmetry.
The spray map $s_{o}^{(1)}$ does not send $\nabla^{*}$ to $\nabla$ on any neighborhood of $o$.
Indeed, suppose that $(s_{o}^{(1)})_{*}\nabla^{*}=\nabla$ on such a neighborhood.
Then $s_{o}^{(1)}\circ s_{o}^{(1)}$ preserves $\nabla$ on a sufficiently small neighborhood of $o$, fixes $o$, and has differential $\id_{T_{o}M}$ there.
By uniqueness of $\nabla$-geodesics, $s_{o}^{(1)}\circ s_{o}^{(1)}=\id$ around $o$.
However, the formula for $s_{o}^{(1)}$ gives
\[
\bigl(s_{o}^{(1)}\circ s_{o}^{(1)}\bigr)(x,1)
=
\left(
\frac{x}{1-2x^{2}/\lambda^{2}},
\frac{\sqrt{1-4x^{2}/\lambda^{2}}}
{1-2x^{2}/\lambda^{2}}
\right)
\ne(x,1)
\]
for every sufficiently small nonzero $x$.
This contradiction proves the claim.

\end{document}